\author{Rebecca Bellovin}
\title{Generic smoothness for $G$-valued potentially semi-stable deformation rings}
\address{
Department of Mathematics,
180 Queen's Gate,
London SW7 2AZ, UK
}\email{r.bellovin@imperial.ac.uk}
\begin{document}

\begin{abstract}
We extend Kisin's results on the structure of characteristic $0$ Galois deformation rings to deformation rings of Galois representations valued in arbitrary connected reductive groups $G$.  In particular, we show that such Galois deformation rings are complete intersections.  In addition, we study explicitly the structure of the moduli space $X_{\varphi,N}$ of (framed) $(\varphi,N)$-modules when $G=\GL_n$.  We show that when $G=\GL_3$ and $K_0=\Q_p$, $X_{\varphi,N}$ has a singular component, and we construct a moduli-theoretic resolution of singularities.
\end{abstract}

\setlength{\parskip}{0ex} 
\maketitle

\tableofcontents

\setlength{\parskip}{0ex} 
\setlength{\parindent}{2ex}

\section{Introduction}

Let $K/\Q_p$ be a finite extension, and let $V$ be a finite dimensional $\F_p$-vector space equipped with a continuous $\F_p$-linear action of $\Gal_K$.  Let $R_V^\square$ be the universal (framed) deformation ring of $V$.  Then Kisin proved that for a fixed $p$-adic Hodge type $\mathbf{v}$ and a fixed Galois type $\tau$, there is a quotient $R_V^\square\twoheadrightarrow R_V^{\square,\tau,\mathbf{v}}$ whose characteristic $0$ points are the potentially semi-stable deformations of $V$ with $p$-adic Hodge type $\mathbf{v}$ and Galois type $\tau$~\cite{kisin}.  He further showed that $\Spec R_V^{\square,\tau,\mathbf{v}}[1/p]$ is equi-dimensional and generically smooth, and computed its dimension.  More recently, Hartl and Hellmann have shown that the moduli space of $(\varphi,N)$-modules is reduced and Cohen--Macaulay~\cite[Theorem 3.2]{hartl-hellmann}, which implies the same properties for $\Spec R_V^{\square,\tau,\mathbf{v}}[1/p]$.

However, it is natural to study Galois representations valued in connected reductive groups other than $\GL_n$.  If $G$ is a connected reductive group over a $p$-adic field which admits a smooth reductive integral model, Balaji has used techniques from integral $p$-adic Hodge theory to construct potentially semi-stable and potentially crystalline integral deformation rings~\cite{balaji}, extending the work of Kisin~\cite{kisin}.  He further showed that potentially crystalline deformation rings are smooth and computed their dimensions.

In this paper, we extend Kisin's results on the local structure of $\Spec R_V^{\square,\tau,\mathbf{v}}[1/p]$ to the study of the characteristic $0$ deformation rings of Galois representations $\rho:\Gal_K\rightarrow G(E)$, where $G$ is a connected reductive group defined over a finite extension $E/\Q_p$.  More precisely, we show the following:
\begin{thm}\label{galois-def-rings}
Let $\rho:\Gal_K\rightarrow G(E)$ be a continuous homomorphism.  Fix a finite totally ramified Galois extension $L/K$, a corresponding Galois type $\tau$, and a $p$-adic Hodge type $\mathbf{v}$.  Then there is a complete local noetherian $E$-algebra $R_\rho^{\square,\tau,\mathbf{v}}$ which pro-represents the deformation problem 
\begin{multline*}
\Def_\rho^{\square,\tau,\mathbf{v}}(R):=\{\widetilde\rho:\Gal_K\rightarrow G(R)| \widetilde\rho|_{\Gal_L}\text{ is a semi-stable lift of }\rho|_{\Gal_L}\\ 
\text{ with Galois type }\tau\text{ and }p\text{-adic Hodge type }\mathbf{v}\}
\end{multline*}
Furthermore, $R_\rho^{\square,\tau,\mathbf{v}}$ is generically smooth, locally a complete intersection, and equidimensional of dimension $\dim_EG+\dim_E(\Res_{E\otimes K/E}G)/P_\mathbf{v}$ (where $P_{\mathbf{v}}$ is the parabolic associated to the $p$-adic Hodge type $\mathbf{v}$).
\end{thm}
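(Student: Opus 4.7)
The plan is to emulate Kisin's strategy from \cite{kisin}, with the main modification being that filtered vector spaces are replaced by $G$-torsors equipped with reductions to a parabolic $P_\mathbf{v}$. First, I would establish pro-representability of $\Def_\rho^{\square,\tau,\mathbf{v}}$: since we consider framed deformations, Schlessinger's criteria apply just as in the $\GL_n$ case, yielding a complete local noetherian $E$-algebra $R_\rho^\square$ and then a quotient $R_\rho^{\square,\tau,\mathbf{v}}$ that cuts out the potentially semi-stable locus with the prescribed Galois type and Hodge-Tate data. Existence of this quotient can be obtained either by fixing a faithful representation $G\hookrightarrow\GL_n$ and pulling back Kisin's construction, or by invoking Balaji's direct construction for reductive $G$.

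The heart of the proof is to build a moduli space $L_\rho^{\square,\tau,\mathbf{v}}$ together with a projective morphism $\Theta\colon L_\rho^{\square,\tau,\mathbf{v}}\to\Spec R_\rho^{\square,\tau,\mathbf{v}}$ that is an isomorphism on a dense open subscheme. The space $L_\rho^{\square,\tau,\mathbf{v}}$ should parametrize framed lifts $\widetilde\rho$ of $\rho$ together with a $P_\mathbf{v}$-reduction of the $G$-torsor $D_{\pst}(\widetilde\rho)$; equivalently, a $G$-structure on a filtered $(\varphi,N,\Gal(L/K))$-module of filtration type $\mathbf{v}$. It fibres over the (smooth) moduli of $(\varphi,N,\Gal(L/K))$-modules with $G$-structure, with fibres isomorphic to the partial flag variety $(\Res_{E\otimes K/E}G)/P_\mathbf{v}$. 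Tangent space calculations using $H^\ast$ of the $G$-adjoint Galois representation, in place of $\Ext^\ast$ of endomorphism algebras used by Kisin, should show that $L_\rho^{\square,\tau,\mathbf{v}}$ is smooth over $E$, with the $\dim_EG$ contribution coming from the framing and the $\dim_E(\Res_{E\otimes K/E}G)/P_\mathbf{v}$ contribution coming from the parabolic reduction.

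Given such a $\Theta$, one deduces by a standard argument that $\Spec R_\rho^{\square,\tau,\mathbf{v}}$ is reduced, generically smooth, and equidimensional of the stated dimension. The complete intersection property is obtained by exhibiting $L_\rho^{\square,\tau,\mathbf{v}}$ locally as a complete intersection in an explicit smooth ambient scheme --- cut out by the defining relations for Frobenius, monodromy, and filtration compatibilities --- and transferring this property across $\Theta$, using that $\Theta$ is an isomorphism at generic points. The main obstacle will be the moduli-theoretic setup and smoothness proof: the $\GL_n$ case has concrete linear-algebra descriptions (matrices, flags) which must be replaced by intrinsic $G$-torsor constructions, and one must verify that the filtered $(\varphi,N)$-module formalism with the required Galois descent data continues to function cleanly when enhanced to carry a $G$-structure, especially in computing the tangent spaces that drive the smoothness argument.
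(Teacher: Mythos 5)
There is a genuine gap at the heart of your proposal: you assert that the moduli space of $G$-valued $(\varphi,N,\Gal_{L/K})$-modules is smooth, and hence that your resolution $L_\rho^{\square,\tau,\mathbf{v}}$ is smooth over $E$. This is false, and establishing the correct weaker statement is the technical core of the whole argument. The scheme $X_{\varphi,N,\tau}$ parametrizing framed $(\varphi,N,\Gal_{L/K})$-modules is only \emph{generically} smooth: its irreducible components correspond roughly to nilpotent orbits, and some of them are singular (the paper exhibits a singular component already for $G=\GL_3$, $K_0=\Q_p$). What one actually proves is that the obstruction space $\H^2$ vanishes on a dense open subset; this requires the theory of cocharacters associated to nilpotent elements, used to put $(\underline\Phi,\underline N)$ into the normal form $\underline\Phi=\lambda(p^{-1/2})\underline{c}$ with $c_1\cdots c_f$ of finite order, so that $p\underline\Ad\underline\Phi-1$ is semisimple with kernel contained in the weight-$2$ eigenspace, which lies in the image of $\ad_N$. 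One then shows separately, by an explicit count of equations, that $X_{\varphi,N,\tau}$ is a local complete intersection in a smooth ambient space; since lci implies Cohen--Macaulay, generic reducedness upgrades to reducedness and equidimensionality. None of this is available if you simply posit smoothness of the linear-algebra moduli space.

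Second, the geometry of your proposed $\Theta$ is off. The filtration on $\D_{\st}^L(\widetilde\rho)$ is canonically determined by $\widetilde\rho$, so ``a framed lift together with a $P_\mathbf{v}$-reduction'' is not extra data and the fibres of $L_\rho^{\square,\tau,\mathbf{v}}\rightarrow\Spec R_\rho^{\square,\tau,\mathbf{v}}$ would not be flag varieties; a projective resolution in the style of the moduli of finite flat group schemes is not the mechanism here. The actual comparison is a \emph{formally smooth morphism of groupoids} $\widehat A_{\mathfrak m}\rightarrow\mathfrak{Mod}_{F,\varphi,N,\tau}$, where $\Sp(A)$ is a connected affinoid neighbourhood in the potentially semi-stable locus of the rigid generic fibre of the framed deformation ring (cut out, after fixing a faithful $\sigma\colon G\rightarrow\GL_n$, as the intersection of the locus where the representation factors through $G$ with Kisin's potentially semi-stable locus of type $(\sigma\circ\tau,\sigma\circ\mathbf{v})$, using local constancy of the $p$-adic Hodge type to isolate the $G$-conjugacy class $\mathbf{v}$). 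Formal smoothness transfers the lci and generic-smoothness properties from $X_{\varphi,N,\tau}\times(\Res_{E\otimes K/E}G)/P_\mathbf{v}$ to $R_\rho^{\square,\tau,\mathbf{v}}$, and the dimension is then computed by a tangent-space calculation at a closed point where $\H^2=0$, via $\dim\Ext^1(D_x,D_x)=\dim(\ad D_x)_K/\Fil^0(\ad D_x)_K+\dim\H_F^0(D_x)$ together with the framing correction. Your heuristic for the two dimension contributions is right in spirit, but without the generic vanishing of $\H^2$, the lci count, and the formally smooth comparison map, the proposal does not yield the theorem.
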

Even if $G=\GL_n$, this improves on the generic smoothness result of~\cite{kisin}. 

As in~\cite{kisin}, to prove Theorem~\ref{galois-def-rings} we study Galois deformation rings and their singularities by studying a certain moduli space of linear algebra data.  Fontaine's theory defines an equivalence of categories between potentially semi-stable Galois representations (valued in $\GL_n$) and ``weakly admissible filtered $(\varphi,N,\Gal_K)$-modules''.  We use the theory of Tannakian categories to define $G$-valued filtered $(\varphi,N,\Gal_K)$-modules in ~\textsection\ref{definitions}, and we review the relevant Tannakian theory in detail in the appendix (in particular, we show that the $p$-adic Hodge type is locally constant).  The analogue of the weak admissibility condition is not clear in general, but infinitesimal deformations of admissible filtered $(\varphi,N,\Gal_K)$-modules are admissible so this suffices to study the deformation theory of potentially semi-stable $G$-valued Galois representations.  

More precisely, in order to prove Theorem~\ref{galois-def-rings}, we first prove the following:
\begin{thm}\label{x-phi-n-tau-sings}
Let $X_{\varphi,N,\tau}$ denote the moduli space of framed $G$-valued $(\varphi,N,\Gal_{L/K})$-modules, where $L/K$ is a finite totally ramified Galois extension.  Then $X_{\varphi,N,\tau}$ is reduced and locally a complete intersection, and each irreducible component has dimension $\dim \Res_{E\otimes L_0/E}G$, where $L_0=K_0$ is the maximal unramified subfield of $L$.  

For any $p$-adic Hodge type $\mathbf{v}$, there is a parabolic subgroup $P_{\mathbf{v}}\subset \Res_{E\otimes L/E}G$ attached to $\mathbf{v}$; the moduli space of framed $G$-valued filtered $(\varphi,N,\Gal_{L/K})$-modules is reduced and locally a complete intersection, and each irreducible component has dimension $\dim \Res_{E\otimes L_0/E}G+\dim (\Res_{E\otimes L/E}G)/P_{\mathbf{v}}$.
\end{thm}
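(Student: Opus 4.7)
The plan is to realize $X_{\varphi,N,\tau}$ as a locally closed subscheme of a smooth ambient scheme defined by explicit equations, and then to apply the tangent-obstruction formalism from Section~\ref{deformations} together with the associated cocharacter theory from Section~\ref{associated} to control the codimensions. I would build up the structure in three stages: first $\varphi$ with the Galois action of type $\tau$, then the monodromy $N$, then the filtration.

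In the first stage, after framing, a $G$-valued $(\varphi,\Gal_{L/K})$-module of Galois type $\tau$ reduces to a tuple $(\varphi, (g_\sigma))$ in $\Res_{E \otimes L_0/E} G$ satisfying a cocycle relation and the semi-linear compatibility between $\varphi$ and the $g_\sigma$. Fixing $\tau$ rigidifies the inertia part of the Galois action up to the centralizer of $\tau$, and the remaining relations determine the non-inertial $g_\sigma$ in terms of $\varphi$, so the moduli space $X_{\varphi,\tau}$ of framed $(\varphi,\Gal_{L/K})$-modules of type $\tau$ is smooth of dimension $\dim \Res_{E \otimes L_0/E} G$.

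In the second stage, the nilpotent $N \in \Res_{E \otimes L_0/E}\mathfrak{g}$ must satisfy the monodromy condition $\Ad(\varphi) N = pN$. I would stratify by the conjugacy class of $N$: fix a representative nilpotent $N_0$ with associated cocharacter $\psi$ from Section~\ref{associated}, and study the stratum where $N$ lies in the orbit of $N_0$. The adjoint action of $\psi$ decomposes $\Res_{E \otimes L_0/E}\mathfrak{g}$ into weight spaces, and a Jacobson-Morozov-style computation shows that the monodromy condition has codimension equal to $\dim(G \cdot N_0)$ in $X_{\varphi,\tau} \times (G \cdot N_0)$, exactly balanced by the $\dim(G \cdot N_0)$ parameters in $N$. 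The tangent-obstruction theory of Section~\ref{deformations} gives an upper bound on tangent dimensions matching this expected count, which yields both the local complete intersection property and reducedness on each stratum; since the strata glue compatibly, $X_{\varphi,N,\tau}$ is reduced, locally a complete intersection, and equidimensional of the asserted dimension.

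In the third stage, a filtration of $p$-adic Hodge type $\mathbf{v}$ on the framed $G$-torsor is a reduction of structure group to a parabolic of type $P_{\mathbf{v}}$, so the filtered moduli is a $(\Res_{E \otimes L/E} G)/P_{\mathbf{v}}$-bundle over $X_{\varphi,N,\tau}$; since the fiber is smooth of the stated dimension and the bundle map is smooth, the dimension formula, reducedness, and complete intersection properties transfer immediately. The main obstacle is the second stage: showing that $\Ad(\varphi) N = pN$ cuts out a complete intersection of exactly the expected codimension on each orbit stratum. This is where the associated cocharacter theory must be applied carefully to decompose the relevant tangent spaces into pieces on which the defining equations act transversally, and it is also where reducedness is most delicate since $X_{\varphi,N,\tau}$ is itself not smooth.
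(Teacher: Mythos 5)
Your stages 1 and 3 are consistent with the paper (the paper likewise notes that $X_{\varphi,\tau}$ is smooth of dimension $\dim\Res_{E\otimes L_0/E}G$, and treats the filtration by observing that the filtered moduli space is simply $X_{\varphi,N,\tau}\times(\Res_{E\otimes K/E}G)/P_{\mathbf{v}}$, so smoothness of the flag variety transfers everything). The gap is in stage 2, and it is fatal to the argument as written. Reducedness and the local complete intersection property are scheme-theoretic statements about the natural scheme structure cut out by the equation $N=p\Ad(\Phi)(\varphi(N))$, and they are not properties that can be checked stratum-by-stratum on a stratification by nilpotent orbit type: each locally closed stratum being smooth of the right dimension tells you only about the underlying reduced components, and says nothing about the scheme structure at points where the closure of one stratum meets another. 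The phrase ``the strata glue compatibly'' is precisely the missing content. The paper's own $\GL_3$ computation shows how delicate this is: the closure of the subregular stratum is singular, with tangent spaces of dimension $\ge 10$ at points lying over $N=0$, so any argument that bounds tangent spaces by the ``expected count'' on closed strata is simply false at such points. Relatedly, your appeal to the tangent--obstruction theory to bound tangent dimensions is only valid where $\H^2$ vanishes, and establishing that $\H^2$ vanishes on a dense open subset of \emph{every} irreducible component is the hard part of the proof; in particular one must produce good points on every connected component of the fibers of $X_{\varphi,N}\rightarrow X_N$, and these components are indexed by the components of the disconnected group $Z_G(\tau)\cap Z_G(N)$ --- an issue your proposal does not address at all (the paper needs Proposition~\ref{disconn} and Lemma~\ref{finrep} on finite-order points of disconnected reductive groups precisely here).

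The paper's actual route avoids stratifying in $N$ when proving the global properties. It first proves generic vanishing of $\H^2$ using associated cocharacters: at a point with $\underline{N}=(N,\dots,N)$ and $\underline{\Phi}_0=(\lambda(p^{-1/2}),\dots)$ for $\lambda$ associated to $N$, the operator $p\underline\Ad\underline{\Phi}_0-1$ is semisimple with kernel the weight-$2$ eigenspaces, which lie in the image of $\ad_N$ by $\mathfrak{sl}_2$-theory; twisting by finite-order elements of $Z_G(\tau)\cap Z_G(\lambda)\cap Z_G(N)$ reaches every component. It then presents the fiber $X_{\varphi,N}$ over a diagonal $\tau_0$ \emph{globally} --- with $\underline{N}$ ranging over all of $\ad Z_G(\tau_0)^{\times f}$, not over an orbit --- as the zero locus of $[L_0:\Q_p]\dim Z_G(\tau_0)$ equations in a smooth ambient space of dimension $2[L_0:\Q_p]\dim Z_G(\tau_0)$. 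Krull's height theorem bounds every component's dimension below by $[L_0:\Q_p]\dim G$, the generic tangent space computation bounds it above, and this forces the local complete intersection property; reducedness then follows from Cohen--Macaulayness plus generic reducedness (Serre's criterion), not from any gluing of strata. If you want to salvage your approach, you would still need this global presentation (or an equivalent) to get lci, at which point the orbit stratification becomes unnecessary for the theorem, though it is the right tool for the finer study of the individual components carried out later in the paper.
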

This extends the work of Hartl and Hellmann~\cite{hartl-hellmann}, who showed that if $G=\GL_n$ and $L=K$, then $X_{\varphi,N,\tau}$ is reduced and Cohen--Macaulay.  Using the relationship between potentially semi-stable Galois representations and filtered $(\varphi,N,\Gal_K)$-modules, and following the arguments of~\cite{kisin}, this permits us to deduce Theorem~\ref{galois-def-rings} and its crystalline analogue in~\textsection\ref{section-def-rings}.

In order to prove Theorem~\ref{x-phi-n-tau-sings}, we first recall the deformation theory of $G$-torsors and morphisms between them in ~\textsection\ref{deformations}.  This permits us to write down a tangent-obstruction theory for deformations of filtered $(\varphi,N,\Gal_K)$-modules.  Using this tangent-obstruction theory and the theory of cocharacters associated to nilpotent elements of a Lie algebra (recalled in ~\textsection\ref{associated}), we show in ~\textsection\ref{reducedness} that the moduli space of (framed) $G$-filtered $(\varphi,N,\Gal_K)$ modules is generically smooth and equidimensional.

Although we set up the deformation theory for filtered $(\varphi,N,\Gal_{L/K})$-modules with $L/K$ an arbitrary finite Galois extension, we need to assume $L/K$ is totally ramified in order to carry out our calculations.  This is because we need the centralizer of $\tau$ in $\Res_{L_0\otimes E/E}G$ to be an algebraic group, which is not the case unless $\tau$ is linear, rather than semi-linear.  Both \cite{kisin} and \cite{balaji} assume that $\tau$ factors through the inertia group $I_{L/K}$, so this additional hypothesis is not overly restrictive.

However, this still leaves open the question of the structure of the irreducible components of $X_{\varphi,N,\tau}$.  We partially address this question in~\textsection\ref{section-calcs} for $G=\GL_n$ when $\tau$ is trivial.  For $G=\GL_2$, we show that $X_{\varphi,N}$ is geometrically the union of two smooth schemes intersecting in a smooth divisor, recovering the result of~\cite[Lemma A.3]{kisin-fmc}, and we give explicit equations.  For $G=\GL_n$, we show that when $K_0=\Q_p$, the irreducible component corresponding to $N$ being regular nilpotent is smooth.  However, for $G=\GL_3$, we show that geometrically there are three irreducible components, which we write $X_{\reg}$, $X_{\sub}$, and $X_0$, and $X_{\sub}$ is singular.  Loosely speaking, the three irreducible components correspond to the three (geometric) nilpotent conjugacy classes of $\mathfrak{gl}_3$, and $X_{\sub}$ is the one corresponding to the subregular nilpotent orbit.  One might hope that the singular points $X_{\sub}$ arise as degenerations from a different irreducible component.  However, we provide a moduli-theoretic resolution of $X_{\sub}$ and use it to show that while some singular points of this component do come from $X_{\reg}$ (in a sense we make more precise in ~\textsection\ref{subreg}), there are others which do not.  This answers a question of Kisin in the negative~\cite{kisin3}.  However, we still know very little about the singularities of $X_{\sub}$.  For example, we do not know whether $X_{\sub}$ is locally a complete intersection.

\subsection*{Acknowledgements}
The results in this paper formed a portion of my Ph.D. thesis, and I am grateful to my advisor, Brian Conrad, for many helpful conversations, as well as for reading multiple drafts of this paper.  I am also grateful to Daniel Erman, Brandon Levin, Peter McNamara, and Jonathan Wise for helpful discussions.  This work was partially supported by an NSF Graduate Research Fellowship.

\section{Definitions}\label{definitions}

Let $E$ and $K$ be finite extensions of $\Q_p$.  Suppose that $\rho:\Gal_K\rightarrow\GL_d(E)$ is a potentially semi-stable Galois representation, becoming semi-stable when restricted to $\Gal_L$ for some finite Galois extension $L/K$.  Then we can associate to $\rho$ a filtered $(\varphi,N,\Gal_{L/K})$-module, which satisfies an additional weak admissibility condition.

\begin{definition}
A \emph{filtered $(\varphi,N,\Gal_{L/K})$-module} is a finite dimensional $L_0$-vector space $D$ equipped with a bijection $\Phi:D\rightarrow D$ which is semi-linear over $\varphi$, a linear endomorphism $N$ such that $N\circ\Phi=p\Phi\circ N$, an action $\tau$ of $\Gal_{L/K}$ which is semi-linear over the Galois action on $L_0$ and commutes with $\Phi$ and $N$, and a separated exhaustive decreasing $\Gal_{L/K}$-stable filtration $\Fil^\bullet D_L$ by $L$-vector spaces.  
\end{definition}

More generally, we will consider continuous Galois representations 
$$\rho:\Gal_K\rightarrow \Aut_G(X)(E)$$
where $G$ is a connected reductive algebraic group defined over $E$ and $X$ is a trivial right $G$-torsor over $E$.  
\begin{remark}
We work with trivial $G$-torsors and their automorphism schemes, rather than copies of $G$, in order to avoid making auxiliary choices of trivializing sections.  This allows us to preserve the traditional distinction between a vector space, and a vector space together with a choice of basis.
\end{remark}

A Galois representation $\rho$ is said to be \emph{potentially semi-stable} if $\sigma\circ\rho:\Gal_K\rightarrow\GL_d(E)$ is potentially semi-stable for some faithful representation $\sigma:G\rightarrow\GL_d$, in which case this holds for all representations $\sigma:G\rightarrow \GL_d$ over $E$.

If $\rho$ is potentially semi-stable, we use the Tannakian formalism to construct a $G$-valued version of the filtered $(\varphi,N,\Gal_{L/K})$-module $\D_{\st}^L(V)$; we refer the reader to ~\textsection\ref{tann-filt-phi-N} for details of the constructions and some of the notation.  Briefly, for every representation $\sigma:G\rightarrow\GL_d$, $\sigma\circ\rho$ is a potentially semi-stable representation, and $\D_{\st}^L(\sigma\circ\rho)$ is a weakly admissible filtered $(\varphi,N,\Gal_{L/K})$-module.  The formation of $\D_{\st}^L(\sigma\circ\rho)$ is exact and tensor-compatible in $\sigma$, and if $\mathbf{1}$ denotes the trivial representation of $G$, then $\D_{\st}^L(\mathbf{1}\circ\rho)$ is the trivial filtered $(\varphi,N,\Gal_{L/K})$-module.  

Therefore, $\sigma\mapsto \D_{\st}^L(\sigma\circ\rho)$ is a fiber functor $\eta:\Rep_E(G)\rightarrow \Vect_{L_0}$, and we obtain a $G$-torsor $Y$ over $L_0$ equipped with 
\begin{itemize}
\item	an isomorphism $\Phi:\varphi^\ast Y\rightarrow Y$,
\item	a nilpotent element $N\in\Lie\Aut_GY$,
\item	for each $g\in\Gal_{L/K}$, an isomorphism $\tau(g):g^\ast Y\rightarrow Y$,
\item	a $\Gal_{L/K}$-stable $\otimes$-filtration on $Y_L$, or equivalently, a $\otimes$-filtration on the $G$-torsor $Y_L^{\Gal_{L/K}}$ over $K$.
\end{itemize}
These are required to satisfy the following compatibilities:
\begin{itemize}
\item	$\underline\Ad\Phi (N)=\frac{1}{p}N$
\item	$\underline\Ad\tau(g)(N)=N$ for all $g\in\Gal_{L/K}$
\item	$\tau(g_1g_2)=\tau(g_2)\circ g_2^\ast\tau(g_1)$ for all $g_1,g_2\in\Gal_{L/K}$
\item	$\tau(g) \circ g^\ast\Phi= \Phi\circ \varphi^\ast\tau(g)$ for all $g\in\Gal_{L/K}$
\end{itemize}
Here $\underline\Ad\Phi$ and $\underline\Ad\tau(g)$ are ``twisted adjoint'' actions on $\Lie\Aut_GY$; after pushing out $Y$ by a representation $\sigma\in \Rep_E(G)$, they are given by $M\mapsto \Phi_\sigma\circ M\circ \Phi_\sigma^{-1}$ and $M\mapsto \tau(g)_\sigma\circ M\circ \tau(g)_\sigma^{-1}$, respectively.

The $G$-valued semi-linear representation $\tau$ is the \emph{Galois type} of $\rho$, and the type of the $\otimes$-filtration is the \emph{$p$-adic Hodge type} of $\rho$.  We refer the reader to \textsection\ref{tann-semilinear} and \textsection\ref{tann-filt} for more details, and in particular for the definition and a discussion of the type of a $\otimes$-filtration.

We wish to study moduli spaces of these objects, because the completed local rings of such moduli spaces will be related to local Galois deformation rings.  In fact, to study potentially semi-stable Galois deformation rings, it suffices to study the local structure of moduli spaces of linear algebra data.

\begin{definition}
Let $\rho:\Gal_K\rightarrow G(E)$ be a continuous representation, and let $R$ be an $E$-finite artin local ring with maximal ideal $\mathfrak{m}$ and residue field $E$.  A \emph{lift} of $\rho$ is a continuous homomorphism $\widetilde\rho:\Gal_K\rightarrow G(R)$ which is $\rho$ modulo $\mathfrak{m}$.  A \emph{deformation} of $\rho$ is a continuous homomorphism $\widetilde\rho:\Gal_K\rightarrow \Aut_G(X)(R)$ which is isomorphic to $\rho$ modulo $\mathfrak{m}$, where $X$ is a trivial $G$-torsor over $R$.  That is, a deformation is a lift where we have forgotten about the trivializing section.
\end{definition}

Suppose that $\rho$ is a potentially semi-stable representation, becoming semi-stable when restricted to $\Gal_L$.  If $\widetilde\rho$ is a lift of $\rho$ to $R$ which is potentially semi-stable, then $\D_{\st}^L(\widetilde\rho)$ is a $G$-valued filtered $(\varphi,N,\Gal_{L/K})$-module over $R$ lifting $\D_{\st}^L(\rho)$.  Similarly, if $\rho$ is semi-stable (resp. crystalline), a semi-stable (resp. crystalline) lift $\widetilde\rho$ yields a $G$-valued filtered $(\varphi,N)$-module (resp. a $G$-valued filtered $\varphi$-module).

\begin{prop}\label{def-reps}
Let $\rho:\Gal_K\rightarrow \Aut_G(X)(E)$ be a potentially semi-stable representation, where $X$ is a trivial $G$-torsor over $E$, and let $R$ be an $E$-finite artin local ring with residue field $E$.  Then $\widetilde\rho\rightsquigarrow \D_{\st}^L(\widetilde\rho)$ is an equivalence of categories from the category of potentially semi-stable deformations of $\rho$ to the category of filtered $(\varphi,N,\Gal_{L/K})$-modules over $R$ deforming $\D_{\st}^L(\rho)$.
\end{prop}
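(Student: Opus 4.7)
The plan is to reduce this to Fontaine's equivalence for $\GL_n$-valued representations via the Tannakian formalism, using as essential input the fact (emphasized in the introduction and developed in Section~\ref{tann-filt-phi-N}) that weak admissibility is preserved under infinitesimal deformation of filtered $(\varphi,N,\Gal_{L/K})$-modules.

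First I would translate both sides into Tannakian language. A deformation $\tilde\rho\colon\Gal_K\to\Aut_G(X)(R)$ of $\rho$ is, by Tannakian reconstruction, the same as an exact tensor functor $\Rep_E(G)\to\Rep_R^{\cont}(\Gal_K)$ into finite free continuous $R[\Gal_K]$-modules that lifts $\sigma\mapsto\sigma\circ\rho$, equipped with a trivialization of the underlying fiber functor at $R$ (reflecting the choice of trivial torsor $X$). Potential semi-stability of $\tilde\rho$ just means that for every $\sigma\in\Rep_E(G)$ the representation $\sigma\circ\tilde\rho$ is potentially semi-stable. Similarly, a deformation of $\D_{\st}^L(\rho)$ to a $G$-valued filtered $(\varphi,N,\Gal_{L/K})$-module over $R$ is the same as an exact tensor functor from $\Rep_E(G)$ into filtered $(\varphi,N,\Gal_{L/K})$-modules over $R$ lifting $\sigma\mapsto \D_{\st}^L(\sigma\circ\rho)$, together with a compatible fiber-functor trivialization.

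In the forward direction, the functor $\tilde\rho\leadsto\D_{\st}^L(\tilde\rho)$ is defined component-wise by composition with Fontaine's $\D_{\st}^L$; exactness, tensor-compatibility, and the behaviour on trivial representations ensure that the composite is again an exact tensor functor, and reduction modulo $\m$ recovers $\D_{\st}^L(\rho)$. Functoriality on isomorphisms between deformations is immediate. For the quasi-inverse, given such a deformation $\D$ presented as a tensor functor, I would apply a quasi-inverse $V_{\st}^L$ of Fontaine's functor component-wise. The only point requiring attention is that $V_{\st}^L$ is only defined on the subcategory of weakly admissible objects, so I need each $\D(\sigma)$ to be weakly admissible over $R$. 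Filtering $R$ by powers of $\m$ reduces this to square-zero extensions; the residual module $\D_{\st}^L(\sigma\circ\rho)$ is weakly admissible by Fontaine, and the cited preservation of weak admissibility under infinitesimal extensions then propagates weak admissibility up to $R$. With that in hand, $\sigma\mapsto V_{\st}^L(\D(\sigma))$ is an exact tensor functor into $\Rep_R^{\cont}(\Gal_K)$, and combined with the given fiber-functor trivialization the Tannakian formalism outputs a potentially semi-stable lift $\tilde\rho$.

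Mutual inverseness of these two constructions, including bijectivity on morphisms, reduces through the Tannakian dictionary to the mutual inverseness of $\D_{\st}^L$ and $V_{\st}^L$ in the $\GL_n$-valued case. The main obstacle is the weak admissibility check in the quasi-inverse direction; once the key fact on infinitesimal preservation of weak admissibility is granted, the remaining Tannakian bookkeeping is routine.
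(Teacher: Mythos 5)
Your proposal is correct and follows essentially the same route as the paper: push out along each $\sigma\in\Rep_E(G)$, use weak admissibility of the residual module together with its preservation under infinitesimal deformation to conclude each $\widetilde\D_\sigma$ is weakly admissible (hence admissible over a $\Q_p$-finite artin ring), apply Fontaine's equivalence component-wise, and reassemble the resulting exact tensor-compatible family into a continuous $G(R)$-valued representation via the Tannakian formalism. The paper's proof is exactly this, with the final reassembly delegated to the discussion of continuous Galois representations in the appendix.
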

\begin{proof}
The formation of $\D_{\st}^L(\widetilde\rho)$ is clearly functorial in $\widetilde\rho$, so it suffices to construct a quasi-inverse.

Suppose $\widetilde\D$ is a deformation of $\D_{\st}^L(\rho)$ (as a $G$-valued filtered $(\varphi,N,\Gal_{L/K})$-module).  Then for every representation $\sigma:G\rightarrow \GL(V)$, the push-out $\widetilde\D_\sigma$ of $\widetilde \D$ is a filtered $(\varphi,N,\Gal_{L/K})$-module over $R$ which deforms $\D_{\st}^L(\sigma\circ\rho)$.  Since $\rho$ is potentially semi-stable, $\D_{\st}^L(\sigma\circ\rho)$ is weakly admissible for all $\sigma\in\Rep_E(G)$, and since deformations of weakly admissible filtered $(\varphi,N,\Gal_{L/K})$-modules are themselves weakly admissible, $\widetilde\D_\sigma$ is weakly admissible for all $\sigma\in\Rep_E(G)$.  

But weakly admissible filtered $(\varphi,N,\Gal_{L/K})$-modules are admissible when the coefficients are a $\Q_p$-finite artin ring, so we have an exact tensor compatible family $(\widetilde\rho_\sigma)$ of potentially semi-stable representations of $\Gal_K$, where $\widetilde\rho_\sigma$ is a deformation to $R$ of the push-out $\rho_\sigma$ of $\rho$.  Therefore, by the discussion in Appendix~\ref{cont-galois}, we obtain a continuous representation $\widetilde\rho:\Gal_K\rightarrow G(R)$ such that $\D_{\st}^L(\widetilde\rho)=\widetilde\D$ and $\widetilde\rho$ is a deformation of $\rho$.
\end{proof}

\begin{definition}
Let $R$ be an $E$-finite artin local ring.  We say that a $G$-valued filtered $(\varphi,N,\Gal_{L/K})$-module $\D$ over $R$ is \emph{admissible} if $\D=\D_{\st}^L(\rho)$ for some potentially semi-stable representation $\rho:\Gal_K\rightarrow \Aut_G(X)(R)$ for some $G$-torsor $X$ over $R$.
\end{definition}

\begin{remark}
In the course of the proof of Proposition~\ref{def-reps} we showed that a deformation of an admissible filtered $(\varphi,N,\Gal_{L/K})$-module is itself admissible.
\end{remark}

Thus, in order to study potentially semi-stable deformations of a specified $G$-valued potentially semi-stable Galois representation, it suffices to study the deformation theory of the associated linear algebra.

\begin{notation}
We will often need to consider tensor products $A\otimes_{\Q_p}L_0$, where $A$ is an $E$-algebra.  In order to simplify notation, particularly in subscripts, we adopt the convention that $A\otimes L_0$ means $A\otimes_{\Q_p}L_0$.
\end{notation}

\section{Deformation theory}\label{deformations}

Fix a finite extension $E/\Q_p$, a connected reductive group $G$ defined over $E$, a finite extension $K/\Q_p$, and a finite Galois extension $L/K$.  We will study the deformation theory of $G$-valued $(\varphi,N,\Gal_{L/K})$-modules, following~\cite{kisin}.

We first introduce two groupoids on the category of $E$-algebras.  Let $\mathfrak{Mod}_N$ be the groupoid whose fiber over an $E$-algebra $A$ consists of the category of $\Res_{E\otimes_{\Q_p}L_0/E}G$-bundles $D_A$ over $A$ equipped with a nilpotent $N\in\Lie\Aut_{G}D_A$ and a family of isomorphisms $\tau(g):g^\ast D_A\rightarrow D_A$ for $g\in \Gal_{L/K}$ such that $\tau(g_1g_2)=\tau(g_2)\circ g_2^\ast\tau(g_1)$ for all $g_1,g_2\in\Gal_{L/K}$, and such that $\underline\Ad(\tau(g))(N)=N$ for all $g\in\Gal_{L/K}$, where $\underline\Ad(\tau(g)):\Lie\Aut_{G}D_A\rightarrow \Lie\Aut_GD_A$ is the induced homomorphism.

Let $\mathfrak{Mod}_{\varphi,N}$ be the groupoid whose fiber over an $E$-algebra $A$ consists of the category of $G$-bundles $D_A$ over $A\otimes_{\Q_p}L_0$ equipped with $\tau$ and $N$ as before, and also an isomorphism $\Phi:\varphi^\ast D_A\rightarrow D_A$ such that $\tau(g) \circ g^\ast\Phi= \Phi\circ \varphi^\ast\tau(g)$ and $\underline\Ad(\Phi) (N)=\frac{1}{p}N$, where $\underline\Ad(\Phi):\Lie\Aut_{G}D_A\rightarrow \Lie\Aut_GD_A$ is the induced homomorphism.

\begin{remark}
To motivate these definitions, we refer the reader to Sections~\ref{tann-semilinear} and \ref{tann-fam-pst}.  We remark only that if $G=\GL_d$ and $D_A$ is a free $A\otimes_{\Q_p}L_0$-module of rank $d$, and $\Phi_D:D_A\rightarrow D_A$ is a semi-linear bijection represented by a matrix $M$, then $\Aut_{G}D_A=(\Res_{L_0/\Q_p}\GL_d)_A$, and $\Phi_D$ induces a map $\Aut_{G}D_A\rightarrow\Aut_{G}D_A$ sending $g\in \GL(A\otimes_{\Q_p}K_0)$ to $\Phi\circ g\circ\Phi^{-1}$, which is represented by the matrix $M\varphi(g)M^{-1}$.

Suppose more generally that $D_A$ is a split $\Res_{E\otimes L_0/E}G$-torsor over an $D$-algebra $A$.  If we choose a trivializing section of $D_A$, it induces a trivializing section of $\varphi^\ast D_A$, and an isomorphism $\varphi^\ast D_A\rightarrow D_A$ of $G$-torsors is given by multiplication by an element $b\in\Res_{E\otimes L_0/E}G$.  If we change our choice of trivializing section by multiplying it by $g\in (\Res_{E\otimes L_0/E}G)(A)$, then $b$ turns into $g^{-1}b\varphi(g)$.  Thus, the linearization of Frobenius $\varphi^\ast D_A\xrightarrow{\sim}D_A$ is given by $b\in\Res_{E\otimes L_0/E}G$, up to ``twisted conjugation''.

If we choose a trivializing section of $D_A$, we obtain an identification of $\Aut_GD_A$ with $(\Res_{E\otimes L_0/E}G)_A$.  Using this identification we can view $N$ as an element of $(\Res_{E\otimes L_0/E}\ad G)(A)$.  Then since $N=p\Ad(b)(\varphi(N))$ holds after pushing out by any representation of $G$, it holds in $(\Res_{E\otimes L_0/E}\ad G)(A)$ as well.
\end{remark}

Given $D_A\in\mathfrak{Mod}_{\varphi,N}$, we let $\ad D_A$ be the $(\varphi,N,\Gal_{L/K})$-module over $A$ induced on $\Lie\Aut_GD_A$.  We denote the Frobenius, nilpotent operator, and action of $\Gal_{L/K}$ on $\ad D_A$ by $\underline\Ad(\Phi)$, $\ad_N$, and $\underline\Ad(\tau)$, as well.  Consider the anti-commutative diagram
\begin{equation*}
\xymatrix{
(\ad D_A)^{\Gal_{L/K}}	\ar[r]^{1-\underline\Ad(\Phi)}\ar[d]^{\ad_N} &	(\ad D_A)^{\Gal_{L/K}}\ar[d]^{\ad_N}	\\
(\ad D_A)^{\Gal_{L/K}}	\ar[r]^{p\underline\Ad(\Phi)-1} &	(\ad D_A)^{\Gal_{L/K}}	
}
\end{equation*}

We write $C^\bullet(D_A)$ for the total complex of this double complex, concentrated in degrees $0$, $1$, and $2$, with differentials $d^0$, $d^1$, and $d^2$, and we write $\H^\bullet(D_A)$ for the cohomology of $C^\bullet(D_A)$.  This is the $G$-valued analogue of the complex which appears in~\cite[\textsection 3]{kisin}  The total complex of this double complex controls the deformation theory of $\mathfrak{Mod}_{\varphi,N}$:

\begin{prop}\label{def-phi-n-tau}
Let $A$ be an artin local $E$-algebra with maximal ideal $\mathfrak{m}_A$, and let $I\subset A$ be an ideal with $I\mathfrak{m}_A=0$.  Let $D_{A/I}$ be an object of $\mathfrak{Mod}_{\varphi,N}(A/I)$, and set $D_{A/\mathfrak{m}_A}=D_{A/I}\otimes_{A/I}A/\mathfrak{m}_A$.  Then
\begin{enumerate}
\item	If $\H^2(D_{A/\mathfrak{m}_A})=0$ then there exists $D_A$ in $\mathfrak{Mod}_{\varphi,N}(A)$ lifting $D_{A/I}$.
\item	The set of isomorphism classes of liftings of $D_{A/I}$ over $A$ is either empty or a torsor under the cohomology group $\H^1(D_{A/\mathfrak{m}_A})\otimes_{A/\mathfrak{m}_A}I$.
\end{enumerate}
\end{prop}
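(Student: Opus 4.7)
The plan is to prove both claims by the standard tangent-obstruction bookkeeping, showing that the double complex defining $C^\bullet(D_{A/\m_A})$ governs exactly the infinitesimal deformations of the triple $(\Phi, N, \tau)$ on a fixed underlying bundle.

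First I would lift the underlying $\Res_{E\otimes L_0/E}G$-bundle $D_{A/I}$ to $A$. This is unobstructed because $G$ is smooth, and in fact such bundles over an artin local $E$-algebra are trivializable, so one may simply lift a trivialization. With a lifted bundle $D_A$ in hand, I then choose arbitrary lifts $\tilde\Phi, \tilde N, \tilde\tau(g)$ of the morphisms $\Phi, N, \tau(g)$, which exist because $\Aut_G D_{A/I}$ and $\Lie\Aut_G D_{A/I}$ are smooth.

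Next I would handle the Galois-theoretic data by Maschke averaging: because $\Gal_{L/K}$ is finite and we work in characteristic zero, a standard averaging procedure adjusts the $\tilde\tau(g)$'s to satisfy the cocycle condition on the nose and adjusts $\tilde\Phi, \tilde N$ to be $\tau$-equivariant. This step is essentially cost-free and is precisely what justifies passing to $\Gal_{L/K}$-invariants throughout the complex: all residual obstructions and deformations automatically lie in $(\ad D)^{\Gal_{L/K}}$.

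At this point only the single remaining constraint $\underline\Ad(\tilde\Phi)(\tilde N) = \frac{1}{p}\tilde N$ stands between $(\tilde\Phi, \tilde N)$ and an object of $\mathfrak{Mod}_{\varphi,N}(A)$. The failure $\omega$ of this relation lies in $(\ad D_{A/\m_A})^{\Gal_{L/K}} \otimes I$; a direct computation using the already-enforced compatibilities shows that $\omega$ is a $2$-cocycle whose class in $\H^2(D_{A/\m_A}) \otimes I$ is independent of the chosen lifts. Vanishing of $\H^2(D_{A/\m_A})$ then allows us to correct $(\tilde\Phi, \tilde N)$ by an element of $C^1$ to kill $\omega$, proving (1). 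For (2), once a lift $D_A$ exists, any other lift differs from it by a pair $(\delta_\Phi, \delta_N) \in C^1 \otimes I$ which must be a $1$-cocycle in order to preserve the $\Phi$-$N$ relation; re-choosing the trivialization of the bundle changes this pair by the image of $d^0$, producing the torsor structure under $\H^1(D_{A/\m_A}) \otimes I$.

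The main obstacle is the careful verification that the anti-commutativity and sign conventions in the double complex correctly encode both the infinitesimal version of $\underline\Ad(\Phi)(N) = \frac{1}{p}N$ and the independence of $\omega$ from the choices of lifts; once this bookkeeping is settled the argument is purely formal, and the analogous statement for $\mathfrak{Mod}_N$ would follow by dropping the $\Phi$-column of the double complex.
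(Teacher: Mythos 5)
Your proposal follows essentially the same route as the paper: lift the underlying torsor and the $\Gal_{L/K}$-action unobstructedly (the latter by characteristic-zero averaging over the finite group), lift $\Phi$ and $N$ equivariantly by averaging, and observe that the only remaining obstruction is the relation $N=p\underline\Ad(\Phi)(N)$, whose failure lies in $\H^2(D_{A/\mathfrak{m}_A})\otimes I$ and whose ambiguity of lifts modulo infinitesimal automorphisms is governed by $\H^1$, exactly as you describe. The only caveat is your parenthetical claim that $G$-torsors over an artin local $E$-algebra are automatically trivial --- this can fail when the reduction to the residue field is a nontrivial torsor --- but lifting the bundle is still unobstructed and unique up to isomorphism (the paper deduces this from the vanishing of \'etale cohomology with coefficients in a coherent sheaf on the affine base), so nothing in your argument breaks.
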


Before we can prove this, we will need some preparatory results on deformations of torsors and on deformations of isomorphisms of torsors.  Note that $I\otimes_{A/\mathfrak{m}_A}D_{A/\mathfrak{m}_A}$ is the set of elements of $(\Aut_GD_A)(A)$ which are the identity modulo $I$.  It is an $A/\mathfrak{m}_A$-vector space, and we sometimes view it additively and sometimes multiplicatively.  We refer to such elements of $(\Aut_GD_A)(A)$ as \emph{infinitesimal automorphisms}.

Let $A$ be a henselian local $E$-algebra with maximal ideal $\mathfrak{m}_A$, and let $I\subset A$ be an ideal with $I\mathfrak{m}_A=0$.  

\begin{lemma}\label{auts-def}
Let $H$ be an affine algebraic group over $E$, and let $D_A$ be an $H$-torsor over $A$.  Let $\overline{f}:D_{A/I}\rightarrow D_{A/I}$ be an automorphism of $H$-torsors.  Then there exists a lift $f:D_A\rightarrow D_A$, and the set of such lifts is a left and right torsor under $I\otimes_{A/\mathfrak{m}_A}\ad D_{A/\mathfrak{m}_A}$.
\end{lemma}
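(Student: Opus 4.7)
The plan is to interpret the set of automorphisms of $D_A$ as the $A$-points of a smooth affine group scheme and then invoke the infinitesimal lifting property. Since $E$ has characteristic zero, every affine algebraic group over $E$ is smooth, so $H$ is smooth; consequently $\Aut_H D_A$, which is the inner form of $H_A$ obtained by twisting $H$ by $D_A$ via the conjugation action, is also smooth over $A$. Tautologically, an automorphism of $D_A$ as an $H$-torsor is an $A$-point of $\Aut_H D_A$, and similarly over $A/I$. Moreover, the Lie algebra of $\Aut_H D_A$ is canonically the adjoint bundle $D_A \times^H \mathfrak{h}$, i.e., $\ad D_A$ in the notation of Section~\ref{deformations}; this identification requires no choice of trivialization of $D_A$.

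First I would construct the lift $f$. The hypothesis $I\mathfrak{m}_A = 0$ together with $I \subset \mathfrak{m}_A$ forces $I^2 = 0$, so $A \twoheadrightarrow A/I$ is a square-zero extension. Smoothness of $\Aut_H D_A$ then gives surjectivity of the reduction map on $A$-points, so $\overline{f}$ lifts to some $f \in (\Aut_H D_A)(A)$. Next I would establish the torsor structure: any two lifts of $\overline{f}$ differ (on either side) by an element of the kernel of $(\Aut_H D_A)(A) \to (\Aut_H D_A)(A/I)$, and the standard tangent-space description for a smooth group scheme under a square-zero extension identifies this kernel canonically with $I \otimes_{A/\mathfrak{m}_A} \Lie(\Aut_H D_A)_{A/\mathfrak{m}_A} = I \otimes_{A/\mathfrak{m}_A} \ad D_{A/\mathfrak{m}_A}$. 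This group acts simply transitively on the set of lifts by either left or right composition; since $I^2 = 0$, any two infinitesimal automorphisms $1+x$ and $1+y$ satisfy $(1+x)(1+y) = 1 + x + y$, so the two actions agree and endow the set of lifts with the structure of both a left and a right torsor under the same abelian group.

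I expect no substantive obstacle; the lemma is a routine consequence of smoothness of algebraic groups in characteristic zero together with the infinitesimal lifting property. The only bit of care required is to recognize $\Aut_H D_A$ as a smooth $A$-group scheme in a trivialization-free way and to match its Lie algebra with the adjoint bundle notation $\ad D_A$ from Section~\ref{deformations}, so that the identification of the kernel with $I \otimes_{A/\mathfrak{m}_A} \ad D_{A/\mathfrak{m}_A}$ is canonical.
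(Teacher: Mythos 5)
Your proof is correct, and it takes a genuinely different route from the paper's. The paper argues by trivializing $D_A$ over a finite \'etale cover $A\to A'$, lifting there, and then killing the obstruction to descent using the vanishing of $\H^1_{\et}(\Spec A,\mathcal{F})$ for a quasi-coherent sheaf $\mathcal{F}$ on an affine scheme; the split case is handled separately. You instead work trivialization-free with the inner form $\Aut_H D_A$, observe that it is a smooth affine group scheme over $A$ (smoothness of $H$ by Cartier in characteristic zero, descended through an fppf trivialization), and apply formal smoothness directly to the square-zero extension $A\twoheadrightarrow A/I$. This is cleaner and more uniform --- it avoids the case split and the cohomological descent argument entirely --- and your identification of $\ker\bigl((\Aut_H D_A)(A)\to(\Aut_H D_A)(A/I)\bigr)$ with $I\otimes_{A/\mathfrak{m}_A}\ad D_{A/\mathfrak{m}_A}$ is exactly the identification the paper itself uses implicitly in the paragraph preceding the lemma. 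One small correction: your final claim that the left and right torsor structures \emph{agree} does not follow from commutativity of the kernel. For $u=1+x$ in the kernel, $u\circ f=f\circ\bigl(1+\Ad(f^{-1})(x)\bigr)$, so left composition by $x$ corresponds to right composition by $\Ad(f^{-1})(x)$, and these differ whenever the adjoint action of $f$ on $\ad D_{A/\mathfrak{m}_A}$ is nontrivial. This does not affect the lemma, which only asserts that the set of lifts is a torsor for the left action and separately for the right action --- both of which your argument establishes --- but you should drop the sentence claiming the two actions coincide.
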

\begin{proof}
If $D_{A}$ is split, the existence of a lift is clear.  Otherwise, there is a finite \'etale cover $A\rightarrow A'$ such that $D_{A'}$ is split, and we may choose a lift $f':D_{A'}\rightarrow D_{A'}$ of $\overline{f}':D_{A'/I}\rightarrow D_{A'/I}$.  Then $f'$ yields a cocycle $[f']\in\H_{\et}^1(\Spec A,I\otimes_{A/\mathfrak{m}_A}\ad D_{A/\mathfrak{m}_A})$, and $f'$ can be modified to descend to an isomorphism $f:D_A\rightarrow D_A$ if and only if $[f']=0$.  But $\Spec A$ is affine and $I\otimes_{A/\mathfrak{m}_A}\ad D_{A/\mathfrak{m}_A}$ is a finite $A$-module.  Therefore, $\H_{\et}^1(\Spec A,I\otimes_{A/\mathfrak{m}_A}\ad D_{A/\mathfrak{m}_A})=0$ and $f$ exists.

The second assertion is clear, because if $f,f':D_A\rightrightarrows D_A$ are two lifts, then $f\circ f'$ and $f'\circ f$ are elements of $I\otimes_{A/\mathfrak{m}_A}\ad D_{A/\mathfrak{m}_A}$.
\end{proof}

The next result also follows from~\cite[Expos\'e XXIV, Lemme 8.1.8]{SGA3}:

\begin{lemma}\label{torsor-def}
Let $H$ be an affine algebraic group over $E$, and let $D_{A/I}$ be an $H$-torsor over $A/I$.  Then there is an $H$-torsor $D_A$ over $A$ such that $D\otimes_AA/I\cong D_{A/I}$, and $D_A$ is unique up to isomorphism.
\end{lemma}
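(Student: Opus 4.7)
The plan is to invoke the standard deformation theory of torsors under a smooth affine group scheme along a square-zero thickening, reducing both existence and uniqueness to the vanishing of higher quasi-coherent cohomology on $\Spec A$. Two preliminary observations frame the argument: since $I\subseteq\m_A$, one has $I^2\subseteq I\m_A=0$, so $\Spec A/I\hookrightarrow\Spec A$ is a square-zero thickening; and since $E$ has characteristic zero, $H$ is automatically smooth over $E$ by Cartier's theorem, so every $H$-torsor is \'etale-locally trivial.

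For existence, I would first choose a finite \'etale cover $A/I\to B'$ on which $D_{A/I}$ becomes trivial. By the topological invariance of the \'etale site under nilpotent thickenings, this lifts uniquely to a finite \'etale cover $A\to A'$ with $A'/IA'=B'$, and the trivial $H$-torsor on $\Spec A'$ restricts to the trivial $H$-torsor on $\Spec B'$. The torsor $D_{A/I}$ is then encoded by a descent datum, namely an $H$-equivariant automorphism $\bar\sigma$ of the trivial $H$-torsor on $\Spec((A'\otimes_AA')/I(A'\otimes_AA'))$ satisfying the usual cocycle condition on the triple product. Running the argument of Lemma~\ref{auts-def} on the affine scheme $\Spec(A'\otimes_AA')$ — whose proof uses only the vanishing of $\H_{\et}^1$ with quasi-coherent coefficients on an affine base, not the henselian hypothesis — produces a lift $\sigma$ of $\bar\sigma$ to an automorphism of the trivial $H$-torsor on $\Spec(A'\otimes_AA')$. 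The failure of $\sigma$ to satisfy the cocycle condition then defines an obstruction class in the \v{C}ech group $\H_{\et}^2(\Spec A,I\otimes_{A/\m_A}\ad D_{A/\m_A})$, which vanishes because $\Spec A$ is affine and the coefficient sheaf is quasi-coherent. Adjusting $\sigma$ by a suitable $1$-coboundary turns it into genuine descent data, yielding the desired $H$-torsor $D_A$ over $A$ lifting $D_{A/I}$.

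For uniqueness up to isomorphism, once existence is in hand the set of isomorphism classes of lifts of $D_{A/I}$ forms a torsor under $\H_{\et}^1(\Spec A,I\otimes_{A/\m_A}\ad D_{A/\m_A})$, which again vanishes by quasi-coherent vanishing on an affine scheme. The main obstacle is really the bookkeeping around the \v{C}ech/gerbe framework needed to identify the obstruction to promoting $\sigma$ to a cocycle as an $\H^2$ class valued in the adjoint bundle; once that identification is made, both conclusions follow immediately from the familiar vanishing of quasi-coherent cohomology on $\Spec A$.
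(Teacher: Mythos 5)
Your argument is correct and follows essentially the same route as the paper's proof: trivialize \'etale-locally, identify the obstruction to producing a descent datum with a class in $\H_{\et}^2(\Spec A, I\otimes_{A/\m_A}\ad D_{A/\m_A})$, and kill it (together with the $\H^1$ governing uniqueness) by quasi-coherent vanishing on the affine scheme $\Spec A$. The only step you leave implicit that the paper makes explicit is that the resulting descent datum is effective because $H$-torsors are affine over the base; with that one sentence added, the two proofs coincide.
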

\begin{proof}
If $D_{A/I}$ is a split $H$-torsor, the existence of a lift is clear.  Otherwise, there is a finite \'etale cover $A\rightarrow A'$ such that $D_{A'/I}$ is split, and we may choose a trivial lift $D_{A'}$ of $D_{A'/I}$.  Then $D_{A'}$ yields a cocycle $[D_{A'}]\in \H_{\et}^2(\Spec A,I\otimes_{A/\mathfrak{m}_A}\ad D_{A/\mathfrak{m}_A})$.  Letting $p_1,p_2:\Spec A'\times_A\Spec A'\rightrightarrows \Spec A'$ be the projection maps, we can find an isomorphism $p_1^\ast D_{A'}\xrightarrow{\sim} p_2^\ast D_{A'}$ (which is the identity modulo $I$), and $[D_{A'}]=0$ if and only if this isomorphism can be chosen to give a descent datum.  But $\Spec A$ is affine and $I\otimes_{A/\mathfrak{m}_A}\ad D_{A/\mathfrak{m}_A}$ is a finite $A$-module, so $\H_{\et}^2(\Spec A,I\otimes_{A/\mathfrak{m}_A}\ad D_{A/\mathfrak{m}_A})=0$.  Since $H$-torsors are affine and descent is effective for affine morphisms, $D_A$ exists.

Now suppose that $D_A$ and $D_A'$ are two lifts of $D_{A/I}$.  There is a finite \'etale cover $A\rightarrow A'$ over which they become isomorphic, and a choice of isomorphism $f'$ over $A'$ yields a cocycle $[f']\in \H_{\et}^1(\Spec A,I\otimes_{A/\mathfrak{m}_A}\ad D_{A/\mathfrak{m}_A})$.  Then $[f']=0$ if and only if $f'$ can be modified to give an isomorphism $f:D_A\xrightarrow{\sim} D_A'$.  But $A$ is affine and $I\otimes_{A/\mathfrak{m}_A}\ad D_{A/\mathfrak{m}_A}$ is a finite $A$-module, so $\H_{\et}^2(\Spec A,I\otimes_{A/\mathfrak{m}_A}\ad D_{A/\mathfrak{m}_A})=0$ and $[f']=0$.  Therefore, $D_A$ is unique up to isomorphism.
\end{proof}

\begin{lemma}\label{isom-def}
Let $H$ be an affine algebraic group over $E$, and let $D_A$, $D_A'$ be $H$-torsors over $A$, and let $\overline{f}:D_{A/I}\rightarrow D_{A/I}'$ be an isomorphism.  Then there exists a lift $f:D_A\rightarrow D_A'$, and the set of such lifts is a torsor under a left action of $\H^0(A,I\otimes_{A/\mathfrak{m}_A}\ad D_{A/\mathfrak{m}_A}')$ and a right action of $\H^0(A,I\otimes_{A/\mathfrak{m}_A}\ad D_{A/\mathfrak{m}_A})$.
\end{lemma}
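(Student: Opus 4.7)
The plan is to mirror the strategy of Lemmas~\ref{auts-def} and \ref{torsor-def}: reduce to the split case via a finite \'etale cover of $\Spec A$, then use that $\Spec A$ is affine so the relevant \'etale cohomology of a coherent sheaf vanishes. More explicitly, first I would pick a finite \'etale cover $A \to A'$ over which both $D_{A'}$ and $D_{A'}'$ acquire trivializing sections. After choosing such sections, the reduction $\overline{f}$ becomes given by an element of $H(A'/IA')$, which we can lift to an element of $H(A')$ by smoothness of $H$ together with the henselian property of $A'$; this produces a lift $f' \colon D_{A'} \to D_{A'}'$ of $\overline{f}\otimes_{A/I} A'/IA'$.

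Next I would analyze the obstruction to descending $f'$ to an isomorphism $f \colon D_A \to D_A'$. The two pullbacks of $f'$ to $A'\otimes_A A'$ differ by an infinitesimal automorphism of (the pullback of) $D_A$, giving a class in $\H^1_{\et}(\Spec A, I\otimes_{A/\m_A}\ad D_{A/\m_A})$. Since $\Spec A$ is affine and the sheaf $I\otimes_{A/\m_A}\ad D_{A/\m_A}$ is quasi-coherent and finitely generated, this \'etale $\H^1$ vanishes, and so $f'$ can be modified by an infinitesimal automorphism over $A'$ in order to satisfy the descent datum, yielding a global lift $f \colon D_A \to D_A'$.

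For the torsor structure on the set of lifts, I would proceed purely formally. If $f,f' \colon D_A \rightrightarrows D_A'$ are two lifts of $\overline{f}$, then $f^{-1}\circ f' \colon D_A \to D_A$ is an automorphism which is the identity modulo $I$, hence an infinitesimal automorphism of $D_A$, i.e.\ an element of $\H^0(A, I\otimes_{A/\m_A}\ad D_{A/\m_A})$; this yields the right torsor structure. Similarly $f'\circ f^{-1}\colon D_A' \to D_A'$ gives the left action by $\H^0(A, I\otimes_{A/\m_A}\ad D_{A/\m_A}')$. Conversely, composing a given lift with any infinitesimal automorphism of the source or target yields another lift, and simple transitivity follows from the uniqueness of a pair of lifts differing by a specified infinitesimal automorphism.

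I expect no serious obstacle; the argument is essentially a combination of Lemmas~\ref{auts-def} and \ref{torsor-def}, with the only mild subtlety being the bookkeeping for the two-sided action (which is forced by the fact that $\ad D_A$ and $\ad D_A'$ need not be canonically identified without a choice of lift $f$). A cleaner alternative would be to invoke Lemma~\ref{torsor-def} to identify $D_A$ with $D_A'$ up to isomorphism, reducing the statement to Lemma~\ref{auts-def}; however, keeping the source and target separate makes the two-sided torsor structure manifest.
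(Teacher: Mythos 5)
Your proposal is correct. Your main route (pass to a finite \'etale cover trivializing both torsors, lift the isomorphism there using formal smoothness of $H$ over the square-zero extension, and kill the descent obstruction by the vanishing of $\H^1_{\et}$ of a coherent sheaf on the affine scheme $\Spec A$) is a sound direct argument, essentially re-running the proofs of Lemmas~\ref{auts-def} and~\ref{torsor-def} with two torsors in place of one. The paper instead takes exactly the ``cleaner alternative'' you mention at the end: it invokes the uniqueness statement of Lemma~\ref{torsor-def} to fix an identification $D_A\cong D_A'$ (possible since both lift the isomorphic reductions $D_{A/I}\cong D_{A/I}'$), whereupon existence of a lift of $\overline{f}$ is immediate from Lemma~\ref{auts-def}; the two-sided torsor structure is then verified by the same formal computation you give ($f\circ f_0^{-1}$ acting on $\ad D_A'$ from the left, $f_0^{-1}\circ f$ acting on $\ad D_A$ from the right). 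The only trade-off is presentational: your direct argument keeps source and target separate, making the two-sided action manifest without an auxiliary choice, at the cost of repeating the \v{C}ech/descent bookkeeping; the paper's reduction is shorter but hides the symmetry behind a non-canonical identification.
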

\begin{proof}


Since $D_{A/I}$ and $D_{A/I}'$ are isomorphic, and there is a unique lift of $D_{A/I}$ to an $H$-torsor over $A$, up to isomorphism, $D_A\cong D_A'$.  Therefore, we may fix an identification of $D_A$ and $D_A'$, so that the existence of a lift of the isomorphism $\overline{f}$ becomes a question of the existence of a lift of an automorphism of an $H$-torsor.  But such a lift exists, by Lemma~\ref{auts-def}.

If $f_1,f_2:D_A\rightrightarrows D_A'$ are two isomorphisms lifting $\overline f$, then $f_1\circ f_2^{-1}$ is an automorphism of $D_A'$ which is the identity modulo $I$, and is therefore an element of $I\otimes_{A/\mathfrak{m}_A}\ad D_A'$.  On the other hand, given an automorphism $g$ of $D_A'$ which is the identity modulo $I$, $g\circ f_2$ is a lift of $\overline f$.

Similarly, $f_2^{-1}\circ f_1$ is an automorphism of $D_A$ which is the identity modulo $I$.  On the other hand, given an automorphism $h$ of $D_A$ which is the identity modulo $I$, $f_2\circ h$ is a lift of $\overline f$.
\end{proof}

\begin{lemma}\label{lift-tau}
Let $D_A$ be a $\Res_{E\otimes L_0/E}G$-torsor over $A$, and let $\tau_0$ be a semi-linear action of $\Gal_{L/K}$ on $D_{A/I}$.  That is, we have a set of isomorphisms $\tau_0(g):g^\ast D_{A/I}\rightarrow D_{A/I}$ such that $\tau_0(g_1g_2)=\tau_0(g_2)\circ g_2^\ast\tau_0(g_1)$.  Then there is a semi-linear action $\tau$ of $\Gal_{L/K}$ lifting $\tau_0$, and it is unique up to isomorphism.
\end{lemma}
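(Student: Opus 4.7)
The plan is to lift each $\tau_0(g)$ individually using Lemma~\ref{isom-def}, then correct the resulting family to a genuine semi-linear action by invoking the vanishing of finite group cohomology with $\Q_p$-vector space coefficients.

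For each $g\in\Gal_{L/K}$, Lemma~\ref{isom-def} produces a lift $\tilde\tau(g):g^*D_A\to D_A$ of $\tau_0(g)$, with the set of all such lifts forming a torsor under $V:=I\otimes_{A/\mathfrak{m}_A}\ad D_{A/\mathfrak{m}_A}$. Since $I\mathfrak{m}_A=0$, the module $V$ is an $A/\mathfrak{m}_A$-vector space, hence a $\Q_p$-vector space as $A$ is an $E$-algebra; and the reduction of $\tau_0$ modulo $\mathfrak{m}_A$ equips $V$ with a $\Gal_{L/K}$-action through the adjoint representation. Since $\Gal_{L/K}$ is finite and $V$ is uniquely divisible by any positive integer, $\H^i(\Gal_{L/K},V)=0$ for all $i>0$.

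Next I would measure the failure of $\tilde\tau$ to be a cocycle. The expression
\[c(g_1,g_2):=\tilde\tau(g_1g_2)^{-1}\circ\tilde\tau(g_2)\circ g_2^*\tilde\tau(g_1)\]
is an automorphism of $(g_1g_2)^*D_A$ which reduces to the identity modulo $I$ by the cocycle condition on $\tau_0$, and hence corresponds to an element of $V$. A direct check shows that $c$ is a 2-cocycle for the $\Gal_{L/K}$-action on $V$, and that replacing $\tilde\tau(g)$ by $\tilde\tau(g)\cdot u(g)$ for a cochain $u$ valued in $V$ alters $c$ by the coboundary $du$. Since $\H^2(\Gal_{L/K},V)=0$, we may correct the initial lifts to obtain a genuine semi-linear action $\tau$ lifting $\tau_0$.

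For uniqueness, two lifts $\tau$ and $\tau'$ determine a 1-cochain $u(g):=\tau'(g)\circ\tau(g)^{-1}\in V$, and the cocycle identity for both forces $u$ to be a 1-cocycle; an isomorphism between $\tau$ and $\tau'$, that is, an infinitesimal automorphism $\alpha$ of $D_A$ intertwining them, corresponds precisely to $u$ being the coboundary of $\alpha$. The vanishing of $\H^1(\Gal_{L/K},V)$ delivers such an $\alpha$. The main obstacle is essentially bookkeeping: formulating $c$ with the correct semi-linear pullbacks $g^*$ so that it lives in the abelian group $V$ and verifying the 2-cocycle identity for the induced $\Gal_{L/K}$-action. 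Beyond this, no genuine geometric obstruction arises, since finite group cohomology over $\Q_p$-vector spaces vanishes in all positive degrees.
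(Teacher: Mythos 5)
Your proposal is correct and follows essentially the same route as the paper: lift each $\tau_0(g)$ individually, measure the failure of the cocycle condition by a $2$-cocycle valued in $I\otimes_{A/\mathfrak{m}_A}\ad D_{A/\mathfrak{m}_A}$, and kill it (and the $1$-cocycle comparing two lifts) using the vanishing of positive-degree cohomology of the finite group $\Gal_{L/K}$ with coefficients in a characteristic-zero vector space. The only cosmetic difference is that you realize $c(g_1,g_2)$ as an automorphism of $(g_1g_2)^\ast D_A$ rather than of $D_A$, which changes nothing.
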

\begin{proof}
For every $g\in\Gal_{L/K}$, we can lift $\tau_0(g)$ to an isomorphism $\tau_1(g):g^\ast D_A\rightarrow D_A$, and we wish to show that we can choose the $\{\tau_1(g)\}_{g\in\Gal_{L/K}}$ such that they provide an action of $\Gal_{L/K}$.  The assignment $(g_1,g_2)\mapsto c(g_1,g_2):=\tau_1(g_2)\circ g_2^\ast\tau_1(g_1)\circ \tau_1(g_1g_2)^{-1}$ is a $2$-cocycle of $\Gal_{L/K}$ valued in $\H^0(A,I\otimes_{A/\mathfrak{m}_A}\ad D_{A/\mathfrak{m}_A})$.  But 
$$\H^2(\Gal_{L/K},\H^0(A,I\otimes_{A/\mathfrak{m}_A}\ad D_{A/\mathfrak{m}_A}))=0$$
because $\Gal_{L/K}$ is a finite group and $\H^0(A,I\otimes_{A/\mathfrak{m}_A}\ad D_{A/\mathfrak{m}_A})$ is a characteristic $0$ vector space.  Therefore, there is some $1$-cochain $c'$ such that $c=d(c')$.  If we define $\tau(g)=c'(g)^{-1}\circ\tau_1(g)$, then $\tau(g_2)\circ g_2^\ast\tau(g_1)=\tau(g_1g_2)$, as desired.

Let $\tau,\tau'$ be two semi-linear actions of $\Gal_{L/K}$ on $D_A$ lifting $\tau_0$.  Then the assignment $g\mapsto c(g):=\tau(g)\circ\tau'(g)^{-1}$ is a $1$-cocycle of $\Gal_{L/K}$, again valued in $\H^0(A,I\otimes_{A/\mathfrak{m}_A}\ad D_{A/\mathfrak{m}_A})$, because 
\begin{eqnarray*}
c(gh) &=& \tau(gh)\circ\tau'(gh)^{-1} 	\\
&=& \tau(h)\circ h^\ast\tau(g) \circ h^\ast\tau'(g)^{-1}\circ \tau'(h)^{-1}	\\
&=&  \tau(h)\circ h^\ast c(g) \circ \tau(h)^{-1}\circ\tau(h)\circ \tau'(h)^{-1}\\
&=& h\cdot c(g) + c(h)
\end{eqnarray*}
where we have switched from multiplicative to additive notation in the last line.
But $\H^1(\Gal_{L/K},\H^0(A,I\otimes_{A/\mathfrak{m}_A}\ad D_{A/\mathfrak{m}_A}))=0$, again because $\Gal_{L/K}$ is a finite group and $\H^0(A,I\otimes_{A/\mathfrak{m}_A}\ad D_{A/\mathfrak{m}_A})$ is a characteristic $0$ vector space, so 
$$c(g)=g\cdot m - m = \tau_0(g)g^\ast m\tau_0(g)^{-1}\circ m^{-1}$$ 
for some $m\in I\otimes_{A/\mathfrak{m}_A}\ad D_{A/\mathfrak{m}_A}$.  Then $m$ is an infinitesimal automorphism of $D_A$ carrying $\tau$ to $\tau'$.
\end{proof}

\begin{lemma}\label{N-def}
Let $D_A$ be a $\Res_{E\otimes L_0/E}G$-torsor over $A$ equipped with a semi-linear action $\tau$ of $\Gal_{L/K}$.  Suppose we are given $N_0\in\ad D_{A/I}$ such that $\underline\Ad(\tau(g))(N_0)=N_0$ for all $g\in\Gal_{L/K}$.  Then there exists $N\in\ad D_A$ lifting $N_0$ such that $\underline\Ad(\tau(g))(N)=N$ for all $g\in \Gal_{L/K}$, and the set of such lifts is a torsor under $I\otimes_{A/\mathfrak{m}_A}(\ad D_{A/\mathfrak{m}_A})^{\Gal_{L/K}}$.
\end{lemma}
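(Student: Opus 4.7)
The plan is to run the standard cocycle/coboundary argument already used in the preceding lemmas of this section. The kernel of the reduction $\ad D_A\twoheadrightarrow \ad D_{A/I}$ is canonically identified with $I\otimes_{A/\mathfrak{m}_A}\ad D_{A/\mathfrak{m}_A}$, and it is stable under the $\Gal_{L/K}$-action induced by $\tau$. Since the reduction map is surjective, we first choose an arbitrary lift $\widetilde N\in\ad D_A$ of $N_0$ (no Galois invariance demanded yet).

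For each $g\in\Gal_{L/K}$, set
$$c(g):=\underline\Ad(\tau(g))(\widetilde N)-\widetilde N.$$
Because $N_0$ is Galois-invariant, $c(g)$ reduces to $0$ mod $I$, so $c(g)\in I\otimes_{A/\mathfrak{m}_A}\ad D_{A/\mathfrak{m}_A}$. A direct computation using $\tau(g_1g_2)=\tau(g_2)\circ g_2^\ast\tau(g_1)$ (carried out exactly as in the previous lemma on lifting semi-linear actions) shows that $g\mapsto c(g)$ is a $1$-cocycle of $\Gal_{L/K}$ with values in $I\otimes_{A/\mathfrak{m}_A}\ad D_{A/\mathfrak{m}_A}$. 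Since this module is a characteristic $0$ vector space and $\Gal_{L/K}$ is finite,
$$\H^1(\Gal_{L/K},I\otimes_{A/\mathfrak{m}_A}\ad D_{A/\mathfrak{m}_A})=0,$$
so $c$ is a coboundary: $c(g)=\underline\Ad(\tau(g))(m)-m$ for some $m\in I\otimes_{A/\mathfrak{m}_A}\ad D_{A/\mathfrak{m}_A}$. Setting $N:=\widetilde N-m$ then yields an element of $\ad D_A$ lifting $N_0$ and satisfying $\underline\Ad(\tau(g))(N)=N$ for all $g$.

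For the torsor assertion, the difference $N-N'$ of two invariant lifts lies in $I\otimes_{A/\mathfrak{m}_A}\ad D_{A/\mathfrak{m}_A}$ and is fixed by every $\underline\Ad(\tau(g))$. Because $I\subset A$ carries the trivial $\Gal_{L/K}$-action and we are in characteristic zero, forming $\Gal_{L/K}$-invariants commutes with tensoring by $I$, so $N-N'\in I\otimes_{A/\mathfrak{m}_A}(\ad D_{A/\mathfrak{m}_A})^{\Gal_{L/K}}$. Conversely, adding any such invariant infinitesimal element to $N$ produces another Galois-invariant lift, giving the torsor structure.

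There is no serious obstacle: the only computation requiring care is verifying the cocycle identity for $c$ in the second step, and it is a formal manipulation identical to the one appearing in the lifting of semi-linear Galois actions above. The essential input is the vanishing of $\H^1(\Gal_{L/K},-)$ on a $\Q$-vector space, which is immediate from the order of $\Gal_{L/K}$ being invertible.
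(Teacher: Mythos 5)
Your proof is correct. The torsor assertion is handled exactly as in the paper: the difference of two invariant lifts lies in $I\otimes_{A/\mathfrak{m}_A}\ad D_{A/\mathfrak{m}_A}$ and is visibly $\Gal_{L/K}$-fixed, and conversely adding an invariant element preserves invariance. For existence, however, you take a slightly longer route than the paper does. You form the $1$-cocycle $c(g)=\underline\Ad(\tau(g))(\widetilde N)-\widetilde N$ and invoke the vanishing of $\H^1(\Gal_{L/K},-)$ on a characteristic-zero vector space to trivialize it; the paper instead simply averages, setting $N:=\frac{1}{|\Gal_{L/K}|}\sum_{g}\underline\Ad(\tau(g))(\widetilde N)$, which manifestly lifts $N_0$ (each summand does, since $N_0$ is invariant) and is manifestly fixed. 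The two arguments are two faces of the same fact --- the averaging operator is precisely how one proves the $\H^1$ vanishing you cite, and your coboundary $m$ is, up to sign, the average of the $c(g)$ --- but the direct averaging spares you the verification of the cocycle identity (which, with the semi-linear composition law $\tau(g_1g_2)=\tau(g_2)\circ g_2^\ast\tau(g_1)$, requires a little care with left versus right actions). The cocycle formulation does have the advantage of matching the pattern of the adjacent lemmas on lifting $\tau$ itself, where no such shortcut is available because the group acts by the unknown quantity being lifted; here $N$ sits in a linear representation of $\Gal_{L/K}$, so the elementary averaging suffices.
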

\begin{proof}
Suppose that $N$ and $N'$ are two $\Gal_{L/K}$-fixed lifts of $N_0$.  Then 
\[	N-N'\in I\otimes_{A/\mathfrak{m}_A}\ad D_{A/\mathfrak{m}_A}	\]
and
\[	\underline\Ad(\tau(g))(N-N')=N-N'	\]
as desired.

We now address the existence of $N$.  Choose some lift $\widetilde N\in \ad D_A$, and define 
\[	N:=\frac{1}{|\Gal_{L/K}|}\sum_{g\in\Gal_{L/K}}\underline\Ad(\tau(g))(\widetilde N)	\]
Then $N$ lifts $N_0$, and $\underline\Ad(\tau(g))(N)=N$ for all $g\in\Gal_{L/K}$.
\end{proof}

The ``averaging'' technique used here is ubiquitous in the theory of representations of finite groups.

\begin{lemma}\label{phi-def}
Let $D_A$ be a $\Res_{E\otimes L_0/E}G$-torsor over $A$ equipped with a semi-linear action $\tau$ of $\Gal_{L/K}$.  Suppose there is an isomorphism $\Phi_0:\varphi^\ast D_{A/I}\rightarrow D_{A/I}$ such that $\tau(g)\circ g^\ast\Phi_0 = \Phi_0\circ \varphi^\ast\tau(g)$ as isomorphisms $\varphi^\ast g^\ast D_{A/I}\rightarrow D_{A/I}$.  Then there exists an isomorphism $\Phi:\varphi^\ast D_A\rightarrow D_A$ lifting $\Phi_0$ such that $\tau(g) \circ g^\ast\Phi= \Phi\circ \varphi^\ast\tau(g)$, and the set of such lifts is a torsor under a right action of $\H^0(A,I\otimes_{A/\mathfrak{m}_A}\ad D_{A/\mathfrak{m}_A})^{\Gal_{L/K}}$. 
\end{lemma}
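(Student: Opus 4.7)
The proof should proceed exactly parallel to the preceding two lemmas (on lifting $\tau$ and on lifting $N$): first lift $\Phi_0$ as a bare isomorphism of torsors using Lemma~\ref{isom-def}, then measure the failure of $\tau$-compatibility as a $1$-cocycle of $\Gal_{L/K}$ valued in the infinitesimal automorphisms of $\varphi^\ast D_A$, and finally kill this cocycle using the vanishing of $\H^1$ for a finite group acting on a characteristic-zero vector space. More precisely, Lemma~\ref{isom-def} applied to the $\Res_{E\otimes L_0/E}G$-torsors $\varphi^\ast D_A$ and $D_A$ (noting that $\varphi^\ast D_A$ restricts to $\varphi^\ast D_{A/I}$, so $\Phi_0$ is an isomorphism between their reductions) yields some lift $\Phi_1:\varphi^\ast D_A\to D_A$ of $\Phi_0$.

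For each $g \in \Gal_{L/K}$, define the obstruction
\[
c(g) := \Phi_1^{-1}\circ\tau(g)^{-1}\circ \Phi_1\circ\varphi^\ast\tau(g),
\]
which is an automorphism of $\varphi^\ast D_A$ and reduces to the identity modulo $I$ by the compatibility of $\Phi_0$ with $\tau$; hence $c(g)$ lies in $I\otimes_{A/\mathfrak{m}_A}\ad D_{A/\mathfrak{m}_A}$ after the canonical identification $\ad(\varphi^\ast D_A)\cong \varphi^\ast \ad D_A$ and use of the $A$-module structure. A direct manipulation, exactly as in the proof of the previous lemma and using the cocycle identity $\tau(gh)=\tau(h)\circ h^\ast\tau(g)$, verifies that $c$ is a $1$-cocycle for $\Gal_{L/K}$ acting on $I\otimes_{A/\mathfrak{m}_A}\ad D_{A/\mathfrak{m}_A}$ by the twisted adjoint action $\underline\Ad(\tau)$. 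Because $\Gal_{L/K}$ is finite and this module is a characteristic-zero vector space, one has $\H^1(\Gal_{L/K},I\otimes_{A/\mathfrak{m}_A}\ad D_{A/\mathfrak{m}_A})=0$, so $c$ is a coboundary: there is an infinitesimal automorphism $\beta$ of $\varphi^\ast D_A$ with $c(g)=(g\cdot\beta)\beta^{-1}$ for all $g$. Then $\Phi:=\Phi_1\circ\beta^{-1}$ is a lift of $\Phi_0$ satisfying the required identity $\tau(g)\circ g^\ast\Phi=\Phi\circ\varphi^\ast\tau(g)$.

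For the torsor structure, if $\Phi$ and $\Phi'$ are two $\tau$-compatible lifts, then $\beta := \Phi^{-1}\circ\Phi'$ is an automorphism of $\varphi^\ast D_A$ trivial modulo $I$, hence an element of $I\otimes_{A/\mathfrak{m}_A}\ad D_{A/\mathfrak{m}_A}$; writing out the compatibility condition for both $\Phi$ and $\Phi'$ and cancelling forces $\underline\Ad(\tau(g))(g^\ast\beta)=\beta$ for all $g$, so $\beta$ lies in the $\Gal_{L/K}$-fixed subspace. Conversely, right-composition of a $\tau$-compatible lift by any such $\Gal_{L/K}$-fixed $\beta$ again produces a $\tau$-compatible lift, giving precisely the right-torsor structure under $\H^0(A,I\otimes_{A/\mathfrak{m}_A}\ad D_{A/\mathfrak{m}_A})^{\Gal_{L/K}}$. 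The main bookkeeping obstacle is keeping track of the interplay between $g^\ast$ and $\varphi^\ast$ and correctly identifying infinitesimal automorphisms of $\varphi^\ast D_A$ with those of $D_A$ in order to verify that $c$ is a genuine cocycle with values in a single Galois module; once this is pinned down, the rest is formal.
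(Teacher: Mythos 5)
Your proof is correct and is essentially the paper's argument: the paper's explicit averaging of a chosen lift $\widetilde\Phi$ over $\Gal_{L/K}$ is precisely the coboundary construction underlying the vanishing of $\H^1(\Gal_{L/K},-)$ on a characteristic-zero module that you invoke, and the torsor statement is verified by the same computation with $\Phi\circ\Phi'^{-1}$ (resp.\ $\Phi^{-1}\circ\Phi'$). The only quibble is notational: as written, $c(g)=\Phi_1^{-1}\circ\tau(g)^{-1}\circ\Phi_1\circ\varphi^\ast\tau(g)$ does not typecheck --- the first factor should be $g^\ast\Phi_1^{-1}$, making $c(g)$ an automorphism of $\varphi^\ast g^\ast D_A$ (or one conjugates everything to land in automorphisms of $D_A$, as the paper does) --- but this is exactly the bookkeeping you flag and it does not affect the argument.
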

\begin{proof}
Suppose first that there are two isomorphisms $\Phi,\Phi':\varphi^\ast D_A\rightrightarrows D_A$ with the desired property.  Then $\Phi\circ\Phi'^{-1}$ is an infinitesimal automorphism of $D_A$ such that for all $g\in\Gal_{L/K}$,
\begin{align*}
\tau(g)\circ g^\ast(\Phi\circ\Phi'^{-1}) &= \Phi\circ \varphi^\ast\tau(g)\circ g^\ast\Phi'^{-1} = (\Phi\circ \Phi'^{-1})\circ \tau(g)
\end{align*}
Thus, $\Phi\circ\Phi'^{-1}\in I\otimes_{A/\mathfrak{m}_A}(\ad D_{A/\mathfrak{m}_A})^{\Gal_{L/K}}$.

We now address the existence of a $\Gal_{L/K}$-fixed lift $\Phi$.  By Lemma~\ref{isom-def}, the set of all lifts $\Phi:\varphi^\ast D_A\rightarrow D_A$ of $\Phi_0:\varphi^\ast D_{A/I}\rightarrow D_{A/I}$ is non-empty, so we choose some lift $\widetilde\Phi$ and again ``average'' it under the action of $\Gal_{L/K}$.  More precisely, for any $g\in\Gal_{L/K}$, $\tau(g) \circ g^\ast\widetilde\Phi\circ \varphi^\ast\tau(g)^{-1}\circ \widetilde\Phi^{-1}$ is an isomorphism $D_A\rightarrow D_A$ which is trivial modulo $I$, so it is an element of $I\otimes_{A/\mathfrak{m}_A}\ad D_{A/\mathfrak{m}_A}$.  Viewing $I\otimes_{A/\mathfrak{m}_A}\ad D_{A/\mathfrak{m}_A}$ additively, we define
$$\Phi:=\left(\frac{1}{|\Gal_{L/K}|}\sum_{g\in\Gal_{L/K}}\tau(g) \circ g^\ast\widetilde\Phi\circ \varphi^\ast\tau(g)^{-1}\circ \widetilde\Phi^{-1}\right)\circ \widetilde\Phi$$
Then for any $h\in\Gal_{L/K}$, 
\begin{align*}
\tau(h)\circ h^\ast\Phi &= \tau(h)\circ \left(\frac{1}{|\Gal_{L/K}|}\sum_{g\in\Gal_{L/K}}\!\!\!\!h^\ast\tau(g) \circ h^\ast g^\ast\widetilde\Phi\circ h^\ast\varphi^\ast\tau(g)^{-1}\circ h^\ast\widetilde\Phi^{-1}\!\right)\!\circ h^\ast\widetilde\Phi	\\
&= \left(\frac{1}{|\Gal_{L/K}|}\sum_{g\in\Gal_{L/K}}\!\!\!\!\tau(gh) \circ (gh)^\ast\widetilde\Phi\circ h^\ast\varphi^\ast\tau(g)^{-1}\circ h^\ast\widetilde\Phi^{-1}\!\right)\!\circ h^\ast\widetilde\Phi	\\
&= \left(\frac{1}{|\Gal_{L/K}|}\sum_{g\in\Gal_{L/K}}\!\!\!\!\tau(gh) \circ (gh)^\ast\widetilde\Phi\circ \varphi^\ast\tau(gh)^{-1}\circ \varphi^\ast\tau(h)\circ h^\ast\widetilde\Phi^{-1}\!\right)\!\circ h^\ast\widetilde\Phi	\\
&= \Phi\circ \varphi^\ast\tau(h)\circ h^\ast\widetilde\Phi^{-1}\circ h^\ast\widetilde\Phi \\
&= \Phi\circ \varphi^\ast\tau(h)
\end{align*}
as desired.
\end{proof}

\begin{remark}
The reader may be concerned that we have asserted the equality of two isomorphisms of $\Res_{E\otimes L_0/E}G$-torsors, one an isomorphism $\varphi^\ast g^\ast D_A\rightarrow D_A$, the other an isomorphism $g^\ast\varphi^\ast D_A\rightarrow D_A$.  But although $\Gal_{L/K}$ may very well be non-abelian, its action on $A\otimes_{\Q_p}L_0$ factors through an abelian quotient and commutes with the action of $\varphi$.  Therefore, $\varphi^\ast g^\ast D_A$ and $g^\ast\varphi^\ast D_A$ are canonically identified.
\end{remark}

Now that we have shown that we can lift $D_{A/I}$ and $\tau_0$ over $A$, uniquely up to isomorphism, and that we can lift $N_0$ and $\Phi_0$ compatibly with $\tau$, we are in a position to prove Proposition~\ref{def-phi-n-tau}.
\begin{proof}[Proof of Proposition~\ref{def-phi-n-tau}]
Let $D_{A/I}$ be a $(\varphi,N,\Gal_{L/K})$-module over $A/I$.  Lemma~\ref{torsor-def} implies that we can lift the underlying $\Res_{E\otimes L_0/E}G$-torsor to a torsor $D_A$ over $A$, and Lemma~\ref{lift-tau} implies that we can lift the action of $\Gal_{L/K}$ to a semi-linear action $\tau$ of $\Gal_{L/K}$ on $D_A$, in both cases uniquely up to isomorphism.  In addition, Lemma~\ref{N-def} implies that we can lift $N_0$ to $N\in \ad D_A$ and Lemma~\ref{phi-def} implies that we can lift $\Phi_0$ to $\Phi:\varphi^\ast D_A\rightarrow D_A$, such that $N$ and $\Phi$ are $\Gal_{L/K}$-fixed.

Then $D_A$, together with $\tau$, $N$, and $\Phi$, is a $(\varphi,N,\Gal_{L/K})$-module if and only if $N=p\underline\Ad(\Phi) (N)$.  Define $h=N-p\underline\Ad(\Phi)(N)\in I\otimes_{A/\mathfrak{m}_A}\ad D_{A/\mathfrak{m}_A}^{\Gal_{L/K}}$.  If $\H^2(D_{A/\mathfrak{m}_A})=0$, then there exist $f,g\in I\otimes_{A/\mathfrak{m}_A}\ad D_{A/\mathfrak{m}_A}^{\Gal_{L/K}}$ such that $h=\ad_{N_0}(f)+(p\underline\Ad(\Phi_0)-1)(g)$.  Then we claim that if we define $\widetilde N:=N+g$ and $\widetilde\Phi:=f^{-1}\circ\Phi$, then $\tilde N=p\underline\Ad(\widetilde\Phi)(\widetilde N)$.  Note that we are going back and forth between the ``additive'' and ``multiplicative'' interpretations of $I\otimes_{A/\mathfrak{m}_A}D_{A/\mathfrak{m}_A}^{\Gal_{L/K}}$.  But $\widetilde N=p\underline\Ad(\widetilde\Phi)(\widetilde N)$ holds after pushing out $D_A$ by any representation $G\rightarrow \GL(V)$ over $E$, by the construction in the proof of~\cite[Proposition 3.1.2]{kisin}, so it holds in $\ad D_A$.

Suppose that $D_A$ and $D_A'$ are two lifts of $D_{A/I}$ as $(\varphi,N,\Gal_{L/K})$-modules.  We may assume that the underlying torsors and actions of $\Gal_{L/K}$ are identified.  Then $g:=N-N'$ and $f:=\Phi\circ \Phi'^{-1}$ are elements of $I\otimes_{A/\mathfrak{m}_A}D_{A/\mathfrak{m}_A}^{\Gal_{L/K}}$, and 
\[	\ad_{N_0}(f)+(p\underline\Ad(\Phi_0) -1)(g)=0	\]
because this holds after pushing out $D_A$ by any representation $G\rightarrow \GL(V)$ over $E$, by the construction in the proof of~\cite[Proposition 3.1.2]{kisin}.  Therefore, $(f,g)$ represents a class in $I\otimes_{A/\mathfrak{m}_A}\H^1(D_{A/\mathfrak{m}_A})$.

Now $D_A$ and $D_A'$ are isomorphic if and only if there is some $\Gal_{L/K}$-invariant automorphism $u$ of the underlying torsor of $D_A$ which is the identity modulo $I$, and carries $N$ to $N'$ and $\Phi$ to $\Phi'$.  That is, $u\in I\otimes_{A/\mathfrak{m}_A}D_{A/\mathfrak{m}_A}^{\Gal_{L/K}}$, and $\Ad(u)(N)=N'$ and $u\circ\Phi=\Phi'\circ\varphi^\ast u$.  But $\Ad(u)(N)=N'$ if and only if $N'-N=\ad_{N_0}(u)$, and $u\circ\Phi=\Phi'\circ\varphi^\ast u$ if and only if $\Phi\circ\Phi^{-1}=u^{-1}\circ \underline\Ad(\Phi')(u)$.  Thus, $D_A$ and $D_A'$ are isomorphic if and only if $(N-N',\Phi\circ\Phi^{-1})$ is in the image of $d^0$.
\end{proof}

\begin{remark}\label{def-thy-framed}
The proof of Proposition~\ref{def-phi-n-tau} also shows that if we fix a particular lift $D_A$ of the underlying $\Res_{E\otimes L_0/E}G$-torsors and a particular lift $\tau$ of $\tau_0$, then the space of lifts $(\Phi,N)$ of $(\Phi_0,N_0)$ to $D_A$ compatible with $\tau$ such that $N=p\underline\Ad(\Phi)(N)$ is either empty or a torsor under 
$$\ker\left((\ad D_{A/\mathfrak{m}_A}\otimes_{A/\mathfrak{m}_A}I)\oplus (\ad D_{A/\mathfrak{m}_A}\otimes_{A/\mathfrak{m}_A}I)\rightarrow(\ad D_{A/\mathfrak{m}_A}\otimes_{A/\mathfrak{m}_A}I)\right)$$
This differs from the statement of Proposition~\ref{def-phi-n-tau} in that we are considering the space of lifts, not the space of lifts up to isomorphism.  We will use this observation in Section~\ref{reducedness} to compute the dimension of a cover of a particular moduli stack.
\end{remark}

We now turn to the question of deforming \emph{filtered} $(\varphi,N,\Gal_{L/K})$-modules.  

\begin{lemma}
Let $A$ be a henselian local $E$-algebra with maximal ideal $\mathfrak{m}_A$, and let $I\subset A$ be an ideal with $I\mathfrak{m}_A=0$.  Let $H$ be a reductive algebraic group over $E$, and let $D_A$ be an $H$-torsor over $A$, and suppose that the reduction $D_{A/I}$ of $D_A$ modulo $I$ is equipped with a $\otimes$-filtration $\mathcal{F}_0^\bullet$.  Then there is a $\otimes$-filtration on $D_A$ lifting it, and the space of such lifts is a torsor under $(\ad D_{A/\mathfrak{m}_A}/\Fil^0\ad D_{A/\mathfrak{m}_A})\otimes_{A/\mathfrak{m}_A}I$.
\end{lemma}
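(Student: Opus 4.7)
The plan is to translate the problem into one about lifting sections of a smooth morphism, using the standard dictionary between $\otimes$-filtrations on an $H$-torsor and parabolic reductions. Recall (from e.g.\ Saavedra-Rivano, or Ziegler's thesis on tensor functors) that for a reductive group $H$ and an $H$-torsor $D$ over a base $S$, a $\otimes$-filtration on $D$ of a specified type is the same datum as a reduction of $D$ to a parabolic subgroup $P\subset H$ (whose conjugacy class is determined by the graded ranks of the filtration), i.e.\ a section of the smooth projective bundle $D/P\to S$. Under this dictionary, the filtration $\calF_0^\bullet$ on $D_{A/I}$ corresponds to a section $s_0:\Spec A/I\rightarrow D_{A/I}/P$ for an appropriate $P\subset H$, and a lift of $\calF_0^\bullet$ to $D_A$ is precisely a section $s:\Spec A\rightarrow D_A/P$ lifting $s_0$.

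I would then apply the deformation theory of sections of a smooth morphism. The morphism $D_A/P\to \Spec A$ is smooth because $H/P$ is smooth and $D_A$ is an $H$-torsor, so $D_A/P$ is \'etale-locally on $\Spec A$ the base change of $H/P$. Since $I\subset \mathfrak{m}_A$ and $I\mathfrak{m}_A=0$ we have $I^2=0$, so $\Spec A/I\hookrightarrow \Spec A$ is a square-zero thickening. The obstruction to lifting $s_0$ lies in $\H^1(\Spec A/I,s_0^*T_{(D_A/P)/\Spec A}\otimes_{A/I}I)$, and once lifts exist they form a torsor under $\H^0$ of the same sheaf. Because $\Spec A/I$ is affine and the coefficient sheaf is quasi-coherent, the $\H^1$ vanishes, so lifts do exist.

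It remains to identify the relative tangent sheaf along $s_0$. At the identity coset of $H/P$ the tangent space is $\mathfrak{h}/\mathfrak{p}$, and twisting by the $P$-reduction $D_P$ of $D_A$ corresponding to $s_0$ identifies the restricted relative tangent sheaf with $\ad D_{A/I}/\ad D_P$. The filtration-parabolic dictionary says exactly that $\Lie D_P=\Fil^0\ad D_{A/I}$, so the torsor group becomes $(\ad D_{A/I}/\Fil^0\ad D_{A/I})\otimes_{A/I}I$. Since $I\mathfrak{m}_A=0$, tensoring with $I$ only sees the reduction of the first factor modulo $\mathfrak{m}_A$, yielding the stated formula $(\ad D_{A/\mathfrak{m}_A}/\Fil^0\ad D_{A/\mathfrak{m}_A})\otimes_{A/\mathfrak{m}_A}I$.

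The main step requiring care is the initial Tannakian translation, and specifically the identification $\Fil^0\ad D=\Lie D_P$, which is what matches the tangent space of the flag bundle with the group appearing in the statement; once that dictionary is in place, the rest is a routine application of formal smoothness of the flag bundle together with the vanishing of higher quasi-coherent cohomology on affines.
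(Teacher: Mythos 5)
Your proof is correct, but it packages the argument differently from the paper. You translate a $\otimes$-filtration into a parabolic reduction, i.e.\ a section of the smooth bundle $D_A/P\rightarrow\Spec A$, and then invoke the general deformation theory of sections of smooth morphisms over a square-zero thickening: the obstruction lies in an $\H^1$ with quasi-coherent coefficients on the affine scheme $\Spec A/I$, hence vanishes, and the lifts form a torsor under $\H^0$ of the pullback of the relative tangent sheaf, which you identify with $(\ad D/\Fil^0\ad D)\otimes I$ via $\Lie P_{\mathcal{F}}=\Fil^0\ad D$. The paper instead argues directly with splittings: it chooses cocharacters $\lambda,\lambda'$ splitting two lifted filtrations and reducing to a common $\lambda_0$ modulo $I$, observes that they are conjugate by an infinitesimal automorphism, and notes that the two filtrations coincide exactly when the conjugating element lies in $\Fil^0\ad D_{A/\mathfrak{m}_A}\otimes I$; existence of a lift is left implicit, resting on the smoothness of $H/P$ as remarked in the surrounding discussion. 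Your route has the advantage of giving a uniform, self-contained treatment of both existence and the torsor structure in one stroke; the paper's route stays closer to the cocharacter language used throughout (associated cocharacters, gradings) and avoids setting up the flag-bundle formalism. Two small points you should make explicit: the equivalence between $\otimes$-filtrations and parabolic reductions requires local splittability, which holds here by Deligne's theorem since $H$ is reductive (or since we are in characteristic $0$); and the claim that every lift of $\mathcal{F}_0^\bullet$ to $D_A$ has the same type, hence is a section of the \emph{same} bundle $D_A/P$, uses the local constancy of the type together with the fact that $\Spec A$ and $\Spec A/I$ share the same closed point.
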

\begin{proof}
Suppose first that there are two $\otimes$-filtrations, $\mathcal{F}^\bullet$ and ${\mathcal{F}'}^\bullet$, lifting $\mathcal{F}_0^\bullet$.  Let $\lambda_0:\Gm\rightarrow\Aut_H(D_{A/I})$ be a cocharacter splitting $\mathcal{F}_0^\bullet$, let $\lambda,\lambda':\Gm\rightrightarrows\Aut_H(D_A)$ be cocharacters splitting $\mathcal{F}^\bullet$ and ${\mathcal{F}'}^\bullet$, respectively, and let $P,P'\subset \Aut_H(D_A)$ be the corresponding parabolics.  We may arrange that $\lambda$ and $\lambda'$ reduce to $\lambda_0$ modulo $I$.  By the local constancy of the type of a filtration, $\lambda$ and $\lambda'$ are conjugate by an infinitesimal automorphism of $D_A$ (since they are equal modulo $I$), and $\mathcal{F}^\bullet={\mathcal{F}'}^\bullet$ if and only if $\lambda$ and $\lambda'$ are $\Fil^0\ad D_{A/\mathfrak{m}_A}\otimes_{A/\mathfrak{m}_A}I$-conjugate.
\end{proof}

We define the groupoid $\mathfrak{Mod}_{F,\varphi,N,\tau}$ of $G$-valued filtered $(\varphi,N,\Gal_{L/K})$-modules of $p$-adic Hodge type $\mathbf{v}$ and Galois type $\tau$ to be the groupoid on the category of $E$-algebras whose fiber over $A$ is the category of $G$-torsors $D_A$ over $A\otimes L_0$ equipped with $\Phi$, $N$, and $\tau$ making $D_A$ into an object of $\mathfrak{Mod}_{\varphi,N,\tau}$, and such that the $G$-torsor $(D_A)_L^{\Gal_{L/K}}$ over $A\otimes K$ is equipped with a $\otimes$-filtration of type $\mathbf{v}$.  The deformation theory of an object $D_A$ of $\mathfrak{Mod}_{F,\varphi,N,\tau}(A)$ is controlled by the total complex $C_F^\bullet(D_A)$ of the double complex
\begin{equation*}
\xymatrix{ 
(\ad D_A)^{\Gal_{L/K}} \ar[r]\ar[d] &  (\ad D_A)^{\Gal_{L/K}}\oplus(\ad D_A)^{\Gal_{L/K}}\ar[r] & (\ad D_A)^{\Gal_{L/K}} \\
(\ad D_{A,L}/\!\Fil^0\!\ad D_{A,L})^{\Gal_{L/K}} & &
}
\end{equation*}
where the top row is the complex $C^\bullet(D_A)$.  We let $\H_F^\bullet(D_A)$ denote the cohomology of $C_F^\bullet(D_A)$.  

Note that since $(\Res_{E\otimes K/E}G)/P_\mathbf{v}$ is smooth and the $\otimes$-filtration on $(D_A)_L^{\Gal_{L/K}}$ does not interact with the $(\varphi,N,\Gal_{L/K})$-module structure, any obstruction to deforming $D_{A/I}$ as a filtered $(\varphi,N,\Gal_{L/K})$-module comes from an obstruction to deforming it as a $(\varphi,N,\Gal_{L/K})$-module. 

More precisely, we have the following result, following~\cite[Lemma 3.2.1]{kisin}:
\begin{prop}\label{def-phi-n-tau-filt}
The natural morphism of groupoids $\mathfrak{Mod}_{F,\varphi,N,\tau}\rightarrow\mathfrak{Mod}_{\varphi,N,\tau}$ given by forgetting the $\otimes$-filtration is formally smooth.  Furthermore, let $A$ be a henselian local ring with maximal ideal $\mathfrak{m}_A$ and an ideal $I\subset A$ with $I\mathfrak{m}_A=0$.  Let $D_{A/I}$ be an object of $\mathfrak{Mod}_{F,\varphi,N,\tau}(A/I)$ and set $D_{A/\mathfrak{m}_A}=D_{A/I}\otimes_{A/I}A/\mathfrak{m}_A$.  Then
\begin{enumerate}
\item	If $\H_F^2(D_{A/\mathfrak{m}_A})=0$ then there exists $D_A$ in $\mathfrak{Mod}_{F,\varphi,N,\tau}(A)$ lifting $D_{A/I}$.
\item	The set of isomorphism classes of liftings of $D_{A/I}$ over $A$ is either empty or a torsor under the cohomology group $\H_F^1(D_{A/\mathfrak{m}_A})\otimes_{A/\mathfrak{m}_A}I$.
\end{enumerate}
\end{prop}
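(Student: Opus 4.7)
The plan is to first establish formal smoothness, and then bootstrap from Proposition~\ref{def-phi-n-tau} together with the preceding lemma on lifting $\otimes$-filtrations. The forgetful map $\mathfrak{Mod}_{F,\varphi,N,\tau} \to \mathfrak{Mod}_{\varphi,N,\tau}$ forgets the $\otimes$-filtration on the $G$-torsor $(D_A)_L^{\Gal_{L/K}}$ over $A \otimes K$. Given a lift $D_A$ of the underlying $(\varphi,N,\Gal_{L/K})$-module, the filtration-lifting lemma produces a $\otimes$-filtration on $(D_A)_L^{\Gal_{L/K}}$ deforming the given one (noting that the $\Gal_{L/K}$-equivariance is automatic since the filtration is interpreted as a section of $(\Res_{E \otimes K/E}G)/P_{\mathbf{v}}$, which is smooth). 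This gives formal smoothness of the forgetful morphism and also shows that the fiber over a fixed $(\varphi,N,\Gal_{L/K})$-lift is a torsor under $(\ad D_{A/\mathfrak{m}_A,L}/\Fil^0\ad D_{A/\mathfrak{m}_A,L})^{\Gal_{L/K}}\otimes_{A/\mathfrak{m}_A} I$.

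For the cohomology-theoretic statements, the key is to observe that $C_F^\bullet(D_A)$ sits in a short exact sequence of complexes
\begin{equation*}
0 \to (\ad D_{A,L}/\Fil^0\ad D_{A,L})^{\Gal_{L/K}}[-1] \to C_F^\bullet(D_A) \to C^\bullet(D_A) \to 0
\end{equation*}
where $C^\bullet(D_A)$ is the total complex controlling deformations in $\mathfrak{Mod}_{\varphi,N,\tau}$. The associated long exact sequence reads
\begin{equation*}
0 \to \H_F^0 \to \H^0 \to (\ad D_{L}/\Fil^0\ad D_{L})^{\Gal_{L/K}} \to \H_F^1 \to \H^1 \to 0 \to \H_F^2 \to \H^2 \to 0.
\end{equation*}
In particular $\H_F^2(D_{A/\mathfrak{m}_A}) \twoheadrightarrow \H^2(D_{A/\mathfrak{m}_A})$, so the vanishing of $\H_F^2$ implies the vanishing of $\H^2$.

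For (1): if $\H_F^2(D_{A/\mathfrak{m}_A}) = 0$ then $\H^2(D_{A/\mathfrak{m}_A}) = 0$, so by Proposition~\ref{def-phi-n-tau} we lift $D_{A/I}$ to an object $D_A'$ of $\mathfrak{Mod}_{\varphi,N,\tau}(A)$, and then by formal smoothness we lift the filtration to produce the desired object of $\mathfrak{Mod}_{F,\varphi,N,\tau}(A)$.

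For (2): assume liftings exist and fix one, $D_A$. Any other lift differs from $D_A$ in two stages — the underlying $(\varphi,N,\Gal_{L/K})$-module may change by a class in $\H^1(D_{A/\mathfrak{m}_A})\otimes I$ by Proposition~\ref{def-phi-n-tau}, and for each such choice the fiber of lifts of the filtration is a torsor under $(\ad D_{L}/\Fil^0\ad D_{L})^{\Gal_{L/K}}\otimes I$ modulo the ambiguity coming from automorphisms of $D_A$ which modify the filtration; the latter identifies this space with the kernel of $\H_F^1 \to \H^1$ via the long exact sequence. Combining yields the torsor structure under $\H_F^1(D_{A/\mathfrak{m}_A})\otimes_{A/\mathfrak{m}_A} I$.

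The only genuinely nontrivial step is assembling the two torsor structures consistently; I expect the main bookkeeping hurdle to be tracking the ambiguity in choosing a splitting cocharacter for the $\otimes$-filtration modulo the $\Gal_{L/K}$-invariant infinitesimal automorphisms of $D_A$, so that the combined extension data really matches the total complex $C_F^\bullet$ rather than a naive direct sum. Once this is matched up, both assertions follow from Proposition~\ref{def-phi-n-tau} applied to the top row together with the preceding filtration-lifting lemma applied to the column.
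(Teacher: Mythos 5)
Your proposal is correct and follows essentially the same route as the paper: lift the underlying $(\varphi,N,\Gal_{L/K})$-module via Proposition~\ref{def-phi-n-tau}, lift the $\otimes$-filtration using the smoothness of $(\Res_{E\otimes K/E}G)/P_{\mathbf{v}}$, and account for the filtration ambiguity modulo $\Gal_{L/K}$-invariant infinitesimal automorphisms. Your explicit short exact sequence of complexes and the resulting long exact sequence (in particular $\H_F^2\cong\H^2$, and $\ker(\H_F^1\to\H^1)$ as the image of $(\ad D_L/\Fil^0\ad D_L)^{\Gal_{L/K}}$) is just a cleaner bookkeeping of the same two-stage torsor argument the paper carries out directly.
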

\begin{proof}
The assertion about formal smoothness follows from the smoothness of the quotient $\Res_{E\otimes K/E}G/P_\mathbf{v}$.  Similarly, if $\H_F^2(D_{A/\mathfrak{m}_A})=0$, then $\H^2(D_{A/\mathfrak{m}_A})=0$ and a lift $D_A$ of $D_{A/I}$ (as $G$-valued $(\varphi,N,\Gal_{L/K})$-modules) exists.  But $\Res_{E\otimes K/E}G/P_\mathbf{v}$ is smooth, so we can lift the $\otimes$-filtration on $(D_{A/I})_L^{\Gal_{L/K}}$ to a $\otimes$-filtration on $(D_A)_L^{\Gal_{L/K}}$.

For the last claim, we note that two lifts, $D_A$ and $D_A'$, of $D_{A/I}$ (as filtered $(\varphi,N,\Gal_{L/K})$-modules) if and only if the infinitesimal automorphism $u$ of the underlying torsor of $D_A$ can be chosen to carry the $\otimes$-filtrations to each other.  But this is the case if and only if its image in $\left(\ad D_{A/\mathfrak{m}_A,L}^{\Gal_{L/K}}/\Fil^0\ad D_{A/\mathfrak{m}_A,L}^{\Gal_{L/K}}\right)\otimes_{A/\mathfrak{m}_A}I$ is trivial.
\end{proof}

\section{Associated cocharacters}\label{associated}

Let $G$ be a connected reductive group over a field $k$, and let $N$ be a nilpotent element of $\mathfrak{g}:=\Lie(G)$.  That is, for any finite dimensional representation $\rho:G\rightarrow \GL(V)$, the pushforward $\rho_\ast N$ is a nilpotent element of $\mathfrak{gl}(V)$.  If $G$ is semisimple, this is equivalent to the condition that $\ad_N:\mathfrak{g}\rightarrow\mathfrak{g}$ be a nilpotent operator.  We briefly review the theory of ``associated cocharacters'' for $N$; we refer the reader to~\cite{jantzen} for further details and proofs, particularly section $5$.  We are only interested in the case when the characteristic of $k$ is $0$, but we review the theory for positive characteristic as well.  We assume the ground field is algebraically closed.

Let $L$ be a Levi factor of a parabolic subgroup of $G$, and suppose that 
$$N\in \mathfrak{l}:=\Lie(L)$$
\begin{definition}
$N$ is \emph{distinguished} in $\mathfrak{l}$ if every torus contained in $Z_L(N)$ is contained in the center of $L$.
\end{definition}

Intuitively, if $N$ is distinguished in $\mathfrak{l}$, $L$ should be the ``smallest'' Levi whose Lie algebra intersects the orbit of $N$ (under conjugation) nontrivially.  For example, if $G=\GL_3$, then 
\[	N=\left(\begin{smallmatrix}0&1&0\\0&0&0\\0&0&0\end{smallmatrix}\right)	\]
is not distinguished in $\mathfrak{gl}_3$, but it is distinguished in the Lie algebra of 
\[	L=\left\{\left(\begin{smallmatrix}\ast&\ast&0\\ \ast&\ast&0\\0&0&\ast\end{smallmatrix}\right)\right\}	\]
Indeed,
\[	Z_G(N)=\left\{\left(\begin{smallmatrix}x&\ast&\ast\\ 0&x&0\\0&\ast&\ast\end{smallmatrix}\right)\right\}	\]
which certainly contains a non-central torus of $\GL_3$, whereas
\[	Z_G(N)\cap L=\left\{\left(\begin{smallmatrix}x&\ast&0\\ 0&x&0\\0&0&\ast\end{smallmatrix}\right)\right\}	\]

Every nilpotent element $N\in\mathfrak{g}$ is distinguished in some Levi subgroup of $G$.  In fact, 
\begin{lemma}[{\cite[4.6]{jantzen}}]
Let $N\in\mathfrak{g}$ be nilpotent and let $T$ be a maximal torus in $Z_G(N)^\circ$.  Then the centralizer $L$ of $T$ in $G$ is a Levi subgroup, and $N$ is a distinguished nilpotent element of $\mathfrak{l}$.
\end{lemma}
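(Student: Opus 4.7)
The plan is to split the statement into three pieces: (a) that $L := Z_G(T)$ really is a Levi subgroup of some parabolic of $G$; (b) that $N$ lies in $\mathfrak{l} = \Lie(L)$; and (c) that $N$ is distinguished in $\mathfrak{l}$. Part (a) is the standard structure-theoretic fact that the centralizer of any subtorus in a connected reductive group over an algebraically closed field is itself connected reductive and arises as a Levi factor of some parabolic, which I would simply cite: one embeds $T$ in a maximal torus $T' \supset T$, observes that $Z_G(T)$ is generated by $T'$ together with the root subgroups $U_\alpha$ for those roots $\alpha$ that vanish on $T$, and realizes these roots as cutting out the Levi of the parabolic associated to a generic auxiliary cocharacter of $T$.

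Part (b) is essentially a one-liner: $T \subset Z_G(N)$ says that $\Ad(T)$ fixes $N$, so $N$ lies in the $T$-fixed subspace $\mathfrak{g}^T$ of $\mathfrak{g}$, which via the weight-space decomposition of $\mathfrak{g}$ under $T$ coincides with $\Lie(Z_G(T)) = \mathfrak{l}$.

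Part (c) is the conceptual core. Let $S \subset Z_L(N)$ be an arbitrary torus; I want to force $S \subset Z(L)$. Because $S \subset L = Z_G(T)$, the tori $S$ and $T$ commute, so the product $T \cdot S$ is a connected commutative subgroup consisting of semisimple elements, hence itself a torus. This torus centralizes $N$ (both factors do) and is connected, so it sits inside $Z_G(N)^\circ$ while containing $T$. The maximality of $T$ in $Z_G(N)^\circ$ forces $T \cdot S = T$, so $S \subset T$. Finally, $T$ is central in $L = Z_G(T)$ by construction, whence $S \subset T \subset Z(L)$, as required.

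The only real ``obstacle'' here is the invocation of the structure theorem in (a); everything else is a soft formal manipulation. Since the result is cited directly from Jantzen 4.6, I would be comfortable pushing (a) onto that reference and writing out (b) and (c) in essentially the two paragraphs above.
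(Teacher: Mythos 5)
Your proof is correct and is essentially the argument given in the cited reference (the paper itself offers no proof, deferring entirely to Jantzen 4.6): the Levi structure of $Z_G(T)$, the observation $N\in\mathfrak{g}^T=\mathfrak{l}$, and the maximality argument showing any torus $S\subset Z_L(N)$ satisfies $S\subset T\subset Z(L)$ are exactly the standard steps. No gaps.
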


\begin{definition}
A cocharacter $\lambda:\mathbf{G}_m\rightarrow G$ is said to be \emph{associated} to $N$ if $\Ad(\lambda(t))(N)=t^2N$ and there is a Levi subgroup $L\subset G$ such that $N$ is distinguished nilpotent in $\mathfrak{l}$ and $\lambda$ factors through the derived group $\mathcal{D}L$ of $L$.
\end{definition}

\begin{lemma}[{\cite[5.3]{jantzen}}]
\begin{enumerate}
\item	If the characteristic of $k$ is good for $G$, then cocharacters associated to $N$ exist.
\item	Any two cocharacters associated to $N$ are conjugate under $Z_G(N)^\circ$.
\end{enumerate}
\end{lemma}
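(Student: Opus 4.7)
My plan is to prove existence via the Jacobson--Morozov theorem in characteristic zero (with its Pommerening--Premet extension in good positive characteristic), and conjugacy via the rigidity theorem that any two $\mathfrak{sl}_2$-triples extending a common nilpotent element are conjugate under the connected centralizer.

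For existence in characteristic zero, I would first invoke Jacobson--Morozov to produce an $\mathfrak{sl}_2$-triple $(N, H, N^+)$ in $\mathfrak{g}$, so that $H$ is semisimple with $[H,N] = 2N$ and $H = [N, N^+]$. Since $G$ is reductive and $H$ is semisimple, $H$ lies in some maximal torus and integrates to a cocharacter $\lambda : \Gm \to G$ with $d\lambda(1) = H$, yielding $\Ad(\lambda(t))(N) = t^2 N$. Next I would pick a maximal torus $T$ of $Z_G(N)^\circ$ and set $L := Z_G(T)$, which by the preceding lemma is a Levi in which $N$ is distinguished. The key point is to run Jacobson--Morozov inside $\mathfrak{l}$ (using that $N \in \mathfrak{l}$ is still nilpotent), which ensures the whole triple lies in $\mathfrak{l}$. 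Then $H = [N, N^+] \in [\mathfrak{l}, \mathfrak{l}] = \Lie(\mathcal{D}L)$, forcing $\lambda$ to factor through $\mathcal{D}L$ as required.

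For conjugacy, given two associated cocharacters $\lambda_1, \lambda_2$ with derivatives $H_1, H_2$, I would argue that each extends uniquely to an $\mathfrak{sl}_2$-triple $(N, H_i, N_i^+)$ in $\mathfrak{g}$: the relation $\Ad(\lambda_i(t))N = t^2 N$ places $N$ in the weight-$2$ subspace under $\lambda_i$, and the standard finite-dimensional representation theory of $\mathfrak{sl}_2$ produces an $N_i^+$ in weight $-2$ with $[N, N_i^+] = H_i$ (using surjectivity of $\ad N : \mathfrak{g}(0) \to \mathfrak{g}(2)$ for the weight-grading attached to $\lambda_i$). Then I would apply the Kostant--Mal'cev rigidity theorem: any two $\mathfrak{sl}_2$-triples sharing the nilpositive element $N$ are conjugate by an element of $Z_G(N)^\circ$. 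This conjugation carries $H_1$ to $H_2$ and hence $\lambda_1$ to $\lambda_2$.

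The main obstacle is the positive-characteristic case, since both Jacobson--Morozov and the rigidity of $\mathfrak{sl}_2$-triples rely on complete reducibility of $\mathfrak{sl}_2$-representations, which fails in small characteristic. In good characteristic $p$ I would replace these by Pommerening's theorem (every nilpotent is Richardson in some distinguished parabolic of a Levi) together with Premet's construction of a cocharacter with $\Ad(\lambda(t))N = t^2 N$ factoring through the derived group of an appropriate Levi, and by Premet's refinement asserting that the set of associated cocharacters forms a single $Z_G(N)^\circ$-orbit. Checking that these substitutes still respect the distinguished-Levi reduction — in particular that the resulting $\lambda$ genuinely factors through $\mathcal{D}L$ for the Levi $L = Z_G(T)$ produced above — will be the technically demanding step.
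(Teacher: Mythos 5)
The paper gives no proof of this lemma: it is quoted directly from Jantzen \cite[5.3]{jantzen}, and your outline is essentially the standard argument found there (Jacobson--Morozov run inside the Levi $L=Z_G(T)$ plus Kostant's rigidity of $\mathfrak{sl}_2$-triples in characteristic $0$, with Pommerening--Premet substitutes in good positive characteristic), which also matches the mechanism the paper itself sketches in the Remark following Proposition~\ref{assoc-jacobson-morozov}. One small imprecision: to complete $(N,H_i)$ to a triple you need $H_i\in[N,\mathfrak{g}]$ together with Morozov's completion lemma (equivalently, surjectivity of $\ad N\colon\mathfrak{g}(-2)\to[N,\mathfrak{g}]\cap\mathfrak{g}(0)$), not surjectivity of $\ad N\colon\mathfrak{g}(0)\to\mathfrak{g}(2)$, which points in the wrong degree; this is a standard fix and does not affect the overall correctness of the plan.
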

Since characteristic $0$ is a good characteristic for all $G$, associated cocharacters will exist for us.  From now on, we assume that the characteristic of $k$ is good for $G$.

\begin{prop}[{\cite[5.5]{jantzen}}]\label{assoc-jacobson-morozov}
Suppose the characteristic is $0$.  Let $N\in\mathfrak{g}$ be a nilpotent element of the Lie algebra of $G$.  Then $\lambda\mapsto d\lambda(1)$ is a bijection from the set of cocharacters associated to $N$ to the set of $X\in[N,\mathfrak{g}]$ such that $[X,N]=2N$.
\end{prop}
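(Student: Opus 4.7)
The plan is to prove the proposition by translating both sides into the language of $\mathfrak{sl}_2$-triples and using the classical Jacobson--Morozov theorem. First I would check that $d\lambda(1) =: H$ lies in the target set. Differentiating $\Ad(\lambda(t))(N)=t^2N$ at $t=1$ gives $[H,N]=2N$. For the membership $H\in[N,\mathfrak{g}]$, I would use the ``associated'' hypothesis to write $\lambda$ as a cocharacter of the semisimple group $\mathcal{D}L$, where $L$ is a Levi in which $N$ is distinguished. Since $\mathrm{Ad}(\lambda(t))(N)=t^2 N$ places $N$ in the weight-$2$ space of $H$ acting on $[\mathfrak{l},\mathfrak{l}]$, applying Jacobson--Morozov inside the semisimple Lie algebra $[\mathfrak{l},\mathfrak{l}]$ yields $Y\in[\mathfrak{l},\mathfrak{l}]$ with $(N,H,Y)$ an $\mathfrak{sl}_2$-triple, so $H=[N,Y]\in[N,\mathfrak{g}]$.

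Next I would establish injectivity. In characteristic zero, the derivative at $1$ determines a cocharacter $\mathbf{G}_m\to G$, because picking any faithful representation $\rho\colon G\to\mathrm{GL}(V)$ decomposes $\rho\circ\lambda$ into weight characters whose weights and weight subspaces are read off from the eigendata of $d\rho(d\lambda(1))$, recovering $\rho\circ\lambda$ and hence $\lambda$.

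For surjectivity, given $X\in[N,\mathfrak{g}]$ with $[X,N]=2N$, I would first run the standard Jacobson--Morozov completion: choose any $Z$ with $[N,Z]=X$, note that $[X,Z]+2Z$ lies in $\ker(\mathrm{ad}\,N)$ because the bracket relations force it, and solve an elementary linear equation on the $X$-weight spaces of $\mathfrak{g}$ (using that $\mathrm{ad}\,X$ acts invertibly on the negative weight spaces that matter) to replace $Z$ by some $Y$ satisfying $[N,Y]=X$ and $[X,Y]=-2Y$. This yields an $\mathfrak{sl}_2$-triple $(N,X,Y)$ and hence a Lie algebra homomorphism $\mathfrak{sl}_2\to\mathfrak{g}$; since we are in characteristic $0$ and $\mathrm{SL}_2$ is simply connected, this integrates uniquely to an algebraic group homomorphism $\mathrm{SL}_2\to G$ whose restriction to the diagonal torus is a cocharacter $\lambda$ with $d\lambda(1)=X$. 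To check $\lambda$ is associated to $N$, I would invoke Lemma~4.2 to produce a Levi $L$ with $N$ distinguished in $\mathfrak{l}$ and, after conjugation by $Z_G(N)^\circ$, arrange that the image of our $\mathrm{SL}_2\to G$ lies in $\mathcal{D}L$ (this uses that the triple and a maximal torus of $Z_G(N)^\circ$ can simultaneously be conjugated into $L$, together with the uniqueness clause of Lemma~4.4).

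The main obstacle is the final verification that the cocharacter produced from the $\mathfrak{sl}_2$-triple genuinely satisfies the ``associated'' condition rather than merely the weight-$2$ scaling condition; the delicate point is exhibiting a Levi $L$ in which both $N$ is distinguished and $\lambda$ factors through $\mathcal{D}L$, which amounts to conjugating the triple into $\mathfrak{l}$ using a torus in $Z_G(N)^\circ$. The other technical ingredient, solving for $Y$ with the correct bracket relations, is routine representation theory of $\mathrm{ad}(X)$ once one knows $X\in[N,\mathfrak{g}]$.
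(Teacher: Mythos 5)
The paper does not actually prove this proposition --- it is quoted verbatim from \cite[5.5]{jantzen} --- so the comparison here is against the standard $\mathfrak{sl}_2$-triple argument, which is indeed the route you take. Your outline is mostly the right one, but the well-definedness step contains a genuine gap. To show $H:=d\lambda(1)\in[N,\mathfrak{g}]$ you ``apply Jacobson--Morozov inside $[\mathfrak{l},\mathfrak{l}]$'' to obtain $Y$ with $(N,H,Y)$ an $\mathfrak{sl}_2$-triple. But Jacobson--Morozov produces a triple $(N,H',Y')$ with \emph{some} semisimple element $H'$; it does not let you prescribe $H$. The statement that completes a given pair $(N,H)$ with $[H,N]=2N$ to a triple is Morozov's lemma, whose hypothesis is precisely $H\in[N,\mathfrak{g}]$ --- the thing being proved --- so as written the step is circular. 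Nor is membership in the derived algebra enough by itself: for $N=(e,0)$ in $\mathfrak{sl}_2\times\mathfrak{sl}_2$, every $H=(h,z)$ satisfies $[H,N]=2N$ and lies in the (semisimple) ambient algebra, yet $H\notin[N,\mathfrak{g}]$ unless $z=0$. Distinguishedness of $N$ in $\mathfrak{l}$ must enter. One correct repair: take a Jacobson--Morozov triple $(N,H',Y')$ in $\mathfrak{d}:=[\mathfrak{l},\mathfrak{l}]$ and set $S:=H-H'$, which centralizes $N$; since $N$ is distinguished in $\mathfrak{l}$, the reductive group $Z_{\mathcal{D}L}(N)\cap Z_{\mathcal{D}L}(\lambda')$ is finite, so the centralizer $\mathfrak{z}_{\mathfrak{d}}(N)$ is concentrated in strictly positive $\ad H'$-weights, all of which lie in $\im(\ad N)$ by $\mathfrak{sl}_2$-theory; hence $S$, and therefore $H=H'+S$, lies in $[N,\mathfrak{g}]$.

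The injectivity step and the Morozov completion in the surjectivity step are fine. Two caveats on the final verification that $\lambda$ is associated. First, conjugating the triple by $Z_G(N)^\circ$ changes $d\lambda(1)$ from $X$ to $\Ad(g)(X)$, so this is only legitimate once you observe that ``associated to $N$'' is itself invariant under $Z_G(N)$-conjugation of $\lambda$ (conjugate the Levi back); without that remark you have only placed $\Ad(g)(X)$, not $X$, in the image. Second, the ``uniqueness clause'' (conjugacy of associated cocharacters) cannot help here, since it presupposes the cocharacters are already known to be associated. The cleaner standard route avoids conjugation: take $T$ a maximal torus of $Z_G(N)\cap Z_G(X)$ (which centralizes the whole triple, as $Y$ is determined by $(N,X)$), check that $T$ is maximal in $Z_G(N)$, and set $L=Z_G(T)$; then the triple lies in $\mathfrak{l}$, $N$ is distinguished there by the lemma quoted from \cite[4.6]{jantzen}, and the image of $\SL_2$ lands in $\mathcal{D}L$ because $\SL_2$ admits no nontrivial homomorphism to the torus $L/\mathcal{D}L$.
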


\begin{remark}
Suppose we are working over an algebraically closed field of characteristic $0$.  The theorem of Jacobson-Morozov states that if $N\in\mathfrak{g}$ is nilpotent and non-zero, there exist $H,Y\in \mathfrak{g}$ such that $(N,H,Y)$ form an $\mathfrak{sl}_2$-triple inside $\mathfrak{g}$, and they are unique up to conjugation by $Z_{\mathfrak{g}}(N)$.  Then since $\SL_2$ is simply connected, we can exponentiate the map $\mathfrak{sl}_2\rightarrow \mathfrak{g}$ to get a map $\SL_2\rightarrow G$.  Composing with the standard diagonal torus $\mathbf{G}_m\rightarrow \SL_2$ turns out to yield an associated cocharacter $\lambda:\mathbf{G}_m\rightarrow G$.
\end{remark}

\begin{remark}
McNinch has relaxed the requirement that the ground field be algebraically closed.  He has shown~\cite[Theorem 26]{mcninch} that over a perfect ground field $F$ of characteristic good for $G$, if $N\in\mathfrak{g}(F)$ is nilpotent and non-zero, there is an $F$-rational cocharacter associated to $N$.  We will not need this here, however.
\end{remark}

Given an associated cocharacter $\lambda$ of $N$, we may construct the associated parabolic $P_G(\lambda)=U_G(\lambda)\rtimes Z_G(\lambda)$, where $U_G(\lambda)$ is the unipotent radical of $P_G(\lambda)$ and $Z_G(\lambda)=P_G(\lambda)/U_G(\lambda)$ is reductive.

Note that if $\lambda$ is associated to $N$, then $N$ lies in the weight $2$ part of the $\lambda$-grading on the Lie algebra $\mathfrak{p}$ of $P_G(\lambda)$, while the Lie algebra of $Z_G(\lambda)$ is by definition the weight $0$ part.  Thus, although we have two Levi subgroups of $G$ floating around, $N$ is not contained in the Lie algebra of $Z_G(\lambda)$, let alone distinguished there.

\begin{prop}[{\cite[5.9,5.10,5.11]{jantzen}}]
\begin{enumerate}
\item	The associated parabolic $P_G(\lambda)$ depends only on $N$, not on the choice of associated cocharacter.
\item	We have $Z_G(N)\subset P_G(\lambda)$.  In particular, $Z_G(N)=Z_P(N)$.
\item	$Z_G(N)=\left(U_G(\lambda)\cap Z_G(N)\right)\rtimes\left(Z_G(\lambda)\cap Z_G(N)\right)$
\item	$Z_G(\lambda)\cap Z_G(N)$ is reductive.
\end{enumerate}
\end{prop}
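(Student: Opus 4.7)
The four assertions are tightly linked, and the cleanest organization proves them in the order (2), (1), (3), (4), with the $\mathfrak{sl}_2$-representation theory underlying $\lambda$ as the central tool. Fix an associated cocharacter $\lambda$ and set $H=d\lambda(1)$; by Proposition~\ref{assoc-jacobson-morozov} one may complete $(N,H)$ to an $\mathfrak{sl}_2$-triple $(N,H,Y)$ in $\mathfrak{g}$, so the $\lambda$-weight decomposition of $\mathfrak{g}$ coincides with the $\ad H$-grading from the $\mathfrak{sl}_2$-action. Write $P=P_G(\lambda)$, $L=Z_G(\lambda)$, $U=U_G(\lambda)$, with Lie algebras $\mathfrak{p},\mathfrak{l},\mathfrak{u}$ the $\ge 0$, $=0$, and $>0$ weight spaces respectively.

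\emph{Step 1: part (2).} In characteristic $0$ every finite-dimensional $\mathfrak{sl}_2$-representation is a sum of irreducibles, and in each $(n+1)$-dimensional irreducible the subspace $\ker(\ad N)$ is exactly the highest-weight line, of weight $n\ge 0$. Hence $\mathfrak{z}_\mathfrak{g}(N)=\ker(\ad N)\subset \mathfrak{p}$. To promote this to the group, use the dynamic characterization $P=\{g\in G:\lim_{t\to 0}\lambda(t)g\lambda(t)^{-1}\text{ exists}\}$: since the closed subgroup $Z_G(N)$ is $\lambda$-stable (as $\ad\lambda(t)$ scales $N$ by $t^2$, so conjugation by $\lambda(t)$ preserves the equation $\ad g(N)=N$ up to the scalar $t^2$, which is absorbed when we ask $g$ to commute with $N$), and since its Lie algebra lies in nonnegative $\lambda$-weights, the limit exists on $Z_G(N)$. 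Thus $Z_G(N)\subset P$, whence $Z_G(N)=Z_P(N)$.

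\emph{Step 2: part (1).} If $\lambda'$ is another cocharacter associated to $N$, the preceding lemma gives $g\in Z_G(N)^\circ$ with $\lambda'=g\lambda g^{-1}$, so $P_G(\lambda')=gP_G(\lambda)g^{-1}$. By Step 1, $g\in Z_G(N)\subset P_G(\lambda)$, so conjugation by $g$ preserves $P_G(\lambda)$ and $P_G(\lambda')=P_G(\lambda)$.

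\emph{Step 3: parts (3) and (4).} By Step 1, $Z_G(N)\subset P=U\rtimes L$. The projection $P\to L$ has kernel $U$, so upon intersecting with $Z_G(N)$ the kernel is $U\cap Z_G(N)$ and the image lies in $L\cap Z_G(N)=Z_G(\lambda)\cap Z_G(N)$. Surjectivity of this map and the existence of a section are checked first on Lie algebras using the $\ad H$-grading: the grading restricts to $\mathfrak{z}_\mathfrak{g}(N)$ and identifies the weight-zero part with $\mathfrak{z}_\mathfrak{l}(N)$, while the strictly positive part lies in $\mathfrak{u}\cap\mathfrak{z}_\mathfrak{g}(N)$, yielding a direct sum $\mathfrak{z}_\mathfrak{g}(N)=(\mathfrak{u}\cap\mathfrak{z}_\mathfrak{g}(N))\oplus\mathfrak{z}_\mathfrak{l}(N)$. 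This integrates to the semidirect product of (3) using the fact that in characteristic $0$ centralizers are smooth, together with the unipotence of $U\cap Z_G(N)$. For (4), $Z_L(N)=Z_G(\lambda)\cap Z_G(N)$ is the centralizer in the reductive group $L$ of the image of the homomorphism $\SL_2\to G$ obtained by integrating the $\mathfrak{sl}_2$-triple (noting $H\in\mathfrak{l}$ and using that the triple can be chosen so that $\SL_2$ maps into the subgroup generated by $\lambda$ together with root subgroups, which in fact meets $L$ in a semisimple subgroup via the associated-cocharacter construction); centralizers of reductive subgroups in a reductive group are reductive, giving the claim.

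\emph{Main obstacle.} The subtle step is Step 1, where the Lie-algebra inclusion $\mathfrak{z}_\mathfrak{g}(N)\subset\mathfrak{p}$ must be promoted to the group inclusion $Z_G(N)\subset P$ without assuming $Z_G(N)$ is connected. The dynamic/limit characterization of $P$ is what makes this go through cleanly, and everything else cascades from it.
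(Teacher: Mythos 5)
This proposition is quoted in the paper from Jantzen without proof, so the comparison is with the standard argument rather than with anything in the text. Your outline follows that standard argument in most respects, but the step you yourself flag as the crux --- promoting $\mathfrak{z}_{\mathfrak{g}}(N)\subset\mathfrak{p}$ to $Z_G(N)\subset P_G(\lambda)$ when $Z_G(N)$ is disconnected --- is justified by a false general principle. You claim that because $Z_G(N)$ is a $\lambda$-stable closed subgroup whose Lie algebra lies in nonnegative $\lambda$-weights, the limit $\lim_{t\to 0}\lambda(t)g\lambda(t)^{-1}$ exists for every $g\in Z_G(N)$. That implication fails in general: in $G=\SL_2$ with $\lambda(t)=\diag(t,t^{-1})$, the subgroup $H=N_G(T)$ is $\lambda$-stable with $\Lie H=\mathfrak{g}_0\subset\mathfrak{g}_{\geq 0}$, yet for the Weyl element $w$ one computes $\lambda(t)w\lambda(t)^{-1}=\left(\begin{smallmatrix}0&t^{2}\\-t^{-2}&0\end{smallmatrix}\right)$, which has no limit as $t\to 0$; so $H\not\subset P_G(\lambda)$. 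The dynamic characterization together with the Lie-algebra inclusion only yields $Z_G(N)^{\circ}\subset P_G(\lambda)$ (a connected smooth subgroup equals its own ``nonnegative'' dynamic subgroup when the Lie algebras agree); it says nothing about the other components.

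The correct way to handle the components is the one you only deploy later, in Step 2: for $g\in Z_G(N)$, the cocharacter $g\lambda g^{-1}$ is again associated to $N$ (it satisfies $\Ad(g\lambda(t)g^{-1})(N)=t^{2}N$ and factors through the derived group of the conjugated Levi, in which $N=\Ad(g)(N)$ is still distinguished), so by the conjugacy lemma there is $h\in Z_G(N)^{\circ}$ with $g\lambda g^{-1}=h\lambda h^{-1}$; then $h^{-1}g\in Z_G(\lambda)\subset P_G(\lambda)$ and $h\in Z_G(N)^{\circ}\subset P_G(\lambda)$, whence $g\in P_G(\lambda)$. This is not circular with your Step 2, but the order of logic must be: connected case by Lie algebras, then (2) for general $g$ via conjugacy of associated cocharacters, then (1). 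A similar disconnectedness issue recurs in your Step 3, where ``integrating'' the Lie-algebra decomposition only gives the semidirect product on identity components; the group-level statement needs the observation that if $g=ul\in Z_G(N)$ with $u\in U_G(\lambda)$, $l\in Z_G(\lambda)$, then comparing $N=\Ad(u)\Ad(l)(N)$ modulo $\mathfrak{g}_{\geq 3}$ (using $N,\Ad(l)(N)\in\mathfrak{g}_2$ and $\Ad(u)\equiv\mathrm{id}$ on $\mathfrak{g}_{\geq 2}/\mathfrak{g}_{\geq 3}$) forces $\Ad(l)(N)=N$ and hence $\Ad(u)(N)=N$ separately.
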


\begin{prop}\label{disconn}
Let $G$ be a connected reductive group over an algebraically closed field of characteristic $0$.  Let $G'$ be a possibly disconnected reductive subgroup of $G$, and suppose $N\in\Lie G'$.  Let $\lambda:\mathbf{G}_m\rightarrow (G')^\circ$ be an associated cocharacter of $N$.  Then $Z_{G'}(N)=\left(U_{G'}(\lambda)\cap Z_{G'}(N)\right)\rtimes\left(Z_{G'}(\lambda)\cap Z_{G'}(N)\right)$ and $U_{G'}(\lambda)\cap Z_{G'}(N)$ is connected.
\end{prop}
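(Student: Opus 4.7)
The plan is to reduce both claims to the analogous statements for the connected reductive group $(G')^\circ$, which are recorded in the immediately preceding proposition.

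First I would observe that $U_{G'}(\lambda)$ is automatically contained in $(G')^\circ$: for any $u \in U_{G'}(\lambda)$, the morphism $\Gm \to G'$, $t \mapsto \lambda(t) u \lambda(t)^{-1}$, extends to $\mathbf{A}^1 \to G'$ sending $0 \mapsto e$, so its image is connected and contains the identity, hence lies in $(G')^\circ$. Therefore $U_{G'}(\lambda) = U_{(G')^\circ}(\lambda)$ and the intersection $U_{G'}(\lambda)\cap Z_{G'}(N)$ coincides with $U_{(G')^\circ}(\lambda)\cap Z_{(G')^\circ}(N)$. For the connectedness assertion I would then invoke the standard fact that in characteristic zero any closed algebraic subgroup of a connected unipotent group is itself connected (via the exp/log identification with a Lie subalgebra); this disposes of the second claim.

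For the product decomposition, take $g \in Z_{G'}(N)$. Since $(G')^\circ$ is normal in $G'$, the cocharacter $g\lambda g^{-1}\colon\Gm \to (G')^\circ$ makes sense, and it is again associated to $N$: the weight-$2$ condition $\Ad(g\lambda(t)g^{-1})(N) = t^2 N$ follows from $\Ad(g)(N) = N$, and if $\lambda$ factors through the derived group of a Levi $L \subset (G')^\circ$ in which $N$ is distinguished, then $g\lambda g^{-1}$ factors through the derived group of $gLg^{-1}$, a Levi of $(G')^\circ$ in which $N=\Ad(g)(N)$ remains distinguished. By the uniqueness of associated cocharacters up to $Z_{(G')^\circ}(N)^\circ$-conjugation (Jantzen 5.3), there exists $h \in Z_{(G')^\circ}(N)^\circ$ with $h\lambda h^{-1} = g\lambda g^{-1}$, and hence $h^{-1}g \in Z_{G'}(\lambda)\cap Z_{G'}(N)$. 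Applying the connected case (part (3) of the preceding proposition) to $h \in Z_{(G')^\circ}(N)$ yields a factorization $h = uz$ with $u \in U_{(G')^\circ}(\lambda)\cap Z_{(G')^\circ}(N)$ and $z \in Z_{(G')^\circ}(\lambda)\cap Z_{(G')^\circ}(N)$, so
\[
g \;=\; u \cdot (z \cdot h^{-1}g)
\]
exhibits $g$ as a product of an element of $U_{G'}(\lambda)\cap Z_{G'}(N)$ and an element of $Z_{G'}(\lambda)\cap Z_{G'}(N)$. The triviality $U_{G'}(\lambda)\cap Z_{G'}(\lambda) = \{e\}$ (any common element is simultaneously $\lambda$-fixed and contracts to $e$) and the standard fact that $Z_{G'}(\lambda)$ normalizes $U_{G'}(\lambda)$ upgrade this product decomposition to a semi-direct product.

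The main obstacle I anticipate is purely bookkeeping around elements $g$ lying outside $(G')^\circ$, but uniqueness of associated cocharacters is exactly the tool that lets one ``absorb'' the disconnected part into the $Z_{G'}(\lambda)$-factor after correcting by an element of $Z_{(G')^\circ}(N)^\circ$, thereby reducing the entire statement to the connected case already in hand.
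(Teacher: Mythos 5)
Your proof is correct, but it takes a genuinely different route from the paper's. The paper never leaves the ambient group: it first upgrades $\lambda$ to a cocharacter associated to $N$ \emph{in $G$ itself} (using Proposition~\ref{assoc-jacobson-morozov} to translate between associated cocharacters and Jacobson--Morozov triples, so that ``associated in $(G')^\circ$'' implies ``associated in $G$''), then intersects the known decomposition $Z_G(N)=\left(U_G(\lambda)\cap Z_G(N)\right)\rtimes\left(Z_G(\lambda)\cap Z_G(N)\right)$ with $G'$, and gets connectedness of $U_{G'}(\lambda)\cap Z_{G'}(N)$ from the contraction $t\mapsto\lambda(t)u\lambda(t)^{-1}$, which sweeps out an $\mathbf{A}^1$ joining $u$ to the identity inside that subgroup. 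You instead stay entirely inside $G'$: you handle an element $g\in Z_{G'}(N)$ lying in an arbitrary component by observing that $g\lambda g^{-1}$ is again associated to $N$ in $(G')^\circ$ and invoking conjugacy of associated cocharacters under $Z_{(G')^\circ}(N)^\circ$ to write $g$ as (element of $Z_{(G')^\circ}(N)$) times (element of $Z_{G'}(\lambda)\cap Z_{G'}(N)$), and you get connectedness from the characteristic-zero fact that closed subgroups of unipotent groups are unipotent, hence connected. Both arguments are sound here; the paper's buys the result as a formal consequence of the statement for $G$ once the ``associated'' property has been transferred (at the cost of the Jacobson--Morozov step, which is where characteristic $0$ enters for the paper), while yours is intrinsic to $G'$ — it never uses the embedding into $G$ at all — at the cost of leaning on the uniqueness-up-to-conjugacy of associated cocharacters to absorb the component group into the $Z_{G'}(\lambda)$-factor. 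One small point worth making explicit in your write-up is why $gLg^{-1}$ is again a Levi subgroup of $(G')^\circ$ with $N$ distinguished in its Lie algebra: conjugation by $g$ is an automorphism of $(G')^\circ$ because $(G')^\circ$ is normal in $G'$, and automorphisms preserve the class of Levi factors of parabolics and the distinguishedness condition; you assert this correctly but it is the one step a referee would ask you to spell out.
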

\begin{proof}
We first claim that $\lambda:\mathbf{G}_m\rightarrow (G')^\circ\rightarrow G$ is associated to $N$ as a cocharacter of $G$.  Since we are in characteristic $0$, we may use Proposition~\ref{assoc-jacobson-morozov} to pass freely between associated cocharacters and Jacobson-Morozov triples.  More precisely, $d\lambda(1)$ satisfies $[d\lambda(1),N]=2N$ whether we view $d\lambda(1)$ as an element of $\Lie G'$ or of $\Lie G$, and if $d\lambda(1)\in[N,\mathfrak{g}']$, then $d\lambda(1)\in[N,\mathfrak{g}]$ as well.  Therefore, $\lambda$ is associated to $N$ as a cocharacter of $G$.

We now consider the structure of $Z_{G'}(N)$ and $U_{G'}(\lambda)\cap Z_{G'}(N)$.  We know that $Z_G(N)\subset P_G(\lambda)$, and in fact, $Z_G(N)$ is the semi-direct product of $U_G(\lambda)\cap Z_G(N)$ and $Z_G(\lambda)\cap Z_G(N)$.  It follows that $Z_{G'}(N)\subset P_{G'}(\lambda)$ and in fact, $Z_{G'}(N)$ is the semi-direct product of $U_{G'}(\lambda)\cap Z_{G'}(N)$ and $Z_{G'}(\lambda)\cap Z_{G'}(N)$.

For the second assertion, we observe that $\lambda$ normalizes
$U_{G'}(\lambda)\cap Z_{G'}(N)$ (by the normality of $U_{G'}(\lambda)$ in $P_{G'}(\lambda)$ and the definition of an associated cocharacter), so everything in $U_{G'}(\lambda)\cap Z_{G'}(N)$ is connected by a copy of $\mathbf{A}^1$ to the identity.  So $U_{G'}(\lambda)\cap Z_{G'}(N)$ is connected.
\end{proof}

We record a few results about families of nilpotent elements of $\mathfrak{g}$.  We let $\mathcal{N}$ denote the space of nilpotent elements of $\mathfrak{g}$.  That is, for any $k$-algebra $A$, $\mathcal{N}(A)$ is the set of elements $N\in\mathfrak{g}(A)$ such that for every representation $\sigma:G\rightarrow \GL_n$, the characteristic polynomial of $\sigma_\ast(N)$ is $T^d$.

\begin{lemma}\label{fam-section}
Let $N\in\mathcal{N}(k)$.  There is an fppf neighborhood $U\rightarrow G\cdot N$ of $N$ and a section $s:U\rightarrow G_U$ such that the $U$-pullback of the universal nilpotent element of $\mathfrak{g}$ over the orbit $G\cdot N$ is of the form $\Ad(s(U))(N)$.  If the characteristic of $k$ is $0$, $U$ may be taken to be an \'etale neighborhood.
\end{lemma}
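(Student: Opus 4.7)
The plan is to identify the orbit $G \cdot N$ with the homogeneous quotient $G/Z_G(N)$ and let the orbit map itself supply the section.

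First I would observe that the morphism $\pi \colon G \to \mathfrak{g}$ defined by $g \mapsto \Ad(g)(N)$ factors through the locally closed immersion $G \cdot N \hookrightarrow \mathfrak{g}$, and that the induced morphism $\pi \colon G \to G \cdot N$ is naturally identified with the fppf quotient $G/Z_G(N)$; in particular it is a right $Z_G(N)$-torsor. The universal nilpotent element of $\mathfrak{g}$ over the orbit is the tautological locally closed immersion $G \cdot N \hookrightarrow \mathfrak{g}$, and its pullback along $\pi$ is by construction the element $\Ad(g_{\mathrm{univ}})(N) \in \mathfrak{g}(G)$, where $g_{\mathrm{univ}} \colon G \to G$ denotes the identity viewed as the universal $G$-point of $G$.

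For the fppf part of the statement I would then simply take $U = G$ equipped with structure morphism $\pi \colon G \to G \cdot N$. Since $\pi$ is the quotient by a closed subgroup scheme of an affine algebraic group, it is faithfully flat and locally of finite presentation, hence an fppf cover of $G \cdot N$; moreover the point $e \in G(k)$ maps to $N$, so $N$ lies in its image. Setting $s \colon U \to G_U$ to be the section corresponding to $\mathrm{id}_G \colon U \to G$, the previous paragraph gives $\Ad(s(U))(N)$ equal to the $U$-pullback of the universal nilpotent element, as required.

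For the characteristic zero strengthening I would invoke Cartier's theorem: every affine algebraic group over a field of characteristic $0$ is smooth, so $Z_G(N)$ is smooth and consequently $\pi \colon G \to G \cdot N$ is a smooth surjection. A smooth surjection admits sections in the étale topology on the target, so there is an étale cover $U \to G \cdot N$ containing $N$ in its image together with a section $s \colon U \to G$ of $\pi$, and the same computation yields $\Ad(s(U))(N)$ equal to the $U$-pullback of the universal nilpotent element. The substantive ingredients are the identification $G \cdot N \cong G/Z_G(N)$ (standard orbit--stabilizer theory for group-scheme actions) and Cartier's smoothness theorem; neither step presents any real obstacle, so the lemma is essentially formal once the orbit map is viewed as a torsor.
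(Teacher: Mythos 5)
Your proposal is correct and follows essentially the same route as the paper: both view the orbit map $G\rightarrow G\cdot N$ as a $Z_G(N)$-torsor, take a local section of it (fppf-locally in general, étale-locally in characteristic $0$ since $Z_G(N)$ is then smooth), and observe that such a section tautologically conjugates $N$ to the universal nilpotent element. Your explicit choice $U=G$ with the identity section in the fppf case is just the standard fact that a torsor trivializes after pullback along itself, so there is no substantive difference.
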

\begin{proof}
We have a morphism $G\rightarrow G\cdot N$, given by $g\mapsto \Ad(g)N$.  This is the structure morphism of a $Z_G(N)$-bundle on $G\cdot N$.  Fppf-locally on $G\cdot N$ (or \'etale-locally in characteristic $0$, since in that case $Z_G(N)$ is smooth), this structure morphism admits a section, which by definition has the desired property.
\end{proof}

\begin{cor}\label{fam-cent}
Let $S$ be a reduced scheme over a characteristic $0$ field $k$, and let $N\in \mathcal{N}(S)$ be a family of nilpotent elements such that for every geometric point $\overline{s}$ of $S$, the conjugacy class of $N_{\overline{s}}$ in $\Lie G$ is constant.  Then the centralizer $Z_{G_S}(N)\subset G_S$ is smooth over $S$.
\end{cor}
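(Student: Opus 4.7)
The plan is to reduce, étale-locally on $S$, to the case of a constant family of nilpotent elements, where smoothness of $Z_{G_S}(N)$ follows immediately from smoothness of the centralizer $Z_G(N_0)$ over $k$ in characteristic $0$.

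Since smoothness of a morphism is fpqc-local on the base, and since $k$ is perfect (being of characteristic $0$), I may base-change to $\bar k$ and assume $k$ is algebraically closed; this preserves reducedness of $S$. Fix a geometric point $\bar s$ of $S$ and set $N_0 := N_{\bar s} \in \mathcal{N}(k)$. The hypothesis that the geometric conjugacy class of $N$ is constant means that the classifying morphism $S \to \mathcal{N}$ defined by $N$ factors set-theoretically through the $G$-orbit $\mathcal{O} := G \cdot N_0 \subset \mathcal{N}$. Because $\mathcal{O}$ is locally closed in $\mathcal{N}$ and $S$ is reduced, this factorization upgrades, Zariski-locally on $S$, to a scheme-theoretic morphism $N : S \to \mathcal{O}$.

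Now apply Lemma~\ref{fam-section} to $N_0$: in characteristic $0$, it produces an étale neighborhood $U \to \mathcal{O}$ of $N_0$ and a section $s : U \to G_U$ under which the universal nilpotent element of $\mathcal{O}$ pulls back to $\Ad(s)(N_0)$. Base change along $N : S \to \mathcal{O}$ yields an étale neighborhood $\pi : S' \to S$ of $\bar s$ together with a section $s' \in G(S')$ satisfying $\pi^\ast N = \Ad(s')(N_0)$. Conjugation by $s'$ then furnishes an isomorphism
\[
Z_G(N_0) \times_k S' \xrightarrow{\sim} Z_{G_{S'}}(\pi^\ast N)
\]
of $S'$-group schemes. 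Since $Z_G(N_0)$ is smooth over $k$ by Cartier's theorem in characteristic $0$, the left-hand side is smooth over $S'$; smoothness then descends along the étale cover $S' \to S$ in a neighborhood of $\bar s$. As $\bar s$ was arbitrary, $Z_{G_S}(N)$ is smooth over $S$.

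The only mildly delicate step is promoting the set-theoretic factorization $|S| \to |\mathcal{O}|$ to an honest morphism of schemes $S \to \mathcal{O}$: this uses both the reducedness of $S$ and the fact that, although $G$-orbits in $\mathcal{N}$ need not be closed, they are locally closed. Everything else is a routine conjugation calculation together with standard descent for smoothness.
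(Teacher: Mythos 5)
Your proof is correct and follows essentially the same route as the paper: use reducedness and constancy of the conjugacy class to factor the classifying map through the orbit $G\cdot N_0$, invoke Lemma~\ref{fam-section} to trivialize the family étale-locally via a section $s$, and conclude that the centralizer is étale-locally a conjugate of the constant smooth group scheme $Z_G(N_0)_U$. Your write-up is slightly more explicit about the base change along $S\to\mathcal{O}$ and the descent of smoothness, but the argument is the same.
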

\begin{proof}
The nilpotent family $N$ is some morphism $f:S\rightarrow \mathcal{N}$, and the constancy of the conjugacy classes and reducedness of $S$ imply that $f$ factors through some orbit $G\cdot N_0$.  Thus, we may assume that $S=G\cdot N_0$ and $N$ is the universal nilpotent element.  For any point $x\in G\cdot N_0$, there is some \'etale neighborhood $U\rightarrow G\cdot N_0$ of $x$ and a section $s:U\rightarrow G_U$ such that the restriction of the universal nilpotent element to $U$ is of the form $\Ad(s(U))N$.  Therefore, $Z_{G_S}(N)|_U=s(U)Z_G(N)_Us(U)^{-1}$, which is visibly smooth over $U$.
\end{proof}

\begin{cor}\label{fam-assoc}
Let $S$ be a reduced scheme over a characteristic $0$ field $k$, and let $N\in \mathcal{N}(S)$ be a family of nilpotent elements such that for every geometric point $\overline{s}$ of $S$, the conjugacy class of $N_{\overline{s}}$ in $\Lie G$ is constant.  Then \'etale-locally on $S$, there is a family of cocharacters $\lambda:(\Gm)_S\rightarrow G_S$ such that for each $\overline{s}$, $\lambda_{\overline{s}}$ is an associated cocharacter for $N_{\overline{s}}$.
\end{cor}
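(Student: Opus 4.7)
The plan is to reduce to the universal situation handled by Lemma~\ref{fam-section} and then conjugate a single associated cocharacter across the orbit.

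First, because $S$ is reduced and the geometric conjugacy class of $N_{\overline{s}}$ is constant, the classifying morphism $f:S\to\mathcal{N}$ for the family $N$ factors through a single orbit $G\cdot N_0$, where $N_0\in\mathcal{N}(k)$ (after replacing $k$ by a finite extension, if needed, to realise a $k$-point in the orbit; this is harmless for a statement that is étale-local on $S$). It therefore suffices to construct the associated cocharacter étale-locally on the orbit $G\cdot N_0$ itself, with $N$ the universal nilpotent element.

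Next, I would invoke Lemma~\ref{fam-section}: since we are in characteristic $0$, there is an étale neighbourhood $U\to G\cdot N_0$ and a section $s:U\to G_U$ with the property that the restriction to $U$ of the universal nilpotent element equals $\underline\Ad(s)(N_0)$. Choose once and for all an associated cocharacter $\lambda_0:\Gm\to G$ of $N_0$; this exists because the characteristic is $0$, hence good for $G$. Then define
\[
\lambda := s\,\lambda_0\,s^{-1}:(\Gm)_U\longrightarrow G_U.
\]
This is a well-defined family of cocharacters over $U$.

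It remains to check that $\lambda_{\overline{u}}$ is an associated cocharacter of $N_{\overline{u}}=\Ad(s(\overline{u}))(N_0)$ at every geometric point $\overline{u}$ of $U$. The two defining conditions are preserved by conjugation: from $\Ad(\lambda_0(t))(N_0)=t^2N_0$ one gets
\[
\Ad\bigl(\lambda(t)\bigr)(N_{\overline{u}})=\Ad(s(\overline{u}))\,\Ad(\lambda_0(t))(N_0)=t^2\,N_{\overline{u}},
\]
and if $L\subset G$ is a Levi in which $N_0$ is distinguished and through whose derived group $\lambda_0$ factors, then $N_{\overline{u}}$ is distinguished in the Levi $\Ad(s(\overline{u}))L$, through whose derived group $\lambda_{\overline{u}}$ factors. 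Hence $\lambda_{\overline{u}}$ is associated to $N_{\overline{u}}$. The only mildly delicate point is the orbit-factorisation in the first step, which uses reducedness of $S$ together with Lemma~\ref{fam-section}; the cocharacter construction itself is completely formal once the section $s$ is in hand.
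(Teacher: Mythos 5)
Your proposal is correct and follows essentially the same route as the paper: reduce to the universal nilpotent element over the orbit $G\cdot N_0$ using reducedness and constancy of the conjugacy class, apply Lemma~\ref{fam-section} to obtain an \'etale-local section $s$, and define $\lambda = s\lambda_0 s^{-1}$ for a fixed associated cocharacter $\lambda_0$ of $N_0$. The only difference is that you spell out the verification that the two defining conditions of an associated cocharacter are preserved under conjugation, which the paper leaves implicit.
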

\begin{proof}
As before, we reduce to the case of a family of nilpotent elements over $U$ of the form $\Ad(s(U))N_0$ for some section $s:U\rightarrow G_U$ and some $N_0\in\mathcal{N}$.  Then if $\lambda_0:\Gm\rightarrow G$ is an associated cocharacter for $N_0$, we define $\lambda:(\Gm)_U\rightarrow G_U$ to be $s(U)\lambda_0s(U)^{-1}$.
\end{proof}

\section{Reducedness of $X_{\varphi,N,\tau}$}\label{reducedness}

We will use the theory of associated cocharacters to study the structure of a cover $X_{\varphi,N,\tau}$ of $\mathfrak{Mod}_{\phi,N,\tau}$ in the case when $L/K$ is totally ramified.  In that case, $L_0=K_0$ with $f:=[L_0:\Q_p]$, and $\tau$ is a linear representation of $\Gal_{L/K}$.  We will first show that this cover is generically smooth, and the method of proof will let us calculate its dimension.  We will then be able to see that this cover is actually a local complete intersection.  Since a local complete intersection which is generically reduced is reduced everywhere, we conclude that our cover is actually reduced.

Let $E/\Q_p$ be a finite extension, and let $D_E$ be a $\Res_{E\otimes L_0/E}G$-torsor over $E$ equipped with a choice of trivializing section, i.e., a copy of $\Res_{E\otimes L_0/E}G$.  For any $E$-algebra $A$, let $\Rep_A\Gal_{L/K}$ denote the set of representations of $\Gal_{L/K}$ on $\Res_{E\otimes L_0/E}G(A)=G(A\otimes L_0)$.

Let $X_{\varphi,N,\tau}$ denote the functor on the category of $E$-algebras whose $A$-points are triples 
\[	(\Phi,N,\tau)\in (\Res_{E\otimes L_0/E}G)(A)\times (\Res_{E\otimes L_0/E}\mathfrak{g})(A)\times\Rep_A\Gal_{L/K}	\]
which satisfy $N=p\Ad(\Phi)(\varphi(N))$, $\tau(g)\circ\Phi=\Phi\circ\tau(g)$, and $N={\Ad}(\tau(g))(N)$ for all $g\in\Gal_{L/K}$.  Here $\varphi$ denotes the Frobenius on the coefficients.

Similarly, let $X_{N,\tau}$ denote the functor on the category of $E$-algebras parametrizing pairs 
\[	(N,\tau)\in (\Res_{E\otimes L_0/E}\mathfrak{g})(A)\times\Rep_A\Gal_{L/K}	\]
such that $N={\Ad}(\tau(g))(N)$ for all $g\in\Gal_{L/K}$.  There is a natural map $X_{\varphi,N,\tau}\rightarrow X_{N,\tau}$ given by forgetting $\Phi$.

There is a third functor $X_\tau$ on the category of $E$-algebras whose $A$-points are representations $\tau:\Gal_{L/K}\rightarrow G(A\otimes L_0)$, and there is a forgetful map $X_{N,\tau}\rightarrow X_\tau$.

All three of these functors are visibly representable by finite-type affine schemes over $E$, which we also denote by $X_{\varphi,N,\tau}$, $X_{N,\tau}$, and $X_\tau$.  Moreover, there is a left action of $\Res_{E\otimes L_0/E}G$ on $X_{\varphi,N,\tau}$ coming from changing the choice of trivializing section.  Explicitly,
\[	a\cdot (\Phi,N,\{\tau(g)\}_{g\in\Gal_{L/K}}) = (a\Phi\varphi(a)^{-1},\Ad(a)(N),\{a\tau(g)a^{-1}\}_{g\in\Gal_{L/K}})	\]
The quotient of $X_{\varphi,N,\tau}$ by this action---``forgetting the framing''---is $\mathfrak{Mod}_{\varphi,N,\tau}$.

\begin{remark}
We have assumed that $L/K$ is totally ramified, so $\Gal_{L/K}$ acts trivially on the coefficients.  Thus, when we write ``$\Ad(\tau(g))(N)$'', we literally mean the adjoint action of $\Res_{E\otimes L_0/E}G$ on its own Lie algebra, not a twisted adjoint action.
\end{remark}

Consider the coherent sheaf $\mathcal{H}^2$ on $X_{\varphi,N,\tau}$ given by the cokernel of 
\[	(\ad D_A)^{\Gal_{L/K}}\oplus(\ad D_A)^{\Gal_{L/K}}\xrightarrow{(p\Phi-1)\oplus \ad_N} (\ad D_A)^{\Gal_{L/K}}	\]
At any closed point $x\in X_{\varphi,N,\tau}$, the specialization $\mathcal{H}^2(x)$ is $\H^2(D_{A\otimes k(x)}$ from Proposition~\ref{def-phi-n-tau} which controls the obstruction theory of the corresponding $(\varphi,N,\Gal_{L/K})$-module.  Therefore, the locus in $X_{\varphi,N,\tau}$ where $\mathcal H^2$ vanishes is open, and to show that $X_{\varphi,N,\tau}$ is generically smooth it suffices to show that this locus is dense.

\begin{prop}
There is a dense open subscheme of $X_{\varphi,N,\tau}$ where $\mathcal H^2$ vanishes.
\end{prop}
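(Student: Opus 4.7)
The sheaf $\mathcal{H}$ is coherent, so the locus $\{\mathcal{H}_x = 0\}$ is open in $X_{\varphi,N,\tau}$; the content of the proposition is that this open is dense. The plan is to exhibit, on each irreducible component, a point where the specialization
\begin{equation*}
(\ad D)^{\Gal_{L/K}} \oplus (\ad D)^{\Gal_{L/K}} \xrightarrow{(p\Ad(\Phi)-1)\,\oplus\,\ad_N} (\ad D)^{\Gal_{L/K}}
\end{equation*}
is surjective. I would do this by stratifying the image of $X_{\varphi,N,\tau}$ in $X_{N,\tau}$ by the $\Res_{E\otimes L_0/E} G$-conjugacy class of $N$ and constructing, at a generic point of the top stratum of each component, a favourable $\Phi$ from the theory of associated cocharacters.

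The starting point is Corollary~\ref{fam-assoc}, applied not inside $\Res_{E\otimes L_0/E}G$ but inside the centralizer $G' := Z_{\Res_{E\otimes L_0/E}G}(\tau)^\circ$, which is reductive because $\tau$ factors through the finite group $\Gal_{L/K}$ in characteristic $0$. This produces, étale locally on the top stratum, a cocharacter $\lambda : \Gm \to G'$ associated to $N$ and in particular commuting with $\tau$. Using $\lambda$, I would take $\Phi$ to be (a twist of) $\lambda(p^{-1/2})$, after harmlessly enlarging $E$ to contain $\sqrt{p}$ and further correcting by the Frobenius permutation of the factors of $E \otimes_{\Q_p} L_0$ so that $\Phi$ commutes with $\tau$ and satisfies $N = p\Ad(\Phi)\varphi(N)$. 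The latter is forced because $N$ has $\lambda$-weight $2$, so that $\Ad(\lambda(p^{-1/2}))(N) = p^{-1} N$; the Frobenius correction is what makes this compatible with the $\varphi$-semi-linearity on $E \otimes L_0$.

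Granted such $\Phi$, $\mathfrak{sl}_2$-theory applied to the pair $(\ad D, N)$ with the $\lambda$-grading shows that $\coker(\ad_N)$ is supported in $\lambda$-weights $\leq 0$. On the weight-$i$ eigenspace, $p\Ad(\Phi)-1$ acts with eigenvalue $p^{1-i/2}-1$, which is nonzero and hence invertible for every $i \leq 0$. Therefore $(p\Ad(\Phi)-1)\oplus\ad_N$ is surjective on $\ad D$, and this surjectivity descends to $\Gal_{L/K}$-invariants because in characteristic $0$ the averaging projector exhibits the invariants as a $\Gal_{L/K}$-equivariant direct summand. This produces the desired point with $\mathcal{H}=0$, and varying $(N,\tau)$ in the top stratum of each component shows that the vanishing locus meets every component, yielding density.

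The main obstacle is the simultaneous arrangement of (i) $\lambda$ commuting with $\tau$, (ii) the semilinear Frobenius correction needed for $\Phi$ to satisfy $N = p\Ad(\Phi)\varphi(N)$ in the $\Res_{E\otimes L_0/E}G$-setting, and (iii) the correct $\lambda$-weights for $\Ad(\Phi)$ on $\ad D$. Once these three compatibilities are in place via $\lambda$, the weight-by-weight invertibility of $p\Ad(\Phi)-1$ on $\coker(\ad_N)$ is immediate, and the remaining Galois-descent step is formal.
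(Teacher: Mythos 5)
Your opening reduction (openness from coherence of $\mathcal{H}$, so the issue is density) and your computation at the distinguished point are both correct and agree with the paper's: taking $\Phi$ to be a Frobenius-twisted $\lambda(p^{-1/2})$ for $\lambda$ an associated cocharacter of $N$ inside $Z_G(\tau)^\circ$, the cokernel of $\ad_N$ sits in $\lambda$-weights $\le 0$ while $p\Ad(\Phi)-1$ is invertible off weight $2$, so the two maps jointly surject and $\H^2$ vanishes there. The gap is in passing from this one point to density. Density requires the vanishing locus to meet \emph{every} irreducible component of $X_{\varphi,N,\tau}$, and distinct irreducible components can have the same image in $X_{N,\tau}$: the fiber of $X_{\varphi,N}\rightarrow X_N$ over a fixed $\underline N\neq 0$ is a coset of $Z_G(\tau)\cap Z_G(N)$, which is in general disconnected, and a given component of $X_{\varphi,N,\tau}$ may meet that fiber only in connected components other than the one containing your $\Phi$. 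So producing one good point per stratum of the image in $X_{N,\tau}$ is not enough. The paper has to work for this: Proposition~\ref{disconn} reduces the component group of $Z_G(\tau)\cap Z_G(N)$ to that of $Z_G(\tau)\cap Z_G(\lambda)\cap Z_G(N)$, Lemma~\ref{finrep} supplies a finite-order representative $\underline c$ on each component, and a separate iteration/semisimplicity argument shows that $p\underline\Ad(\underline\Phi_0\cdot\underline c)-1$ still has kernel confined to the weight-$2$ eigenspaces. None of this is formal, and none of it appears in your sketch.

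Second, your stratification says nothing about the irreducible components contained in $\{\underline N=0\}$, and these exist: for $G=\GL_2$ the locus $\underline N=0$ is a full component, and the point $(\underline\Phi,\underline N)=(1,0)$ is not in the closure of the $\underline N\neq 0$ locus (any nearby point with $\underline N\neq 0$ satisfies $\det(1-p\underline\Ad\underline\Phi)=0$, which fails at $\underline\Phi=1$). On such a component there is no associated cocharacter to hang the construction on, and what must be shown is that every connected component of $Z_G(\tau)^{\times f}$ contains a $\underline\Phi$ with $p\underline\Ad\underline\Phi-1$ invertible on $\ad D$. The paper proves this by a separate deformation argument, moving $\underline\Phi$ within its connected component along $\lambda'(t)$ for $\lambda'$ associated to a putative nonzero element of $\ker(p\underline\Ad\underline\Phi-1)$, again with finite-order corrections. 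As written, your proposal establishes generic vanishing only on the components that happen to contain the single distinguished point over each top stratum, so it does not prove the proposition.
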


Before we begin, we remind the reader that $\Ad$ refers to a literal adjoint action of an algebraic group on its Lie algebra, while $\underline\Ad$ refers to a Frobenius-semilinear action.

\begin{proof}
We begin by extending scalars of $X_{\varphi,N,\tau}$ from $E$ to $\overline E$.  Then the $\overline{E}$-points of $X_{\varphi,N,\tau}$ correspond to triples of $f$-tuples $\underline{\Phi}=(\Phi_1,\ldots,\Phi_f)$, $\underline{N}=(N_1,\ldots,N_f)$, and $\underline{\tau}=(\tau_1,\ldots,\tau_f)$ with $\Phi_i\in G(\overline{E})$, $N_i\in\mathfrak{g}(\overline{E})$, and $\tau_i:\Gal_{L/K}\rightarrow G(\overline{E})$ a representation, which are required to satisfy
\begin{eqnarray*}
N_i&=&p\Ad(\Phi_i)(N_{i+1})	\\
N_i&=&\Ad(\tau_i(g))(N_i)	\\
\tau_i(g)\circ \Phi_i&=&\Phi_i\circ\tau_{i+1}(g)
\end{eqnarray*}
for all $i$ and all $g\in\Gal_{L/K}$. 

It suffices to show that $\mathcal H^2$ vanishes on a dense open subset of each non-empty fiber of $X_{\varphi,N,\tau}\rightarrow X_\tau$.  Moreover, the condition ``$\mathcal H^2$ vanishes at $y\in X_{\varphi,N,\tau}$'' is invariant under the action of $\Res_{E\otimes L_0/E}G$ on $X_{\varphi,N,\tau}$.  The compatibilities between $\underline{\Phi}$, $\underline{N}$, and $\underline{\tau}$ imply that if $k/\overline{E}$ is an extension of fields and the fiber over $z\in X_\tau(k)$ is non-empty, the representation $\underline{\tau}$ corresponding to $z$ has the property that the Frobenius-conjugates of the $\tau_i$ are $G(k)$-conjugate.  Thus, letting $\underline{a}=(1,\Phi_1,\Phi_1\Phi_2,\ldots)$ and replacing $(\underline\Phi,\underline N,\underline\tau)$ with $\underline{a}\cdot(\underline\Phi,\underline N,\underline\tau)$, we may assume that $\underline{\tau}=(\tau,\ldots,\tau)$ for some representation $\tau:\Gal_{L/K}\rightarrow G(k)$.

Let $X_{\varphi,N}$ denote the fiber of $X_{\varphi,N,\tau}$ over the point corresponding to $\underline{\tau}$, so that $X_{\varphi,N}$ parametrizes pairs of $f$-tuples $\underline{\Phi}$ and $\underline{N}$ such that $\Phi_i\in Z_G(\tau)$, $N_i\in \Lie Z_G(\tau)$, and $N_i=p\Ad(\Phi_i)(N_{i+1})$ for all $i$.  Let $X_N$ denote the fiber of $X_{N,\tau}$ over the point corresponding to $\underline\tau$, so that $X_N$ parametrizes $f$-tuples $\underline{N}$ with $N_i\in \Lie Z_G(\tau)$.  There is a forgetful map $X_{\varphi,N}\rightarrow X_N$, and to show that $\mathcal H^2$ vanishes generically on $X_{\varphi,N}$, it suffices to show that it vanishes on a dense subset of each non-empty fiber.  Since $\mathcal H^2$ vanishes on a Zariski-open set, it in fact suffices to find a point on each connected component of each non-empty fiber.  Note that although $Z_G(\tau)$ is reductive by~\cite[Theorem 2.2]{humphreys}, it will not generally be connected.

The compatibility between $\underline\Phi$ and $\underline N$ implies that either $N_i=0$ for all $i$ or $N_i\neq 0$ for all $i$.  We first treat the case where $N_i\neq 0$ for all $i$.  For each $N_i$, choose an associated cocharacter $\lambda_i:\mathbf{G}_m\rightarrow Z_G(\tau)^\circ$.  Then if $(\underline\Phi,\underline N)$ corresponds to a $k'$-point of $X_{\varphi,N}$ for some extension $k'/k$, we have
\[	\Ad(\lambda_i(p^{-1/2}))(N_i)= p^{-1}N_i = \Ad(\Phi_i)(N_{i+1})	\]
by the compatibility between $\underline{\Phi}$ and $\underline{N}$.  We see that if $\underline{N}$ corresponds to a point $y\in X_N$ with non-empty fiber, the Frobenius conjugates of the $N_i$ are $Z_G(\tau)(\overline{E})$-conjugate.  Letting $\underline{b}=(\Phi_1^{-1}\lambda_1(p^{-1/2}),\ldots,\Phi_f^{-1}\lambda_f(p^{-1/2}))$ and replacing $(\underline\Phi,\underline N,\underline\tau)$ with $\underline{b}\cdot(\underline\Phi,\underline N,\underline\tau)$, we may assume that $\underline{N}=(N,\ldots,N)$ for some $N\in \Lie Z_G(\tau)$.

The fiber of $X_{\varphi,N}\rightarrow X_N$ over $\underline{N}$ is a coset in $Z_G(\tau)^{\times f}$ of $\left(Z_G(N)\cap Z_G(\tau)\right)^{\times f}$.  This will generally be disconnected, even if $Z_G(\tau)$ is connected.  We will find a point on each component of the fiber over $\underline{N}$ where $\mathcal H^2$ vanishes.

Choose an associated cocharacter $\lambda:\mathbf{G}_m\rightarrow Z_G(\tau)^\circ$ for $N$, and let $\Phi_0$ denote $\lambda(p^{-1/2})$.  Let $\underline{\Phi}_0=(\Phi_0,\ldots,\Phi_0)$.  We need to analyze the maps
\[	p\underline\Ad(\underline{\Phi})_0-1:\ad D_{k'}\rightarrow \ad D_{k'}	\]
and
\[	\ad_{\underline{N}}:\ad D_{k'}\rightarrow \ad D_{k'}	\]
Here $\ad D_{k'}$ is $\mathfrak{g}_{k'}^{\times f}$, and $\underline\Ad(\underline{\Phi})$ acts by
\[	(X_1,\ldots,X_f)\mapsto (\Ad(\Phi_1)(X_2),\ldots,\Ad(\Phi_f)(X_1))	\]
and $\ad_{\underline{N}}$ acts by
\[	(X_1,\ldots,X_f)\mapsto (\ad_{N_1}(X_1),\ldots,\ad_{N_f}(X_f))	\]

Each factor $\mathfrak{g}_{k'}$ is graded by $\lambda$; $p\underline\Ad(\underline{\Phi}_0)-1$ is a semi-simple endomorphism of $\ad D_{k'}$ since it is the difference of commuting semi-simple operators.  Further, $p\underline\Ad(\underline{\Phi}_0)-1$ acts invertibly on $\mathfrak{g}_{k'}$, except on the weight-$2$ eigenspaces of the factors, where it is $0$. Therefore, the cokernel of $p\underline\Ad(\underline{\Phi}_0)-1$ is the direct sum of the weight $2$ eigenspaces of the factors.  But by the representation theory of $\mathfrak{sl}_2$, the weight $2$ part of $\mathfrak{g}_{k'}$ is in the image of $\ad_N$.  Thus, $\mathcal H^2=0$ at the point corresponding to $(\underline{\Phi}_0,\underline{N},\underline{\tau})$.

It remains to find points where $\mathcal H^2$ vanishes on any other components of the fiber of $X_{\varphi,N}\rightarrow X_{N}$ over the point corresponding to $(\underline N,\underline\tau)$.  This fiber is a torsor under the action of $\left(Z_G(\tau)\cap Z_G(N)\right)^{\times f}$, the centralizer of $\underline N$ in $\left(Z_G(\tau)\right)^{\times f}$, so connected components of the fiber correspond to connected components of $\left(Z_G(\tau)\cap Z_G(N)\right)^{\times f}$.  By Proposition~\ref{disconn}, the disconnectedness of $Z_G(\tau)\cap Z_G(N)$ is entirely accounted for by the disconnectedness of $Z_G(\tau)\cap Z_G(\lambda)\cap Z_G(N)$.  For any component of $\left(Z_G(\tau)\cap Z_G(N)\right)^{\times f}$, we may therefore choose a representative $\underline{c}:=(c_1,\ldots,c_f)$ with the $c_i\in Z_G(\tau)\cap Z_G(\lambda)\cap Z_G(N)$.  

Furthermore, by Lemma~\ref{finrep}, whose statement and proof we postpone, there is a finite-order point on the component of $c_1\cdots c_f$ of $Z_G(\tau)\cap Z_G(\lambda)\cap Z_G(N)$.  By adjusting $c_f$ by a point on the connected component of the identity $\left(Z_G(\tau)\cap Z_G(\lambda)\cap Z_G(N)\right)^\circ$, we may replace $\underline{c}$ by another point (on the same connected component) with $c_1\cdots c_f$ finite order.

We put $\underline{\Phi}:=\underline{\Phi}_0\cdot \underline{c}$, and we claim that the $k'$-linear endomorphism $p\underline\Ad(\underline{\Phi})-1$ on $\mathfrak{g}^{\times f}$ is semi-simple.  It suffices to show that the endomorphism $p\underline\Ad(\underline{\Phi})$ is semi-simple.  The $f$th iterate of this map is 
\[	(X_1,\ldots,X_f)\mapsto \left(p^f\Ad(c_1c_2\cdots c_f)\circ\Ad(\Phi_0^f)(X_1),\ldots,  \right)	\]
But $\Ad(\Phi_0^f)$ and $\Ad(c_1c_2\cdots c_f)$ are commuting semi-simple operators, because $\Phi_0=\lambda(p^{-1/2})$ and the $c_i$ were chosen to centralize $\lambda$ so the $f$th iterate of $p\Ad(\underline{\Phi})$ is semi-simple.  Since we are working in characteristic $0$, this implies that $p\Ad(\underline{\Phi})$ is semi-simple as well.

Let us consider the kernel of $p\underline\Ad(\underline{\Phi})-1$.  The operators $\Ad(\Phi_0)$ and $\Ad(c_1c_2\cdots c_f)$ can be simultaneously diagonalized, because they are commuting semi-simple operators, so to compute the kernel of $p\underline\Ad(\underline{\Phi})-1)$, it suffices to compute the kernel of the restriction of $p\underline\Ad(\underline{\Phi})-1$ to each simultaneous eigenspace.  So suppose $\underline{X}:=(X_1,\ldots,X_f)\in \ker(p\underline\Ad(\underline{\Phi})-1)$, and suppose that the $X_i$ are eigenvectors for $\Ad(\Phi_0)$.  Then $p\Ad(c_i)\circ\Ad(\Phi_0)(X_{i+1})=X_i$ for all $i$, and iterating, we have
\[	p^{f}\Ad(c_ic_{i+1}\cdots c_{i-1})\circ\Ad(\Phi_0^f)(X_i))=X_i	\]
for all $i$.  The indices are taken modulo $f$.  But $c_ic_{i+1}\cdots c_{i-1}$ is a finite order operator, say of order $n$.  Iterating the application of $(p\Ad(\underline\Phi))^f$ $n$ times, we have
\[	p^{nf}\Ad(\Phi_0^{nf})(X_i) = X_i	\]
Therefore, $X_i$ lives in the weight $2$ eigenspace of $\Phi_0$ for all $i$.

We have now seen that $p\underline\Ad(\underline\Phi)-1$ is a semi-simple endomorphism of $\ad D_{k'}$ whose kernel is the direct sum of the weight $2$ eigenspaces of the factors, while the image of $\ad_{\underline{N}}$ includes the weight $2$ -eigenspaces.  Thus, 
\[	(p\underline\Ad(\underline\Phi)-1)+\ad_{\underline N}:\ad D_{k'}\oplus \ad D_{k'}\rightarrow\ad D_{k'}	\]
is surjective, and $\mathcal H^2$ vanishes at the point corresponding to $(\underline \Phi,\underline N,\underline\tau)$.

Now suppose that the $N_i=0$ for all $i$.  Then the fiber of $X_{\varphi,N}$ over $\underline N$ is $Z_G(\tau)^{\times f}$, so we need to find a point on every connected component of $Z_G(\tau)^{\times f}$ where $\H^2$ vanishes, i.e., to find some $\underline\Phi$ such that $p\underline\Ad(\underline\Phi)-1$ acts invertibly on $\ad D_{k'}$. 

Suppose that $\underline\Phi$ does not have this property, i.e., that there is some $\underline{N}'\neq 0$ such that $(p\underline\Ad(\underline\Phi)-1)(\underline N')=0$.  We will find some $\underline\Phi'$ on the same connected component of $Z_G(\tau)^{\times f}$ such that $p\underline\Ad(\underline\Phi')-1$ acts invertibly on $\ad D_{k'}$.

Note that $\underline N'$ is nilpotent, so by the argument above there is some $\underline{b}\in Z_G(\tau)$ such that $\underline{b}\cdot (\underline\Phi,\underline N')$ has $\underline{b}\cdot\underline{N}'=(N',\ldots,N')$.  Therefore, $\underline{b}\cdot(\underline\Phi,\underline N',\underline\tau)$ is a $(\varphi,N,\Gal_{L/K})$-module with $\underline{b}\cdot\underline{N}'=(N',\ldots,N')$ non-zero.  In other words, if $\lambda':\Gm\rightarrow Z_G(\tau)^\circ$ is a cocharacter associated to $N'$, then $\underline{b}\cdot\underline\Phi \in(\lambda'(p^{-1/2}))_i\left(Z_G(\tau)\cap Z_G(N)\right)^{\times f}$.

As before, there exists 
\[	\underline{c}=(c_1,\ldots,c_f)\in \left(Z_G(\tau)\cap Z_G(\lambda')\cap Z_G(N)\right)^{\times f}	\]
with $c_1\cdots c_f$ finite order such that $\underline{b}\cdot\underline\Phi$ is on the same connected component of $(\lambda'(p^{-1/2}))_i\left(Z_G(\tau)\cap Z_G(N)\right)^{\times f}$ as $\left(\lambda'(p^{-1/2})c_1,\ldots,\lambda'(p^{-1/2})c_f\right)$.  Therefore, $\underline{\Phi}$ is on the same connected component of $Z_G(\tau)^{\times f}$ as $\underline{b}^{-1}\cdot\left(\lambda'(p^{-1/2})c_1,\ldots,\lambda'(p^{-1/2})c_f\right)$.  Since $\Gm$ is connected, this implies that $\underline{\Phi}$ is on the same connected component of $Z_G(\tau)^{\times f}$ as $\underline{b}\cdot\left(\lambda'(t)c_1,\ldots,\lambda'(t)c_f\right)$, for all $t$.

Now $\lambda':\Gm\rightarrow Z_G(\tau)^\circ$ induces a grading of $\ad D_{k'}$ (by grading each factor).  Then for any $t_0\in k'$ such that $\lambda'(t_0)$ does not have $\zeta/p$ as an eigenvalue for any $p$th power root of unity $\zeta$, $p\underline\Ad\left(\lambda'(t_0),\ldots,\lambda'(t_0)\right)-1$ acts invertibly on $\ad D_{k'}$.  We claim that $p\underline\Ad\left(\lambda'(t_0)c_1,\ldots,\lambda'(t_0)c_f\right)-1$ acts invertibly on $\ad D_{k'}$.

Indeed, if $\Ad\left(\lambda'(t_0)c_1,\ldots,\lambda'(t_0)c_f\right)$ has eigenvalue $1/p$, then the $f$th iterate, which is $k'\otimes_{\Q_p}K_0$-linear and acts by
\[	(X_1,\ldots,X_f)\mapsto \left(\Ad(c_1\cdots c_{f}\lambda'(t_0)^f)(X_1),\ldots, \Ad(c_f\cdots c_{f-1}\lambda'(t_0)^f)(X_f)\right)	\]
has eigenvalue $1/p^f$.  Suppose that $c_1\cdots c_f$ has order $n$.  Then the $fn$th iterate acts by
\[	(X_1,\ldots,X_f)\mapsto \left(\Ad(\lambda'(t_0)^{fn})(X_1),\ldots, \Ad(\lambda'(t_0)^{fn})(X_f)\right)	\]
and has eigenvalue $1/p^{fn}$, contradicting our hypothesis on $t_0$.

To summarize, $\underline\Phi$ is on the same connected component of $Z_G(\tau)^{\times f}$ as $\underline\Phi'$, where $\underline\Phi'=\underline{b}^{-1}\cdot(\lambda'(t_0)c_i)_i$, and $p\underline\Ad(\lambda'(t_0)c_i)_i-1$ acts invertibly on $\ad D_{k'}$.  Therefore, $p\underline\Ad(\underline\Phi')-1$ acts invertibly on $\ad D_{k'}$, and we are done.
\end{proof}

The proof of Lemma~\ref{finrep} is due to Peter McNamara:
\begin{lemma}\label{finrep}
Let $H$ be a (possibly disconnected) reductive group over an algebraically closed field.  On each connected component of $H$, there is a point of finite order.
\end{lemma}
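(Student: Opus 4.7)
The plan is to pick any $h \in C$ and modify it to a torsion element $th \in C$, where $t$ lies in a maximal torus $T \subset H^\circ$; the needed $t$ will come from an abelian computation on $T$.

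First I would arrange that $h$ normalizes a maximal torus $T$ of $H^\circ$. Since $H^\circ \trianglelefteq H$, conjugation by $h$ is an automorphism of $H^\circ$, so $hTh^{-1}$ is another maximal torus of $H^\circ$; by the conjugacy of maximal tori in the connected group $H^\circ$, some $g \in H^\circ$ satisfies $(gh)T(gh)^{-1} = T$, and I may replace $h$ by $gh \in C$. Let $\sigma: T \to T$ denote $t \mapsto hth^{-1}$. Since $H/H^\circ$ and the Weyl group $N_{H^\circ}(T)/T$ are both finite, some power $t_0 := h^m$ lies in $T$; then $\sigma^m = \Ad(t_0)|_T = \mathrm{id}_T$, so $\sigma$ is an algebraic automorphism of $T$ of order dividing $m$.

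The key calculation, using the relation $ht = \sigma(t)h$ and induction, is
\[
(th)^m = t \cdot \sigma(t) \cdot \sigma^2(t) \cdots \sigma^{m-1}(t) \cdot t_0 = \phi(t) \cdot t_0,
\]
where $\phi(t) := \prod_{i=0}^{m-1} \sigma^i(t)$ defines a homomorphism $\phi: T \to T$ (because $T$ is abelian). It therefore suffices to solve $\phi(t) = t_0^{-1}$. On the cocharacter lattice tensored with $\Q$, the map $\phi$ acts as $1 + \sigma + \sigma^2 + \cdots + \sigma^{m-1}$, which is $m$ times the projector onto the $\sigma$-invariants since $\sigma^m = 1$; hence the image of $\phi: T \to T$ is exactly the connected fixed-point subtorus $T^{\sigma,\circ}$.

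The main obstacle is that $t_0$ only obviously lies in $T^\sigma$ — one computes $\sigma(t_0) = h \cdot h^m \cdot h^{-1} = t_0$ — and need not lie in the identity component $T^{\sigma,\circ}$. I would fix this by replacing $m$ with $km$, where $k$ is the order of the (necessarily finite) component group of $T^\sigma$; then $h^{km} = t_0^k \in T^{\sigma,\circ}$, the order of $\sigma$ still divides $km$, and the argument above with $m$ replaced by $km$ produces $t \in T$ solving $\phi_{km}(t) = t_0^{-k}$. Then $(th)^{km} = 1$, so $th \in C$ is the desired point of finite order.
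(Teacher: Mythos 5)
Your proof is correct, but it takes a genuinely different route from the paper's. The paper reduces to the component of $g$ inside the center $Z_{Z_H(g)}$ of its centralizer, which is a commutative group; it then decomposes that group as (semisimple) $\times$ (unipotent), discards the connected unipotent part, and splits the extension $1\to M^0\to M\to M/M^0\to 1$ of the finite component group by the torus $M^0$ using divisibility of tori. You instead normalize a maximal torus $T\subset H^\circ$ by a representative $h$ of the component, and solve the twisted norm equation $\phi(t)=t_0^{-k}$ on $T$, identifying the image of $\phi=\prod_i \sigma^i$ with $T^{\sigma,\circ}$ via the projector $\tfrac{1}{m}(1+\sigma+\cdots+\sigma^{m-1})$ on $X_\ast(T)\otimes\Q$. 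The two arguments hit the same essential obstruction in different guises: your passage from $T^\sigma$ to $T^{\sigma,\circ}$ by raising to the power $k=|T^\sigma/T^{\sigma,\circ}|$ plays exactly the role of the paper's splitting of $M\to M/M^0$. What your version buys is that it bypasses the structure theory of commutative algebraic groups (the $M\times U$ decomposition, and the claim that $U$ is connected) in favor of conjugacy of maximal tori and an explicit cocharacter computation; the paper's version buys a shorter reduction by making everything commutative at the outset. All the individual steps you use check out: $(th)^m=\phi(t)\,h^m$ by the stated induction, $\sigma^m=\Ad(t_0)|_T=\mathrm{id}$ because $T$ is abelian, $t_0\in T^\sigma$, surjectivity of $\phi$ onto the subtorus $T^{\sigma,\circ}$ on points over an algebraically closed field, and $th$ lying in the original component since $t\in H^\circ$ and $H^\circ$ is normal.
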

\begin{proof}
Choose a component $gH^\circ$.  To produce a finite order point on $gH^\circ$, it suffices to produce a finite order point on the component of $g$ in the center $Z_{Z_H(g)}$ of the centralizer $Z_H(g)$.  But $Z_{Z_H(g)}$ is commutative, so we have a decomposition $Z_{Z_H(g)}=M\times U$, where $M$ consists of semisimple elements of $Z_{Z_H(g)}$ and $U$ consists of unipotent elements of $Z_{Z_H(g)}$, by~\cite[Theorem 15.5]{humphreys-lag}.  Since $U$ is a unipotent group, it is connected, and so it suffices to produce a finite order point on each component of $M$.  Now the connected component of the identity $M^0\subset M$ is a torus, and we claim that the exact sequence 
\[	0\rightarrow M^0\rightarrow M\rightarrow M/M^0\rightarrow 0	\]
is split.  Since $M/M^0$ is abelian, we may assume it is cyclic.  Let $x\in M/M^0$ be a generator, and let $n$ be its order.  Choose any lift $\widetilde{x}\in M$ of $x$.  If $n\widetilde{x}\in M^0$ is the identity, we are done.  Otherwise, note that multiplication $n:M^0\rightarrow M^0$ is surjective; if $y\in M^0$ is in the preimage of $n\widetilde{x}$ under multiplication by $n$, then $y^{-1}\widetilde{x}\in M$ is a lift of $x$ such that $ny^{-1}\widetilde{x}$ is the identity, and we have our desired splitting.
\end{proof}

We are now in a position to deduce the first part of Theorem~\ref{x-phi-n-tau-sings}:
\begin{cor}
$X_{\varphi,N,\tau}$ is reduced and locally a complete intersection.  Each irreducible component has dimension $\dim \Res_{E\otimes L_0/E}G$.
\end{cor}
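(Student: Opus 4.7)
The plan is to deduce the corollary formally from the preceding proposition together with the tangent-obstruction theory of Section~\ref{deformations}. The preceding proposition exhibits a dense open $U \subset X_{\varphi,N,\tau}$ on which $\H^2(D_x)$ vanishes, so by Proposition~\ref{def-phi-n-tau} the moduli problem is formally smooth on $U$, which is therefore smooth, and in particular reduced, over $\Spec E$.

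Next, I would compute the dimension from the Euler characteristic of the complex $C^\bullet$. Its terms are $(\ad D)^{\Gal_{L/K}}$, $(\ad D)^{\Gal_{L/K}} \oplus (\ad D)^{\Gal_{L/K}}$, and $(\ad D)^{\Gal_{L/K}}$, so $\chi(C^\bullet)=0$ and $\dim\H^1(D_x) - \dim\H^0(D_x) = \dim\H^2(D_x)$ at every point. The tangent space of $X_{\varphi,N,\tau}$ at $x$ decomposes as the tangent to the $\Res_{E\otimes L_0/E}G$-orbit (whose stabilizer has Lie algebra $\H^0$) plus a transverse slice of dimension $\dim\H^1$, giving
\[
\dim T_x X_{\varphi,N,\tau} = \dim\Res_{E\otimes L_0/E}G + \dim\H^2(D_x).
\]
On $U$ this equals $\dim\Res_{E\otimes L_0/E}G$, so every irreducible component of $X_{\varphi,N,\tau}$ has generic dimension $\dim\Res_{E\otimes L_0/E}G$.

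For the complete intersection property, at an arbitrary point $x$ I would use the obstruction-theoretic presentation from Proposition~\ref{def-phi-n-tau} to write
\[
\widehat{\O}_{X_{\varphi,N,\tau},x} \cong E[[T_1,\ldots,T_n]]/(f_1,\ldots,f_r)
\]
with $n = \dim\Res_{E\otimes L_0/E}G + \dim\H^2(D_x)$ and $r \leq \dim\H^2(D_x)$. Krull's Hauptidealsatz gives $\dim_x X_{\varphi,N,\tau} \geq n - r \geq \dim\Res_{E\otimes L_0/E}G$, while the dimension bound of the previous paragraph gives the reverse inequality. Equality then forces $r=\dim\H^2(D_x)$ and makes $(f_1,\ldots,f_r)$ a regular sequence, yielding the local complete intersection property and simultaneously pinning every irreducible component to dimension exactly $\dim\Res_{E\otimes L_0/E}G$. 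Reducedness follows: $X_{\varphi,N,\tau}$ is Cohen--Macaulay (being locally a complete intersection) and generically reduced (by the first paragraph), hence reduced.

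The genuine mathematical content was already in the preceding proposition; the corollary is a formal assembly of Euler characteristics, obstruction-theoretic presentations, and the Hauptidealsatz. The most delicate bookkeeping step is identifying $n=\dim\Res_{E\otimes L_0/E}G + \dim\H^2$ as the embedding dimension of the \emph{scheme} $X_{\varphi,N,\tau}$, rather than of the stack $\mathfrak{Mod}_{\varphi,N,\tau}$, which requires adding back the trivialization data and invoking the vanishing Euler characteristic identity.
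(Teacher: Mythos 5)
Your proof is correct, and it follows the same overall skeleton as the paper's: the dense $\H^2=0$ locus gives smoothness and the dimension upper bound on every component, a presentation by (codimension)-many equations gives the lower bound and the complete intersection property, and Cohen--Macaulay plus generically reduced yields reduced. The one genuine difference is how the presentation is produced. The paper does it globally and explicitly: after an \'etale localization over $X_\tau$ trivializing the $Z_{\Res_{E\otimes L_0/E}G}(\tau_0)$-torsor, it identifies $X_{\varphi,N,\tau}$ with $U\times X_{\varphi,N}$, where $X_{\varphi,N}$ is cut out of the smooth variety $Z_G(\tau_0)^{\times f}\times \Lie Z_G(\tau_0)^{\times f}$ by the $[L_0:\Q_p]\dim Z_G(\tau_0)$ equations $N_i=p\Ad(\Phi_i)(N_{i+1})$; the number of equations is uniform over each component of $X_\tau$. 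You instead work formally at each point, presenting $\widehat\O_{X,x}$ as $E[[T_1,\dots,T_n]]/(f_1,\dots,f_r)$ with $n=\dim T_xX$ and $r\le\dim\H^2(D_x)$, and you compute $n=\dim\Res_{E\otimes L_0/E}G+\dim\H^2(D_x)$ via $\chi(C^\bullet)=0$; here the number of relations varies with $x$. Your route is softer and would apply even where no explicit global equations are visible, but it does lean on the bookkeeping you flag: that the framed functor is pro-represented by $\widehat\O_{X,x}$ with tangent space $Z^1(C^\bullet)$ enlarged by the unobstructed $\tau$-directions, and that the only obstruction is the class of $N-p\underline\Ad(\Phi)(N)$ in $\H^2$ (lifting $\tau$, $\Phi$, and $N$ separately being unobstructed by the lemmas of Section~\ref{deformations}); both points check out. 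The paper's explicit equations buy a concrete smooth ambient scheme, which it reuses in the later explicit calculations, whereas your argument is the more portable one.
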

\begin{proof}
We again extend scalars on $X_{\varphi,N,\tau}$ from $E$ to $\overline{E}$.
We then choose an $\overline{E}$-point $x\in X_{\varphi,N,\tau}$ where $\mathcal H^2$ vanishes.  The tangent space at this point is given by $\ker\left(\ad D_{\overline{E}}\oplus \ad D_{\overline{E}}\rightarrow \ad D_{\overline{E}}\right)$ by Remark~\ref{def-thy-framed} (since we are working with a moduli space of framed objects), and since $\mathcal H^2$ vanishes, the tangent space has dimension $\dim_{\overline E}\ad D_{\overline{E}} = [L_0:\Q_p]\dim G$.  Since this implies that there is a Zariski open dense subspace of $X_{\varphi,N,\tau}$ with dimension $[L_0:\Q_p]\dim G$ and $X_{\varphi,N,\tau}$ is a scheme of finite type over a field, we see that $X_{\varphi,N,\tau}$ is equidimensional of dimension $[L_0:\Q_p]\dim G$.  Indeed, since the dimension of a scheme is insensitive to nilpotents, we may pass to the underlying reduced subscheme of $X_{\varphi,N,\tau}$ and compute the dimension of each irreducible component.  The dimension of a variety is equal to the transcendence degree of its function field, and since each irreducible component has a Zariski open dense subspace of dimension $[L_0:\Q_p]\dim G$, we are done.

Next, observe that $X_\tau$ is the disjoint union of smooth schemes of the form 
\[	\left(\Res_{E\otimes L_0/E}G\right)/\left(Z_{\Res_{E\otimes L_0/E}G}(\tau_0)\right)	\]
where $\tau_0:\Gal_{L/K}\rightarrow G(\overline E)$ is a representation of $\Gal_{L/K}$.

Over a diagonal point $\tau=(\tau_0,\ldots,\tau_0)\in X_\tau$, the fiber $X_{\varphi,N}$ is defined by the relations $N_i=p\Ad(\Phi_i)(N_{i+1})$, where $N_i=\Ad(\tau_0(g))(N_i)$ and $\tau_0(g)=\Ad(\Phi_i)(\tau_{0}(g))$. Thus, $\Phi_i$ and $N_i$ are required to live in $Z_G(\tau_0)$ and its Lie algebra, respectively.  Then the condition
\[	N_i=p\Ad(\Phi_i)(N_{i+1})	\]
gives us $\dim Z_G(\tau_0)$ equations, so the fiber $X_{\varphi,N}$ is cut out of the smooth $2[L_0:\Q_p]\dim Z_G(\tau_0)$-dimensional space $Z_G(\tau_0)^{\times f}\times \ad Z_G(\tau_0)^{\times f}$ by $[L_0:\Q_p]\dim Z_G(\tau_0)$ equations.

Now for any $\tau'\in X_\tau$ on the same component as $\tau$, there is some \'etale neighborhood $U$ of $\tau'$ such that the quotient map $\Res_{E\otimes L_0/E}G \rightarrow \Res_{E\otimes L_0/E}G/Z_{\Res_{E\otimes L_0/E}G}(\tau)$ admits a section $g\in \Res_{E\otimes L_0/E}G(U)$, since $Z_{\Res_{E\otimes L_0/E}G}(\tau)$ is smooth.  Therefore, the $U$-pullback $X_{\varphi,N,\tau}|_U$  of $X_{\varphi,N,\tau}$ is isomorphic to $U\times X_{\varphi,N}$ and has dimension $[L_0:\Q_p]\dim G$.  But $U\times X_{\varphi,N}$ is cut out of the smooth $[L_0:\Q_p](\dim G-\dim Z_G(\tau_0))+2[L_0:\Q_p]\dim Z_G(\tau_0)$-dimensional space $U\times Z_G(\tau_0)^{\times f}\times \ad Z_G(\tau_0)^{\times f}$ by $[L_0:\Q_p]\dim Z_G(\tau_0)$ equations, so it is locally a complete intersection.

Since being locally a complete intersection can be checked \'etale-locally, it follows that $X_{\varphi,N,\tau}$ is locally a complete intersection.  Furthermore, schemes which are local complete intersections are Cohen--Macaulay by~\cite[Theorem 21.3]{matsumura}.  Cohen--Macaulay schemes which are generically reduced are reduced everywhere, since they have no embedded points, by~\cite[Theorem 17.3]{matsumura}, so we are done.
\end{proof}

Thus far, we have considered moduli spaces of $(\varphi,N,\Gal_{L/K})$-modules, rather than moduli spaces of filtered $(\varphi,N,\Gal_{L/K})$-modules.  We now add a filtration to our set-up and complete the proof of Theorem~\ref{x-phi-n-tau-sings}.  

Fix a conjugacy class $[\mathbf{v}]$ of cocharacters $\mathbf{v}:\Gm\rightarrow (\Res_{E\otimes K/E}G)_{\overline{E}}$ with a representative defined over $E$.  Let $P_\mathbf{v}$ denote the parabolic $P_{\Res_{E\otimes K/E}G}(\mathbf{v})\subset \Res_{E\otimes K/E}G$ for some such representative $\mathbf{v}$.  Then $\Res_{E\otimes K/E}G/P_\mathbf{v}$ represents the moduli problem on $E$-algebras which is defined on $A$-points by
\[	A\mapsto \{\otimes\text{-filtrations of type }\mathbf{v}\text{ on }(\Res_{E\otimes K/E}G)_A\}	\]
Indeed, given an $A$-point of $\Res_{E\otimes K/E}G/P_\mathbf{v}$ where $A$ is an $E$-algebra, we obtain a family $\mathcal{P}\rightarrow \Spec (A\otimes K)$ of parabolic subgroups, such that for every geometric point $x$ of $\Spec (A\otimes K)$, $\mathcal{P}_x$ is conjugate to $P_{\Res_{E\otimes K/E}G}(\mathbf{v})$.  \'Etale-locally on $\Spec (A\otimes K)$, $\mathcal{P}=gP_{\mathbf{v}}g^{-1}$ for some $g\in G(A\otimes K)$ since the morphism $\Res_{E\otimes K/E}G\rightarrow\Res_{E\otimes K/E}G/P_\mathbf{v}$ is smooth.  Thus, \'etale-locally on $\Spec (A\otimes E)$, we get a $\otimes$-filtration on $(\Res_{E\otimes K/E}G)_A$, by taking the $\otimes$-filtration associated to the $\otimes$-grading induced by $g\lambda g^{-1}$.  Since $g$ is defined up to translation by an element of $P_{\mathbf{v}}(A)$ (since parabolic subgroups are their own normalizers) and such elements preserve the $\otimes$-filtration, we in fact get a $\otimes$-filtration on $(\Res_{E\otimes K/E}G)_A$ over all of $\Spec A$.

Thus, $X_{\varphi,N,\tau}\times \Res_{E\otimes K/E}G/P_\mathbf{v}$ is the moduli space of framed filtered $(\varphi,N,\tau)$-modules valued in $G$; it is locally a complete intersection because $X_{\varphi,N,\tau}$ is and $\Res_{E\otimes K/E}G/P_\mathbf{v}$ is smooth.

\section{Galois deformation rings}\label{section-def-rings}

Fix a continuous potentially semi-stable representation $\rho:\Gal_K\rightarrow G(E)$, with Galois type $\tau$ and $p$-adic Hodge $\mathbf{v}$, and assume that $\rho$ becomes semi-stable over a finite, Galois, totally ramified extension $L/K$.  We wish to study potentially semi-stable lifts $\tilde\rho:\Gal_K\rightarrow G(R)$, where $R$ is a $\Q_p$-finite artin local ring with residue field $E$.  More precisely, we consider the deformation functor $\Def_\rho^\square$ whose $R$-points are
\begin{equation*}
\begin{split}
\Def_\rho^{\square}(R):=\{\tilde\rho:\Gal_K\rightarrow G(R)| \tilde\rho\text{ is a lift of }\rho\}
\end{split}
\end{equation*}
Following~\cite{kisin}, we will show that $\Def_\rho^\square$ is pro-represented by a complete local noetherian $\Q_p$-algebra $R_\rho^\square$ which is reduced and equidimensional, by relating it to $\mathfrak{Mod}_{F,\varphi,N,\tau}$.  This will prove Theorem~\ref{galois-def-rings}.

We also define the deformation groupoid $\Def_\rho^{\tau,\mathbf{v}}$ whose $R$-points are
\begin{equation*}
\begin{split}
\Def_\rho^{\tau,\mathbf{v}}(R):=\{\widetilde\rho:\Gal_K\rightarrow G(R)| \widetilde\rho\otimes_RE\cong\rho\text{ and }\widetilde\rho\text{ is potentially semi-stable}\\ \text{with Galois type }\tau\text{ and }p\text{-adic Hodge type }\mathbf{v}\}
\end{split}
\end{equation*}
There is a natural morphism of groupoids $\Def_\rho^{\square,\tau,\mathbf{v}}\rightarrow\Def_\rho^{\tau,\mathbf{v}}$ given by ``forgetting the basis''.  There is also an associated functor $|\Def_\rho^{\tau,\mathbf{v}}|$ whose $R$-points are defined by
\[	|\Def_\rho^{\tau,\mathbf{v}}|(R):=\Def_\rho^{\tau,\mathbf{v}}(R)/\sim	\]

Then $\Def_\rho^{\square,\tau,\mathbf{v}}\rightarrow\Def_\rho^{\tau,\mathbf{v}}$ is formally smooth in the sense that for every square-zero thickening $R\rightarrow R/I$, 
given a deformation $\rho':\Gal_K\rightarrow G(R)$ of $\rho$ and a lift $\widetilde\rho:\Gal_K\rightarrow G(R/I)$ of $\rho$ such that $\rho'\otimes_RR/I\cong\widetilde\rho$, there is a lift $\widetilde\rho':\Gal_K\rightarrow G(R)$ of $\rho$ such that $\widetilde\rho'\cong\widetilde\rho$.  

Moreover, the fibers of the map $|\Def_\rho^{\square,\tau,\mathbf{v}}|(E[\varepsilon])\rightarrow|\Def_\rho^{\tau,\mathbf{v}}|(E[\varepsilon])$ are torsors under $\ad G/(\ad G)^{\Gal_K}$.  More precisely, $g\in\ad G$ (an element of $G(E[\varepsilon])$ which is the identity modulo $\varepsilon$) acts by conjugation on representations $\rho:\Gal_K\rightarrow G(E[\varepsilon])$, and $g\in\ad G$ acts trivially if and only if $g\in(\ad G)^{\Gal_K}$.  Thus,
\begin{equation}\label{tgt-space-pst}
\dim_E|\Def_\rho^{\square,\tau,\mathbf{v}}|(E[\varepsilon]) = \dim_E|\Def_\rho^{\tau,\mathbf{v}}|(E[\varepsilon]) + \dim_E\ad G - \dim_E(\ad G)^{\Gal_K}
\end{equation}

Fix a faithful representation $\sigma:G\rightarrow \GL_n$.  Then $\sigma\circ\rho$ is potentially semi-stable, with Galois type $\sigma\circ\tau$ and $p$-adic Hodge type the (geometric) conjugacy class of $\sigma\circ\mathbf{v}$.  More precisely, $\tau$ is a homomorphism $\Gal_{L/K}=I_{L/K}\rightarrow\Aut(\D_{\st}^L(\rho))$, and by $\sigma\circ\tau$ we really mean the homomorphism $I_{L/K}\rightarrow GL_n(E\otimes_{\Q_p}L)$ induced by pushing out $\D_{\st}^L(\rho)$ along $\sigma$.  To interpret $\sigma\circ\mathbf{v}$, we choose a representative cocharacter for the conjugacy class $\mathbf{v}$, compose with $\sigma$, and consider the corresponding conjugacy class of cocharacters $\Gm\rightarrow\Res_{E\otimes L/E}\GL_n$.  Then we define two deformation functors:
\begin{equation*}
\begin{split}
\Def_{\sigma\circ\rho}^\square(R):=\{\tilde\rho':\Gal_K\rightarrow\GL_n(R)| \tilde\rho'\text{ is a lift of }\sigma\circ\rho\}
\end{split}
\end{equation*}
\begin{equation*}
\begin{split}
\Def_{\sigma\circ\rho}^{\square,\tau,\mathbf{v}}(R):=\{\tilde\rho':\Gal_K\rightarrow \GL_n(R)| \tilde\rho'\text{ is a potentially semi-table lift of }\sigma\circ\rho \\ 
\text{ with Galois type }\sigma\circ\tau\text{ and }p\text{-adic Hodge type }\sigma\circ\mathbf{v}\}
\end{split}
\end{equation*}
The pro-representability of $\Def_{\sigma\circ\rho}^\square$ follows from Schlessinger's criterion; this is discussed in~\cite{mazur}, for example.  The pro-representability of $\Def_{\sigma\circ\rho}^{\square,\tau,\mathbf{v}}$ is a deeper fact, and the proof relies on integral $p$-adic Hodge theory.

Choose an $\mathscr{O}_E$-model for $\sigma\circ\rho$, and let $V_\F$ be the reduction modulo $\pi$, where $\pi$ is a uniformizer of $\mathscr{O}_E$.  Then there is a complete local noetherian $W(k_E)$-algebra $R_{V_\F}^{\square}$ which prorepresents framed deformations of $V_\F$ (on the category of artinian $W(k_E)$-algebras).  Here $k_E$ is the residue field of $\mathscr{O}_E$.  This again follows from Schlessinger's criterion.  

We can associate to $R_{V_\F}^{\square}$ its generic fiber, which will be a (non-quasi-compact) quasi-Stein rigid analytic space $X_{V_\F}^\square$ over $W(k_E)[1/p]$.  Then $\sigma\circ\rho$ corresponds to a point of $X_{V_\F}^\square$, and for every $x\in X_{V_\F}^\square$, the complete local ring at $x$ is the framed deformation ring of the corresponding characteristic $0$ Galois representation $V_x$ by~\cite[Proposition 2.3.5]{kisin-bt} or the discussion in~\cite[\textsection6]{conrad-char0}.

We define two closed subspaces of $X_{V_\F}^\square$.  The first, which we denote $X_{V_\F,G}^\square$, is the subspace consisting of Galois representations such that the image of $\Gal_K$ in $\GL_n(R)$ is contained in $G(R)\subset\GL_n(R)$.  The second, which we denote $X_{V_\F,\st}^{\square,\tau,\mathbf{v}}$, is the subspace consisting of potentially semi-stable representations with Galois type $\sigma\circ\tau$ and $p$-adic Hodge type $[\sigma\circ\mathbf{v}]$.  The existence of $X_{V_\F,\st}^{\square,\tau,\mathbf{v}}$ follows from~\cite[Theorem 2.7.6]{kisin}.

Then the point $x_0$ corresponding to $\sigma\circ\rho$ lies in $X_{V_\F,G}^\square\cap X_{V_\F,\st}^{\square,\tau,\mathbf{v}}$, by construction.  Furthermore, the complete local ring of $X_{V_\F,G}^\square\cap X_{V_\F,\st}^{\square,\tau,\mathbf{v}}$ at $x_0$ (which is noetherian) pro-represents $\Def_\rho^{\square,\tau,\mathbf{v}}$.  The reader may object that the conjugacy class of $\sigma\circ\mathbf{v}$ may ``glue together'' several different conjugacy classes of cocharacters $\Gm\rightarrow \Res_{E\otimes_{\Q_p}L/E}G$.  This is quite true.  However, $p$-adic Hodge types are locally constant, by the discussion in~\textsection\ref{tann-filt}, and so the entire connected component of $X_{V_\F,G}^\square\cap X_{V_\F,\st}^{\square,\tau,\mathbf{v}}$ containing $x_0$ has $p$-adic Hodge type $\mathbf{v}$.

Let $\Sp(A)\subset X_{V_\F,G}^\square\cap X_{V_\F,\st}^{\square,\tau,\mathbf{v}}$ be a connected affinoid subdomain containing $x_0$.  By Appendix~\ref{tann-fam-pst}, $\Spec A$ carries a $G$-valued family of filtered $(\varphi,N,\Gal_{L/K})$-modules, and therefore defines an $A$-valued point of the groupoid $\mathfrak{Mod}_{F,\varphi,N,\tau}$.  Identifying $A$ with the groupoid on $E$-algebras it represents, we view this as a morphism of groupoids $A\rightarrow \mathfrak{Mod}_{F,\varphi,N,\tau}$.

\begin{prop}\label{gal-rep-form-sm}
For every maximal ideal $\mathfrak{m}$ of $A$, corresponding to a Galois representation $\rho:\Gal_K\rightarrow G(E')$, the morphism of groupoids $\widehat A_\mathfrak{m}\rightarrow \mathfrak{Mod}_{F,\varphi,N,\tau}$ is formally smooth.
\end{prop}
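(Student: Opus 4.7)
The plan is to unpack what formal smoothness means in this situation and then reduce it to the equivalence of Proposition~\ref{def-reps} combined with the torsor-lifting lemmas of Section~\ref{deformations}. Let $R \twoheadrightarrow R/I$ be a square-zero thickening of Artin local $E$-algebras with residue field $E'$, let $f : \widehat{A}_\mathfrak{m} \to R/I$ be an $E$-algebra map, corresponding by pro-representability of $\Def_\rho^{\square,\tau,\mathbf{v}}$ to a framed potentially semi-stable deformation $\tilde\rho_{R/I} : \Gal_K \to G(R/I)$ of $\rho$, and suppose we are given a lift of the composite $\widehat{A}_\mathfrak{m} \to R/I \to \mathfrak{Mod}_{F,\varphi,N,\tau}$ in the form of an object $D_R \in \mathfrak{Mod}_{F,\varphi,N,\tau}(R)$ equipped with an isomorphism $\iota : D_R \otimes_R R/I \xrightarrow{\sim} D_{R/I}$, where $D_{R/I} := \D_{\st}^L(\tilde\rho_{R/I})$. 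The task is to produce a map $\widehat{A}_\mathfrak{m} \to R$ lifting $f$ whose induced object in $\mathfrak{Mod}_{F,\varphi,N,\tau}(R)$ is $2$-isomorphic to $(D_R, \iota)$.

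First, I would apply Proposition~\ref{def-reps} to turn $D_R$ into an unframed potentially semi-stable deformation $\tilde\rho' : \Gal_K \to \Aut_G(Y)(R)$ of $\rho$, for some $G$-torsor $Y$ over $R$, with $\D_{\st}^L(\tilde\rho') \cong D_R$. Under this equivalence $\iota$ becomes an isomorphism $\alpha : \tilde\rho' \otimes_R R/I \xrightarrow{\sim} \tilde\rho_{R/I}$ of unframed deformations over $R/I$; the Galois type $\tau$ and Hodge type $\mathbf{v}$ of $\tilde\rho'$ are forced since $D_R$ lives in $\mathfrak{Mod}_{F,\varphi,N,\tau}$.

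It remains to promote $\tilde\rho'$ to a \emph{framed} lift extending $\tilde\rho_{R/I}$. Since $\tilde\rho_{R/I}$ is valued in $G(R/I)$, its underlying torsor is the trivial torsor with its standard trivialization, so $\alpha$ exhibits $Y \otimes_R R/I$ as trivial with a distinguished trivialization. By Lemma~\ref{torsor-def}, $Y$ itself is trivial, and by Lemma~\ref{isom-def} the trivialization of $Y \otimes_R R/I$ lifts to a trivialization of $Y$; by adjusting the lift by an infinitesimal element of $G(R)$, we may arrange the lift to reduce to exactly the given trivialization rather than merely an isomorphic one. This promotes $\tilde\rho'$ to a framed representation $\tilde\rho : \Gal_K \to G(R)$ with $\tilde\rho \otimes_R R/I = \tilde\rho_{R/I}$, and consequently to a map $\widehat{A}_\mathfrak{m} \to R$ whose reduction is $f$ and whose associated filtered $(\varphi,N,\Gal_{L/K})$-module is identified with $D_R$ via the isomorphism transported from $\alpha$. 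There is no substantive obstacle here: all of the real work is packaged in Proposition~\ref{def-reps} and the torsor/isomorphism lifting lemmas, and the remaining content is simply that framings (i.e., trivializations of $G$-torsors) lift freely over square-zero thickenings of Artin local rings.
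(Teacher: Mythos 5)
Your argument is correct and is essentially the argument the paper invokes: the paper's proof simply cites \cite[Proposition 3.3.1]{kisin}, and that proof proceeds exactly as you do, converting the module-theoretic lift to an unframed Galois deformation via the equivalence of Proposition~\ref{def-reps} (using that deformations of admissible modules remain admissible) and then lifting the framing across the square-zero thickening. The only point worth making explicit is that Lemma~\ref{isom-def} already produces a trivialization of $Y$ reducing \emph{exactly} to the given one, so no further adjustment is needed.
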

\begin{proof}
The proof of~\cite[Proposition 3.3.1]{kisin} carries over verbatim here.
\end{proof}

\begin{cor}
For any $x\in X_{V_\F,G}^\square\cap X_{V_\F,\st}^{\square,\tau,\mathbf{v}}$, let $\widehat A_x$ be the completed local ring of $X_{V_\F,G}^\square\cap X_{V_\F,\st}^{\square,\tau,\mathbf{v}}$ at $x$.  Then there is an object $D_x\in\mathfrak{Mod}_{F,\varphi,N,\tau}(\widehat A_x)$.  If $U\subset \Spec(\widehat A_x)$ is the complement of the support of $\H^2(D_x)$, then $U$ is dense in $\Spec(\widehat A_x)$.
\end{cor}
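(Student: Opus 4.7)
The statement has two parts: the existence of $D_x$ and the density of $U$.

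For the existence of $D_x$, choose a connected affinoid subdomain $\Sp(A) \subset X_{V_\F,G}^\square \cap X_{V_\F,\st}^{\square,\tau,\mathbf{v}}$ containing $x$, just as in the setup for the previous proposition. By Appendix~\ref{tann-fam-pst}, the universal potentially semi-stable Galois representation on $\Sp(A)$ produces a $G$-valued family of filtered $(\varphi,N,\Gal_{L/K})$-modules over $\Spec A$, that is, an $A$-point of $\mathfrak{Mod}_{F,\varphi,N,\tau}$. Pulling back along the natural map $\Spec \widehat A_x \to \Spec A$ (factoring through the localization of $A$ at the maximal ideal corresponding to $x$) yields the desired $D_x \in \mathfrak{Mod}_{F,\varphi,N,\tau}(\widehat A_x)$.

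For the density of $U$, the previous proposition gives formal smoothness of the induced map of groupoids $\Spec \widehat A_x \to \mathfrak{Mod}_{F,\varphi,N,\tau}$.  The framed moduli scheme $X_{F,\varphi,N,\tau} := X_{\varphi,N,\tau} \times (\Res_{E\otimes K/E}G)/P_\mathbf{v}$ of Section~\ref{reducedness} is a smooth atlas for $\mathfrak{Mod}_{F,\varphi,N,\tau}$, the covering map being a torsor under $\Res_{E\otimes L_0/E}G$ that encodes the choice of trivializing section.  Form the fiber product
\[ Z := \Spec \widehat A_x \times_{\mathfrak{Mod}_{F,\varphi,N,\tau}} X_{F,\varphi,N,\tau}. \]
Base change makes $Z \to \Spec \widehat A_x$ a smooth $\Res_{E\otimes L_0/E}G$-torsor and $Z \to X_{F,\varphi,N,\tau}$ formally smooth.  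Since the underlying torsor of $D_E$ is split by construction, the special fiber of $Z \to \Spec \widehat A_x$ is trivial; combined with smoothness of $\Res_{E\otimes L_0/E}G$ and the fact that $\widehat A_x$ is henselian, this trivialization lifts, so $Z \cong \Spec \widehat A_x \times_E \Res_{E\otimes L_0/E}G$.

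By Section~\ref{reducedness}, the locus in $X_{F,\varphi,N,\tau}$ where $\H^2$ vanishes is open and dense; its preimage in $Z$ under the formally smooth (hence flat) map $Z \to X_{F,\varphi,N,\tau}$ is therefore open and dense in $Z$.  On the other hand, the sheaf $\H^2$ on $Z$ is pulled back from $\H^2(D_x)$ on $\Spec \widehat A_x$ along the smooth projection, so its vanishing locus is $U \times_E \Res_{E\otimes L_0/E}G$; since $\Res_{E\otimes L_0/E}G$ is integral, density of this product in $Z$ forces density of $U$ in $\Spec \widehat A_x$.  The main obstacle is the bookkeeping required to pass from the groupoid-level formal smoothness to a scheme-level flatness statement usable for density transport, which is handled precisely by trivializing the framing torsor on the complete local base as above.
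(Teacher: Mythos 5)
Your strategy is essentially the paper's: produce the $A$-point of $\mathfrak{Mod}_{F,\varphi,N,\tau}$ from the affinoid neighbourhood, pull it back to $\widehat A_x$, pass to the framed moduli scheme $X_{\varphi,N,\tau}\times(\Res_{E\otimes K/E}G)/P_{\mathbf{v}}$ via the fiber product with the smooth atlas, and transport the density of the $\H^2=0$ locus back along a (formally smooth, hence flat) map using the fact that $\H^2$ only depends on the unframed object. This is exactly the content of Kisin's Proposition 3.1.6, which the paper invokes. However, there is one step that does not hold as written: the assertion that ``the underlying torsor of $D_E$ is split by construction,'' which you use to conclude that the $\Res_{E\otimes L_0/E}G$-torsor $Z\rightarrow\Spec\widehat A_x$ is trivial. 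The split object $D_E$ of Section~\ref{reducedness} is part of the definition of the \emph{framed} moduli problem $X_{\varphi,N,\tau}$; it is not the object attached to the point $x$. The relevant object is $D_x=\D_{\st}^L(\rho_x)$, produced by the Tannakian formalism, and for a general reductive $G$ this is a $G$-torsor over $\kappa(x)\otimes L_0$ with no reason to be trivial (only for $G=\GL_n$ is splitness automatic). So the special fiber of $Z\rightarrow\Spec\widehat A_x$ need not be trivial, and the henselian lifting argument has nothing to lift.

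The paper repairs this exactly where your argument breaks: it first passes to a finite \'etale extension $\widehat A_x\rightarrow A'$ corresponding to a finite extension of the residue field which splits $D_x$; only then is $D_{A'}$ induced by an honest morphism $\Spec A'\rightarrow X_{\varphi,N,\tau}\times(\Res_{E\otimes K/E}G)/P_{\mathbf{v}}$, and your density-transport argument is run over $A'$ to show the support of $\H^2(D_{A'})$ is nowhere dense. Since $\widehat A_x\rightarrow A'$ is finite, flat and surjective, the support of $\H^2(D_{A'})$ is the preimage of that of $\H^2(D_x)$, and minimal primes of $\Spec A'$ lie over (and exhaust) the minimal primes of $\Spec\widehat A_x$; hence density of $U_{A'}$ descends to density of $U$. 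With this one correction your proof coincides with the paper's.
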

\begin{proof}
After an \'etale extension $\widehat A_x\rightarrow A'$ corresponding to a finite extension of the residue field (to split $D_x$), $D_{A'}:=D_x\otimes_{\widehat A_x}A'$ is induced by a morphism 
\[	\Spec A'\rightarrow X_{\varphi,N,\tau}\times \Res_{E\otimes L_0/E}G/P_\mathbf{v}	\]
Furthermore, $U_{A'}$ is the complement of the support of $\H^2(D_{A'})$.  Then the proof of~\cite[Proposition 3.1.6]{kisin} carries over verbatim, and we see that the support of $\H^2(D_{A'})$ is nowhere dense in $\Spec A'$.  It follows that $U$ is dense in $\Spec(\widehat A_x)$.
\end{proof}
It follows that there is a formally smooth dense open subscheme of $\Spec A$ where $\H^2(D_A)=0$.  

\begin{cor}
For any $x\in X_{V_\F,G}^\square\cap X_{V_\F,\st}^{\square,\tau,\mathbf{v}}$, let $\widehat A_x$ be the completed local ring of $X_{V_\F,G}^\square\cap X_{V_\F,\st}^{\square,\tau,\mathbf{v}}$ at $x$.  Then $\widehat A_x$ is a complete intersection.
\end{cor}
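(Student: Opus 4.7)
The plan is to combine the formal smoothness statement from the preceding proposition with the local complete intersection result for the framed moduli space from Section~\ref{reducedness}. The key conceptual point is that $X_{\varphi,N,\tau}\times (\Res_{E\otimes K/E}G)/P_\mathbf{v}$ is a smooth cover of the stack $\mathfrak{Mod}_{F,\varphi,N,\tau}$ (parametrizing objects together with a trivialization of the underlying $\Res_{E\otimes L_0/E}G$-torsor), so after passing to a suitable étale neighborhood we can transfer geometric properties back and forth.

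First, following the proof of the previous corollary, I choose an étale extension $\widehat A_x\to A'$ corresponding to a finite residue field extension such that the underlying $\Res_{E\otimes L_0/E}G$-torsor of $D_x\otimes_{\widehat A_x}A'$ becomes split. Fixing a trivializing section produces a morphism
\[
\Spec A'\longrightarrow X_{\varphi,N,\tau}\times (\Res_{E\otimes K/E}G)/P_\mathbf{v}
\]
whose composition with the smooth projection to $\mathfrak{Mod}_{F,\varphi,N,\tau}$ agrees with $\Spec A'\to \Spec\widehat A_x\to \mathfrak{Mod}_{F,\varphi,N,\tau}$. Since $\widehat A_x\to \mathfrak{Mod}_{F,\varphi,N,\tau}$ is formally smooth by the preceding proposition and $\widehat A_x\to A'$ is étale, the composition is formally smooth; combined with the fact that the map from the framed cover to the stack is smooth (trivializing a torsor), a routine deformation-theoretic chase (lifting against square-zero thickenings) shows that the map $\Spec A'\to X_{\varphi,N,\tau}\times (\Res_{E\otimes K/E}G)/P_\mathbf{v}$ is itself formally smooth at the point above $x$.

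Second, by the corollary at the end of Section~\ref{reducedness}, the target $X_{\varphi,N,\tau}\times (\Res_{E\otimes K/E}G)/P_\mathbf{v}$ is locally a complete intersection. It is a standard fact (see e.g.\ the stability of the local complete intersection property under formally smooth base change) that if $B\to A$ is a formally smooth morphism of noetherian local rings and $B$ is a complete intersection, then $A$ is a complete intersection. Applying this to the completed local rings obtained from the formally smooth morphism above, we conclude that $A'$ is a complete intersection.

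Finally, since $\widehat A_x\to A'$ is faithfully flat with regular (in fact, étale) fibers, the complete intersection property descends: $\widehat A_x$ is a complete intersection. The main obstacle in carrying this out carefully is the bookkeeping around the framed vs.\ unframed moduli — verifying that the trivialization after étale extension really produces a \emph{formally smooth} morphism to the framed scheme, rather than just to the stack — but once that is in place the rest is a direct appeal to the reducedness/LCI theorem of Section~\ref{reducedness}.
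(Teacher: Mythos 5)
There is a genuine gap at the central step. You claim that after an \'etale extension $\widehat A_x\to A'$ splitting the torsor, the chosen trivializing section gives a \emph{formally smooth} morphism $\Spec A'\to X_{\varphi,N,\tau}\times (\Res_{E\otimes K/E}G)/P_{\mathbf{v}}$, and that this follows by a ``routine deformation-theoretic chase'' from the formal smoothness of $\Spec A'\to\mathfrak{Mod}_{F,\varphi,N,\tau}$ together with the smoothness of the framed cover over the stack. This implication is false: if $g\circ f$ is formally smooth and $g$ is smooth, $f$ need not be formally smooth (the inclusion of the origin into $\mathbf{A}^1$ over a point is the basic counterexample). Concretely, in the lifting problem for $f$ one produces a lift only after allowing an infinitesimal change of framing, i.e.\ the lift agrees with the prescribed map to the framed scheme only up to the action of the group $\Res_{E\otimes L_0/E}G$, which is exactly the ambiguity a section does not absorb. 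In fact your claim is impossible on dimension grounds: a formally smooth map of complete local noetherian rings $B\to \widehat{A'}_y$ forces $\dim \widehat{A'}_y\ge\dim B$, but $\dim\widehat A_x=\dim G+\dim(\Res_{E\otimes K/E}G)/P_{\mathbf{v}}$ while the completed local ring $B$ of $X_{\varphi,N,\tau}\times(\Res_{E\otimes K/E}G)/P_{\mathbf{v}}$ has dimension $[L_0:\Q_p]\dim G+\dim(\Res_{E\otimes K/E}G)/P_{\mathbf{v}}$, which is strictly larger whenever $[L_0:\Q_p]>1$.

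The repair is what the paper does: instead of choosing a section over an \'etale cover, form the fiber product $\Spec A'=\Spec\widehat A_x\times_{\mathfrak{Mod}_{F,\varphi,N,\tau}}\bigl(X_{\varphi,N,\tau}\times (\Res_{E\otimes K/E}G)/P_{\mathbf{v}}\bigr)$, using that the framed cover is representable over the stack. Then formal smoothness of $\Spec A'$ over the framed scheme is automatic (it is a base change of $\widehat A_x\to\mathfrak{Mod}_{F,\varphi,N,\tau}$), and $\Spec A'\to\Spec\widehat A_x$ is smooth and surjective of relative dimension $\dim\Res_{E\otimes L_0/E}G$ --- not \'etale. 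Your remaining steps (completed local rings become power series rings over complete intersections, hence are complete intersections; the property then descends along a faithfully flat map with regular fibers) are correct and coincide with the paper's, once the descent is carried out along this smooth surjection rather than an \'etale one.
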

\begin{proof}
The morphism $X_{\varphi,N,\tau}\times \Res_{E\otimes K/E}G/P_\mathbf{v}\rightarrow \mathfrak{Mod}_{F,\varphi,N,\tau}$ is smooth, so the fiber product with $\Spec A_x\rightarrow \mathfrak{Mod}_{F,\varphi,N,\tau}$ is an affine scheme $\Spec A'$ smooth over $A_x$ and formally smooth over $X_{\varphi,N,\tau}\times \Res_{E\otimes K/E}G/P_\mathbf{v}$.  It suffices to show that $A'$ is locally a complete intersection.

Let $y$ be a point of $\Spec A'$ and let ${A'}_y^\wedge$ be the complete local ring at $y$.  Then the morphism $\Spec {A'}_y^\wedge\rightarrow X_{\varphi,N,\tau}\times \Res_{E\otimes K/E}G/P_\mathbf{v}$ is induced by a local ring homomorphism $B\rightarrow {A'}_y^\wedge$, where $B$ is the completed stalk at a point of $X_{\varphi,N,\tau}\times \Res_{E\otimes K/E}G/P_\mathbf{v}$.  But $B\rightarrow {A'}_y^\wedge$ is formally smooth by Proposition~\ref{gal-rep-form-sm}, so ${A'}_y^\wedge$ is a formal power series ring over $B$.  Since $B$ is complete intersection, $\widehat{A'}_y$ is as well.
\end{proof}

Then as in~\cite[Theorem 3.3.4]{kisin}, we prove the following: 
\begin{prop}
$\Spec A$ is equi-dimensional of dimension 
\[	\dim_EG+\dim_E(\Res_{E\otimes K/E}G)/P_\mathbf{v}	\]
\end{prop}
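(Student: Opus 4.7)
The plan is to derive the dimension formula by combining the complete intersection property established in the previous corollary with a single Euler-characteristic computation at a sufficiently generic point of $\Spec A$. Since $A$ is a complete intersection it is in particular Cohen--Macaulay, hence locally equidimensional; because $\Sp(A)$ is connected, $A$ has no nontrivial idempotents, so $\Spec A$ is connected as well, and local equidimensionality propagates across connected components of a Cohen--Macaulay scheme. Thus it is enough to compute $\dim A$ at one well-chosen closed point, and the rest of the argument is purely a tangent space calculation.

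The previous corollary supplies a dense open $U\subset \Spec A$ on which $\H^2(D_A)=0$; at each closed point $x\in U$, formal smoothness of $\widehat A_x\to \mathfrak{Mod}_{F,\varphi,N,\tau}$ together with the smoothness of $\mathfrak{Mod}_{F,\varphi,N,\tau}$ at $D_x$ (given by Proposition~\ref{def-phi-n-tau-filt} and the vanishing of $\H_F^2(D_x)$) shows that $\Spec A$ is smooth at $x$. I would pick such a point $x_0$, corresponding to a potentially semi-stable representation $\rho_{x_0}$, and compute the tangent space using the identification $\widehat A_{x_0}\cong \Def_{\rho_{x_0}}^{\square,\tau,\mathbf{v}}$ and the tangent space formula~(\ref{tgt-space-pst}):
\[
\dim T_{x_0}\Spec A \;=\; \dim_E\ad G \;-\; \dim_E(\ad G)^{\Gal_K} \;+\; \dim_E|\Def_{\rho_{x_0}}^{\tau,\mathbf{v}}|(E[\varepsilon]).
\]
Proposition~\ref{def-reps} is then used to transport the Galois deformation data across to filtered $(\varphi,N,\Gal_{L/K})$-modules: $(\ad G)^{\Gal_K}$ becomes $\H^0_F(D_{x_0})$, while $|\Def_{\rho_{x_0}}^{\tau,\mathbf{v}}|(E[\varepsilon])$ becomes $\H^1_F(D_{x_0})$.

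Finally, I would compute the Euler characteristic of $C_F^\bullet(D_{x_0})$ directly from the definition: the four copies of $(\ad D_{x_0})^{\Gal_{L/K}}$ in degrees $0$, $1$ (with multiplicity two), and $2$ cancel entirely, so the Euler characteristic reduces to the single remaining summand, namely $-\dim_E(\ad D_{x_0,L}/\Fil^0\ad D_{x_0,L})^{\Gal_{L/K}}$. The latter quantity is identified with $\dim_E(\Res_{E\otimes K/E}G)/P_\mathbf{v}$ via the fact that $(\Res_{E\otimes K/E}G)/P_\mathbf{v}$ is the moduli scheme of $\otimes$-filtrations of type $\mathbf{v}$, whose tangent space at the filtration of $D_{x_0}$ is precisely this quotient. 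Using $\H^2_F(D_{x_0})=0$ we conclude $\dim \H^1_F-\dim \H^0_F=\dim_E(\Res_{E\otimes K/E}G)/P_\mathbf{v}$, and substituting into the displayed tangent space formula yields the desired equality $\dim A = \dim_E G + \dim_E(\Res_{E\otimes K/E}G)/P_\mathbf{v}$. The main bookkeeping hurdle I anticipate is matching the identification of the filtration summand with the tangent space of the partial flag variety compatibly with taking $\Gal_{L/K}$-invariants; the rest of the computation is essentially mechanical once Proposition~\ref{def-reps} is applied.
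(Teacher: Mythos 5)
Your proposal is correct and follows essentially the same route as the paper: pass to a smooth point of the dense open locus where $\H^2$ vanishes, use the tangent-space formula~(\ref{tgt-space-pst}) together with Proposition~\ref{def-reps} and the vanishing of $\H^2_F$ to reduce to the Euler characteristic of $C^\bullet_F$, which equals $-\dim_E(\ad D_{x,L}/\Fil^0\ad D_{x,L})^{\Gal_{L/K}}=-\dim_E(\Res_{E\otimes K/E}G)/P_\mathbf{v}$. The only cosmetic difference is that you make explicit the Cohen--Macaulay-plus-connectedness argument for equidimensionality, which the paper leaves implicit in the preceding complete intersection corollary.
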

\begin{proof}
By the discussion of~\cite{conrad-irred} before Lemma 2.2.3, the dimension of local rings of $\Spec A$ is constant on irreducible components.  Thus, it suffices to show that $\Spec A$ contains a Zariski dense subspace of dimension $\dim_EG+\dim_E(\Res_{E\otimes K/E}G)/P_\mathbf{v}$.

There is a formally smooth dense open subscheme $U\subset\Spec A$ where $\H^2(D_A)$ vanishes.  To compute the dimension of $\Spec A$, we choose a closed point $x\in U$, with residue field $E'$, corresponding to the maximal ideal $\mathfrak{m}\subset A$.  Let $\rho_x$ be the representation $\rho_x:\Gal_K\rightarrow G(E')$ corresponding to $x$, and let $D_x:=\D_{\st}^L(\rho_x)$.  Since $A$ is formally smooth at $x$, to compute the dimension of $A$, it suffices to compute the tangent space at $x$.  But by equation~\eqref{tgt-space-pst},
\[	\dim_{E'}|\Def_{\rho_x}^{\square,\tau,\mathbf{v}}|(E'[\varepsilon]) = \dim_{E'}|\Def_{\rho_x}^{\tau,\mathbf{v}}|(E'[\varepsilon]) + \dim_E G - \dim_{E'}(\ad \rho_x)^{\Gal_K}	\]
where $\ad \rho_x$ is the induced Galois representation $\ad\rho_x:\Gal_K\rightarrow \ad G$.  Now by Proposition~\ref{def-reps}, 
\[	\dim_{E'}|\Def_{\rho_x}^{\tau,\mathbf{v}}|(E'[\varepsilon]) = \dim_{E'}\Ext^1(D_x,D_x)	\]
where $\Ext^1$ means extensions in the category of filtered $(\varphi,N,\Gal_{L/K})$-modules.  Because $\H^2(D_x)=0$ by assumption, we can actually compute $\dim_{E'}\Ext^1(D_x,D_x)$ to be
\begin{eqnarray*}
\dim_{E'}\Ext^1(D_x,D_x) &=& \dim_{E'}\H_F^1(D_x) \\
&=& \dim_{E'}((\ad D_x)_K/\Fil^0(\ad D_x)_K) + \dim_{E'}H_F^0(D_x)
\end{eqnarray*}
This follows from Proposition~\ref{def-phi-n-tau-filt}, since $\H_F^2(D_x)=\H^2(D_x)=0$.  In addition, we have $\dim_{E'}H_F^0(D_x) = \dim_{E'}(\ad \rho_x)^{\Gal_K}$, since both spaces are the infinitesimal automorphisms of $\rho_x$, so in the end we find that
\begin{equation*}
\begin{split}
\dim_{E'}|\Def_{\rho_x}^{\square,\tau,\mathbf{v}}|(E'[\varepsilon]) &= \dim_E G + \dim_{E'}((\ad D_x)_K/\Fil^0(\ad D_x)_K) \\
&= \dim_EG+\dim_E(\Res_{E\otimes K/E}G)/P_\mathbf{v}
\end{split}
\end{equation*}
as desired, since $(\ad D_x)_K$ is the tangent space of the (smooth) group $\Res_{E\otimes K/E}G$ and $\Fil^0(\ad D_x)_K$ is the tangent space of the (smooth) group $P_\mathbf{v}$.
\end{proof}

Again as in~\cite[Theorem 3.3.8]{kisin}, the crystalline analogue follows by similar arguments:
\begin{prop}
Let $\rho$ be a potentially crystalline representation $\rho:\Gal_K\rightarrow G(E)$ with Galois type $\tau$ and $p$-adic Hodge type $\mathbf{v}$.  Then the deformation problem 
\begin{multline*}
\Def_{\rho,{\rm{cr}}}^{\square,\tau,\mathbf{v}}(R):=\{\tilde\rho:\Gal_K\rightarrow G(R)| \tilde\rho\text{ is a potentially crystalline lift of }\rho\\ 
\text{ with Galois type }\tau\text{ and }p\text{-adic Hodge type }\mathbf{v}\}
\end{multline*}
is pro-representable by a complete local noetherian ring $R_{\rho,{\rm{cr}}}^{\square,\tau,\mathbf{v}}$ which is formally smooth of dimension 
\[	\dim_EG+\dim_E(\Res_{E\otimes K/E}G)/P_\mathbf{v}	\]
\end{prop}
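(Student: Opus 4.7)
My plan is to replay the entire argument used for the potentially semi-stable case, substituting crystalline analogues of every object and exploiting the fact that the $N$-free moduli space is smooth rather than merely a complete intersection. First I would introduce the groupoid $\mathfrak{Mod}_{F,\varphi,\tau}$ on $E$-algebras, parametrizing tuples $(D_A,\Phi,\tau,\text{filtration})$ with no monodromy operator, and check the crystalline analogue of Proposition~\ref{def-reps}: potentially crystalline deformations of $\rho$ correspond to deformations of $\D_{\cris}^L(\rho)$ in $\mathfrak{Mod}_{F,\varphi,\tau}$. The controlling deformation complex collapses to the top row of the double complex from Section~\ref{deformations}, augmented by the filtration column; in particular $\H_F^2(D)=\coker(1-\underline\Ad(\Phi))$ modulo a filtration piece.

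Next I would build the $N$-free moduli space $X_{\varphi,\tau}$, parametrizing pairs $(\Phi,\tau)$ with $\tau(g)\circ\Phi=\Phi\circ\tau(g)$, and show that $X_{\varphi,\tau}$ is \emph{smooth}. The argument is now much cleaner than for $X_{\varphi,N,\tau}$: $X_\tau$ is already smooth (a disjoint union of homogeneous spaces $\Res_{E\otimes L_0/E}G/Z_{\Res_{E\otimes L_0/E}G}(\tau_0)$), and \'etale-locally on $X_\tau$ the fiber of $X_{\varphi,\tau}\to X_\tau$ over a diagonalized $\tau=(\tau_0,\ldots,\tau_0)$ is the smooth group scheme $Z_{\Res_{E\otimes L_0/E}G}(\tau_0)\cong Z_G(\tau_0)^{\times f}$. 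The relations $N=p\Ad(\Phi)(\varphi(N))$ that cut $X_{\varphi,N,\tau}$ out of an ambient smooth space have no counterpart, which is precisely why ``locally complete intersection'' is upgraded to ``smooth''. Consequently $X_{\varphi,\tau}\times\Res_{E\otimes K/E}G/P_\mathbf{v}$ is smooth of dimension $\dim\Res_{E\otimes L_0/E}G+\dim(\Res_{E\otimes K/E}G)/P_\mathbf{v}$.

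I would then repeat the argument of the preceding proposition to identify $\Def_\rho^{\square,\tau,\mathbf{v}}$ in the crystalline sense with the completed local ring $R_{\rho,\cris}^{\square,\tau,\mathbf{v}}$ of $X_{V_\F,G}^\square\cap X_{V_\F,\cris}^{\square,\tau,\mathbf{v}}$ at the point corresponding to $\sigma\circ\rho$; the crystalline analogue $X_{V_\F,\cris}^{\square,\tau,\mathbf{v}}$ exists by~\cite[Theorem 2.7.6]{kisin}. The crystalline analogue of the formal-smoothness statement gives a formally smooth morphism $\Spec R_{\rho,\cris}^{\square,\tau,\mathbf{v}}\to\mathfrak{Mod}_{F,\varphi,\tau}$; forming the fibre product with the smooth cover $X_{\varphi,\tau}\times\Res_{E\otimes K/E}G/P_\mathbf{v}\to\mathfrak{Mod}_{F,\varphi,\tau}$ produces a scheme that is smooth over $R_{\rho,\cris}^{\square,\tau,\mathbf{v}}$ and formally smooth over a smooth base. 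It follows that $R_{\rho,\cris}^{\square,\tau,\mathbf{v}}$ is formally smooth.

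For the dimension I would reuse the Euler-characteristic calculation: after deleting the row corresponding to $\ad_N$, the complex $C_F^\bullet(D_x)$ still has numerical Euler characteristic $-\dim(\ad D_L/\Fil^0\ad D_L)^{\Gal_{L/K}}$, so $\dim\H_F^1=\dim\H_F^0+\dim(\ad D_L/\Fil^0\ad D_L)^{\Gal_{L/K}}$ as before (here $\H_F^2$ vanishes automatically at a smooth point of $X_{\varphi,\tau}$), and combining with equation~\ref{tgt-space-pst} recovers the formula $\dim_E G+\dim_E(\Res_{E\otimes K/E}G)/P_\mathbf{v}$. The one step where I anticipate real care is the verification that the formal smoothness of $\Spec R_{\rho,\cris}^{\square,\tau,\mathbf{v}}\to\mathfrak{Mod}_{F,\varphi,\tau}$ carries over from~\cite[Proposition 3.3.1]{kisin}: one must check that the argument does not secretly rely on the presence of $N$ when comparing Galois-side and linear-algebra-side obstruction theories. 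This should be genuinely routine since removing $N$ only discards relations, but it is the only point that is not purely cosmetic relative to the semi-stable case.
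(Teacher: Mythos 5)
Your proposal is correct and follows exactly the route the paper intends: the paper's own "proof" of this proposition is the single remark that the arguments of the semi-stable case carry over and are easier because $X_{\varphi,\tau}$ is actually smooth of dimension $\dim\Res_{E\otimes L_0/E}G$, which is precisely the skeleton you have fleshed out (the only cosmetic slip is your description of $\H_F^2$ as a cokernel of $1-\underline\Ad(\Phi)$ -- with no monodromy relation the controlling complex sits in degrees $0$ and $1$, so $\H_F^2$ vanishes identically rather than "at smooth points"). Your dimension count via the truncated Euler characteristic and equation~\eqref{tgt-space-pst} matches the paper's semi-stable computation verbatim.
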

In fact, the arguments are easier because $X_{\varphi,\tau}$ is actually smooth of dimension $\dim \Res_{E\otimes L_0/E}G$, not merely generically smooth.

\section{Explicit calculations}\label{section-calcs}

We wish to study the irreducible components of $X_{\varphi,N,\tau}$.  For simplicity, we restrict ourselves to the case when $L=K$ and $\tau$ is trivial.  In addition, suppose temporarily that $K_0=\Q_p$. 

Let $N_0\in\mathcal{N}$ be a non-zero nilpotent element of $\mathfrak{g}$, and let $\mathcal{O}_{N_0}\subset\mathcal{N}$ be its $G$-orbit, which is locally closed in $\mathcal{N}$.  Then if we consider the fiber square
\[\begin{CD}
X_{\varphi,N}|_{\mathcal{O}_{N_0}}@>>>X_{\varphi,N}	\\
@VVV					@VVV	\\
\mathcal{O}_{N_0}@>>>\mathcal{N}
\end{CD}	\]
the left vertical arrow is smooth.  Our technique for studying the irreducible components of $X_{\varphi,N,\tau}$ relies on studying the closure of $X_{\varphi,N}|_{\mathcal{O}_{N_0}}$ inside $\mathcal{N}\times G$.  To do this, we will use Springer resolutions of closures of nilpotent orbits.

Let $G$ be a connected reductive group, and let $N_0\in\mathfrak{g}$ be nilpotent and non-zero.  Then we can find a cocharacter $\lambda:\mathbf{G}_m\rightarrow G$ associated to $N_0$, and $\lambda$ defines a grading on $\mathfrak{g}$.  Associated cocharacters are not unique, but they are defined up to conjugacy by $Z_G(N)^{\circ}\subset P:=P_G(\lambda)$; it follows that the associated filtration on $\mathfrak{g}$ depends only on $N_0$.  

The Lie algebra $\mathfrak{p}$ of $P$ is naturally identified with $\mathfrak{g}_{\geq 0}$, and carries a filtration, which is preserved by the conjugation action of $P$.  Given $P':=gPg^{-1}$, the Lie algebra $\mathfrak{p}'$ carries the conjugate filtration.

We will be interested in $\overline{G\cdot N_0}$, the closure of the orbit of $N_0$ under the adjoint action of $G$ on $\mathfrak{g}$.  In general, this will be singular.  However, we have
\begin{prop}[{\cite[8.3.1]{weyman}}]
There is a natural morphism $G\times^P\mathfrak{g}_{\geq 2}\rightarrow \overline{G\cdot N_0}$ given by $(g,N)\mapsto \Ad(g)(N)$, and this is a resolution of singularities.
\end{prop}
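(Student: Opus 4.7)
The plan is to check that the morphism is well-defined, that the source is smooth, that the morphism is proper with image exactly $\overline{G\cdot N_0}$, and that the morphism is birational.

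Well-definedness and smoothness: since $P=P_G(\lambda)=Z_G(\lambda)\ltimes U_G(\lambda)$ and $\mathfrak{g}_{\geq 2}$ is a sum of $\lambda$-weight spaces, the adjoint action of $P$ on $\mathfrak{g}$ preserves $\mathfrak{g}_{\geq 2}$: the Levi $Z_G(\lambda)$ preserves each $\lambda$-weight space, and the unipotent radical $U_G(\lambda)$ only raises $\lambda$-weights. Thus the right $P$-action $(g,N)\cdot p=(gp,\Ad(p^{-1})N)$ on $G\times\mathfrak{g}_{\geq 2}$ is well-defined and the morphism $(g,N)\mapsto\Ad(g)N$ is plainly $P$-invariant, so descends to $G\times^P\mathfrak{g}_{\geq 2}$. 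The source is the total space of the $G$-equivariant vector bundle on $G/P$ with fiber $\mathfrak{g}_{\geq 2}$, and is therefore smooth.

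Properness and the image: the morphism factors as the closed immersion $G\times^P\mathfrak{g}_{\geq 2}\hookrightarrow G/P\times\mathfrak{g}$, $[g,N]\mapsto(gP,\Ad(g)N)$ (with image the incidence variety $\{(gP,X):X\in\Ad(g)\mathfrak{g}_{\geq 2}\}$), followed by projection to $\mathfrak{g}$; since $G/P$ is projective, this projection, and hence our morphism, is proper. In particular the image is closed in $\mathfrak{g}$. Since $\Ad(\lambda(t))N_0=t^2N_0$ places $N_0\in\mathfrak{g}_{\geq 2}$ and $[e,N_0]\mapsto N_0$, the image contains $\overline{G\cdot N_0}$. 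For the reverse inclusion the key input is that $P\cdot N_0$ is open and dense in $\mathfrak{g}_{\geq 2}$, which amounts to the surjectivity of $\ad(N_0):\mathfrak{g}_{\geq 0}\twoheadrightarrow\mathfrak{g}_{\geq 2}$: by Proposition~\ref{assoc-jacobson-morozov} $N_0$ fits into a Jacobson--Morozov triple with $H=d\lambda(1)$, and on each $\mathfrak{sl}_2$-irreducible summand of $\mathfrak{g}$ the raising operator $\ad(N_0)$ is surjective between weight spaces of non-negative weight. Hence $\overline{P\cdot N_0}=\mathfrak{g}_{\geq 2}$, so the image equals $G\cdot\mathfrak{g}_{\geq 2}=G\cdot\overline{P\cdot N_0}\subseteq\overline{G\cdot(P\cdot N_0)}=\overline{G\cdot N_0}$.

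Birationality: by the structure result $Z_G(N_0)\subseteq P$ of \cite[5.10]{jantzen} (recalled in the excerpt), $Z_P(N_0)=Z_G(N_0)$. The orbit identification $P\cdot N_0\cong P/Z_P(N_0)$ then yields $G\times^P(P\cdot N_0)\cong G\times^P(P/Z_P(N_0))\cong G/Z_P(N_0)=G/Z_G(N_0)$, and under this identification the morphism becomes the orbit map $G/Z_G(N_0)\xrightarrow{\sim}G\cdot N_0$. So the morphism restricts to an isomorphism from the open subset $G\times^P(P\cdot N_0)$ of the source onto the open dense subset $G\cdot N_0$ of $\overline{G\cdot N_0}$, which establishes birationality. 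Combined with properness and the smoothness of the source, this exhibits the morphism as a resolution of singularities. The main obstacle is the density of $P\cdot N_0$ in $\mathfrak{g}_{\geq 2}$ (equivalently the surjectivity of $\ad(N_0):\mathfrak{g}_{\geq 0}\twoheadrightarrow\mathfrak{g}_{\geq 2}$), which is the essential input from $\mathfrak{sl}_2$-theory via the associated cocharacter; the remaining steps are essentially formal.
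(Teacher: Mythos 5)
The paper does not prove this proposition---it is quoted from \cite[8.3.1]{weyman}---so there is no in-paper argument to compare against. Your proof is correct and is the standard one: the only non-formal input, as you correctly identify, is the surjectivity of $\ad(N_0):\mathfrak{g}_{\geq 0}\to\mathfrak{g}_{\geq 2}$ coming from $\mathfrak{sl}_2$-weight theory for the triple with $H=d\lambda(1)$, which gives density of $P\cdot N_0$ in $\mathfrak{g}_{\geq 2}$; combined with $Z_G(N_0)\subseteq P_G(\lambda)$ (the paper's quoted Proposition from \cite[5.9--5.11]{jantzen}) this yields both the image computation and birationality, and properness follows from the closed embedding into $G/P\times\mathfrak{g}$ exactly as the paper itself observes in the paragraph following the statement.
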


We rewrite the quotient $G\times^P\mathfrak{g}_{\geq 2}$ in a more convenient form.  Consider the morphism 
\begin{eqnarray*}
G\times \mathfrak{g}_{\geq 2} &\rightarrow& G/P\times \mathfrak{g}	\\
(g,X)&\mapsto& (gP, \Ad(g)(X))
\end{eqnarray*}
It is $P$-equivariant, so descends to a morphism $G\times^P\mathfrak{g}_{\geq 2}\rightarrow G/P\times \mathfrak{g}$.  The image is $\{(gP,X)| X\in \Ad(g)(\mathfrak{g}_{\geq 2})\}$.  In other words, the image parametrizes pairs $(P',X)$, where $P'$ is a parabolic conjugate to $P$, and $X\in \mathfrak{p}_{\geq 2}'$.

Moreover, if $(g,X)$ and $(g',X')$ have the same image in $G/P\times\mathfrak{g}$, then there exists $p\in P$ such that $g'=gp$, and 
$$\Ad(g)(X)=\Ad(g')(X')=\Ad(g)\Ad(p)(X')$$
This implies that $X=\Ad(p)(X')$, so $(g,X)\sim (g',X')$.  In other words, the map $G\times^P\mathfrak{g}_{\geq 2}\rightarrow G/P\times \mathfrak{g}$ is an isomorphism onto its image.

\subsection{$\GL_2$}

We study the geometric structure of $X_{\varphi,N}$ more closely when our group $G$ is $\GL_2$ and $\tau$ is trivial.

Fix an unramified extension $K_0$ over $\Q_p$ of degree $f$.  After extending scalars on $X_{\varphi,N}$ from $E$ to $\overline{E}=\overline{\Q}_p$, we are considering the subscheme of $\GL_2^{\times f}\times \mathfrak{gl}_2^{\times f}$ of $f$-tuples $\underline{\Phi}:=(\Phi_1,\ldots ,\Phi_f)$ and $\underline{N}:=(N_1,\ldots ,N_f)$ satisfying 
$$N_i=p\Ad(\Phi_i)(N_{i+1})$$
for all $i$ (here the indices are taken modulo $f$).

There are two irreducible components, $X_{\reg}$ and $X_0$, corresponding to the regular nilpotent orbit in $\mathfrak{gl}_2$ and the orbit $\underline{N}=(0,\ldots,0)$, respectively.  Their intersection $X_{\reg,0}$ is the subscheme of $\GL_2^{\times f}$ of $\underline{\Phi}$ such that $\det(1-p\underline\Ad\underline{\Phi})=0$, where we consider $1-p\underline\Ad\underline{\Phi}$ as an operator on the $\overline{\Q}_p$-vector space $\mathfrak{gl}_2^{\times f}$ acting via
\[	\underline\Ad\underline{\Phi}(\underline{X})=(\Ad(\Phi_1)(X_2),\ldots,\Ad(\Phi_f)(X_0))	\]

More precisely, Corollary~\ref{fam-cent} implies that the locus in $X_{\varphi,N}$ where $\underline{N}\neq 0$ is a smooth open subscheme of dimension $\dim\GL_2\cdot f$ (since it is a $Z_G(\underline N_{\reg})$-torsor), and the locus where $\underline{N}=0$ is a smooth closed subscheme of dimension $f\cdot\dim\GL_2$.  We let $X_{\reg}$ denote the closure of the former inside $X_{\varphi,N}$ and we let $X_0$ denote the latter.  We will show that $X_{\reg}$ and $X_0$ are smooth irreducible components of $X_{\varphi,N}$, and their intersection $X_{\reg,0}$ is smooth as well, and characterized as the subscheme of $\GL_2^{\times f}$ such that $\det(1-p\underline\Ad\underline\Phi)=0$.

We do the last part first.



\begin{prop}
$X_{\reg,0}\subset \GL_2^{\times f}|_{\underline N=0}$ is defined scheme-theoretically by the equation $\det(1-p\underline\Ad\underline\Phi)=0$.
\end{prop}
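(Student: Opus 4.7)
Write $L := 1 - p\underline\Ad\underline\Phi$ for the operator on $\mathfrak{gl}_2^{\times f}$. The plan is to use a local normal form for $L$ enabled by a uniform one-dimensional kernel bound. First, I would observe that $X_{\varphi,N}$ is cut out scheme-theoretically inside $\GL_2^{\times f}\times\mathfrak{gl}_2^{\times f}$ by the linear equations $L\underline N = 0$: taking traces and determinants of $N_i = p\Ad(\Phi_i)(N_{i+1})$ yields $(p^f - 1)\Tr(N_i) = 0$ and $(p^{2f} - 1)\det(N_i) = 0$, and both coefficients are units in any $E$-algebra, so the nilpotency of each $N_i$ is automatic. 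Moreover $L$ respects the decomposition $\mathfrak{gl}_2^{\times f} = \mathfrak{sl}_2^{\times f}\oplus Z^{\times f}$ with $Z = E\cdot I$, and on the scalar factor $p\underline\Ad\underline\Phi$ acts as $p$ times the cyclic permutation, with eigenvalues of the form $p\zeta \neq 1$ for $\zeta^f = 1$. Thus $L|_{Z^{\times f}}$ is invertible with determinant a nonzero constant, and $(\det L) = (\det L|_{\mathfrak{sl}_2^{\times f}})$ as ideals in $A := \mathcal{O}_{\GL_2^{\times f}}$.

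Next I would show that $\dim\ker L(\underline\Phi_0)|_{\mathfrak{sl}_2^{\times f}}\leq 1$ at every geometric point $\underline\Phi_0$. Such a kernel element is determined by its first component $N_1$ via $N_{i+1} = p^{-1}\Ad(\Phi_i^{-1})(N_i)$, and $N_1$ must be a $p^{-f}$-eigenvector of $\Ad(\Phi_1\cdots\Phi_f)$ on $\mathfrak{sl}_2$. The eigenvalues of $\Ad(g)|_{\mathfrak{sl}_2}$ are $\{1,\alpha/\beta,\beta/\alpha\}$ for $\alpha,\beta$ the eigenvalues of $g$, and one cannot have $1 = p^{-f}$ nor simultaneously $\alpha/\beta = \beta/\alpha = p^{-f}$ (the latter would force $p^{2f}=1$), so the $p^{-f}$-eigenspace is at most one-dimensional.

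Now fix $\underline\Phi_0 \in V(\det L)$. By the kernel bound, $L|_{\mathfrak{sl}_2^{\times f}}$ has one-dimensional kernel and cokernel at $\underline\Phi_0$, so I can choose local trivializations in which the first source direction spans $\ker L|_{\underline\Phi_0}$ and the first target direction complements $\im L|_{\underline\Phi_0}$. Then $L|_{\mathfrak{sl}_2^{\times f}}$ has block form $\begin{pmatrix} a & b\\ c & d\end{pmatrix}$ with $a,b,c$ vanishing at $\underline\Phi_0$ and $d$ invertible on a neighborhood; invertible row and column operations reduce this to $\diag(h,d)$ with $h := a - bd^{-1}c$, giving $(h) = (\det L)$ as ideals of the local ring $A_{\underline\Phi_0}$. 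Writing $\underline N = (v_1, v_2)$ in these bases (and noting that the scalar component of $\underline N$ is forced to vanish), $X_{\varphi,N}$ is locally $\Spec(A_{\underline\Phi_0}[v_1]/(hv_1))$, with $X_1$ cut out by $v_1$.

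To finish, I would identify $\mathcal{I}(X_0)$ locally via the scheme-theoretic closure of $\{v_1 \neq 0\}$ inside $X_{\varphi,N}$: this closure is defined by the ideal $\operatorname{Ann}(v_1^\infty)$, and a direct calculation in the domain $A_{\underline\Phi_0}[v_1]$ (cancelling $v_1$ from $av_1^n \in (hv_1)$) shows $\operatorname{Ann}(v_1^n) = (h)$ for every $n\geq 1$. Hence $\mathcal{I}(X_0)$ is locally $(h) = (\det L)\cdot\mathcal{O}_{X_{\varphi,N}}$, and globally $\mathcal{I}(X_0) = (\det L)\cdot\mathcal{O}_{X_{\varphi,N}}$ by coherence. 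Restricting to $X_1 \cong \GL_2^{\times f}$ gives $\mathcal{I}(X_{0,1}) = (\det L) \subset A$, which is the stated scheme-theoretic identity. The key technical point is the uniform kernel bound of paragraph two; without it, the local normal form would contain a rank-drop block of size $\geq 2$, in which case $\operatorname{Ann}(\coker L^T)$ can strictly contain $(\det L)$ and the scheme-theoretic intersection $X_{0,1}$ would be strictly smaller than $V(\det L)$.
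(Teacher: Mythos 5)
Your proposal is correct, but it proves the proposition by a genuinely different route than the paper. The paper argues in two steps: first a purely set-theoretic identification of $X_{0,1}$ with $V(\det(1-p\underline\Ad\underline\Phi))$ (the containment one way because $\det(1-p\underline\Ad\underline\Phi)$ vanishes on the reduced open stratum $X_0|_{\underline N\neq 0}$ and hence on its closure, and the other way by the $\mathbf{A}^1$-degeneration $t\mapsto(\underline\Phi_0,tN)$), and then an upgrade to scheme-theoretic equality by invoking the \emph{next} proposition, which shows via an explicit characteristic-polynomial and Jacobian computation that the divisor $\det(1-p\underline\Ad\underline\Phi)=0$ is smooth, hence reduced, so that a closed subscheme with the same geometric points must coincide with it. You instead compute the ideal sheaf of $X_0$ directly: you reduce to the linear system $L\underline N=0$ (correctly noting that nilpotency of the $N_i$ is automatic since $p^f-1$ and $p^{2f}-1$ are units), split off the invertible central block, and use the pointwise bound $\dim\ker L\leq 1$ to put $L$ in the Schur-complement normal form $\diag(h,d)$ with $(h)=(\det L)$, after which the annihilator computation in $A[v_1]/(hv_1)$ identifies the scheme-theoretic closure of the $\underline N\neq 0$ locus and hence $\mathcal{I}(X_{0,1})=(\det L)$. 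The kernel bound you need is exactly the content of the lemma the paper proves a page later (for its tangent-space computation on $X_0$), and your eigenvalue argument for it is a valid alternative to the paper's cocharacter argument, so there is no circularity. What each approach buys: the paper's is shorter given that it is about to prove smoothness of the divisor anyway; yours establishes the scheme-theoretic statement without knowing $V(\det L)$ is reduced, and in fact inverts the logical dependence, since reducedness of $V(\det L)$ would then follow from smoothness of the open stratum rather than from the separate Jacobian computation.
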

\begin{proof}
If $(\underline\Phi_0,\underline N_0)$ corresponds to a geometric point of the open subscheme $X_{0}|_{\underline N\neq 0}\subset X_{\reg}$, then $\underline N_0$ is an element of the kernel of $1-p\underline\Ad\underline\Phi_0$, so $\det(1-p\underline\Ad\underline\Phi_0)=0$.  Since $X_{0}|_{\underline N\neq 0}$ is smooth (and in particular reduced), the equation $\det(1-p\underline\Ad\underline\Phi)$ vanishes on $X_{\reg}$.  Thus $X_{\reg,0}$ is contained in the subscheme defined by $\det(1-p\underline\Ad\underline\Phi)=0$.

Conversely, suppose that $\underline\Phi_0$ corresponds to a geometric point of $\GL_2^{\times f}=X_0$ with $\det(1-p\underline\Ad\underline\Phi_0)$.  Then there is some non-zero $N\in \mathfrak{gl}_2^{\times f}$ such that $(1-p\underline\Ad\underline\Phi_0)(N)=0$.  We define a morphism $\A^1\rightarrow X_{\varphi,N}$ via $t\mapsto (\underline\Phi_0,tN)$.   For $t\neq 0$, this morphism lands in the $\underline N\neq 0$ locus of $X_{\varphi,N}$.  Therefore, $(\underline\Phi_0,0)\in X_{\reg}$.

Thus, $X_{\reg,0}$ is a closed subscheme of $\{\det(1-p\underline\Ad\underline\Phi)=0\}\subset X_0$ with the same geometric points.  But by the next proposition, $\{\det(1-p\underline\Ad\underline\Phi)=0\}$ is smooth, and in particular, reduced, so it is equal to $X_{\reg,0}$.
\end{proof}

\begin{prop}
The subscheme of $\GL_2^{\times f}$ defined by $\det(1-p\underline\Ad\underline\Phi)=0$ is smooth.
\end{prop}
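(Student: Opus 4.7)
The plan is to identify the hypersurface as the scheme-theoretic preimage of a smooth plane curve under a morphism to $\A^1 \times \Gm$ that is smooth along that preimage; smoothness of the hypersurface then follows formally from preservation of smoothness under smooth base change.

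I would begin with an eigenvalue computation for $\underline\Ad\underline\Phi$, which is the ``block cyclic shift'' $(X_1,\ldots,X_f)\mapsto(\Ad(\Phi_1)X_2,\ldots,\Ad(\Phi_f)X_1)$ on $\mathfrak{gl}_2^{\times f}$. Its $f$-th iterate preserves each factor and acts there by a cyclic rotation of $\Ad(\Pi)$, where $\Pi:=\Phi_1\cdots\Phi_f$, so the eigenvalues of $\underline\Ad\underline\Phi$ are the $f$-th roots of the eigenvalues of $\Ad(\Pi)$. Letting $\alpha,\beta$ be the eigenvalues of $\Pi$, so that $\Ad(\Pi)$ has eigenvalues $1,1,\alpha/\beta,\beta/\alpha$ on $\mathfrak{gl}_2$, and using $\prod_{\mu^f=c}(1-p\mu)=1-p^fc$, one obtains
\[
\det(1-p\underline\Ad\underline\Phi) = (1-p^f)^2\,(1-p^f\alpha/\beta)(1-p^f\beta/\alpha).
\]
Expanding the last two factors in the symmetric functions $t:=\operatorname{tr}\Pi$ and $d:=\det\Pi$, clearing $d^{-1}$ (a unit on $\GL_2^{\times f}$), and discarding the nowhere-vanishing prefactor $(1-p^f)^2$, the equation $\det(1-p\underline\Ad\underline\Phi)=0$ becomes $F(t,d):=p^f t^2-(p^f+1)^2 d=0$. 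In other words, our subscheme is the scheme-theoretic preimage $\mu^{-1}(Z)$, where $\mu:\GL_2^{\times f}\to\A^1\times\Gm$ sends $(\Phi_1,\ldots,\Phi_f)\mapsto(\operatorname{tr}\Pi,\det\Pi)$ and $Z:=\{F=0\}$.

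Next, $Z$ is smooth because the gradient of $F$ is $(2p^ft,-(p^f+1)^2)$, which is nowhere zero. I would then factor $\mu = (\operatorname{tr},\det)\circ m$, where $m:\GL_2^{\times f}\to\GL_2$ is multiplication. The map $m$ is smooth (its fibers over $\Pi$ are copies of $\GL_2^{f-1}$), and the adjoint quotient $(\operatorname{tr},\det):\GL_2\to\A^1\times\Gm$ is smooth away from the scalar locus $\{t^2=4d\}$. So it only remains to verify that $Z$ is disjoint from this scalar locus: combining $p^f t^2=(p^f+1)^2d$ with $t^2=4d$ on $\{d\neq 0\}$ would force $4p^f=(p^f+1)^2$, i.e.\ $(p^f-1)^2=0$, which is impossible since $p^f\neq 1$.

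Therefore $\mu$ is smooth along $\mu^{-1}(Z)$, and $\mu^{-1}(Z)$, being the preimage of a smooth scheme under a morphism smooth there, is smooth. The one step requiring real care is the initial identification of $\det(1-p\underline\Ad\underline\Phi)$, up to a unit, with the pullback of a polynomial in the trace and determinant of $\Pi$; once that reduction is in place, everything else is essentially mechanical.
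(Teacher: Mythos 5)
Your proof is correct and follows essentially the same route as the paper: both reduce, via the smooth multiplication map $\GL_2^{\times f}\to\GL_2$, to the hypersurface $p^f(\Tr\Pi)^2=(p^f+1)^2\det\Pi$ and conclude from the impossibility of $4p^f=(p^f+1)^2$. Your factorization through the adjoint quotient $(\Tr,\det)$ is just a cleaner packaging of the paper's direct Jacobian computation on $\GL_2$, whose critical locus is precisely the scalar matrices you exclude.
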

\begin{proof}
In order to study the locus in $\GL_2^{\times f}$ where $\det(1-p\underline\Ad\underline\Phi)=0$, we will compute the characteristic polynomial of $\underline\Phi\in\GL_2^{\times f}$ acting on $\mathfrak{gl}_2^{\times f}$.

For $\underline{X}:=(X_1,\ldots,X_f)\in \mathfrak{gl}_2^{\times f}$, $\underline\Ad\underline{\Phi}$ acts by 
$$\underline\Ad\underline{\Phi}(\underline{X})=(\Ad(\Phi_1)(X_2),\ldots,\Ad(\Phi_f)(X_1))$$
As a matrix, this is
$$\begin{pmatrix}
0 & \Ad(\Phi_1) & 0 & \cdots & 0	\\
0 & 0 & \Ad(\Phi_2) & & 0		\\
\vdots &  & \ddots & \ddots & \vdots	\\
0 & 0 & \cdots & 0 & \Ad(\Phi_{f-1})	\\
\Ad(\Phi_f) & 0 & \cdots & 0 & 0
\end{pmatrix}$$
Here the ``entries'' are actually $4\times 4$ matrices, and we view $\lambda$ and $\Ad(\Phi_i)$ as operators on $\mathfrak{gl}_2$.
Thus, to compute the characteristic polynomial of $p\underline\Ad\underline{\Phi}$, we need to compute the determinant of
$$\begin{pmatrix}
\lambda & -p\Ad(\Phi_1) & 0 & \cdots & 0	\\
0 & \lambda & -p\Ad(\Phi_2) & & 0		\\
\vdots &  & \ddots & \ddots & \vdots	\\
0 & 0 & \cdots & \lambda & -p\Ad(\Phi_{f-1})	\\
-p\Ad(\Phi_f) & 0 & \cdots & 0 & \lambda
\end{pmatrix}$$

By row reduction, this is the same as the determinant of
$$\begin{pmatrix}
\lambda & -p\Ad(\Phi_1) & 0 & \cdots & 0	\\
0 & \lambda & -p\Ad(\Phi_2) & & 0		\\
\vdots &  & \ddots & \ddots & \vdots	\\
0 & 0 & \cdots & \lambda & -p\Ad(\Phi_{f-1})	\\
0 & 0 & \cdots & 0 & \lambda-p^f\Ad(\Phi_f\cdot\Phi_1\cdots\Phi_{f-1})/\lambda^{f-1}
\end{pmatrix}$$
which is $\det(\lambda^f-p^f\Ad(\Phi_f\cdot\Phi_1\cdots\Phi_{f-1}))$.

We are interested in the subscheme of $\GL_2^{\times f}$ where $\det(1-p^f\Ad(\Phi_1\cdots\Phi_{f}))=0$.  Letting $\Nm\underline{\Phi}$ denote the product $\Phi_1\cdots\Phi_f$, the equation of this subscheme can be computed to be
$$p^f(\Tr\Nm\underline{\Phi})=(p^f+1)^2\det\Nm\underline{\Phi}$$
This follows from a brute force computation that the characteristic polynomial of the adjoint action of $\Phi\in\GL_2$ on $\mathfrak{gl}_2$is
$$\lambda^4-\frac{(\Tr\Phi)^2}{\det\Phi}\lambda^3 + 2(\frac{(\Tr\Phi)^2}{\det\Phi}-1)\lambda^2-\frac{(\Tr\Phi)^2}{\det\Phi}\lambda + 1$$
Thus, we are interested in the zero-locus of
\begin{equation*}
\begin{split}
1-p^f\frac{(\Tr\Nm\underline\Phi)^2}{\det\Nm\underline\Phi}+2p^{2f}&\left(\frac{(\Tr\Nm\underline\Phi)^2}{\det\Nm\underline\Phi}-1\right)-p^{3f}\frac{(\Tr\Nm\underline\Phi)^2}{\det\Nm\underline\Phi}+p^{4f} \\
&= (1-p^{2f})^2-p^f\frac{(\Tr\Nm\underline\Phi)^2}{\det\Nm\underline\Phi}(1-p^f)^2	\\
&= (1-p^f)^2\left[(1+p^f)^2-p^f\frac{(\Tr\Nm\underline\Phi)^2}{\det\Nm\underline\Phi}\right]
\end{split}
\end{equation*}
But then a simple computation shows that the equation
$$p^f(\Tr\Phi)^2=(p^f+1)^2\det\Phi$$
defines a smooth subscheme of $\GL_2$.  For if $\Phi=(\begin{smallmatrix}a&b\\c&d\end{smallmatrix})$, then the Jacobian of this equation is
$$\begin{pmatrix}2p^fa+2p^fd-(p^f+1)^2d\\ (p^f+1)^2c\\ (p^f+1)^2b \\ 2p^fa+2p^fd-(p^f+1)^2a\end{pmatrix}$$
Vanishing would force $a=d$ and $b=c=0$, which in turn would force $4p^f=(p^f+1)^2$, implying $(p^f-1)^2=0$, implying $p^f=1$, which is impossible.  Since the multiplication map $\GL_2^{\times f}\rightarrow \GL_2$ is smooth, this shows that $X_{\reg,0}$ is smooth.
\end{proof}

Next we claim that $X_{\reg}$ is smooth.  This can be done via a simple tangent space calculation: we know that $X_{\varphi,N}$ is equidimensional of dimension $4f$, and we know that $X_{\reg}$ contains an irreducible dense open smooth piece of dimension $4f$, by definition, so it is enough to show that the tangent space at every point of $X_{\reg}$ has dimension $4f$.

\begin{lemma}
Fix $\underline\Phi\in\GL_2^{\times f}(k)$ for some extension $k/\overline{E}$.  Then the space of elements $\underline{N}\in\mathfrak{gl}_2^{\times f}(k)$ such that $\underline N=p\underline\Ad\underline\Phi(N)$ is a $k$-vector space of dimension at most $1$.
\end{lemma}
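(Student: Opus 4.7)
The plan is to reduce the linear algebra problem on $\mathfrak{gl}_2^{\times f}$ to a single eigenspace computation for one element of $\GL_2$. The key observation is that the defining relation $N_i = p\Ad(\Phi_i)(N_{i+1})$ inverts to $N_{i+1} = p^{-1}\Ad(\Phi_i^{-1})(N_i)$, so the whole tuple $\underline N$ is determined by $N_1$. Iterating around the cycle, this forces exactly one consistency condition, namely
\[
N_1 \;=\; p^{-f}\Ad\bigl((\Phi_1\Phi_2\cdots \Phi_f)^{-1}\bigr)(N_1),
\]
or equivalently $p^f\Ad(A)(N_1)=N_1$, where $A := \Phi_1\Phi_2\cdots\Phi_f \in \GL_2(k)$. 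So the space of $\underline N$ in the kernel of $1-p\underline\Ad\underline\Phi$ injects into the $p^{-f}$-eigenspace of $\Ad(A)$ on $\mathfrak{gl}_2$, and it suffices to show this eigenspace has dimension at most $1$.

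Next I would compute the eigenvalues of $\Ad(A)$ on $\mathfrak{gl}_2$. Passing to $\overline k$ and writing $\alpha,\beta$ for the eigenvalues of $A$, the decomposition $\mathfrak{gl}_2 = Z(\mathfrak{gl}_2) \oplus \mathfrak{sl}_2$ shows that $\Ad(A)$ acts by the eigenvalues $1,1,\alpha\beta^{-1},\beta\alpha^{-1}$ when $A$ is diagonalizable. If $\alpha \ne \beta$, then $A$ is diagonalizable, $\Ad(A)$ is as well, and the putative eigenvalue $p^{-f} \ne 1$ can occur at most once: the alternative, $\alpha\beta^{-1}=\beta\alpha^{-1}=p^{-f}$, would force $p^{2f}=1$, which is impossible.

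Finally I would handle the degenerate case $\alpha=\beta$: either $A$ is a scalar matrix, so $\Ad(A)$ is the identity, or $A$ is a non-trivial Jordan block, in which case $A = \alpha(I+n)$ with $n$ nilpotent and $\Ad(A)$ is unipotent with the unique eigenvalue $1$. In both situations $p^{-f}$ is not an eigenvalue at all, so the eigenspace is zero. In every case the $p^{-f}$-eigenspace is at most one-dimensional, proving the lemma. No step here looks like a serious obstacle; the only thing to be careful about is the non-semisimple case for $A$, which is dispatched by the Jordan form observation above.
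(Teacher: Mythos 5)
Your proof is correct. Both arguments begin with the same reduction: the relation $N_i=p\Ad(\Phi_i)(N_{i+1})$ shows the tuple is determined by $N_1$, and going around the cycle leaves the single condition $N_1=p^f\Ad(\Nm\underline\Phi)(N_1)$, so everything comes down to bounding the $p^{-f}$-eigenspace of $\Ad(\Nm\underline\Phi)$ on $\mathfrak{gl}_2$. From there the two proofs diverge. The paper assumes a nonzero solution exists, conjugates it to the standard nilpotent $\left(\begin{smallmatrix}0&1\\0&0\end{smallmatrix}\right)$, pins down the shape of $\Nm\underline\Phi$ relative to the associated cocharacter $\lambda=\diag(t,t^{-1})$, and then uses the $\lambda$-grading of $\mathfrak{gl}_2$ to show any solution dies in the quotients $\mathfrak{gl}_2/\Fil^{\geq 0}$ and $\Fil^{\geq 0}/\Fil^{\geq 2}$, hence lies in the weight-$2$ line. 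You instead bound the multiplicity of the eigenvalue $p^{-f}$ of $\Ad(A)$ directly, via the spectrum $\{1,1,\alpha\beta^{-1},\beta\alpha^{-1}\}$ in the semisimple case and the unipotence of $\Ad(A)$ in the Jordan-block case. Your route is more elementary and self-contained (pure $2\times 2$ linear algebra, no normalization of a solution needed, and it handles the ``no nonzero solution'' case uniformly), while the paper's phrasing in terms of associated cocharacters and weight filtrations is the one that generalizes to the $\GL_n$ and general-$G$ computations elsewhere in the paper. Both are complete; no gap.
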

\begin{proof}
The space of such $\underline{N}$ is certainly a $k$-vector space (under the diagonal action of $k$ on $\mathcal{N}(k) = \mathfrak{gl}_2(k)^{\times f}$), so if there are no such $\underline N$, we are done.  Suppose there is some $\underline N=(N_1,\ldots,N_f)$ such that $\underline N=p\underline\Ad\underline\Phi(N)$.  Then $N_i = p\Ad(\Phi_i)(N_{i+1})$, so the $N_i$ are determined by $N_1$.  We may assume by conjugation that $N_1=\left(\begin{smallmatrix}0&1\\0&0\end{smallmatrix}\right)$.  We also have $N_1=p^f\Ad(\Nm\underline\Phi)(N_1)$, so if $\lambda:\Gm\rightarrow \GL_2$ is the cocharacter $t\mapsto \left(\begin{smallmatrix}t&0\\0&t^{-1}\end{smallmatrix}\right)$ associated to $N_1$, then $\Nm\underline\Phi$ is of the form $\left(\begin{smallmatrix}p^{-f/2}&0\\0&p^{f/2}\end{smallmatrix}\right)\left(\begin{smallmatrix}a&0\\0&a\end{smallmatrix}\right)\left(\begin{smallmatrix}1&b\\0&1\end{smallmatrix}\right)$.  It suffices to show that the space of $N\in \mathfrak{gl}_2(k)$ such that $N=p^f\Ad(\Nm\underline\Phi)(N)$ is $1$-dimensional.

Now $\mathfrak{gl}_2(k)$ is graded by the action of $\lambda$, with weight spaces of weights $-2$, $0$, and $2$, generated by $\left(\begin{smallmatrix}0&0\\1&0\end{smallmatrix}\right)$, $\{\left(\begin{smallmatrix}1&0\\0&1\end{smallmatrix}\right),\left(\begin{smallmatrix}1&0\\0&-1\end{smallmatrix}\right)\}$, and $\left(\begin{smallmatrix}0&1\\0&0\end{smallmatrix}\right)$, respectively.  Conjugation by $\left(\begin{smallmatrix}1&b\\0&1\end{smallmatrix}\right)$ acts trivially on $\mathfrak{gl}_2/\Fil^{\geq 0}\mathfrak{gl}_2$ and $\Fil^{\geq 0}\mathfrak{gl}_2/\Fil^{\geq 2}\mathfrak{gl}_2$, and conjugation by $\left(\begin{smallmatrix}p^{-f/2}&0\\0&p^{f/2}\end{smallmatrix}\right)$ acts by multiplication by $p^{f}$ and $1$ on these spaces, respectively.  Therefore, if $N_1'\in \mathfrak{gl}_2(\overline{E})$ satisfies $N_1'=p^f\Ad(\Nm\underline\Phi)(N_1')$, the image of $N_1'$ is $0$ in each of these quotients.  Therefore, $N_1'$ is a multiple of $\left(\begin{smallmatrix}0&1\\0&0\end{smallmatrix}\right)$, as desired.
\end{proof}


Away from $X_{\reg,0}$, $X_{\reg}$ is smooth.  This can be seen by considering the morphism $X_{\varphi,N}\rightarrow X_N$ restricted to the regular nilpotent orbit $U$ of $\GL_2^{\times f}$.  For any point $\underline N\in U$, there is an \'etale neighborhood $V$ and a section $s:V\rightarrow \GL_2^{\times f}$ such that the nilpotent matrix over $V$ is of the form $\Ad(s(V))(\underline N)$, by Lemma~\ref{fam-section}.  If $\underline{N}=(N_0,\ldots,N_0)$, this implies that 
\[	X_{\varphi,N}|_V=s(V)\left((\Phi_0,\ldots,\Phi_0)Z_{\GL_2}(N_0)^{\times f}\right)\varphi(s(V))^{-1}	\]
where $N_0 = p\Phi_0N_0\Phi_0^{-1}$.  But every $\underline{N}\in U$ is $\GL_2^{\times f}$-conjugate to $(N_0,\ldots,N_0)$ for some regular nilpotent $N_0\in\mathfrak{gl}_2$.  Thus, we have an \'etale-local description of $X_{\varphi,N}|_U$, showing it is smooth.

On the other hand, at a geometric point of $X_{\reg}$ corresponding to $(\Phi_0,0)$, the tangent space is the space of pairs $(\underline\Phi_0+\varepsilon\underline\Phi_1,\varepsilon \underline N_1)$ with $\underline\Phi_0+\underline\varepsilon\Phi_1\in X_{\reg,0}$ and $\underline N_1$ satisfying $\underline N_1=p\underline \Phi_0\cdot\underline N_1$.  For we have seen that the equation $\det(1-p\Ad\underline\Phi)$ vanishes on $X_{\reg}$, so $\det(1-p\Ad(\underline\Phi_0+\varepsilon\underline\Phi_1))=0$, so $\underline\Phi_0+\varepsilon\underline\Phi_1$ is an $\overline{E}[\varepsilon]$-point of $X_{\reg,0}$.

Since $X_{\reg,0}$ is a smooth divisor of $\GL_2^{\times f}$ and the space of $\underline N_1$ compatible with $\underline \Phi_0$ is $1$-dimensional, this has the correct dimension.

In short, we have shown the following:
\begin{thm}
The space $X_{\varphi,N}$ is the union of two smooth schemes of dimension $4f$, whose intersection is smooth of dimension $4f-1$.
\end{thm}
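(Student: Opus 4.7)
The plan is to verify the theorem by checking three separate statements: $X_1 = \GL_2^{\times f}|_{\underline N = 0}$ is smooth of dimension $4f$, the intersection $X_{0,1}$ is smooth of dimension $4f-1$, and $X_0$ is smooth of dimension $4f$. The first is immediate since projecting away $\underline N$ identifies $X_1$ with $\GL_2^{\times f}$. The main structural input for the other two is the scheme-theoretic description $X_{0,1} = \{\det(1 - p\underline\Ad\underline\Phi) = 0\} \subset \GL_2^{\times f}$ established in the preceding proposition, together with the fact (from the previous section) that $X_{\varphi,N}$ is reduced and equidimensional of dimension $4f$.

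For the smoothness of $X_{0,1}$, I would explicitly compute $\det(1 - p\underline\Ad\underline\Phi)$. The operator $\underline\Ad\underline\Phi$ has a cyclic block form on $\mathfrak{gl}_2^{\times f}$, so row reduction collapses the characteristic polynomial of $p\underline\Ad\underline\Phi$ to $\det(\lambda^f - p^f \Ad(\Nm\underline\Phi))$ on $\mathfrak{gl}_2$. A direct calculation expresses the characteristic polynomial of $\Ad(\Phi)$ on $\mathfrak{gl}_2$ in terms of $\Tr\Phi$ and $\det\Phi$, and evaluating at $\lambda=1$ factors out a nonzero $(1-p^f)^2$ to leave the equation $p^f(\Tr\Nm\underline\Phi)^2 = (p^f+1)^2 \det\Nm\underline\Phi$ on $\GL_2$. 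Writing $\Phi = \bigl(\begin{smallmatrix}a&b\\c&d\end{smallmatrix}\bigr)$, I would check that the Jacobian of this equation never vanishes (simultaneous vanishing would force $(p^f-1)^2 = 0$, which is absurd). Smoothness of the multiplication $\GL_2^{\times f} \to \GL_2$ then transports smoothness back to $X_{0,1}$.

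For the smoothness of $X_0$, the open dense locus $X_0|_{\underline N \neq 0}$ is already smooth of dimension $4f$: over the regular nilpotent orbit $U \subset \GL_2^{\times f}$, Lemma~\ref{fam-section} supplies an \'etale-local section identifying $X_{\varphi,N}|_V$ with a coset translate of $Z_{\GL_2}(N_0)^{\times f}$ fibered over $V$. At a point $(\underline\Phi_0, 0) \in X_{0,1}$, I would bound the tangent space: a tangent vector $(\underline\Phi_0 + \varepsilon\underline\Phi_1,\, \varepsilon\underline N_1) \in X_0(\overline E[\varepsilon])$ must satisfy $\underline N_1 = p\underline\Ad\underline\Phi_0(\underline N_1)$ and, since $\det(1-p\underline\Ad\underline\Phi)$ vanishes on $X_0$, also $\underline\Phi_0 + \varepsilon\underline\Phi_1 \in X_{0,1}(\overline E[\varepsilon])$. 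The first condition contributes at most $1$ dimension by the preceding kernel lemma, while the second contributes $\dim X_{0,1} = 4f - 1$, giving tangent dimension $\le 4f$. Combined with equidimensionality of $X_{\varphi,N}$ and the presence of the smooth dense open of dimension $4f$ inside $X_0$, this forces smoothness.

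The main obstacle is the concrete Jacobian computation for $X_{0,1}$; the rest is a matter of carefully assembling the earlier lemmas. Note that $X_0 \cup X_1 = X_{\varphi,N}$ set-theoretically because $\mathfrak{gl}_2$ has only two nilpotent $\GL_2$-orbits (the zero orbit and the regular orbit), so no further irreducible components can appear.
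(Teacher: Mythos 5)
Your proposal is correct and follows essentially the same route as the paper: the same scheme-theoretic identification of $X_{0,1}$ with $\{\det(1-p\underline\Ad\underline\Phi)=0\}$, the same row-reduction and Jacobian computation for its smoothness, and the same tangent-space bound at points $(\underline\Phi_0,0)$ of $X_0$ (one dimension from the kernel lemma plus $4f-1$ from the smooth divisor $X_{0,1}$), combined with equidimensionality of $X_{\varphi,N}$ and the smooth dense open locus over the regular orbit. No substantive differences to flag.
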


\subsection{Regular nilpotent orbits in $\GL_n$}

Let $G=\GL_n$, and assume for the sake of simplicity that $K_0=\Q_p$.  The regular nilpotent orbit $\mathcal{O}_{\reg}$ in $\mathfrak{g}$ is the orbit of $N_{\reg}$, i.e., the nilpotent element with all ones on the superdiagonal.  Let $\lambda_{\reg}:\G_m\rightarrow G$ be the cocharacter $\diag(t^{n-1},t^{n-3},\ldots,t^{1-n})$; $\lambda_{\reg}$ is associated to $N_{\reg}$.  The conjugation action of $\lambda_{\reg}$ induces a grading on $\mathfrak{g}$, with $N_{\reg}$ in weight 2; $\mathfrak{g}\cong\oplus_{i=1-n}^{n-1} \mathfrak{g}_{2i}$, and the graded pieces are on the diagonals.  The parabolic $P_{\reg}:=P_G(\lambda_{\reg})$ is the standard upper triangular Borel.

We wish to study the closure $X_{\reg}$ of $X_{\varphi,N}|_{\mathcal{O}_{\reg}}$ inside $X_{\varphi,N}$.  To do this, we first extend scalars from $E$ to $\overline{E}$, and we define an auxiliary moduli problem $\widetilde X_{\reg}$.  The resolution $G\times^{P_{\reg}}\mathfrak{g}_{\geq2}$ of the closure $\overline{\mathcal{O}}_{\reg}$ of $\mathcal{O}_{\reg}$ carries a universal parabolic $\mathcal{P}$, along with the filtered Lie algebra $\mathfrak{P}\supset\mathfrak{P}_{\geq 2}\supset\cdots\supset\mathfrak{P}_{2(n-1)}$ of $\mathcal{P}$.  More precisely, $\mathcal{P}$ is a parabolic subgroup scheme $\mathcal{P}\subset G\times G/P_{\reg}$, such that for every parabolic subgroup scheme $P\rightarrow S$ with $P_{\overline s}$ conjugate to $P_{\reg}$, there is a unique morphism $f:S\rightarrow G/P_{\reg}$ such that $P\cong f^\ast \mathcal{P}$.  Then we define
\[	\widetilde X_{\reg}(A):=\{(\Phi,N)\in (\mathcal{P}\times_{G/P_{\reg}}\mathfrak{P}_{\geq 2})(A)\mid (1-p\Ad(\Phi))|_{\mathfrak{p}_{\geq2}/\mathfrak{p}_{\geq4}}=0, (1-p\Ad(\Phi))(N)=0\}	\]
In other words, $\Phi$ is an $A$-point of a family of parabolics $P$ and $N$ is an $A$-point of the Lie algebra ${\mathfrak{p}}$ of $P$, and we impose certain linear algebraic conditions on $\Phi$ and $N$.  There is a natural morphism $\widetilde X_{\reg}\rightarrow X_{\varphi,N}$ given by forgetting the parabolic, as well as a natural morphism $\widetilde X_{\reg}\rightarrow \mathcal{N}$ given by forgetting both the parabolic and $\Phi$.

\begin{prop}
$\widetilde X_{\reg}$ is smooth, as is the fiber $\widetilde X_{\reg}|_{N=0}$ over $N=0$.
\end{prop}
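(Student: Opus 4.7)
The plan is to use the forgetful morphism $\widetilde X_{\reg}\rightarrow G/P_{\reg}$ (sending $(\Phi,N)$ to the underlying parabolic) to reduce the smoothness question to an analysis over the basepoint. Because $G$ acts transitively on $G/P_{\reg}$ by conjugation and the map is $G$-equivariant, we have
\[	\widetilde X_{\reg}\cong G\times^{P_{\reg}}Y_{\reg},	\]
where $Y_{\reg}\subset P_{\reg}\times\mathfrak{p}_{\geq 2}$ is the fiber over the identity coset, i.e., the subscheme cut out by the two conditions (i) $(1-p\Ad(\Phi))|_{\mathfrak{p}_{\geq 2}/\mathfrak{p}_{\geq 4}}=0$ and (ii) $(1-p\Ad(\Phi))(N)=0$. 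Since $G\rightarrow G/P_{\reg}$ is a $P_{\reg}$-torsor over a smooth base, $\widetilde X_{\reg}$ is smooth if and only if $Y_{\reg}$ is, and similarly the fiber at $N=0$ is smooth iff $Z:=Y_{\reg}|_{N=0}\subset P_{\reg}$ is.

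I would next analyze the operator $1-p\Ad(\Phi)$ with respect to the $P_{\reg}$-stable filtration $\mathfrak{p}_{\geq 2}\supset\mathfrak{p}_{\geq 4}\supset\cdots$. Its action on the graded piece $\mathfrak{p}_{\geq 2k}/\mathfrak{p}_{\geq 2k+2}$ factors through the projection $P_{\reg}\twoheadrightarrow T$ to the diagonal torus, and if $D=\diag(d_1,\dots,d_n)$ is the image of $\Phi$, then on the basis vector $e_{i,i+k}$ it acts by $1-p\cdot d_i/d_{i+k}$. Thus condition (i) is equivalent to the system $d_{i+1}=p\,d_i$ for $i=1,\dots,n-1$, which defines a one-dimensional smooth subtorus translate in $T$. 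Pulling back along the smooth surjection $P_{\reg}\twoheadrightarrow T$ shows that $Z\subset P_{\reg}$ is smooth of dimension $1+\dim U_{P_{\reg}}$. This already settles smoothness of $\widetilde X_{\reg}|_{N=0}=G\times^{P_{\reg}}Z$.

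For the full space, I would observe that for $\Phi\in Z$ the graded pieces of $1-p\Ad(\Phi)$ in degrees $2k\geq 4$ act by $1-p^{1-k}$, which is invertible. Hence $1-p\Ad(\Phi)$ restricts to an automorphism of $\mathfrak{p}_{\geq 4}$, while its image out of $\mathfrak{p}_{\geq 2}$ lies in $\mathfrak{p}_{\geq 4}$ by condition (i); so the map
\[	1-p\Ad(\Phi):\mathfrak{p}_{\geq 2}\longrightarrow\mathfrak{p}_{\geq 4}	\]
is surjective with kernel of constant dimension $n-1$ as $\Phi$ ranges over $Z$. Consequently the projection $Y_{\reg}\rightarrow Z$ is a vector bundle of rank $n-1$, making $Y_{\reg}$ smooth of dimension $\dim P_{\reg}$, and hence $\widetilde X_{\reg}$ smooth of dimension $\dim G$.

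The main technical point is the constant-rank statement in the last paragraph: one must check not just that each individual $1-p\Ad(\Phi)$ drops into $\mathfrak{p}_{\geq 4}$ and is invertible there, but that this holds uniformly in families, so that the kernel glues to a locally free sheaf over $Z$. This follows from the triangular structure of $1-p\Ad(\Phi)$ relative to the filtration and the observation that invertibility on each higher associated graded piece is an open condition that in fact holds identically on $Z$; the rest of the argument is linear algebra.
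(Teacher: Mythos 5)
Your proof is correct, and it reaches the conclusion by a genuinely different route than the paper. The paper verifies smoothness via the infinitesimal lifting criterion: it lifts the parabolic using smoothness of $G/P_{\reg}$, lifts $\Phi$ using the observation that the locus of $\Phi$ with $(1-p\Ad(\Phi))|_{\mathfrak{p}_{\geq2}/\mathfrak{p}_{\geq4}}=0$ is a torsor under the smooth subgroup of $P_{\reg}$ acting trivially on $\mathfrak{p}_{\geq2}/\mathfrak{p}_{\geq4}$, and then lifts $N$ by asserting that $\ker(1-p\Ad(\Phi))|_{\mathfrak{p}_{\geq2}}$ is a rank $n-1$ vector bundle. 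You instead give a global structural description: $\widetilde X_{\reg}\cong G\times^{P_{\reg}}Y_{\reg}$ with $Y_{\reg}$ a rank $n-1$ vector bundle over the smooth locus $Z\subset P_{\reg}$. The two arguments rest on the same two computations, but yours makes both explicit where the paper is terse: the identification of condition (i) with the equations $d_{i+1}=p\,d_i$ on the torus quotient (equivalent to the paper's torsor statement, since the stabilizer is $Z(\GL_n)\cdot U_{P_{\reg}}$), and --- more usefully --- the proof that the kernel is a vector bundle, via the observation that on $Z$ the operator $1-p\Ad(\Phi)$ acts on $\gr^{2k}\mathfrak{p}$ by the nonzero scalar $1-p^{1-k}$ for $k\geq2$, hence restricts to an automorphism of $\mathfrak{p}_{\geq4}$ and maps $\mathfrak{p}_{\geq2}$ onto $\mathfrak{p}_{\geq4}$ with constant corank $n-1$. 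Your approach buys the dimension count $\dim\widetilde X_{\reg}=\dim G$ for free and supplies the detail the paper leaves implicit; the paper's deformation-theoretic phrasing has the advantage of generalizing more readily to situations where a global fibration structure is less transparent.
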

\begin{proof}
We use the functorial criterion for smoothness.  Let $A$ be an $\overline E$-algebra, let $I\subset A$ be an ideal such that $I^2=0$, and let $(\Phi_0,N_0,P_0)$ be an $A/I$-point of $\widetilde X_{\reg}$.  We wish to lift $(\Phi_0, N_0,P_0)$ to an $A$-point of $\widetilde X_{\reg}$.

First of all, $G/P_{\reg}$ is smooth, so we can lift $P_0$ to an $A$-point $P$ of $(G/P_{\reg})$.  We claim that the space of $\Phi$ in $P$ such that $1-p\Ad(\Phi)$ kills ${\mathfrak{p}}_{\geq 2}/\mathfrak{p}_{\geq 4}$ is smooth over $\Spec A$.  To see this, we can work locally on $\Spec A$.  Since the quotient $G\rightarrow G/P_{\reg}$ admits sections Zariski-locally, we may assume that there is some $g_P\in G(A)$ such that $P=g_PP_{\reg}g_P^{-1}$.  Therefore, we may assume that $P=P_{\reg}$.  But the space of $\Phi$ such that $(1-p\Ad(\Phi))|_{\mathfrak{p}_{\geq2}/\mathfrak{p}_{\geq4}}=0$ is a torsor under the subgroup $Z_G(\mathfrak{p}_{\geq2}/\mathfrak{p}_{\geq4})\subset P_{\reg}$
which acts trivially on $\mathfrak{p}_{\geq2}/\mathfrak{p}_{\geq4}$, so it is smooth.

Thus, we can lift ${\Phi}_0$ to an $A$-point $\Phi$ of $P$ such that $1-p\Ad(\Phi)$ kills $\mathfrak{p}_{\geq 2}/\mathfrak{p}_{4}$, so $\widetilde X_{\reg}|_{N=0}$ is smooth.  It remains to lift $N_0$ to an $A$-point of the kernel of $1-p\Ad(\Phi)$.  But the kernel of $1-p\Ad(\Phi)$ on $\mathfrak{p}_{\geq 2}$ is a rank $n-1$ vector bundle on $\Spec A$, so we can lift $N_0$.
\end{proof}

\begin{prop}
The morphism $\widetilde X_{\reg}\rightarrow X_{\varphi,N}$ is an isomorphism onto $X_{\reg}$.
\end{prop}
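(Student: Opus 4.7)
My plan is to show the forgetful map $\pi\colon \widetilde X_{\reg}\to X_{\varphi,N}$ lands in $X_{\reg}$ as a proper bijection on geometric points, and then promote this bijection to a scheme-theoretic isomorphism.

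\emph{Properness and image.} The map $\pi$ is proper since $\widetilde X_{\reg}$ is a closed subscheme of $X_{\varphi,N}\times G/P_{\reg}$ and the flag variety $G/P_{\reg}$ is projective. By the previous proposition $\widetilde X_{\reg}$ is smooth, and its projection to $G/P_{\reg}$ has smooth connected fibers (the $\Phi$-direction is a torsor under the subgroup of $P$ acting trivially on $\mathfrak{p}_{\geq 2}/\mathfrak{p}_{\geq 4}$, and $N$ ranges over a rank $n-1$ vector bundle), so $\widetilde X_{\reg}$ is connected, hence irreducible. The preimage of $\mathcal{O}_{\reg}$ under $\widetilde X_{\reg}\to\mathcal{N}$ is a nonempty open, hence dense, and its image in $X_{\varphi,N}$ lies in $X_{\varphi,N}|_{\mathcal{O}_{\reg}}\subset X_{\reg}$. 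Properness and closedness of $X_{\reg}$ force $\pi(\widetilde X_{\reg})\subseteq X_{\reg}$, with equality since $\pi(\widetilde X_{\reg})$ is closed and contains a dense open of $X_{\reg}$.

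\emph{Bijection on geometric points.} Given $(\Phi,N,P)\in\widetilde X_{\reg}(\overline k)$, the relation $(1-p\Ad(\Phi))|_{\mathfrak{p}_{\geq 2}/\mathfrak{p}_{\geq 4}}=0$ forces every simple root $\alpha$ of $P$ to satisfy $\alpha(\Phi_s)=1/p$, where $\Phi_s$ is the semi-simple part of $\Phi$. This pins down $\Phi_s$ as regular semi-simple with distinct eigenvalues $\phi,p\phi,\ldots,p^{n-1}\phi$ for some $\phi\in\overline k^\times$. Then $P$ is forced to be the unique Borel stabilizing the flag of generalized $\Phi$-eigenspaces ordered by increasing $p$-valuation, so $P$ is recovered canonically from $\Phi$ alone. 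In particular $\pi$ is injective on $\overline k$-points.

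\emph{Upgrading to isomorphism and main obstacle.} To promote bijectivity to a scheme-theoretic isomorphism I would construct an inverse $X_{\reg}\to\widetilde X_{\reg}$, $(\Phi,N)\mapsto(\Phi,N,P(\Phi))$, with $P(\Phi)$ the stabilizer of the canonical flag of generalized $\Phi$-eigenspaces indexed by $p$-valuation. Making this construction work in families over $X_{\reg}$ is the main technical obstacle: the characteristic polynomial of $\Phi$ must split in the predicted geometric-progression pattern. By the point-wise analysis above together with reducedness of $X_{\reg}$, this factorization holds on $X_{\reg}$, and after passing to an \'etale cover extracting an $n$-th root of $\det(\Phi)\cdot p^{-n(n-1)/2}$ to label the starting eigenvalue $\phi$, the flag is built from the resulting generalized eigenspace decomposition and descends because $P(\Phi)$ does not depend on the choice of labelling. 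A second route, which I would pursue in parallel, is to verify via a tangent-space calculation that $\pi$ is \'etale at every geometric point: proper plus \'etale plus bijective then forces an isomorphism directly, and has the bonus of exhibiting $X_{\reg}$ as smooth. The delicate input for either approach is the same: infinitesimal deformations of $P$ are uniquely determined by those of $(\Phi,N)$, mirroring the uniqueness of step two for first-order thickenings.
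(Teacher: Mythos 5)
Your treatment of properness (realizing $\widetilde X_{\reg}$ as a closed subscheme of $X_{\varphi,N}\times G/P_{\reg}$ rather than running the valuative criterion as the paper does) and your identification of the image with $X_{\reg}$ (via irreducibility of $\widetilde X_{\reg}$ and density of the preimage of $\mathcal{O}_{\reg}$, rather than the paper's explicit $\mathbf{A}^1$-degeneration $(\Phi,N+t(N'-N))$) are both correct and arguably cleaner than the original. Your geometric-point injectivity argument is also essentially sound: the condition that $1-p\Ad(\Phi)$ kill $\mathfrak{p}_{\geq2}/\mathfrak{p}_{\geq4}$ does force the eigenvalues of $\Phi$ into a geometric progression $\phi,p\phi,\ldots,p^{n-1}\phi$, and the Borel is recovered from the resulting eigenflag (though ``increasing $p$-valuation'' is not the right way to say this over an abstract algebraically closed field of characteristic $0$; the ordering comes from the position in the geometric progression, i.e.\ $\phi$ is the unique eigenvalue $\mu$ with $\mu/p$ not an eigenvalue).

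The genuine gap is the step you yourself flag and then defer: neither of your two proposed routes for promoting the geometric-point bijection to a scheme-theoretic isomorphism is actually carried out, and a proper morphism that is bijective on geometric points onto a reduced target need not be an isomorphism (normalizations of cuspidal curves being the standard counterexample), so something really must be proved here. The paper closes this by showing $\pi$ is a \emph{monomorphism}: for an $A$-valued point $(\Phi,N,P)$, the full filtered Lie algebra $\mathfrak{p}$ is recovered from $\Phi$ alone via the identity $\mathfrak{g}_{2i}=\ker(1-p^{-i}\Ad(\Phi))+\mathfrak{g}_{\geq 2i+2}$, which follows because $1-p^{-i}\Ad(\Phi)$ annihilates $\mathfrak{g}_{\geq 2i}/\mathfrak{g}_{\geq 2i+2}$ and $\ker(1-p^{-i}\Ad(\Phi))$ meets $\mathfrak{g}_{\geq 2i+2}$ trivially. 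A proper monomorphism is a closed immersion, and a closed immersion from a reduced scheme with the same geometric points as the reduced scheme $X_{\reg}$ is an isomorphism onto it; no inverse section and no tangent-space computation on $X_{\reg}$ is needed. If you prefer your first route (building $P(\Phi)$ in families from generalized eigenspaces), note that $\phi=\Tr(\Phi)/(1+p+\cdots+p^{n-1})$ is already a regular function on $X_{\reg}$, so no \'etale cover is required---but you would still have to verify that the characteristic polynomial factors as $\prod_i(T-p^i\phi)$ scheme-theoretically and that the eigenspaces form sub-bundles, which is exactly the kind of work the monomorphism argument sidesteps.
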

\begin{proof}
We first show that $\widetilde X_{\reg}\rightarrow X_{\varphi,N}$ is a monomorphism.  So suppose $(\Phi,N)$ is an $A$-valued pair such that $N=p\Ad(\Phi)(N)$; we need to show that there is at most one $P$ such that $\Phi\in P$, $N\in \mathfrak{p}_{\geq 2}$, and $1-p\Ad(\Phi)$ kills $\mathfrak{p}_{\geq 2}/\mathfrak{p}_{\geq4}$.  For this, it suffices to show that $\Phi$ and $N$ determine $\mathfrak{p}$ as a Lie subalgebra of $\mathfrak{gl}_n$, together with its filtration.  Further, it suffices to check this on geometric points of $A$, so we may assume that $A$ is an algebraically closed field of characteristic $0$.  

So suppose there is some such $P$, and let $\lambda_P:\G_m\rightarrow G$ be a cocharacter such that $P=P_G(\lambda_P)$.  Then we can uniquely write $\Phi=zu$ with $z\in Z_G(\lambda_P)$ and $u\in U_G(\lambda_P)$.  Now $U_G(\lambda_P)$ acts (via the adjoint action) as the identity on each quotient $\mathfrak{g}_{\geq i}/\mathfrak{g}_{\geq i+1}$, and $\Ad(\lambda_P(t_0))$ acts on $\mathfrak{g}_{\geq 2}/\mathfrak{g}_{\geq4}$ by multiplication by $1/p$ if and only if $t_0=p^{-1/2}$.  Thus, $1-p\Ad(\Phi)$ kills $\mathfrak{g}_{\geq 2}/\mathfrak{g}_{4}$ if and only if $z=\lambda_P(p^{-1/2})z'$, where $z'\in Z_G(\lambda_P)$ acts as the identity on $\mathfrak{g}_{\geq 2}/\mathfrak{g}_{\geq4}$.  Since $Z_G(\lambda_P)$ is conjugate to the standard diagonal torus in $\GL_n$ (since $\lambda_P$ is conjugate to $\lambda_{\reg}$), we see easily that $1-p\Ad(\Phi)$ kills $\mathfrak{g}_{\geq 2}/\mathfrak{g}_{\geq4}$ if and only if $z'\in Z_G$.

But now we see that $\Ad(\Phi)$ acts on $\mathfrak{g}_{\geq 2i}/\mathfrak{g}_{\geq 2i+2}$ as multiplication by $p^{-i}$.  It follows that $\ker(1-p^{-i}\Ad(\Phi))$ is a subspace of $\mathfrak{g}_{\geq 2i}$, linearly disjoint from $\mathfrak{g}_{\geq 2i+2}$.  Thus, if we can show that $\mathfrak{g}_{2i}=\ker(1-p^{-i}\Ad(\Phi))+\mathfrak{g}_{2i+2}$, we will be done.  But this follows because $1-p^{-i}\Ad(\Phi):\mathfrak{g}_{\geq 2i}\rightarrow \mathfrak{g}_{\geq 2i}$ descends to the zero map on $\mathfrak{g}_{\geq 2i}/\mathfrak{g}_{\geq 2i+2}$.  This implies that the image of $1-p^{-i}\Ad(\Phi)$ lies in $\mathfrak{g}_{\geq 2i+2}$, so $\ker(1-p^{-i}\Ad(\Phi))$ has dimension at least $\dim \mathfrak{g}_{\geq 2i}/\mathfrak{g}_{\geq 2i+2}$; since it is linearly disjoint from $\mathfrak{g}_{\geq -2i+2}$, it has dimension exactly $\dim \mathfrak{g}_{\geq 2i}-\dim\mathfrak{g}_{\geq 2i+2}$, and $\mathfrak{g}_{2i}=\ker(1-p^{-i}\Ad(\Phi))+\mathfrak{g}_{2i+2}$.

Next, we show that the image of $\widetilde X_{\reg}$ is contained in $X_{\reg}$. 
Let $(\Phi,N)$ correspond to a geometric point of $X_{\varphi,N}$ in the image of $\widetilde X_{\reg}$.  Then we may assume that $\Phi\in P_{\reg}$ and $N\in \mathfrak{p}_{\geq 2}$, and $1-p\Ad(\Phi)$ kills $\mathfrak{p}_{\geq 2}/\mathfrak{p}_{\geq4}$.  We have seen that $1-p\Ad(\Phi)$ kills an $n-1$-dimensional subspace of $\mathfrak{p}_{\geq 2}$ containing $N$ and intersecting $\mathfrak{p}_{\geq4}$ trivially.  But any such subspace contains a regular nilpotent element $N'$, and $(\Phi,N+t(N'-N))$ defines an $\mathbf{A}^1$-point of $X_{\varphi,N}$ connecting $(\Phi,N)$ to $X_{\varphi,N}|_{\mathcal{O}_{\reg}}$.

Finally, we show that $\widetilde X_{\reg}\rightarrow X_{\varphi,N}$ is proper. Let $R$ be a discrete valuation ring over $\overline E$, and let $f:\Spec R\rightarrow X_{\reg}$ be a morphism such that the generic point $\eta$ of $\Spec R$ maps to $X_{\varphi,N}|_{\mathcal{O}_{\reg}}$.  This induces a family $(\Phi,N)$ of $(\varphi,N)$-modules over $R$.  Forgetting $\Phi$ yields a morphism $\eta\rightarrow \mathcal{O}_{\reg}$, and therefore an $R$-point $P$ of $G/P_{\reg}$, since $G/P_{\reg}$ is proper.  Since $N_\eta\in (\mathfrak{p}_{\eta})_{\geq 2}$ and this is a closed condition, we have $N\in\mathfrak{p}_{\geq 2}$.  Further, since $1-p\Ad(\Phi_\eta)$ kills $(\mathfrak{p}_{\eta})_{\geq 2}/(\mathfrak{p}_{\eta})_{\geq4}$, $1-p\Ad(\Phi)$ kills $\mathfrak{p}_{\geq 2}/\mathfrak{p}_{\geq4}$.  Thus, the image of $\widetilde X_{\reg}$ includes all of $X_{\reg}$, and the morphism $\widetilde X_{\reg}\rightarrow X_{\varphi,N}$ is proper.

We now know that $\widetilde X_{\reg}\rightarrow X_{\varphi,N}$ is a proper monomorphism, so it is a closed immersion.  Furthermore, the geometric points of its image are exactly those of $X_{\reg}$; since both $\widetilde X_{\reg}$ and $X_{\reg}$ are reduced, this shows that $\widetilde X_{\reg}\rightarrow X_{\varphi,N}$ is an isomorphism onto $X_{\reg}$.
\end{proof}

Combining these two results, we see that $X_{\reg}$ is smooth, and has a nice moduli description.

\subsection{The subregular nilpotent orbit of $\GL_3$}\label{subreg}
Let $G=\GL_3$, and assume again that $K_0=\Q_p$.  There are three geometric conjugacy classes in $\mathcal{N}_{\overline E}$, namely the orbits $\mathcal{O}_{\reg}$, $\mathcal{O}_{\sub}$, and $\{0\}$ of $N_{\reg}:=\left(\begin{smallmatrix}0&1&0\\0&0&1\\0&0&0\end{smallmatrix}\right)$, $N_{\sub}:=\left(\begin{smallmatrix}0&1&0\\0&0&0\\0&0&0\end{smallmatrix}\right)$, and $N_0:=\left(\begin{smallmatrix}0&0&0\\0&0&0\\0&0&0\end{smallmatrix}\right)$, respectively.  We have seen that the closure $X_{\reg}$ of $X_{\varphi,N}|_{\mathcal{O}_{\reg}}$ is smooth; it is connected, so it is irreducible.  At the other extreme, $X_0:=X_{\varphi,N}|_{N=0}$ is evidently smooth and irreducible.
We now treat the structure of the closure $(X_{\sub})_{\overline E}$ of $(X_{\varphi,N})_{\overline E}|_{\mathcal{O}_{\sub}}$ inside $(X_{\varphi,N})_{\overline E}$ and show that it is singular.  Going forward, we extend scalars on $X_{\varphi,N}$ from $E$ to $\overline E$ and suppress the subscript.  

The cocharacter $\lambda_{\sub}:\Gm\rightarrow\GL_3$ defined by $\lambda_{\sub}(t)=\left(\begin{smallmatrix}t&0&0\\0&t^{-1}&0\\0&0&1\end{smallmatrix}\right)$ is associated to $N_{\sub}$.  The Lie algebra $\mathfrak{gl}_3$ is graded by the action of $\lambda_{\sub}$, and the part which has weight at least $2$ is the $1$-dimensional subspace
\[	\mathfrak{g}_{\geq2}=\mathfrak{g}_2=\left\{\left(\begin{smallmatrix}0&\ast&0\\0&0&0\\0&0&0\end{smallmatrix}\right)\right\}	\]
which has weight exactly $2$.  
We also have
\[	\mathfrak{g}_{\geq0} = \left\{\left(\begin{smallmatrix}\ast&\ast&\ast\\0&\ast&0\\0&\ast&\ast\end{smallmatrix}\right)\right\}	\]
and
\[	\mathfrak{g}_{\geq1} = \left\{\left(\begin{smallmatrix}0&\ast&\ast\\0&0&0\\0&\ast&0\end{smallmatrix}\right)\right\}	\]
Then setting $P_{\sub}:=P_G(\lambda_{\sub})$, the resolution $G\times^{P_{\sub}}\mathfrak{g}_{2}$ of the closure $\overline{\mathcal{O}}_{\sub}$ of $\mathcal{O}_{\sub}$ carries a universal parabolic $\mathcal{P}$ and a line bundle corresponding to $\mathfrak{g}_{2}$.

We consider an auxiliary moduli problem $\widetilde X_{\sub}$:
\[	\widetilde X_{\sub}(A):=\{(\Phi,N)\in (P\times_{G/P_{\sub}}\mathfrak{p}_{2})(A)| (1-p\Ad(\Phi))|_{\mathfrak{p}_{2}}=0\}	\]
As before, there are natural morphisms $\widetilde X_{\sub}\rightarrow X_{\varphi,N}$ and $\widetilde X_{\sub}\rightarrow \mathcal{N}$.

\begin{prop}
$\widetilde X_{\sub}$ is smooth, as is the fiber $\widetilde X_{\sub}|_{N=0}$ over $N=0$.
\end{prop}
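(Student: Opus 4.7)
The plan is to verify smoothness via the functorial criterion, in direct imitation of the proof for $\widetilde X_{\reg}$. Given an $\overline E$-algebra $A$, a square-zero ideal $I\subset A$, and an $(A/I)$-point $(\Phi_0,N_0,P_0)$ of $\widetilde X_{\sub}$, I will lift in three stages: first lift $P_0$ to $P$ in $G/P_{\sub}$; second lift $\Phi_0$ to an $A$-point $\Phi$ of $\mathcal{P}$ satisfying $(1-p\Ad(\Phi))|_{\mathfrak{p}_2}=0$; and third lift $N_0$ to an $A$-point of the line bundle $\mathfrak{p}_2$. For the fiber $\widetilde X_{\sub}|_{N=0}$, only the first two steps are needed.

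Step 1 is immediate: $G/P_{\sub}$ is a smooth projective variety (a partial flag variety), so $P_0$ lifts. Step 3 is also easy, since $\mathfrak{p}_2$, being the pullback along $G\to G/P_{\sub}$ of the one-dimensional summand $\mathfrak{g}_2\subset\mathfrak{g}$, is a line bundle on $G/P_{\sub}$; its total space is smooth, so $N_0$ lifts freely.

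The substantive step is Step 2. Working Zariski-locally on $\Spec A$, the quotient map $G\to G/P_{\sub}$ admits a section, so we may choose $g_P\in G(A)$ with $P=g_P P_{\sub} g_P^{-1}$, and after conjugating by $g_P$ we reduce to the case $P=P_{\sub}$, $\mathfrak{p}_2=\mathfrak{g}_2$. Since $\mathfrak{g}_2$ is one-dimensional, the adjoint action of $P_{\sub}$ on $\mathfrak{g}_2$ defines a character $\chi\colon P_{\sub}\to \Gm$, and the condition $(1-p\Ad(\Phi))|_{\mathfrak{g}_2}=0$ cuts out the fiber $\chi^{-1}(p^{-1})$. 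The character $\chi$ is surjective, as witnessed by $\lambda_{\sub}(t)=\diag(t,t^{-1},1)\in P_{\sub}$ acting on $\mathfrak{g}_2$ by $t^2$; hence $\chi$ is a smooth morphism of group schemes and its fiber over $p^{-1}$ is a smooth coset of $\ker\chi$ in $P_{\sub}$. This produces the desired lift $\Phi$, and also proves that $\widetilde X_{\sub}|_{N=0}\to G/P_{\sub}$ is a torsor under a smooth group scheme over $G/P_{\sub}$, hence smooth.

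The only possible obstacle is ensuring that the relevant group-theoretic constructions are smooth in a family; but because the condition $(1-p\Ad(\Phi))|_{\mathfrak{p}_2}=0$ cuts out a fiber of a surjective character from $P_{\sub}$ to $\Gm$, it is a smooth relative divisor in $\mathcal{P}$ over $G/P_{\sub}$, and combined with the line bundle $\mathfrak{p}_2$ yields $\widetilde X_{\sub}$ as a smooth scheme of dimension $\dim P_{\sub}-1+\dim G/P_{\sub}+1=\dim G$ over $\overline E$.
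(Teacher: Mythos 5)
Your proof is correct and follows essentially the same route as the paper: lift the parabolic via smoothness of $G/P_{\sub}$, reduce Zariski-locally to $P=P_{\sub}$, observe that the locus $(1-p\Ad(\Phi))|_{\mathfrak{p}_2}=0$ is smooth, and then lift $N_0$ in the line bundle $\mathfrak{p}_2$. Your description of that locus as the fiber $\chi^{-1}(p^{-1})$ of the character $\chi\colon P_{\sub}\to\Gm$ is just a repackaging of the paper's description of it as a torsor under the subgroup $Z_{P_{\sub}}(\mathfrak{p}_2)=\ker\chi$ acting trivially on $\mathfrak{p}_2$.
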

\begin{proof}
We use the functorial criterion for smoothness.  Let $A$ be an $\overline E$-algebra, let $I\subset A$ be an ideal such that $I^2=0$, and let $(\Phi_0,N_0,P_0)$ be an $A/I$-point of $\widetilde X_{\sub}$.  We wish to lift $(\Phi_0,N_0,P_0)$ to an $A$-point of $\widetilde X_{\sub}$.

First of all, $G/P_{\sub}$ is smooth, so we can lift $P_0$ to an $A$-point $P$ of $G/P_{\sub}$.  We claim that the space of $\Phi$ in $P$ such that $1-p\Ad(\Phi)$ kills $\mathfrak{p}_{2}$ is smooth over $\Spec A$.  To see this, we can work locally on $\Spec A$.  Since the quotient $G\rightarrow G/P_{\sub}$ admits sections Zariski-locally, we may assume that there is some $g_P\in G(A)$ such that $P=g_PP_{\sub}g_P^{-1}$.  Therefore, we may assume that $P=P_{\sub}$.  But the space of $\Phi$ such that $(1-p\Ad(\Phi))|_{\mathfrak{p}_{2}}=0$ is a torsor under the subgroup $Z_{P_{\sub}}(\mathfrak{p}_{2})\subset P_{\sub}$ which acts trivially on $\mathfrak{p}_2$, so it is smooth.

Thus, we can lift $\Phi_0$ to an $A$-point $\Phi$ of $P$ such that $1-p\Ad(\Phi)$ kills $\mathfrak{p}_{2}$, so $\widetilde X_{\sub}|_{N=0}$ is smooth.  It remains to lift $N_0$ to an $A$-point of the kernel of $1-p\Ad(\Phi)$.  But the kernel of $1-p\Ad(\Phi)$ on $\mathfrak{p}_{2}$ is a line bundle on $\Spec A$, so we can lift $N_0$.
\end{proof}

\begin{lemma}\label{P2-p2}
Let $A$ be a local ring.  If $P, P'\in (G/P_{\sub})(A)$ are parabolic subgroups of $G_A$ such that $\mathfrak{p}_2=\mathfrak{p}'_2$ as submodules of $\mathfrak{g}_A$, then $P=P'$.
\end{lemma}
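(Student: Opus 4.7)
Our plan is to realize the variety $G/P_{\sub}$ of subregular-type parabolics as a locally closed subscheme of $\P(\mathfrak{g})$ via the $G$-equivariant morphism $\iota \colon G/P_{\sub} \to \P(\mathfrak{g})$ sending a parabolic $P$ to the line $\mathfrak{p}_2 \subset \mathfrak{g}$.  Once we know $\iota$ is a monomorphism of schemes, the hypothesis $\mathfrak{p}_2 = \mathfrak{p}'_2$ in $\mathfrak{g}_A$ translates directly to $\iota(P) = \iota(P')$ in $\P(\mathfrak{g})(A)$, forcing $P = P'$ in $(G/P_{\sub})(A)$.

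To construct $\iota$, we observe that for any parabolic $P$ in the conjugacy class of $P_{\sub}$, the line $\mathfrak{p}_2$ is canonically identified with the Lie ideal $[\mathfrak{u},\mathfrak{u}] \subset \mathfrak{p}$, where $\mathfrak{u}$ is the nilpotent radical of $\mathfrak{p}$.  For $P_{\sub}$ itself the direct computation $[E_{13}, E_{32}] = E_{12}$ shows that $E_{12}$ spans $\mathfrak{g}_2$, while all further brackets land in $\mathfrak{g}_{\geq 3}=0$; hence $\mathfrak{p}_2 = [\mathfrak{u},\mathfrak{u}]$ is intrinsic to $P$ (independent of the choice of associated cocharacter), and the assignment $P\mapsto \mathfrak{p}_2$ is $G$-equivariant and functorial in the base, defining a morphism $\iota$ that sends $eP_{\sub}$ to $[E_{12}]\in\P(\mathfrak{g})$.

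The crux will be to identify the scheme-theoretic stabilizer in $G$ of $[E_{12}]$ under the adjoint action with $P_{\sub}$.  Imposing $gE_{12} = \alpha\,E_{12}g$ entry-by-entry over an arbitrary base ring, the auxiliary parameter $\alpha$ is automatically a unit (since $gE_{12}g^{-1}$ has the same rank as $E_{12}$), and the remaining equations read $g_{21}=g_{23}=g_{31}=0$ together with $g_{11}=\alpha g_{22}$, which are exactly the defining equations of $P_{\sub}\subset G$.  Since we are working in characteristic zero, every affine group scheme of finite type over the base field is smooth (Cartier), so the stabilizer subgroup scheme is automatically smooth and its coincidence with $P_{\sub}$ on geometric points upgrades to scheme-theoretic equality.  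It then follows that $\iota$ factors through an isomorphism $G/P_{\sub} \xrightarrow{\sim} G\cdot[E_{12}] \subset \P(\mathfrak{g})$, a locally closed immersion and in particular a monomorphism.  We expect the main obstacle to be precisely this upgrade of the pointwise stabilizer computation to the scheme-theoretic level, which the characteristic-zero smoothness of stabilizer subgroup schemes handles cleanly.
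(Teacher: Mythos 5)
Your proof is correct and rests on exactly the same key fact as the paper's: the (scheme-theoretic) stabilizer in $G$ of the line spanned by $N_{\sub}=E_{12}$ under the adjoint action is cut out by $g_{21}=g_{23}=g_{31}=0$, i.e.\ is precisely $P_{\sub}$. The paper phrases this by conjugating $P$ to $P_{\sub}$, writing $P'=gPg^{-1}$ with $g\in G(A)$ (using that $A$ is local) and concluding $g\in P_{\sub}$ from $\Ad(g)(N_{\sub})=\alpha N_{\sub}$, whereas you package the same computation as the statement that $P\mapsto\mathfrak{p}_2$ is a monomorphism $G/P_{\sub}\hookrightarrow\P(\mathfrak{g})$; the content is essentially identical.
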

\begin{proof}
After conjugating, we may assume that $P=P_{\sub}$, so that $\mathfrak{p}_2$ is generated by $N_{\sub}$.  Further, there is some $g\in G(A)$ such that $P'=gPg^{-1}$, and conjugation by $g$ defines an isomorphism of $\mathfrak{p}_2$ and $\mathfrak{p}'_2$.  Since $\mathfrak{p}_2=\mathfrak{p}'_2$ by assumption, $\Ad(g)(N_{\sub})=\alpha\cdot N_{\sub}$ for some $\alpha\in A^\times$, so
\[	g\in\{g\in G(A) | \Ad(g)(N_{\sub})=\alpha N_{\sub}\text{ for some }\alpha\in A^\times\}\subset P_{\sub}	\]
so $P'=P$.
\end{proof}

\begin{prop}
The natural morphism $\widetilde X_{\sub}\rightarrow X_{\varphi,N}$ is proper, and its image is $X_{\sub}$.  Moreover, it is an isomorphism above $X_{\sub}|_{\mathcal{O}_{\sub}}$.
\end{prop}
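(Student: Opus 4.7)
The plan is to break the proposition into three assertions—properness, image equal to $X_{\sub}$, and the isomorphism over $X_{\sub}|_{\mathcal{O}_{\sub}}$—and handle them in turn, with Lemma~\ref{P2-p2} and the Springer resolution of $\overline{\mathcal{O}}_{\sub}$ as the key ingredients. For properness, I would embed $\widetilde X_{\sub}$ as a closed subscheme of $G \times \mathfrak{g} \times (G/P_{\sub})$ via the universal parabolic $\mathcal{P}$ and the line subbundle $\mathfrak{p}_2$ of $\mathfrak{g}_{G/P_{\sub}}$: the defining conditions $\Phi \in P$, $N \in \mathfrak{p}_2$, and $(1 - p\Ad(\Phi))|_{\mathfrak{p}_2} = 0$ are all closed, so $\widetilde X_{\sub}$ is closed in $X_{\varphi,N} \times (G/P_{\sub})$, and properness of the projection to $X_{\varphi,N}$ is inherited from properness of $G/P_{\sub}$.

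To identify the image, first observe that $\mathfrak{p}_2$ is one-dimensional and, since $P$ is conjugate to $P_{\sub}$, is spanned by a subregular nilpotent. Hence each $N \in \mathfrak{p}_2$ is either zero or subregular: when subregular, $(\Phi, N) \in X_{\varphi,N}|_{\mathcal{O}_{\sub}} \subset X_{\sub}$ directly; when $N = 0$, the $\A^1$-family $t \mapsto (\Phi, tN')$ for any nonzero $N' \in \mathfrak{p}_2$ stays inside $X_{\varphi,N}$ and places $(\Phi, 0)$ in the closure $X_{\sub}$. This gives the containment of the image in $X_{\sub}$. For the reverse containment, properness ensures the image is closed, so it suffices to exhibit the dense open $X_{\varphi,N}|_{\mathcal{O}_{\sub}}$ inside the image. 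For $(\Phi, N)$ there, I would take $P = P_N$, the associated parabolic of Proposition~\ref{disconn}; the conditions $N \in \mathfrak{p}_2$ and $(1 - p\Ad(\Phi))|_{\mathfrak{p}_2} = 0$ are automatic, and the remaining point $\Phi \in P_N$ I would verify by picking a square root $t$ of $p^{-1}$ and a cocharacter $\lambda$ associated to $N$, then decomposing $\Phi = \lambda(t) \cdot (\lambda(t)^{-1}\Phi)$: the relation $\Ad(\lambda(t))(N) = t^2 N = p^{-1}N = \Ad(\Phi)(N)$ puts $\lambda(t)^{-1}\Phi$ in $Z_G(N) \subset P_N$, while $\lambda(t) \in P_N$ tautologically.

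Finally, for the isomorphism above $X_{\sub}|_{\mathcal{O}_{\sub}}$, I would invoke the fact that the Springer resolution $G \times^{P_{\sub}} \mathfrak{p}_{\sub,2} \to \overline{\mathcal{O}}_{\sub}$ is an isomorphism above $\mathcal{O}_{\sub}$, yielding a morphism $\mathcal{O}_{\sub} \to G/P_{\sub}$ that sends $N \mapsto P_N$. Composing with the recipe $(\Phi, N) \mapsto (\Phi, N, P_N)$ of the previous step produces a section of the forgetful map over $X_{\sub}|_{\mathcal{O}_{\sub}}$; Lemma~\ref{P2-p2} then shows this section is also a retraction, since over the subregular locus $\mathfrak{p}_2 = \langle N \rangle$ is the line bundle cut out by $N$ and hence determines $P$ uniquely. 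Thus both maps are mutually inverse isomorphisms. The step I expect to be the principal obstacle is the verification that $\Phi \in P_N$ in the image-containment argument: without it the image would be strictly smaller than $X_{\sub}$, and it is precisely here that one needs the associated-cocharacter machinery of Section~\ref{associated} to supply an explicit element of $P_N$ realizing the scaling $N \mapsto p^{-1}N$.
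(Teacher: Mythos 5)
Your proposal is correct, and it reaches the same conclusion by a somewhat different route than the paper on the two global points. For properness, you observe that $\widetilde X_{\sub}$ is cut out by closed conditions inside $X_{\varphi,N}\times G/P_{\sub}$ and inherit properness from $G/P_{\sub}$; the paper instead runs the valuative criterion directly, extending the parabolic over a DVR using properness of $G/P_{\sub}$ and then checking that the conditions $N\in\mathfrak{p}_2$ and $(1-p\Ad(\Phi))|_{\mathfrak{p}_2}=0$ are closed and hence persist at the special point. The two arguments have the same engine, but yours cleanly separates properness (a one-line closed-immersion observation) from surjectivity, whereas the paper's single DVR argument delivers both at once. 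For surjectivity, the paper never explicitly exhibits a preimage of a point of $X_{\varphi,N}|_{\mathcal{O}_{\sub}}$ — it is implicit in the map $\eta\rightarrow\mathcal{O}_{\sub}\rightarrow G/P_{\sub}$ — while you make the section $(\Phi,N)\mapsto(\Phi,N,P_N)$ explicit and verify $\Phi\in P_N$ via the decomposition $\Phi=\lambda(p^{-1/2})\cdot(\lambda(p^{-1/2})^{-1}\Phi)$ with the second factor in $Z_G(N)\subset P_G(\lambda)$; this is a genuine addition (and the checks that $\mathfrak{p}_2=\langle N\rangle$, so that $N\in\mathfrak{p}_2$ and the kernel condition hold, are correct for the subregular orbit in $\mathfrak{gl}_3$). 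Your explicit construction then does double duty in the last step: combined with Lemma~\ref{P2-p2} it gives a two-sided inverse over $X_{\sub}|_{\mathcal{O}_{\sub}}$, where the paper instead argues that a proper monomorphism is a closed immersion and checks injectivity on geometric points via the same lemma. Both are valid; your version is more constructive and makes visible exactly where the associated-cocharacter machinery enters, at the cost of a slightly longer argument.
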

\begin{proof}
Suppose that an $\overline E$-point $(\Phi,N)\in\X_{\varphi,N}(\overline{E})$ is in the image of $\widetilde X_{\sub}$.  We need to show that $(\Phi,N)$ is in $X_{\sub}(\overline E)$.  By assumption, there is some parabolic $P$ conjugate to $P_{\sub}$ such that $\Phi\in P$ and $N\in\mathfrak{p}_{2}$.  If $N\neq 0$, then $(\Phi,N)$ is in $X_{\varphi,N}|_{\mathcal{O}_{\sub}}$.  If $N=0$, then we observe that $1-p\Ad(\Phi)$ acts trivially on $\mathfrak{p}_{2}$, so there is some non-zero $N'\in \mathfrak{p}_{2}$ such that $N'=p\Ad(\Phi)(N')$.  Then $(\Phi,tN')$ defines an $\mathbf{A}^1$-point of $\X_{\varphi,N}$ connecting $(\Phi,0)$ to $X_{\varphi,N}|_{\mathcal{O}_{\sub}}$.

Now we need to show that $\widetilde X_{\sub}\rightarrow X_{\varphi,N}$ is proper and surjects onto $X_{\sub}$.  Let $R$ be a discrete valuation ring, and let $\Spec R\rightarrow X_{\sub}$ be a morphism such that the generic point $\eta$ lands in $X_{\varphi,N}|_{\mathcal{O}_{\sub}}$.  This induces a family $(\Phi,N)$ of $(\varphi,N)$-modules over $R$.  Forgetting $\Phi$ yields a morphism $\eta\rightarrow \mathcal{O}_{\sub}$, and therefore an $R$-point $P$ of $G/P_{\sub}$, since $G/P_{\sub}$ is proper.  Since $N_\eta\in (\mathfrak{p}_{\eta})_{2}$ and this is a closed condition, we have $N\in\mathfrak{p}_{2}$.  Further, since $1-p\Ad(\Phi_\eta)$ kills $(\mathfrak{p}_{\eta})_{2}$, $1-p\Ad(\Phi)$ kills $\mathfrak{p}_{2}$.  Thus, the image of $\widetilde X_{\sub}$ includes all of $X_{\sub}$, and the morphism $\widetilde X_{\sub}\rightarrow X_{\varphi,N}$ is proper.

Finally, we need to show that $\widetilde X_{\sub}\rightarrow X_{\varphi,N}$ is an isomorphism when restricted to the preimage of $X_{\sub}|_{\mathcal{O}_{\sub}}$.  It suffices to show that it is a monomorphism, and since $\widetilde X_{\sub}\rightarrow X_{\varphi,N}$ is proper, this can be checked on geometric points of $X_{\sub}|_{\mathcal{O}_{\sub}}$.  Suppose the fiber over $(\Phi,N)\in X_{\sub}|_{\mathcal{O}_{\sub}}$ has more than one $\kappa(\overline{\eta})$-point, i.e., there are two parabolics $P,P'\in G/P_{\sub}$ such that $\Phi\in P\cap P'$ and $N\in \mathfrak{p}_2\cap \mathfrak{p}'_2$.  Then $N$ generates both $\mathfrak{p}_2$ and $\mathfrak{p}'_2$, so by Lemma~\ref{P2-p2}, $P=P'$.
\end{proof}

We see that $X_{\sub}$ is the image of a smooth connected variety, so $X_{\sub}$ is itself irreducible.

In contrast to the case of the regular nilpotent orbit, the morphism $\widetilde X_{\sub}\rightarrow X_{\sub}$ is not an isomorphism: if $\Phi=\left(\begin{smallmatrix}1&0&0\\0&p&0\\0&0&p^2\end{smallmatrix}\right)$, then $\left(\Phi,0\right)$ is a point of $X_{\sub}$.  However, the fiber of $\widetilde X_{\sub}\rightarrow X_{\varphi,N}$ over $\left(\Phi,0\right)$ contains distinct points corresponding to the parabolics $\left(\begin{smallmatrix}*&*&*\\0&*&0\\0&*&*\end{smallmatrix}\right)$ and $\left(\begin{smallmatrix}*&0&*\\ \ast&*&*\\0&0&*\end{smallmatrix}\right)$.

We claim more: 
\begin{thm}\label{xsub-sing}
$X_{\sub}$ is singular at every point $(\Phi,0)$ with more than one pre-image in $\widetilde X_{\sub}$.  
\end{thm}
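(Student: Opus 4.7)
The plan is to produce enough linearly independent tangent vectors at $(\Phi,0)\in X_{\sub}$ to force $\dim T_{(\Phi,0)}X_{\sub}\geq 10$, strictly exceeding $\dim X_{\sub}=9$ and witnessing singularity.

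Fix two distinct preimages $P_1,P_2\in \widetilde X_{\sub}$ over $(\Phi,0)$, and let $N_i\in\mathfrak{g}$ be a nonzero generator of the line $\mathfrak{p}_{i,2}$. By Lemma~\ref{P2-p2} the hypothesis $P_1\neq P_2$ forces $\mathfrak{p}_{1,2}\neq\mathfrak{p}_{2,2}$, so $N_1$ and $N_2$ are linearly independent in $\mathfrak{g}$. First I would produce the two tangent vectors $(0,N_i)\in T_{(\Phi,0)}X_{\sub}$: the assignment $t\mapsto(\Phi,tN_i,P_i)$ is a morphism $\mathbf{A}^1\to\widetilde X_{\sub}$, because $tN_i\in\mathfrak{p}_{i,2}$ and $(1-p\Ad(\Phi))(tN_i)=0$ follow immediately from the hypothesis that $(\Phi,0,P_i)\in\widetilde X_{\sub}$, and its composition with the forgetful morphism to $X_{\sub}$ is the curve $t\mapsto(\Phi,tN_i)$ with tangent $(0,N_i)$ at the origin.

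Next I would bound the image $W_i\subseteq T_{(\Phi,0)}X_{\sub}$ of the differential of $\widetilde X_{\sub}\to X_{\sub}$ at $(\Phi,0,P_i)$. Any infinitesimal deformation of $(\Phi,0,P_i)$ inside $\widetilde X_{\sub}$ must have $\widetilde N\in\widetilde{\mathfrak p}_2$, and reducing this to first order forces the $\delta N$-component of any tangent vector at $(\Phi,0,P_i)$ into the line $\mathfrak{p}_{i,2}$. In particular $(0,N_1)\in W_1$ does not belong to $W_2$, since $N_1\notin\mathfrak{p}_{2,2}$.

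In the generic case, when the fiber of $\widetilde X_{\sub}\to X_{\sub}$ over $(\Phi,0)$ is zero-dimensional at $P_2$, the differential there has trivial kernel, so $\dim W_2=\dim \widetilde X_{\sub}=9$; adjoining $(0,N_1)\in W_1\setminus W_2$ yields a subspace $W_2+\langle(0,N_1)\rangle\subseteq T_{(\Phi,0)}X_{\sub}$ of dimension at least $10$, which concludes the proof. The main obstacle is the case of a positive-dimensional fiber, where $\dim W_i<9$; here I would use the parameterization of $T_{(\Phi,0,P_i)}\widetilde X_{\sub}$ from the proof of Proposition~\ref{def-phi-n-tau} to decompose $W_i=\mathrm{span}_i\oplus\mathfrak{p}_{i,2}$ via the formula $\delta\Phi=[g_P,\Phi]+\Phi p$, and verify by direct calculation that $\dim(\mathrm{span}_1+\mathrm{span}_2)\geq 8$ using the fact that the parabolics $P_1\neq P_2$ produce genuinely different $\delta\Phi$-spans; combined with $\dim(\mathfrak{p}_{1,2}+\mathfrak{p}_{2,2})=2$, this again yields $\dim(W_1+W_2)\geq 10$.
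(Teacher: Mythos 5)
Your overall strategy is the same as the paper's: compare the images of the two tangent maps $T_{(\Phi,0,P_i)}\widetilde X_{\sub}\rightarrow T_{(\Phi,0)}X_{\sub}$ and show their span has dimension $>9$. Your first two steps are fine: the curves $t\mapsto(\Phi,tN_i)$ do give $(0,N_i)\in W_i$, and the $\delta N$-component of any tangent vector at $(\Phi,0,P_i)$ is indeed confined to the line $\mathfrak{p}_{i,2}$, so $(0,N_1)\notin W_2$. But there are two genuine gaps in the rest.

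First, in your ``generic case'' you infer that the differential of $\widetilde X_{\sub}\rightarrow X_{\sub}$ at $(\Phi,0,P_2)$ is injective from the fact that the fiber is zero-dimensional there. That implication is false in general: the kernel of the differential is the tangent space of the \emph{scheme-theoretic} fiber, which can be positive-dimensional even when the fiber is finite (compare $t\mapsto t^2$ at the origin). The paper instead bounds the kernel directly: an element of the kernel is an infinitesimal deformation $\widetilde P=\Ad(1+\varepsilon g)P$ with $(\Phi,0,\widetilde P)$ still admissible, which forces $[g,N_{\sub}]\in\ker(1-p\Ad(\Phi))$; one then shows $\ker(1-p\Ad(\Phi))\cap[N_{\sub},\mathfrak{g}]$ has dimension at most $2$ and invokes Lemma~\ref{P2-p2} to conclude the kernel is at most $1$-dimensional (and zero in the first case). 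You need some such argument to get $\dim W_2=9$.

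Second, and more seriously, the hard case is not actually proved. When $\dim W_i=8$ your arithmetic is right — you need only $\mathrm{span}_1\neq\mathrm{span}_2$ to reach dimension $8+1+1=10$ — but ``verify by direct calculation \ldots\ using the fact that the parabolics $P_1\neq P_2$ produce genuinely different $\delta\Phi$-spans'' is a restatement of the goal, not an argument. A priori the two parabolics could impose the \emph{same} linear condition on $\delta\Phi$ (for instance, both conditions might reduce to preserving $\ker(1-p\Ad(\Phi))$), in which case the span would be only $9$-dimensional and the point could be smooth. The paper's proof of this step is the real content of the theorem in this case: one takes $\widetilde\Phi$ in both images, so $1-p\Ad(\widetilde\Phi)$ kills $N_{\sub}$ and some $N'+\varepsilon N''$ with $N''\in[N',\mathfrak{g}]$ modulo the image of $1-p\Ad(\Phi)$, and then a weight-by-weight eigenvalue analysis of $\Ad(\Phi)$ on the graded pieces $\mathfrak{g}_{\geq j}/\mathfrak{g}_{\geq j+1}$ shows $N''$ must already lie in $\ker(1-p\Ad(\Phi))$; hence $\widetilde\Phi$ kills both $N_{\sub}$ and $N'$ and is confined to a proper subspace of the $\delta\Phi$'s arising from $P_{\sub}$. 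Without this (or an equivalent) computation the proof is incomplete.
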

\begin{proof}
For such a $\Phi$, there exist distinct parabolics $P,P'\subset G$  with $\Phi\in P\cap P'$ and $(1-p\Ad(\Phi))|_{\mathfrak{p}_2}=(1-p\Ad(\Phi))|_{\mathfrak{p}'_2}=0$.  There are natural maps of tangent spaces $T_{(\Phi,0,P)}\widetilde X_{\sub}\rightarrow T_{(\Phi,0)}X_{\sub}$ and $T_{(\Phi,0,P')}\widetilde X_{\sub}\rightarrow T_{(\Phi,0)}X_{\sub}$; we will study their kernels and images.

After conjugating, we may assume that $P=P_{\sub}$.  The tangent space of $\widetilde X_{\sub}$ at $(\Phi,0,P_{\sub})$ consists of deformations $(\widetilde\Phi,\widetilde N,\widetilde P)$ such that $\widetilde\Phi\in\widetilde P$, $\widetilde N\in\widetilde{\mathfrak{p}}_2$, and $1-p\Ad(\widetilde\Phi)$ kills $\widetilde{\mathfrak{p}}_2$, where $\widetilde{\mathfrak{p}}$ is the Lie algebra of $\widetilde P$ and $\widetilde{\mathfrak{p}}_2$ is its weight $2$ part.  The kernel of the morphism $T_{(\Phi,0,P)}\widetilde X_{\sub}\rightarrow T_{(\Phi,0)}X_{\sub}$ consists of deformations $\widetilde P$ of $P_{\sub}$ such that $(\Phi,0,\widetilde P)$ is an element of $T_{(\Phi,0,P)}\widetilde X_{\sub}$.  If there are two such deformations $\widetilde P_1,\widetilde P_2$, the weight $2$ parts of their Lie algebras are generated by $\Ad(1+\varepsilon g_i)(N_{\sub})$, respectively, where $i=1,2$ and $g_i\in\mathfrak{g}$.  But $\Ad(1+\varepsilon g_i)(N_{\sub})=N_{\sub}+\varepsilon[g_i,N_{\sub}]$ and $1-p\Ad(\widetilde\Phi)$ must kill both $N_{\sub}+\varepsilon[g_1,N_{\sub}]$ and $N_{\sub}+\varepsilon[g_2,N_{\sub}]$, so we must have $(1-p\Ad(\Phi))([g_1-g_2,N_{\sub}])=0$ (since $\varepsilon\Ad(\widetilde\Phi)=\varepsilon\Ad(\Phi)$ by assumption).

We claim there is at most a $2$-dimensional space of elements of $\ker(1-p\Ad(\Phi)):\mathfrak{g}\rightarrow\mathfrak{g}$ of the form $[g,N_{\sub}]$; combined with Lemma~\ref{P2-p2}, this implies that the kernel of $T_{(\Phi,0,P)}\widetilde X_{\sub}\rightarrow T_{(\Phi,0)}X_{\sub}$ is at most $1$-dimensional.  The image of $[N_{\sub},-]:\mathfrak{g}\rightarrow\mathfrak{g}$ lands in $\mathfrak{p}$, and $\ker(1-p\Ad(\Phi))$ generates a nilpotent subalgebra.  But $\mathfrak{p}_{\geq 1}$ is $3$-dimensional and $[\mathfrak{p}_{\geq1},\mathfrak{p}_{\geq1}]=\mathfrak{p}_2$, so if $1-p\Ad(\Phi)$ kills a $2$-dimensional subspace of $\mathfrak{p}_{\geq1}/\mathfrak{p}_2$, it cannot kill $\mathfrak{p}_2$.

If $\ker(1-p\Ad(\Phi))|_{\mathfrak{p}}$ is $1$-dimensional, as in the example above with $\Phi=\left(\begin{smallmatrix}1&0&0\\0&p&0\\0&0&p^2\end{smallmatrix}\right)$, this shows that the map $T_{(\Phi,0,P_{\sub})}\widetilde X_{\sub}\rightarrow T_{(\Phi,0)}X_{\sub}$ is injective.  But the images of $T_{(\Phi,0,P)}\widetilde X_{\sub}$ and $T_{(\Phi,0,P')}\widetilde X_{\sub}$ in $T_{(\Phi,0)}X_{\sub}$ are distinct (as $(\Phi,\varepsilon N_{\sub})\neq (\Phi,\varepsilon N')$, where $N'$ is the subregular nilpotent element generating $\mathfrak{p}'_2$), so together they generate a subspace of $T_{(\Phi,0)}X_{\sub}$ of dimension strictly greater than $9$, and $(\Phi,0)$ is a singular point of $X_{\sub}$.  

On the other hand, suppose that $\ker(1-p\Ad(\Phi))\cap \rm{im}([N_{\sub},-])$ is $2$-dimensional.  This happens, for example, if $\Phi=\left(\begin{smallmatrix}1&0&0\\0&p&0\\0&0&p\end{smallmatrix}\right)$.  Then the image of $T_{(\Phi,0,P_{\sub})}\widetilde X_{\sub}$ in $T_{(\Phi,0)}X_{\sub}$ is $8$-dimensional; to show that $(\Phi,0)$ corresponds to a singular point of $X_{\sub}$, we need to show that not all deformations $(\widetilde\Phi,0)$ coming from $T_{(\Phi,0,P_{\sub})}\widetilde X_{\sub}$ (which is a $7$-dimensional subspace) also come from $T_{(\Phi,0,P')}\widetilde X_{\sub}$.  Indeed, if two $8$-dimensional subspaces of $T_{(\Phi,0)}X_{\sub}$ intersect in a subspace of dimension at most $6$, they generate a subspace of dimension at least $10$.

We first note that $\ker(1-p\Ad(\Phi))$ generates a nilpotent subalgebra of $\mathfrak{g}$, and by~\cite[Theorem 2.2]{lmt}, any nilpotent subalgebra is contained in a Borel.  But $1-p\Ad(\Phi)$ cannot kill the entire nilpotent part of a Borel subalgebra, which is $3$-dimensional, so $\ker(1-p\Ad(\Phi))$ is at most $2$-dimensional, which implies that $\ker(1-p\Ad(\Phi))=\ker(1-p\Ad(\Phi))\cap \rm{im}([N_{\sub},-])\subset\mathfrak{p}$.  

Now we can compute explicitly.  If $(1-p\Ad(\Phi))(N_{\sub})=0$, then $\Phi$ is of the form $\left(\begin{smallmatrix}a&\ast&\ast\\0&pa&0\\0&\ast&\ast\end{smallmatrix}\right)$.  We may assume that $\widetilde P=P_{\sub}$; then $\widetilde\Phi$ has the same form.  If $1-p\Ad(\Phi)$ kills an additional element $N'\in\mathfrak{p}$, then either $N'=\left(\begin{smallmatrix}0&0&\ast\\0&0&0\\0&0&0\end{smallmatrix}\right)$ and $\Phi$ is of the form $\left(\begin{smallmatrix}a&\ast&\ast\\0&pa&0\\0&0&pa\end{smallmatrix}\right)$, or $N'=\left(\begin{smallmatrix}0&0&0\\0&0&0\\0&\ast&0\end{smallmatrix}\right)$ and $\Phi$ is of the form $\left(\begin{smallmatrix}a&\ast&0\\0&pa&0\\0&\ast&a\end{smallmatrix}\right)$.  

Assume that $N'=\left(\begin{smallmatrix}0&0&1\\0&0&0\\0&0&0\end{smallmatrix}\right)$; the argument in the second case will be similar.  Then if $\widetilde\Phi$ comes from $T_{(\Phi,0,P')}\widetilde X_{\sub}$, $1-p\Ad(\widetilde\Phi)$ kills an element of the form $N'+\varepsilon N''$, where $N''=[g,N']$ for some $g\in\mathfrak{g}$.  We have
\[	
\begin{split}
N'+\varepsilon N''=p\Ad(\widetilde\Phi)(N'+\varepsilon N'') = p\Ad(\widetilde\Phi)(N')+p\varepsilon\Ad(\widetilde\Phi)(N'') = \Ad(\widetilde\Phi\Phi^{-1})(N') + p\varepsilon\Ad(\Phi)(N'') \\
= N' + \varepsilon[\widetilde\Phi\Phi^{-1}-1,N'] + p\varepsilon\Ad(\Phi)(N'')	
\end{split}
\]
In other words, $N''$ is such that $(1-p\Ad(\Phi))(N'')\in [N',\mathfrak{p}]$.
By computation, $[N',\mathfrak{p}]$ is the space 
\[	\left(\begin{smallmatrix}0&\ast&\ast\\0&0&0\\0&0&0\end{smallmatrix}\right)=\ker(1-p\Ad(\Phi))\subset\mathfrak{p}_{\geq1}	\]
On the other hand, $\Ad(\Phi)$ acts on $\mathfrak{g}_{\geq-2}/\mathfrak{g}_{\geq-1}$ by multiplication by $p$, so if $N''$ exists, it lies in $\mathfrak{g}_{\geq-1}$.  Further, the action of $\Ad(\Phi)$ on $\mathfrak{g}_{\geq-1}/\mathfrak{g}_{\geq0}$ has two eigenspaces, one with eigenvalue $1$ and one with eigenvalue $p$, so we are looking for $N''$ in $\mathfrak{g}_{\geq0}=\mathfrak{p}$.  Since $\Ad(\Phi)$ acts trivially on $\mathfrak{p}/\mathfrak{p}_{\geq1}$, $N''$ must lie in $\mathfrak{p}_{\geq1}$.  But $\Ad(\Phi)$ acts diagonalizably on $\mathfrak{p}_{\geq1}/\mathfrak{p}_2$ with eigenvalues $1$ and $p^{-1}$, so if $(1-p\Ad(\Phi))(N'')\in [N',\mathfrak{p}]\subset \ker(1-p\Ad(\Phi))$, then $N''$ is already in the kernel of $1-p\Ad(\Phi)$.

To summarize, if $\widetilde\Phi$ comes from both $T_{(\Phi,0,P_{\sub})}$ and $T_{(\Phi,0,P')}\widetilde X_{\sub}$, then $1-p\Ad(\widetilde\Phi)$ kills both $N_{\sub}$ and $N'+\varepsilon N''$, where $(1-p\Ad(\Phi))(N'')=0$.  But then $1-p\Ad(\widetilde\Phi)$ kills both $N_{\sub}$ and $N'$, so $\widetilde\Phi$ is of the form $\left(\begin{smallmatrix}a&\ast&\ast\\0&pa&0\\0&0&pa\end{smallmatrix}\right)$.  Since there are plainly choices of $\widetilde\Phi$ which do not lie in this space, $(\Phi,0)$ is a singular point of $X_{\sub}$.
\end{proof}
We conclude by remarking on the singular points of $X_{\sub}$ we have constructed.
Part of the singular locus of $X_{\sub}$ is $X_{\sub}\cap X_{\reg}\cap X_0$.  More precisely, suppose we have a pair $(\Phi,N)$ such that $(1-p\Ad(\Phi))(N)=0$ and $N$ is regular nilpotent.  After conjugating, we may assume that $N=\left(\begin{smallmatrix}0&1&0\\0&0&1\\0&0&0\end{smallmatrix}\right)$.  Then $(\Phi,0)$ is a singular point of $X_{\sub}$, since if $\Phi=\left(\begin{smallmatrix}a&b&c\\0&pa&pb\\0&0&p^2a\end{smallmatrix}\right)$, then $1-p\Ad(\Phi)$ kills the subregular nilpotent elements $\left(\begin{smallmatrix}0&1&ba^{-1}(1-p)^{-1}\\0&0&0\\0&0&0\end{smallmatrix}\right)$ and $\left(\begin{smallmatrix}0&0&ba^{-1}(p-1)^{-1}\\0&0&1\\0&0&0\end{smallmatrix}\right)$, which have distinct parabolics attached to them.

However, there are other singular points in $X_{\sub}$.  For example, let $\Phi=\left(\begin{smallmatrix}1&0&0\\0&p&0\\0&0&p\end{smallmatrix}\right)$.  The kernel of $1-p\Ad(\Phi):\mathfrak{gl}_3\rightarrow\mathfrak{gl}_3$ is the $2$-dimensional vector space $\left\{\left(\begin{smallmatrix}0&\ast&\ast\\0&0&0\\0&\ast&0\end{smallmatrix}\right)\right\}$, which consists entirely of subregular nilpotent elements.  The parabolic attached to $\left(\begin{smallmatrix}0&1&0\\0&0&0\\0&0&0\end{smallmatrix}\right)$ is $P_{\sub}$, while the parabolic attached to $\left(\begin{smallmatrix}0&0&1\\0&0&0\\0&0&0\end{smallmatrix}\right)$ is $\left\{\left(\begin{smallmatrix}\ast&\ast&\ast\\0&\ast&\ast\\0&0&\ast\end{smallmatrix}\right)\right\}$, so $(\Phi,0)$ is a singular point of $X_{\sub}$.  If $(\Phi,0)$ were a point of $X_{\reg}$, then $\ker(1-p\Ad(\Phi))$ would generate the nilpotent part of the Lie algebra of a Borel of $G$, which is $3$-dimensional.  But the Lie bracket of any two elements of $\ker(1-p\Ad(\Phi))$ is trivial, so $(\Phi,0)$ does not lie in $X_{\reg}$.

This dichotomy corresponds to the dichotomy in the proof of Theorem~\ref{xsub-sing}.  The subregular elements we wrote down for $\Phi$ such that $(\Phi,0)\in X_{\sub}\cap X_{\reg}\cap X_0$ have the property that $[N,\mathfrak{g}]\cap\ker(1-p\Ad(\Phi))$ is $1$-dimensional.  However, if $\ker(1-p\Ad(\Phi))$ is $2$-dimensional but consists entirely of subregular elements (so that $(\Phi,0)$ is a singular point of $X_{\sub}$ which does not lie in $X_{\reg}$), then the fiber of $\widetilde X_{\sub}\rightarrow X_{\sub}$ is isomorphic to $\P^1$ and so we can deform the parabolics attached to $\Phi$.

\appendix

\section{Tannakian formalism}\label{tannakian}

The theory of Tannakian categories enables us to study algebraic groups over fields in terms of their faithful representations.  The theory is developed in detail in~\cite{dm} and~\cite{saavedra}; we recall some of the basics here, and work out a number of useful examples.

\subsection{Fiber functors}

Let $k$ be a field, and let $A$ be a $k$-algebra.  Let $G$ be an affine $k$-group scheme.

\begin{definition}
A \emph{fiber functor} $\omega:\Rep_k(G)\rightarrow \Proj_A$ is a functor from the category of $k$-linear finite-dimensional representations of $G$ to the category of finite projective $A$-modules such that
\begin{enumerate}
\item	$\omega$ is $k$-linear, exact, and faithful
\item\label{tensor-def}	$\omega$ is a tensor functor, that is, $\omega(V_1\otimes_kV_2)=\omega(V_1)\otimes_A\omega(V_2)$
\item	If $\mathbf{1}$ denotes the trivial representation of $G$, then $\eta(\mathbf{1})$ is the trivial $A$-module of rank $1$.
\end{enumerate}
\end{definition}
We have suppressed some compatibilities in our definitions, in particular on the isomorphism in part~\ref{tensor-def}.  We refer the reader to~\cite[\textsection 1]{dm} for a full discussion of tensor categories and tensor functors.

Given a fiber functor $\omega:\Rep_k(G)\rightarrow \Proj_A$ and an $A$-algebra $A'$, there is a natural fiber functor $\omega':\Rep_k(G)\rightarrow \Proj_{A'}$ given by composing $\omega$ with the natural base extension functor $\varphi_{A'}:\Proj_A\rightarrow\Proj_{A'}$ sending $M$ to $M\otimes_AA'$.

\begin{definition}
Let $\omega,\eta:\Rep_k(G)\rightrightarrows \Proj_A$ be fiber functors.  Then $\underline\Hom^\otimes(\omega,\eta)$ is the functor on $A$-algebras given by
$$\underline\Hom^\otimes(\omega,\eta)(A'):=\Hom^\otimes(\varphi_{A'}\circ\omega,\varphi_{A'}\circ\eta)$$
Here $\Hom^\otimes$ refers to natural transformations of functors which preserve tensor products.
\end{definition}

\begin{thm}[{\cite[Prop. 2.8]{dm}}]\label{g-auts}
Let $\omega:\Rep_k(G)\rightarrow \Vect_k$ be the natural forgetful functor from the category of $k$-linear finite-dimensional representations of $G$ to the category of finite-dimensional $k$-vector spaces.  Then the natural morphism of functors on $k$-algebras $G\rightarrow \underline\Aut^\otimes(\omega)$ is an isomorphism.
\end{thm}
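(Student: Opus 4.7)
My plan is to prove both injectivity and surjectivity of the natural morphism on $A$-points, functorially in the $k$-algebra $A$. The natural map sends $g \in G(A)$ to the tensor-compatible family of $A$-linear automorphisms $\{\rho_V(g) \otimes_k 1 \colon \omega(V) \otimes_k A \to \omega(V) \otimes_k A\}_{V \in \Rep_k(G)}$, where $\rho_V$ denotes the representation structure on $V$; this is visibly a natural transformation of $\varphi_A \circ \omega$ preserving tensor products and the unit object.

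The key structural input I would use is the matrix-coefficient description of the Hopf algebra $\mathcal{O}(G)$: namely, $\mathcal{O}(G)$ is the filtered colimit over finite-dimensional representations $V$ of $V^\vee \otimes_k V$, with transition maps induced by morphisms of representations, and the multiplication on $\mathcal{O}(G)$ is dual to the tensor product of representations. Equivalently, $\mathcal{O}(G)$ is the union of its finite-dimensional subrepresentations under the right regular action of $G$, and every finite-dimensional representation of $G$ embeds $G$-equivariantly into a finite direct sum of copies of $\mathcal{O}(G)$.

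For injectivity, I would argue that if $g \in G(A)$ acts trivially on $\omega(V) \otimes_k A$ for every finite-dimensional $V$, then $g$ acts trivially on every finite-dimensional subrepresentation of $\mathcal{O}(G) \otimes_k A$, hence on all of $\mathcal{O}(G) \otimes_k A$. Since the right regular action of $G$ on $\mathcal{O}(G)$ is faithful, this forces $g$ to be the identity of $G(A)$.

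For surjectivity, given a tensor natural automorphism $\lambda = \{\lambda_V\}$ of $\varphi_A \circ \omega$, I would define an $A$-algebra homomorphism $\alpha_\lambda \colon \mathcal{O}(G) \otimes_k A \to A$ by the rule $v^\vee \otimes v \mapsto \langle v^\vee \otimes 1,\, \lambda_V(v \otimes 1) \rangle$ on a matrix coefficient arising from a finite-dimensional representation $V$. The serious content of the proof is then to verify that this formula is well-defined on the colimit (by naturality of $\lambda$ under morphisms $V \to V'$), unital (from $\lambda_{\mathbf{1}} = \mathrm{id}$), and multiplicative. The main obstacle will be multiplicativity: one must carefully unwind how the Hopf algebra product on $\mathcal{O}(G)$ arises from tensor products of representations, and then invoke the tensor compatibility $\lambda_{V_1 \otimes V_2} = \lambda_{V_1} \otimes_A \lambda_{V_2}$ under the identification $\omega(V_1 \otimes V_2) = \omega(V_1) \otimes_k \omega(V_2)$. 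Once $\alpha_\lambda$ is constructed, it corresponds to an $A$-point $g_\lambda \in G(A)$, and the defining formula shows immediately that $g_\lambda$ maps to $\lambda$ under the natural morphism, providing the required inverse.
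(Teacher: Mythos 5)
The paper offers no proof of this statement; it is imported verbatim from Deligne--Milne, and your sketch is essentially the argument given there. Your reduction to the matrix-coefficient description of $\mathcal{O}(G)$ is the right move, and the one input you defer --- that $\mathcal{O}(G)$ is genuinely the filtered colimit (coend) of the spaces $V^\vee\otimes_k V$ over morphisms of representations, so that naturality of $\lambda$ suffices for well-definedness of $\alpha_\lambda$ --- is precisely Lemma 2.9 of the cited reference, while multiplicativity follows from tensor-compatibility exactly as you indicate, since the product on $\mathcal{O}(G)$ is the matrix-coefficient map of $V_1\otimes V_2$. The final verification that $g_\lambda$ recovers $\lambda$ does work as claimed: writing the comodule structure map of $V$ in terms of a dual basis shows $\rho_V(g_\lambda)(v)=\sum_i v_i\,\langle v_i^\vee,\lambda_V(v)\rangle=\lambda_V(v)$.
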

\begin{remark}
Deligne and Milne actually prove more than this; they show that given an abstract neutral $k$-linear Tannakian category $C$ with fiber functor $\omega:C\rightarrow \Vect_k$, the functor $\underline\Aut^\otimes(\omega)$ is representable by an affine $k$-group scheme $G$.  However, we will not need this level of generality.
\end{remark}

\begin{definition}
A (right) \emph{$G$-torsor} over an affine $k$-scheme $\Spec A$ is an affine morphism $X\rightarrow \Spec A$ which is faithfully flat over $A$, together with an action $X\times G_A\rightarrow X$ so that the morphism $X\times G_A\rightarrow X\times X$ defined by $(x,g)\mapsto (x,x\cdot g)$ is an isomorphism.
\end{definition}
\begin{remark}
In fact, the assumption that $X$ is affine follows by fpqc descent from the other properties, plus the seemingly milder hypothesis that the morphism $X\rightarrow\Spec A$ is fpqc.
\end{remark}
\begin{remark}
Suppose that $G$ is smooth.  Then if $\Spec A'\rightarrow \Spec A$ is an fpqc base change which trivializes $X$, we see that $X_{A'}\rightarrow \Spec A'$ is smooth.  Smoothness descends along fpqc morphisms, so $X\rightarrow \Spec A$ is smooth as well.  It follows that $X$ can actually be trivialized by an \'etale surjective base change on $A$.
\end{remark}
\begin{remark}
Suppose that $G$ is an affine algebraic group.  Then if $\Spec A'\rightarrow \Spec A$ is an fpqc base change which trivializes $X$, we see that $X_{A'}\rightarrow \Spec A'$ is fppf.  Being fppf descends along fpqc morphisms, so $X\rightarrow \Spec A$ is fppf as well.  It follows that $X$ can actually be trivialized by an fppf base change on $A$.
\end{remark}

\begin{thm}[{\cite[Thm. 3.2]{dm}\label{g-x-homs}}]
Let $\omega:\Rep_k(G)\rightarrow \Vect_k$ be the natural forgetful functor.
\begin{enumerate}
\item	For any fiber functor $\eta:\Rep_k(G)\rightarrow \Proj_A$, $\underline\Hom^\otimes(\varphi_A\circ\omega,\eta)$ is representable by an affine scheme faithfully flat over $\Spec A$; it is therefore a $G$-torsor.
\item	The functor $\eta\rightsquigarrow\underline\Hom^\otimes(\varphi_A\circ\omega,\eta)$ is an equivalence between the category of fiber functors $\eta:\Rep_k(G)\rightarrow\Proj_A$ and the category of $G$-torsors over $\Spec A$.  The quasi-inverse assigns to any $G$-torsor $X$ over $A$ the functor $\eta$ sending any $\rho:G\rightarrow \GL(V)$ to the $M\in\Proj_A$ associated to the push-out of $X$ over $A$.
\end{enumerate}
\end{thm}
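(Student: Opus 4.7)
The plan is to construct the quasi-inverse explicitly and then prove both parts simultaneously by reducing to the trivial-torsor case via faithfully flat descent.  Given a $G$-torsor $X$ over $\Spec A$ and a representation $\rho:G\to\GL(V)$, I would define $\eta_X(V):=(X\times V)/G$, the contracted product along the diagonal action; equivalently, $\eta_X(V)=(\Gamma(X,\mathscr{O}_X)\otimes_k V)^G$.  Because $X$ is fpqc-locally trivial, on any trivialization $\Spec A'\to\Spec A$ one has $\eta_X(V)\otimes_AA'\cong V\otimes_kA'$, which gives finite projectivity together with exactness, tensor-compatibility, and $\eta_X(\mathbf 1)=A$; thus $\eta_X$ is a fiber functor, and this yields the candidate quasi-inverse.

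For part (1), set $T_\eta:=\underline\Hom^\otimes(\varphi_A\circ\omega,\eta)$.  Theorem~\ref{g-auts} applied after base change gives $G_{A'}\cong\underline\Aut^\otimes(\varphi_{A'}\circ\omega)$ for every $A$-algebra $A'$, and precomposition by automorphisms of the source gives a natural right action of $G$ on $T_\eta$.  Using the rigidity of $\Rep_k(G)$ (every representation has a dual), any tensor natural transformation between fiber functors valued in projective modules is automatically an isomorphism, so the $G$-action is free and simply transitive whenever $T_\eta$ is non-empty.  The main work is to show (a) that $T_\eta$ is representable by an affine $A$-scheme, and (b) that it is faithfully flat over $\Spec A$.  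For (a), I would reduce to the case that $G$ is affine algebraic (a general affine $G$ is an inverse limit of algebraic quotients), pick a faithful representation $V_0$ generating $\Rep_k(G)$ as a rigid tensor category, and realize $T_\eta$ as a closed subscheme of the affine scheme $\underline\Hom_A(\omega(V_0)\otimes_kA,\eta(V_0))$ cut out by finitely many tensor-compatibility relations coming from tensor generators and relations among them.  Step (b) is the crux: I would produce a faithfully flat $A$-algebra over which $\eta$ becomes isomorphic to $\varphi_A\circ\omega$ by exhibiting the coordinate ring of $T_\eta$ explicitly as a quotient of $\bigoplus_V\omega(V)^\vee\otimes_k\eta(V)$ by the tensor-compatibility relations, then verifying faithful flatness via a fiber-wise dimension count over residue fields of $A$ (using that $\omega(V)$ and $\eta(V)$ have the same rank) combined with flatness over $A$ coming from projectivity of each $\eta(V)$.

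For part (2), once (1) is in hand, the two compositions are compared fpqc-locally.  Starting from a $G$-torsor $X$, the natural $G$-equivariant morphism $X\to T_{\eta_X}$ that sends a section $x\in X(A')$ to the transformation $v\mapsto[x,v]\in\eta_X(V)\otimes_AA'$ becomes, after base change along any trivialization of $X$, exactly the isomorphism $G_{A'}\cong\underline\Aut^\otimes(\varphi_{A'}\circ\omega)$ of Theorem~\ref{g-auts}, hence is itself an isomorphism by descent.  Conversely, starting from a fiber functor $\eta$, the canonical evaluation map $\eta\to\eta_{T_\eta}$ becomes an isomorphism after pullback to $T_\eta$ (over which $\eta$ trivializes tautologically), and descent again finishes the argument.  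The central obstacle throughout is step (b) of part (1)---the fpqc-local triviality of an arbitrary fiber functor; once this is established, the rest is formal manipulation.
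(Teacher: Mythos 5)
This theorem is stated in the paper purely as a citation of \cite[Thm.\ 3.2]{dm}; the paper supplies no proof of its own, so there is nothing in the text to compare your argument against. Judged on its own terms, your outline follows the standard Tannakian route: the contracted product $(X\times V)/G=(\Gamma(X,\mathscr{O}_X)\otimes_kV)^G$ for the quasi-inverse, rigidity to upgrade tensor natural transformations to isomorphisms (hence simple transitivity of the $G$-action), the coend $\bigoplus_V\omega(V)^\vee\otimes_k\eta(V)$ modulo naturality relations for representability, and fpqc descent to compare the two composites. You also correctly isolate the crux, namely faithful flatness of $T_\eta$ over $\Spec A$.

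That step, however, is the one place where your sketch as written does not yet close. First, a ``fiber-wise dimension count using that $\omega(V)$ and $\eta(V)$ have the same rank'' is in danger of circularity: the equality $\operatorname{rank}_A\eta(V)=\dim_kV$ is usually deduced \emph{from} local triviality of $\eta$, and if you want it beforehand you need a separate argument (e.g.\ that $\Lambda^{\dim V}V$ is invertible while $\Lambda^{\dim V+1}V=0$, using that exterior powers are direct summands of tensor powers in characteristic $0$). Second, ``flatness coming from projectivity of each $\eta(V)$'' does not follow for a \emph{quotient} of $\bigoplus_V\omega(V)^\vee\otimes_k\eta(V)$; quotients of projectives need not be flat. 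The standard repair, which is essentially Deligne--Milne's, is to identify the representing algebra with $\eta(\mathscr{O}_G)$, where $\mathscr{O}_G$ is viewed as an ind-object of $\Rep_k(G)$ via the regular representation: then the coend is a \emph{filtered colimit} of the finite projective modules $\eta(B_i)$ over the finite-dimensional subrepresentations $B_i\subset\mathscr{O}_G$, hence flat, and faithfulness follows because $\mathbf{1}\hookrightarrow\mathscr{O}_G$ has cokernel again a filtered colimit of objects of $\Rep_k(G)$, so $A=\eta(\mathbf{1})\to\eta(\mathscr{O}_G)$ is universally injective and in particular all fibers of $T_\eta\to\Spec A$ are non-empty. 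With that identification in place, the rest of your argument (representability, the torsor property, and the two descent comparisons in part (2)) goes through as you describe.
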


\begin{cor}
Let $\eta:\Rep_k(G)\rightarrow \Proj_A$ be a fiber functor, corresponding to a $G$-torsor $X\rightarrow \Spec A$.  Then the functor $\underline\Aut^\otimes(\eta)$ is representable by the $A$-group scheme $\Aut_G(X)$.  This is a form of $G_A$.
\end{cor}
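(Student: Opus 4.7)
The plan is to exhibit a natural transformation $\Aut_G(X)\to\underline{\Aut}^\otimes(\eta)$ of functors on $A$-algebras and verify that it is an isomorphism fppf-locally on $\Spec A$; since both sides are fppf sheaves, descent will finish the argument. This will simultaneously show that $\Aut_G(X)$ is an fppf-form of $G_A$, and in particular that it is representable by an $A$-group scheme.

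To construct the map, fix an $A$-algebra $A'$ and $\phi\in\Aut_G(X)(A')$. By the quasi-inverse in Theorem \ref{g-x-homs}, the base change $\eta(V)\otimes_A A'$ is computed as the $A'$-module associated to the contracted product $X_{A'}\times^G V$ for each $V\in\Rep_k(G)$. Because $\phi$ is $G$-equivariant, it induces an $A'$-linear automorphism of $X_{A'}\times^G V$. These automorphisms are automatically natural in $V$, compatible with tensor products (since contracted products commute with tensor products of representations), and they act as the identity on the pushout attached to $\mathbf{1}$. This defines the required element of $\underline{\Aut}^\otimes(\eta)(A')$, and the construction is functorial in $A'$.

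To check that the map is an isomorphism, choose an fppf cover $A\to A'$ over which $X$ trivializes, so that $X_{A'}\cong G_{A'}$ as $G$-torsors. Under this identification, $\Aut_G(X)_{A'}$ becomes $G_{A'}$ acting on itself by left translation, and the fiber functor $\eta_{A'}$ is identified with $\varphi_{A'}\circ\omega$, where $\omega:\Rep_k(G)\to\Vect_k$ is the forgetful functor. Theorem \ref{g-auts}, together with the fact that the formation of $\underline{\Aut}^\otimes$ commutes with base change on $A$, then identifies $\underline{\Aut}^\otimes(\eta)_{A'}$ with $G_{A'}$ as well. Unwinding the constructions shows that the morphism of the previous paragraph becomes the identity on $G_{A'}$ under these identifications. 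Since both functors are fppf sheaves on $A$-algebras, this isomorphism after the fppf base change $A\to A'$ descends to the original base, giving the claim. The same fppf cover trivializes $\Aut_G(X)$ as $G$, establishing that it is a form of $G_A$.

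The main technical point, rather than any deep obstacle, is verifying that $\underline{\Aut}^\otimes(\eta)$ is an fppf sheaf whose formation commutes with the base change $A\to A'$, and that the identification of $\Aut_G(G_{A'})$ with $G_{A'}$ is compatible with the pushout operation in a way that matches Theorem \ref{g-auts}. Both are routine consequences of the definitions, but they have to be stated carefully for the descent step to work.
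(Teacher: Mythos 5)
Your argument is correct and is essentially the one the paper intends: the corollary is stated without proof as an immediate consequence of Theorem~\ref{g-auts} and Theorem~\ref{g-x-homs}, namely the pushout-induced map $\Aut_G(X)\to\underline\Aut^\otimes(\eta)$ becomes the identification $G_{A'}\cong\underline\Aut^\otimes(\varphi_{A'}\circ\omega)$ after an fppf cover trivializing $X$, and one descends using that both sides are fppf sheaves whose formation commutes with base change (the latter being true by the very definition of $\underline\Aut^\otimes$). The routine points you flag at the end are indeed the only things to check, and they go through as you say.
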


Let $G'$ be another affine $k$-group scheme, and let $f:G\rightarrow G'$ be a homomorphism of $k$-group schemes.  Then there is a push-out construction in the style of ``associated bundles''.  Namely, the space $X\times_{\Spec A}G'_A$ carries a right action of $G_A$, via 
$$(x,g')\cdot g = (x\cdot g, f(g^{-1})g')$$
where $x,g',g$ are $A'$-points of $X,G',G$, respectively.  Then we define 
$$X'=(X\times_{\Spec A}G'_A)/G$$
The existence of this quotient must be justified.  We claim it is sufficient to construct $X'_{A'}$, where $A\rightarrow A'$ is an fpqc morphism.  There is a descent datum on $(X\times_{\Spec A}G'_A)_{A'}$ because it is the base change of an $A$-scheme, and the action of $G$ respects this descent datum.  Therefore, if $X_{A'}$ exists, it is equipped with a descent datum.  But $X'_{A'}\rightarrow\Spec A'$ is affine and descent is effective in the affine case, so the existence of $X'_{A'}$ implies the existence of $X'$.

Now take $A\rightarrow A'$ to be an fpqc morphism which splits $X$.  Then 
$$X_{A'}\times_{\Spec A'}G'_{A'} = G_{A'}\times_{\Spec A'}G'_{A'}$$
and the quotient by $G_{A'}$ is visibly $G'_{A'}$.

We can also see this on the level of fiber functors as follows.  Suppose that $X$ corresponds to the fiber functor $\eta:\Rep_k(G)\rightarrow \Proj_A$.  We may define a fiber functor $\eta':\Rep_k(G')\rightarrow \Proj_A$ by taking
\[	\eta'(\rho) = \eta(\rho\circ f)	\]
for every representation $\rho:G'\rightarrow \GL(V)$.

These constructions can readily be checked to be equivalent.  In particular, given a representation $\rho:G\rightarrow \GL_n$, the push-out bundle is the $\GL_n$-torsor associated to the vector bundle $\omega(\rho)$.

We will be interested in the these constructions in the case when $G$ is a linear algebraic $k$-group.  By~\cite[Prop. 2.20]{dm}, this is the case if and only if $\Rep_k(G)$ has a tensor generator $V$.  That is, every object of $\Rep_k(G)$ is isomorphic to a subquotient of some direct sum of tensor powers of $V$ and $V^\ast$.  In fact, if $G$ is algebraic, then any faithful representation of $G$ is a tensor generator of $\Rep_k(G)$.

\subsection{Examples}

We give a number of examples which are relevant to $p$-adic Hodge theory.  As in the previous section, we let $k$ be a field, $G$ be an affine $k$-group, and $A$ be a $k$-algebra.  As our primary interest will be in the case where $A$ is a $k$-affionid algebra, we do not assume $A$ is finite type.  Several of these example rely on results proved in~\cite{bellovin}.

\subsubsection{$\Gm$}
Let $G=\Gm$, and let $\rho:G\rightarrow \GL(V)$ be a representation of $\Gm$.  Then $V$ decomposes as $V=\oplus_{n\in\Z} V_n$, where $\Gm$ acts on $V_n$ via multiplication by $t\mapsto t^n$.  These decompositions are exact and tensor-compatible, in the sense that if
\[	0\rightarrow V'\rightarrow V\rightarrow V''\rightarrow 0	\]
is an exact sequence of representations of $\Gm$, then
\[	0\rightarrow V'_n\rightarrow V_n\rightarrow V''_n\rightarrow 0	\]
is exact for all $n$, and 
\[	(V\otimes_kV')_n=\bigoplus_{p+q=n}V_p\otimes_kV'_q	\]
Thus, $\Rep_k(G)$ is isomorphic (as a rigid tensor category) to the category of graded vector spaces.  It is generated by the $1$-dimensional representation on which $\Gm$ acts by scaling.

\subsubsection{Gradings}
Let $X\rightarrow \Spec A$ be a $G$-torsor, corresponding to a fiber functor $\eta:\Rep_k(G)\rightarrow \Proj_A$.  A \emph{$\otimes$-grading} of $\eta$ is the specification of a grading $\eta(V)=\oplus_{n\in\Z}\eta(V)_n$ of vector bundles on each $\eta(V)$ such that
\begin{enumerate}
\item	the specified gradings are functorial in $V$
\item	the specified grading are tensor-compatible, in the sense that
\[	\eta(V\otimes_kV')_n=\bigoplus_{p+q=n}(\eta(V)_p\otimes\eta(V')_q)	\]
\item	$\eta(\mathbf{1})_0=\eta(\mathbf{1})$
\end{enumerate}
Equivalently, a  $\otimes$-grading of $\eta$ is a factorization of $\eta$ through the category of graded vector bundles on $\Spec A$.

Then for any $A$-algebra $A'$ and any point $t\in\Gm(A')$, we define a natural transformation from $\varphi_{A'}\circ\eta$ to itself, via the family of homomorphisms
\[	\bigoplus_{n\in\Z}t^n:\oplus_{n\in\Z}\left(\varphi_{A'}\circ\eta(V)\right)_n\rightarrow \bigoplus_{n\in\Z}\left(\varphi_{A'}\circ\eta(V)\right)_n	\]
This is clearly functorial in $V$, and it is exact and tensor-compatible.  Thus, we have defined a homomorphism $\Gm(A')\rightarrow \Aut^\otimes(\varphi_{A'}\circ\eta)=\Aut_G(X)(A')$.  But it is clearly functorial in $A'$, so we get a homomorphism of $A$-group schemes $\Gm\rightarrow \Aut_G(X)$.

\begin{example}
Let $G$ be a linear algebraic group over $\Q_p$, and let $\rho:\Gal_K\rightarrow G(\Q_p)$ be a continuous representation such that the composition $\sigma\circ\rho:\Gal_K\rightarrow\GL(V)$ is Hodge-Tate for every representation $\sigma:G\rightarrow\GL(V)$.  Then $\eta:V\mapsto \D_{\HT}^K(\sigma\circ\rho)$ is a fiber functor $\eta:\Rep_{\Q_p}\rightarrow \Vect_K$ equipped with a $\otimes$-grading.  Thus, we get a $G_K$-torsor $\D_{\HT}^K(\rho)$ over $\Spec K$, together with a cocharacter $\Gm\rightarrow\Aut_G(\D_{\HT}^K(\rho))$.
\end{example}

\subsubsection{Filtered vector bundles}\label{tann-filt}
Let $X\rightarrow \Spec A$ be a $G$-torsor, corresponding to a fiber functor $\eta:\Rep_k(G)\rightarrow \Proj_A$.  A \emph{$\otimes$-filtration} of $\eta$ is the specification of a decreasing filtration $\mathcal{F}^\bullet(\eta(V))$ of vector sub-bundles on each $\eta(V)$ such that
\begin{enumerate}
\item	the specified filtrations are functorial in $V$
\item	the specified filtrations are tensor-compatible, in the sense that
$$\mathcal{F}^n\eta(V\otimes_kV')=\sum_{p+q=n}\mathcal{F}^p\eta(V)\otimes\mathcal{F}^q\eta(V')\subset V\otimes V'$$
\item	$\mathcal{F}^n(\eta(\mathbf{1}))=\eta(\mathbf{1})$ if $n\leq 0$ and $\mathcal{F}^n(\eta(\mathbf{1}))=0$ if $n\geq 1$
\item	the associated functor from $\Rep_k(G)$ to the category of graded projective $A$-modules is exact.
\end{enumerate}
Equivalently, a $\otimes$-filtration of $\eta$ is the same as a factorization of $\eta$ through the category of filtered vector bundles over $\Spec A$.

We define two auxiliary subfunctors of $\underline\Aut^\otimes(\eta)$.
\begin{itemize}
\item	$P_{\mathcal{F}}=\underline\Aut_{\mathcal{F}}^\otimes(\eta)$ is the functor on $A$-algebras such that
\begin{equation*}
\begin{split}
\underline\Aut_{\mathcal{F}}^\otimes(\eta)(A') = \{\lambda\in\Aut_G(\eta)(A')| \lambda(\mathcal{F}^n\eta(V))&\subset\mathcal{F}^n\eta(V)\text{ for all }\\V\in\Rep_k(G)\text{ and }n\in\Z\}
\end{split}
\end{equation*}
\item	$U_{\mathcal{F}}=\underline\Aut_{\mathcal{F}}^{\otimes!}(\eta)$ is the functor on $A$-algebras such that
\begin{equation*}
\begin{split}
\underline\Aut_{\mathcal{F}}^\otimes(\eta)(A') = \{\lambda\in\Aut_G(\eta)(A')| (\lambda-id)(\mathcal{F}^n\eta(V))&\subset\mathcal{F}^{n+1}\eta(V)\text{ for all }\\V\in\Rep_k(G)\text{ and }n\in\Z\}
\end{split}
\end{equation*}
\end{itemize}
By~\cite[Chapter IV, 2.1.4.1]{saavedra}, these functors are both representable by closed subgroup schemes of $\Aut_G(X)$, and they are smooth if $G$ is.

Given a $\otimes$-grading of $\eta$, we may construct a $\otimes$-filtration of $\eta$, by setting
$$\mathcal{F}^n\eta(V)=\oplus_{n'\geq n}\eta(V)_{n'}$$
We say that a $\otimes$-filtration is \emph{splittable} if it arises in this way, and we say that $\otimes$-filtration is \emph{locally splittable} if it arises in this way, fpqc-locally on $\Spec A$.

In fact, if $k$ is a characteristic $0$ field or $G$ is a reductive group, then every $\otimes$-filtration is Zariski-locally splittable.  This is a theorem of Deligne, proved in~\cite[Chapter IV, 2.4]{saavedra}.

Now assume that $G$ is connected reductive.  Then we have the following results.
\begin{thm}[{\cite[Chapter IV, 2.2.5]{saavedra}}]
\begin{enumerate}
\item	$P_{\mathcal{F}}$ is represented by a parabolic subgroup of $\Aut_G(X)$ and $U_{\mathcal{F}}$ is represented by the unipotent radical of $P_{\mathcal{F}}$.  The Lie algebra $\Lie\Aut_G(X)$ itself has a filtration (via the filtration on $\eta$ applied to the adjoint representation of $G$), and $\Lie P_{\mathcal{F}}=\mathcal{F}^0\Lie\Aut_G(X)$ and $\Lie U_{\mathcal{F}}=\mathcal{F}^1\Lie\Aut_G(X)$.
\item	Let $\mu:\Gm\rightarrow\Aut_G(X)$ be a cocharacter corresponding to a splitting of the filtration.  Then $P_{\mathcal{F}}=P_{\Aut_G(X)}(\mu)$ and $Z_{\Aut_G(X)}(\mu)$ is a Levi subgroup of $P_{\mathcal{F}}$.
\item	If $\mu,\mu':\Gm\rightarrow \Aut_G(X)$ are two cocharacters splitting the filtration, they are conjugate by a unique $A$-point of $U_{\mathcal{F}}$.  More precisely, the functor of splittings of $\mathcal{F}$ is a $U_{\mathcal{F}}$-torsor, and as $\Spec A$ is affine and $U_{\mathcal{F}}$ is unipotent, it is a trivial $U_{\mathcal{F}}$-torsor.
\end{enumerate}
\end{thm}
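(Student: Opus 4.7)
The plan is to reduce everything to the case where the filtration is splittable and then translate the conditions through the Tannakian dictionary into classical statements about parabolics attached to cocharacters. Since $G$ is reductive, Deligne's theorem (cited immediately before the statement) says that $\mathcal{F}$ is Zariski-locally splittable on $\Spec A$. So I would work locally and fix a cocharacter $\mu:\Gm\to\Aut_G(X)$ whose induced $\otimes$-grading refines $\mathcal{F}$, i.e., $\mathcal{F}^n\eta(V)=\bigoplus_{n'\geq n}\eta(V)_{n'}$ for every $V\in\Rep_k(G)$. This $\mu$ also induces a $\Z$-grading on $\Lie\Aut_G(X)$ via the adjoint representation.

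Next I would identify $P_{\mathcal{F}}$ with $P_{\Aut_G(X)}(\mu)$ and $U_{\mathcal{F}}$ with its unipotent radical $U_{\Aut_G(X)}(\mu)$. Recall that $P_{\Aut_G(X)}(\mu)(A')$ consists of those $\lambda\in\Aut_G(X)(A')$ for which $\lim_{t\to 0}\mu(t)\lambda\mu(t)^{-1}$ exists, equivalently those $\lambda$ that are ``block upper triangular'' with respect to the $\mu$-grading on each $\eta(V)$. This is exactly the condition that $\lambda$ preserve every $\mathcal{F}^n\eta(V)$, so $P_{\mathcal{F}}=P_{\Aut_G(X)}(\mu)$ as subfunctors of $\Aut_G(X)$. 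The same argument with ``acts as identity on each graded piece'' in place of ``block upper triangular'' identifies $U_{\mathcal{F}}$ with $U_{\Aut_G(X)}(\mu)$. Because $\Aut_G(X)$ is a reductive $A$-group (as an inner form of $G_A$), $P(\mu)$ is a parabolic subgroup scheme and $U(\mu)$ is its unipotent radical, with $Z_{\Aut_G(X)}(\mu)$ a Levi subgroup; this gives part (2) and the representability claim in part (1). The Lie algebra identifications follow because $\Lie P(\mu)$ and $\Lie U(\mu)$ are by definition the weight $\geq 0$ and weight $\geq 1$ parts of $\Lie\Aut_G(X)$ under $\Ad\circ\mu$, and the tensor compatibility of $\mathcal{F}$ applied to the adjoint representation of $G$ shows these agree with $\mathcal{F}^0\Lie\Aut_G(X)$ and $\mathcal{F}^1\Lie\Aut_G(X)$.

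For part (3), two splittings $\mu,\mu'$ of $\mathcal{F}$ yield two Levi subgroups $Z_{\Aut_G(X)}(\mu)$ and $Z_{\Aut_G(X)}(\mu')$ of the common parabolic $P_{\mathcal{F}}$. Standard structure theory for parabolic subgroups says that any two Levi factors of a parabolic are conjugate by a unique element of its unipotent radical; transporting $\mu$ by this element produces a cocharacter landing in $Z_{\Aut_G(X)}(\mu')$ and splitting the same filtration, hence agreeing with $\mu'$ (splittings of $\mathcal{F}$ inside a fixed Levi are determined by the grading they induce, which is forced by $\mathcal{F}$ and the Levi). Finally, I would glue: since $P_{\mathcal{F}}$ and $U_{\mathcal{F}}$ are defined canonically from $\mathcal{F}$ with no reference to a splitting, the local identifications with $P(\mu)$ and $U(\mu)$ assemble into global closed subgroup schemes of $\Aut_G(X)$ that are parabolic and unipotent respectively (these properties can be checked fpqc-locally).

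The main obstacle is the Tannakian translation in the second paragraph: showing rigorously that preserving $\mathcal{F}$ functorially and tensor-compatibly on \emph{all} $V\in\Rep_k(G)$ is equivalent to the parabolic condition relative to a single cocharacter $\mu$. The cleanest route is to use that $\Rep_k(G)$ is generated (as a tensor category with duals and subquotients) by a faithful representation, so the filtration-preservation condition is determined by its value on this generator; then the equivalence with $P(\mu)$ reduces to a statement about filtered vector bundles, which is elementary.
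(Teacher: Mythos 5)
The paper offers no proof of this statement: it is quoted directly from Saavedra Rivano [Ch.\ IV, 2.2.5], and the only in-text preparation is the preceding remark that $P_{\mathcal{F}}$ and $U_{\mathcal{F}}$ are representable closed subgroup schemes and that the filtration is Zariski-locally splittable (Deligne's theorem). Your reconstruction via the dynamic method is correct and is essentially how the cited source (and Deligne--Milne) argue: locally choose a splitting cocharacter $\mu$, identify $P_{\mathcal{F}}=P_{\Aut_G(X)}(\mu)$ and $U_{\mathcal{F}}=U_{\Aut_G(X)}(\mu)$ by testing the filtration-preservation condition on a faithful tensor generator (using that $\Aut_G(X)$ is a closed subgroup of $\GL(\eta(V_0))$, so the limit at $t=0$ lands back in $\Aut_G(X)$), and then invoke the structure theory of $P(\mu)$ for the reductive group scheme $\Aut_G(X)$, which is an inner form of $G_A$. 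The one step worth making explicit in part (3) is why the transported cocharacter $u\mu u^{-1}$ actually coincides with $\mu'$ rather than merely landing in the same Levi: since $u\mu u^{-1}$ factors through $Z_{\Aut_G(X)}(\mu')$ it commutes with $\mu'$, and two commuting cocharacters splitting the same $\otimes$-filtration induce a bigrading on each $\eta(V)$ whose components of bidegree $(m,n)$ with $m\neq n$ must vanish (each would have to lie simultaneously in $\mathcal{F}^m$ and in the $\mu'$-weight-$n$ part), so the two gradings agree on every $\eta(V)$ and hence the cocharacters are equal; uniqueness of $u$ then follows from $U_{\mathcal{F}}\cap Z_{\Aut_G(X)}(\mu)=\{1\}$.
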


The following lemma on conjugacy of cocharacters is well-known, but we provide a proof for the convenience of the reader.
\begin{lemma}\label{alg-conj}
Let $F$ be a separably closed field, let $G$ be a connected reductive group over $F$, and let $\lambda:\Gm\rightarrow G$ be a cocharacter defined over an separably closed extension $K/F$.  Then the $G(K)$-orbit of $\lambda$ (under conjugation) contains a cocharacter of $G$ defined over $F$, and all such cocharacters are $G(F)$-conjugate.
\end{lemma}
\begin{proof}
For any separably closed field $F$, the set of $G(F)$-orbits of cocharacters is naturally identified with $G(F)\backslash \Hom_F(\Gm,G)$.  Moreover, for any maximal $F$-torus $T\subset G$, there is a natural map ${\rm{X}}_\ast(T)\rightarrow G(F)\backslash \Hom_F(\Gm,G)$, where ${\rm{X}}_\ast(T)$ is the set of cocharacters of $T$.  Since $F$ is separably closed, all maximal $F$-tori are split and $G(F)$-conjugate; since the image of any homomorphism $\lambda:\Gm\rightarrow G$ is contained in some maximal torus, this map is surjective.  

On the other hand, if $\lambda,\lambda':\Gm\rightarrow T$ are two $G(F)$-conjugate cocharacters, then the image of $\lambda'$ is also contained in the maximal torus $gTg^{-1}$ for some $g\in G(F)$.  Thus, $T$ and $gTg^{-1}$ are maximal tori in $Z_G(\lambda')$, so they are $Z_G(\lambda')(F)$-conjugate, and $\lambda$ and $\lambda'$ are $N_G(T)(F)$-conjugate.  It follows that we have a bijection $N_G(T)(F)\backslash{\rm{X}}_\ast(T)\rightarrow G(F)\backslash \Hom_F(\Gm,G)$.  Since the quotient map $N_G(T)\rightarrow W_G(T)=N_G(T)/Z_G(T)$ is smooth and $F$ is separably closed, $W_G(T)(F)=N_G(T)(F)/Z_G(T)(F)$ and we have a bijection $W_G(T)(F)\backslash{\rm{X}}_\ast(T)\rightarrow G(F)\backslash \Hom_F(\Gm,G)$.

The left side is insensitive to change in $F$, because $W_G(T)$ is finite \'etale and the category of $F$-tori is anti-equivalent to the category of finite free $\Z$-modules (as $\Gal_F$ is trivial).  We therefore see that 
\[	G(F)\backslash \Hom_F(\Gm,G)\xleftarrow{\sim}W_G(T)(F)\backslash{\rm{X}}_\ast(T)\cong W_G(T)(K)\backslash{\rm{X}}_\ast(T)\xrightarrow{\sim} G(K)\backslash \Hom_K(\Gm,G)	\]
as desired.
\end{proof}

A \emph{type} is a conjugacy class of cocharacters $\mathbf{v}:\Gm\rightarrow G_{k^{\rm{sep}}}$.  If $X\rightarrow \Spec A$ is a $G$-torsor corresponding to a fiber functor $\eta$, then Lemma~\ref{alg-conj} shows that a $\otimes$-filtration on $\eta$ induces a well-defined type at every point $x\in\Spec A$.  We claim that the type is Zariski-locally constant on $\Spec A$.

To see this, we first prove the following lemma:
\begin{lemma}\label{hens-conj}
Suppose $A'$ is a strictly henselian local $F$-algebra, where $F$ is separably closed.  If $\lambda:\Gm\rightarrow G(A')$ is a cocharacter, then the $G(A')$-conjugacy class of $\lambda$ contains a cocharacter defined over $F$, and all such cocharacters are $G(F)$-conjugate.
\end{lemma}
\begin{proof}
As in the proof of Lemma~\ref{alg-conj}, we fix a maximal $F$-torus $T\subset G$ and consider the natural map ${\rm{X}}_\ast(T)\rightarrow G(A')\backslash \Hom_{A'}(\Gm,G_{A'})$.  For any cocharacter $\lambda:\Gm\rightarrow G_{A'}$, there is a (fiberwise) maximal torus of $G_{A'}$ containing the image of $\lambda$ since $A'$ is strictly henselian.  Indeed, the centralizer $Z_{G_{A'}}(\lambda)$ is a reductive $A'$-group scheme whose maximal $A'$-tori are maximal $A'$-tori of $G_{A'}$, and the existence of maximal $A'$-tori in $Z_{G_{A'}}(\lambda)$ follows from~\cite[Cor. 3.2.7]{conrad-luminy} (again using that $A'$ is strictly henselian).  Moreover, maximal tori of $G_{A'}$ are $G(A')$ conjugate by~\cite[Thm. 3.2.6]{conrad-luminy}, so we may assume that the image of $\lambda$ is contained in $T_{A'}$.

Again as in the proof of Lemma~\ref{alg-conj}, we see that we have a bijection \[	W_G(T_{A'})(A')\backslash {\rm{X}}_\ast(T_{A'})\xrightarrow{\sim} G(A')\backslash \Hom_{A'}(\Gm,G_{A'})	\]
We further have a natural surjection ${\rm{X}}_\ast(T)\twoheadrightarrow W_G(T_{A'})(A')\backslash {\rm{X}}_\ast(T_{A'})$; we claim that if $\lambda,\lambda':\Gm\rightrightarrows T$ are $W_G(T_{A'})(A')$-conjugate, they are $W_G(T)(F)$-conjugate.  Suppose $\lambda=g\lambda'g^{-1}$, where $g\in W_G(T_{A'})$, and let $\overline g$ denote the reduction of $g$ modulo the maximal ideal of $A'$.  Then $\overline g$ is defined over $F$, so $g\overline g^{-1}\in W_G(T_{A'})$ is residually trivial.  But $W_G(T_{A'})$ is finite \'etale over $A'$, so $g=\overline g$ and we are done.
\end{proof}

Choose a cocharacter $\lambda:\Gm\rightarrow\Aut_G(X)$ splitting $\eta$.  We may apply Lemma~\ref{hens-conj} to find an affine \'etale cover $\{U_i\}_{i=1}^n$ of $\Spec A$ (with each $U_i$ is connected) so that $\lambda|_{U_i}$ is defined over $k^{\rm{sep}}$ (or more precisely, the structure morphism $U_i\rightarrow\Spec k$ factors through a finite separable extension $k'/k$, and $\lambda|_{U_i}$ is defined over $k'$).  As \'etale morphisms are open, we are reduced to checking that for any point $s\in\Spec A$ in the image of $U_i$, the type of $\lambda$ at $\kappa(s)$ can be computed at $U_i\times_k\kappa(s)$. In other words, we need to know that types are insensitive to extensions $F/\kappa(s)$, where $F$ is a finite \'etale $\kappa(s)$-algebra (not necessarily a field).  But this is clear, since the type is constant on each component of $U_i\times_k\kappa(s)$ and we can compute the type at $\kappa(s)$ itself on any geometric point over it.

%
%

\subsubsection{Endomorphisms and nilpotent elements}\label{endos}
Let $X\rightarrow \Spec A$ be a $G$-torsor, corresponding to a fiber functor $\eta:\Rep_k(G)\rightarrow \Proj_A$.  Suppose that for each $V\in \Rep_k(G)$, $\eta(V)$ is equipped with an endomorphism $N_V$, and suppose further that these endomorphisms are exact and tensor-compatible, in the sense that $N_{V\otimes V'}=1_V\otimes N_{V'}+N_V\otimes 1_{V'}$.  Then $(1+\varepsilon N_V)_V$ is a family of exact and tensor-compatible automorphisms of $\eta(V)_{A[\varepsilon]/\varepsilon^2}$.  In other words, we have an $A[\varepsilon]/\varepsilon^2$-point of $\underline\Aut^\otimes(\eta)$, and therefore an element $N\in\Aut_G(X)(A[\varepsilon]/\varepsilon^2)=\Lie\Aut_G(X)$.

We can say more when the $\{N_V\}$ are all \emph{nilpotent} endomorphisms and $k$ has characteristic $0$.  Then for each $A$-algebra $A'$ and each representation $V$, there is an action of $\Ga(A')$ on $\eta(V)_{A'}$, where $t\in\Ga(A')$ acts via $\exp(t\cdot N_V)$ (note that because $N_V$ is nilpotent, there are no issues of convergence).  We therefore have homomorphisms
$$\Ga(A')\rightarrow\Aut^\otimes(\varphi_{A'}\circ\eta)=\Aut_G(X)(A')$$
These homomorphisms are functorial in $A'$, so we have a homomorphism of $A$-group schemes $\Ga\rightarrow \Aut_G(X)$.  This in turn induces a homomorphism of Lie algebras over $\Lie\Ga\rightarrow \Lie\Aut_G(X)$.  In particular, the image of the distinguished element $d/dt\in\Lie\Ga$ yields a distinguished element $N\in\Lie\Aut_G(X)$.

\subsubsection{Semi-linear automorphisms}\label{tann-semilinear}
Let $X\rightarrow \Spec k'\otimes_kA$ be a $G$-torsor, corresponding to a fiber functor $\eta:\Rep_k(G)\rightarrow \Proj_{k'\otimes_kA}$, where $k'/k$ is a finite cyclic extension, with $\Gal(k'/k)$ generated by $\varphi$.  Suppose that for each $V\in \Rep_k(G)$, $k'\otimes_k\eta(V)$ is equipped with a bijection $\Phi_V:k'\otimes_k\eta(V)\rightarrow k'\otimes_k\eta(V)$ which is $A$-linear but $k'\otimes_kA$-semi-linear over $\varphi$.  That is, $\Phi(av)=\varphi(a)\Phi(v)$ for $a\in k'\otimes_kA$, $v\in k'\otimes_k\eta(V)$.  Suppose further that the $\Phi_V$ are exact and tensor compatible, in the sense that $\Phi_{V\otimes V'}=\Phi_V\otimes\Phi_{V'}$.  This is the same thing as a tensor-compatible family of isomorphisms (of $k'\otimes_k A$-modules) $\Phi_V':\varphi^\ast\eta(V)\xrightarrow{\sim} \eta(V)$.

Thus, we get an isomorphism $\Phi':\varphi^\ast\eta\xrightarrow{\sim}\eta$ of fiber functors $\Rep_k(G)\rightrightarrows\Proj_{k'\otimes_kA}$, and therefore an isomorphism of $G$-bundles $\Phi':\varphi^\ast X\xrightarrow{\sim} X$.

We can give another interpretation of $\Phi'$.  We may consider the Weil restriction $\Res_{k'/k}X$, which is a $\Res_{k'/k}(G_{k'})$-torsor over $A$, and we may use $\{\Phi_V\}$ to define a homomorphism 
\[	\Aut_{\Res_{k'/k}G_{k'}}(\Res_{k'/k}X)\rightarrow \Aut_{\Res_{k'/k}G_{k'}}(\Res_{k'/k}X)	\]
Concretely, if $V\in\Rep_k(G)$ and $g_V\in \Res_{k'/k}\Aut\eta(V)(A')=\Aut(\eta(V)\otimes_kA')$, then this homomorphism sends $g_V$ to $\Phi_V\circ g_V\circ\Phi_V^{-1}$.  Since $\Phi_V$ is semi-linear, this is a kind of ``twisted conjugation'' on $\GL(V)$.

\subsubsection{Continuous Galois representations}\label{cont-galois}

Let $E$ and $K$ be finite extensions of $\Q_p$, and let $G$ be an affine algebraic group over $E$.  Let $\omega:\Rep_E(G)\rightarrow \Vect_E$ be the forgetful fiber functor.  Suppose that for every $V\in\Rep_E(G)$ we have a continuous representation $\rho_V:\Gal_K\rightarrow \GL(V)$, and suppose that this family of representations is $\otimes$-compatible and exact, in the sense that $\rho_{V\otimes_EV'}=\rho_V\otimes\rho_{V'}$ and that if $0\rightarrow V'\rightarrow V\rightarrow V''\rightarrow 0$ is exact, then so is $0\rightarrow \rho_{V'}\rightarrow \rho_{V}\rightarrow \rho_{V''}\rightarrow 0$.  Then each $g\in\Gal_K$ defines a tensor automorphism of $\omega$, and therefore an element of $G(E)$.  

Thus, we get a homomorphism $\rho:\Gal_K\rightarrow G(E)$.  We wish to show that it is continuous.  But if $\sigma:G\rightarrow \GL(V)$ is a faithful representation, then considering $\sigma\circ\rho$ embeds the image of $\rho$ in the $E$-points of a closed subgroup of $\GL(V)$.  Since $\rho_V=\sigma\circ\rho$ is continuous by assumption, so is $\rho$.

\subsubsection{Families of de Rham representations}

Let $E$ and $K$ be finite extensions of $\Q_p$, let $A$ be an $E$-affinoid algebra, and let $G$ be an affine algebraic group over $E$.  Let $\rho:\Gal_K\rightarrow G(A)$ be a continuous homomorphism.  We say that $\rho$ is de Rham if $\sigma\circ\rho:\Gal_K\rightarrow \GL(V)$ is de Rham for every representation $\sigma\in \Rep_E(G)$.  By \cite[Theorem 5.1.2]{bellovin}, this is the case if and only if $(\sigma\circ\rho)_x:\Gal_K\rightarrow \GL(V_x)$ is de Rham for every $E$-finite artin local point $x:A\rightarrow B$.  In that case, $\D_{\dR}^K(\sigma\circ\rho)$ is an $A$-locally free $A\otimes_{\Q_p}K$-module such that $\D_{\dR}^K(\sigma\circ\rho)$ is filtered by sub-bundles $\Fil^\bullet\D_{\dR}^K(\sigma\circ\rho)$.  These sub-bundles are $A$-locally direct summands of $\D_{\dR}^K(V)$ as $A$-modules, but not necessarily as $A\otimes_{\Q_p}K$-modules.  The formation of $\D_{\dR}^K$ is exact and tensor compatible, so we get a $\Res_{E\otimes_{\Q_p}L/E}G$-torsor $\D_{\dR}^K(\rho)$ over $\Spec A$.  Furthermore, the filtrations $\Fil^\bullet\D_{\dR}^K(\sigma\circ\rho)$ define a $\otimes$-filtration on $\D_{\dR}^K(\rho)$.

We define the \emph{$p$-adic Hodge type} of $\rho$ to be the type of this $\otimes$-filtration.  Recall that the type is a geometric conjugacy class of cocharacters $\Gm\rightarrow (\Res_{E\otimes_{\Q_p}L/E}G)_{\overline{E}}$ which split the $\otimes$-filtration.  We showed that the type of a $\otimes$-filtration is locally constant on $\Spec A$, so the $p$-adic Hodge type of a family of de Rham representations is locally constant on $\Spec A$, as well.


\subsubsection{Potentially semi-stable Galois representations}

Let $E$ and $K$ be finite extensions of $\Q_p$, and let $G$ be an affine algebraic group defined over $E$.  Let $\rho:\Gal_K\rightarrow G(E)$ be a continuous homomorphism.  We say that $\rho$ is potentially semi-stable if $\sigma\circ\rho:\Gal_K\rightarrow \GL(V)$ is potentially semi-stable for every representation $\sigma\in \Rep_E(G)$.  

Let $\sigma_0$ be a faithful representation of $G$.  Then $\rho$ is potentially semi-stable if and only if $\sigma_0\circ\rho$ is.  This follows because $\sigma_0$ is a tensor generator of $\Rep_E(G)$.  More precisely, suppose that $\sigma_0\circ\rho$ becomes semi-stable when restricted to $\Gal_L$ for some finite extension $L/K$.  Then the formalism of admissible representations implies that $\sigma_0^\vee\circ\rho|_{\Gal_L}$ is semi-stable, as is $\sigma\circ\rho|_{\Gal_L}$ for any subrepresentation or quotient representation of $\sigma_0$.  Moreover, if $\sigma\circ\rho|_{\Gal_L}$ and $\sigma'\circ\rho|_{\Gal_L}$ are both semi-stable, then so is $(\sigma\otimes\sigma')\circ\rho|_{\Gal_L}$.  But since $\sigma_0$ is a tensor generator for $\Rep_E(G)$, this implies that $\sigma\circ\rho|_{\Gal_L}$ is semi-stable for any $\sigma\in\Rep_E(G)$.

\begin{remark}
A similar argument shows that that for any period ring $\B_\ast$, $\sigma\circ\rho$ is $\B_\ast$-admissible for every $\sigma\in\Rep_E(G)$ if and only if $\sigma_0\circ\rho$ is $\B_\ast$-admissible for an arbitrary faithful representation $\sigma_0:G\rightarrow \GL_n$.  Namely, the formalism of admissible representations implies that $\B_\ast$-admissibility is preserved under tensor products and duals, as well as passage to subrepresentations and quotient representations.  Since any faithful representation $\sigma_0$ is a tensor generator for $\Rep_E(G)$, $\B_\ast$-admissibility of $\sigma_0\circ\rho$ implies $\B_\ast$-admissibility of $\sigma\circ\rho$ for all $\sigma\in\Rep_E(G)$.
\end{remark}

\subsubsection{Filtered $(\varphi,N,\Gal_{L/K})$-modules}\label{tann-filt-phi-N}

Let $E$ and $K$ be finite extensions of $\Q_p$, let $L/K$ be a finite Galois extension, let $A$ be an $E$-algebra, and let $G$ be an affine algebraic group over $E$.  Let 
$$\eta:\Rep_EG\rightarrow\Proj_{A\otimes_{\Q_p}L_0}$$ 
be a fiber functor to the category of vector bundles over $A\otimes_{\Q_p}L_0$ which are $A$-locally free (i.e., the fiber at a point of $\Spec A$ is required to have constant rank), and suppose that $\eta(V)$ is equipped in an exact and tensor-compatible way with a semi-linear bijection $\Phi_V:\eta(V)\rightarrow\eta(V)$, a semi-linear action $\tau_V$ of $\Gal_{L/K}$ on $\eta(V)$, and an endomorphism $N_V$, and that $\{\eta(V)_L\}$ is equipped with a $\otimes$-filtration $\mathcal{F}_V^\bullet$ such that
\begin{itemize}
\item	$N_V = p\Phi_V\circ N_V\circ\Phi_V^{-1}$
\item	$N_V = \tau_V(g)\circ N_V\circ \tau_V(g)^{-1}$ for all $g\in\Gal_{L/K}$
\item	$\tau_V(g)\circ\Phi_V = \Phi_V\circ\tau_V(g)$ for all $g\in\Gal_{L/K}$
\item	$\mathcal{F}_V^\bullet$ is stable by the action of $\Gal_{L/K}$
\end{itemize}

The fiber functor $\eta:\Rep_EG\rightarrow\Proj_{A\otimes_{\Q_p}L_0}$ induces a $G$-torsor $X$ over $A\otimes_{\Q_p}L_0$, and therefore a $\Res_{E\otimes L_0/E}G$-torsor $\Res_{A\otimes L_0/A}X$ over $A$.

By Galois descent, the category of $A\otimes_{\Q_p}L$-vector bundles with a semi-linear $\Gal_{L/K}$-action is equivalent to the category of $A\otimes_{\Q_p}K$-vector bundles.  Therefore, the specification of a $\Gal_{L/K}$-stable filtration on $\eta(V)_L$ is the same as the specification of a filtration on $\eta(V)_L^{\Gal_{L/K}}$ (which is a finite projective $A\otimes_{\Q_p}K$-module).  Moreover, taking $\Gal_{L/K}$-invariants is exact and tensor-compatible, so $\{(\mathcal{F}^\bullet)^{\Gal_{L/K}}\}$ is a $\otimes$-filtration of $\{\eta(V)_L^{\Gal_{L/K}}\}$.

The family $\{N_V\}$ induces a point of $\Aut_{\Res_{E\otimes L_0/E}G}(\Res_{A\otimes L_0/A}X)(A[\varepsilon]/\varepsilon^2)$, as in Section~\ref{endos}.  Since the equations $N_V = p\Phi_V\circ N_V\circ\Phi_V^{-1}$ force $N_V$ to be nilpotent, $N$ is nilpotent as well.

The family $\{\Phi_V\}$ induces an isomorphism of $G$-torsors $\Phi':\varphi^\ast X\rightarrow X$, as well as a homomorphism 
\[	\Aut_{\Res_{E\otimes L_0/E}G}(\Res_{A\otimes L_0/A}X)\rightarrow\Aut_{\Res_{E\otimes L_0/E}G}(\Res_{A\otimes L_0/A}X)	\]
sending $g\in\Aut_{\Res_{E\otimes L_0/E}G}(\Res_{A\otimes L_0/A}X)(A')$ to $\Phi\circ \varphi^{\ast}(g)\circ\Phi^{-1}$, as in Section~\ref{tann-semilinear}.  We let $\underline\Ad(\Phi)$ be the induced map 
\[	\Lie\Aut_{\Res_{E\otimes L_0/E}G}(\Res_{A\otimes L_0/A}X)\rightarrow\Lie\Aut_{\Res_{E\otimes L_0/E}G}(\Res_{A\otimes L_0/A}X)	\]

Similarly, the families $\{\tau(g)\}$ for $g\in\Gal_{L/K}$ induce isomorphisms of $G$-torsors $\tau(g)':g^\ast X\rightarrow X$, homomorphisms 
\[	\Aut_{\Res_{E\otimes L_0/E}G}(\Res_{A\otimes L_0/A}X)\rightarrow\Aut_{\Res_{E\otimes L_0/E}G}(\Res_{A\otimes L_0/A}X)	\]
and maps
\[	\underline\Ad(\tau(g)):\Lie\Aut_{\Res_{E\otimes L_0/E}G}(\Res_{A\otimes L_0/A}X)\rightarrow\Lie\Aut_{\Res_{E\otimes L_0/E}G}(\Res_{A\otimes L_0/A}X)	\]

Then for any $g_1,g_2\in\Gal_{L/K}$, the isomorphism $\tau(g_1g_2)':(g_1g_2)^{\ast}X\rightarrow X$ is equal to the isomorphism $\tau(g_1)'\circ g_1^\ast\tau(g_2)':g_2^\ast g_1^\ast X\rightarrow X$, because this holds after pushing out by every representation $V\in\Rep_EG$.  Similarly, $\tau(g)'\circ g^\ast\Phi'=\Phi'\circ \varphi^\ast\tau(g)'$.


Finally, we observe that $\underline\Ad(\tau(g))(N)=N$ for all $g\in\Gal_{L/K}$ and $N=p\underline\Ad(\Phi) (N)$, since these equalities hold after pushing out by every representation $V\in\Rep_EG$.

To summarize, we have constructed 
\begin{itemize}
\item	a $\Res_{E\otimes L_0/E}G$-torsor $\Res_{A\otimes L_0/A}X$,
\item	a $\Res_{E\otimes K/E}G$-torsor $X_L^{\Gal_{L/K}}$,
\item	a parabolic subgroup scheme $P\subset \Aut_{\Res_{E\otimes K/E}G}(X_L^{\Gal_{L/K}})$,
\item	an isomorphism of $G$-torsors $\Phi':\varphi^\ast X\rightarrow X$, a nilpotent element 
\[	N\in\Lie\Aut_{\Res_{E\otimes L_0/E}G}\Res_{A\otimes L_0/A}X	\]
and a family of isomorphisms of $G$-torsors $\tau(g)':g^\ast X\rightarrow X$,
\end{itemize}
satisfying various compatibilities.

\subsubsection{Families of potentially semi-stable Galois representations}\label{tann-fam-pst}

Let $E$ and $K$ be finite extensions of $\Q_p$, let $A$ be an $E$-affinoid algebra, and let $G$ be an affine algebraic group over $E$.  Let $\rho:\Gal_K\rightarrow \Aut_G(X)(A)$ be a continuous homomorphism, where $X$ is a trivial $G$-torsor over $A$.  We say that $\rho$ is potentially semi-stable if $\sigma\circ\rho:\Gal_K\rightarrow \GL(V)$ is potentially semi-stable for every representation $\sigma\in \Rep_E(G)$.  By \cite[Theorem 5.1.2]{bellovin}, this is the case if and only if $(\sigma\circ\rho)_x:\Gal_K\rightarrow \GL(V_x)$ is potentially semi-stable for every $E$-finite artin local point $x:A\rightarrow B$.  In that case, $\D_{\st}^L(\sigma\circ\rho)$ is an $A$-locally free $A\otimes_{\Q_p}L_0$-module such that $\D_{\st}^L(\sigma\circ\rho)_L$ is filtered by sub-bundles $\Fil^\bullet\D_{\st}^L(\sigma\circ\rho)_L$, and $\D_{\st}^L(\sigma\circ\rho)$ is equipped with a bijection $\Phi_\sigma:\D_{\st}^L(\sigma\circ\rho)\rightarrow\D_{\st}^L(\sigma\circ\rho)$ which is semi-linear over $1\otimes\varphi$, an endomorphism $N_\sigma$ such that $N\circ\Phi=p\Phi\circ N$, and a semi-linear action of $\Gal_{L/K}$ which commutes with $\Phi$ and $N$ and stabilizes $\Fil^\bullet\D_{\st}^L(\sigma\circ\rho)_L$.

These structures are exact and $\otimes$-compatible in the senses discussed above, and so we get a $G$-torsor $\D_{\st}^L(\rho)$ over $\Spec A\otimes L_0$ 
 together with a isomorphism of $G$-torsors $\Phi':\varphi^\ast\D_{\st}^L(\rho)\rightarrow\D_{\st}^L(\rho)$, an element $N\in\Lie\Aut_{\Res_{E\otimes L_0/E}G}(\D_{\st}^L(\rho))$ satisfying $N=\frac{1}{p}\underline{\Ad}(\Phi)(N)$, a family of isomorphisms $\tau(g)':g^\ast\D_{\st}^L(\rho)\rightarrow\D_{\st}^L(\rho)$ commuting with $\Phi$ and $N$, and a geometric conjugacy class of cocharacters $\Gm\rightarrow \Res_{E\otimes K/E}G$.

\bibliographystyle{alpha}
\bibliography{thesis}

\end{document}